\documentclass[11pt,reqno]{amsart}
\usepackage[T1]{fontenc}
\usepackage{lmodern}
\usepackage[a4paper,margin=28mm]{geometry}
\usepackage{amsmath,amssymb,amsthm,mathtools}
\usepackage[expansion=false]{microtype}
\usepackage[hidelinks]{hyperref}
\allowdisplaybreaks[2]
\numberwithin{equation}{section}
\newtheorem{theorem}{Theorem}[section]
\newtheorem{proposition}[theorem]{Proposition}
\newtheorem{lemma}[theorem]{Lemma}
\newtheorem{corollary}[theorem]{Corollary}
\theoremstyle{definition}
\newtheorem{definition}[theorem]{Definition}
\newtheorem{example}[theorem]{Example}
\theoremstyle{remark}
\newtheorem{remark}[theorem]{Remark}
\newcommand{\g}{\mathfrak g}

\newcommand{\F}{\mathbb F}
\newcommand{\id}{\mathrm{id}}
\newcommand{\Hom}{\operatorname{Hom}}
\newcommand{\End}{\operatorname{End}}
\newcommand{\Der}{\operatorname{Der}}
\newcommand{\Aut}{\operatorname{Aut}}
\newcommand{\Ext}{\operatorname{Ext}}

\newcommand{\ad}{\operatorname{ad}}
\newcommand{\im}{\operatorname{im}}

\newcommand{\tr}{\operatorname{tr}}
\newcommand{\nab}{\mathrm{nab}}
\newcommand{\ab}{\mathrm{ab}}
\newcommand{\Ob}{\operatorname{Ob}}

\newcommand{\At}{\mathcal A_{\vartheta}}
\newcommand{\Ft}{\mathcal F_{\vartheta}}
\newcommand{\Ht}{\mathcal H^2_{\rho,\vartheta}}
\newcommand{\Zt}{Z^2_{\rho,\vartheta}}

\title[Wells exact sequences for $n$-Lie algebras]
{Wells exact sequences for $n$-Lie algebras}
\date{}
\subjclass[2010]{Primary 17A42, 17B40; Secondary 17B56, 17B30}
\keywords{$n$-Lie algebra, crossed product, non-abelian extension, representation cohomology, automorphism, derivation, Wells exact sequence}
\hypersetup{pdftitle={Wells exact sequences for n-Lie algebras}}

\author{Tao Zhang}
\address{School of Mathematics and Statistics (School of Cryptology), Henan Normal University,
Xinxiang 453007, P. R. China}
\email{zhangtao@htu.edu.cn}

\newcommand{\Inn}{\operatorname{Inn}}
\newcommand{\Out}{\operatorname{Out}}
\newcommand{\ext}{\mathrm{ext}}

\allowdisplaybreaks
\begin{document}
\begin{abstract}
We study derivations and automorphisms of arbitrary non-abelian extensions of $n$-Lie algebras through their full crossed-product data.
Their relative center is a canonical coefficient module, and a third cohomology class obstructs completion of data satisfying the mixed Filippov identities. When this class vanishes, a relative second cohomology space classifies the remaining extension data. We prove a non-abelian good criterion, construct the Lie algebra and group of compatible pairs, and obtain derivation and automorphism Wells sequences with explicit lifting equations. The derivation obstruction is a Lie algebra $1$-cocycle; the additive automorphism obstruction is a crossed homomorphism. We distinguish the ordinary center from the relative center, and outer derivations of the kernel from the mixed action required. Sparse crossed products are treated, including the representation class of their second mixed operation and the additional equations governing changes of sparse sections.
\end{abstract}
\maketitle

\section{Introduction}

An extension separates an algebra into an ideal and a quotient.
The extension problem asks which brackets combine these two
algebras; the lifting problem asks which derivations or automorphisms
of the two factors preserve the resulting extension. Wells'
sequence~\cite{Wells} expresses the latter problem as an obstruction
to preserving an extension class. Its Lie algebra and $3$-Lie algebra
forms were studied in~\cite{BardakovSingh,TanXu,Xu}.
For an arbitrary non-abelian kernel, the obstruction must account
for both the quotient bracket and every mixed bracket.

The theory of $n$-Lie algebras began with Filippov~\cite{Filippov}.
Its representation and cohomology theories are developed
in~\cite{Kasymov,DaletskiiTakhtajan,Takhtajan,CasasLodayPirashvili}.
The structural and deformation aspects appear in~\cite{Ling,AzcarragaIzquierdo}.
The extending structures for \(3\)-Lie algebras was given in~\cite{Zhang3Lie}.
Non-abelian extensions and their equivalences were investigated
in~\cite{SongMakhloufTang,AfiBasdouri}. We use the full crossed-product
construction of~\cite{ZhangExtending} for $n$-Lie algebras.
An extension of $\g$ by $V$ is an exact sequence of $n$-Lie algebras
\begin{equation}
\mathcal E:\quad
0\longrightarrow V\xrightarrow{i}E\xrightarrow{p}\g
\longrightarrow0.
\end{equation}
With quotient $\g$ and kernel $V$, a section determines
\[
\omega:\bigwedge^n\g\longrightarrow V,\qquad
\rho_r:\bigwedge^{n-r}\g\otimes\bigwedge^rV\longrightarrow V,
\quad 1\leq r\leq n-1.
\]
For $n\geq4$, the intermediate maps $\rho_2,\ldots,\rho_{n-2}$
carry information that is absent from the ternary case.

Classical non-abelian Lie extension theory has two related features.
An outer action on the kernel induces an action on its center;
a class in third cohomology obstructs existence, and second
cohomology describes extensions once the obstruction vanishes.
Alekseevsky, Michor, and Ruppert~\cite{AlekseevskyMichorRuppert}
establish this picture for super Lie algebras.
Fr\'egier~\cite{Fregier} treats ordinary Lie extensions through
non-abelian cohomology and its gauge equivalence.
Inassaridze, Khmaladze, and Ladra~\cite{InassaridzeKhmaladzeLadra}
develop a different coefficient theory using crossed modules.

For the $n$-Lie algebras case, the familiar formula
$\Out(V)=\Der(V)/\Inn(V)$ remains a quotient of ordinary Lie
algebras. It does not by itself encode an arbitrary $n$-Lie
extension. Indeed, changing a section can change $\rho_1$ by
mixed derivations that are not inner derivations of $V$.
Moreover, the relevant coefficient module is generally
\[
A_\rho=\{a\in V:[a,b,z_3,\ldots,z_n]_E=0
\text{ for all }b\in V,\ z_j\in E\},
\]
which can be strictly smaller than $Z(V)$.
These distinctions determine the form of the obstruction theory.

The main results are as follows. First, we construct a third
cohomology obstruction for a full mixed family for which all mixed
Filippov identities can be solved. A finite bracket calculation
shows that the remaining defect is an $A_\rho$-valued cocycle.
The cohomology convention retains the alternating primitive
required to change an $n$-ary bracket. Vanishing is equivalent to
existence of an extension with that mixed family.
Second, we identify exactly those changes of section preserving
the mixed brackets. They form a vector space $\mathcal F_\rho$,
and their pure changes are the finite coboundaries $d_\rho f$.
Consequently, the extension classes with fixed mixed maps form
an affine space over
\[
\mathcal H_\rho^2
=Z^2_{\ab}(\g,A_\rho)/d_\rho\mathcal F_\rho.
\]
The difference between $\mathcal F_\rho$ and
$\Hom(\g,A_\rho)$ explains why this space can be a proper quotient
of ordinary alternating extension cohomology.

Third, compatible derivation pairs form a Lie algebra
$\mathfrak G_\rho$ acting on $\mathcal H_\rho^2$.
Solving their mixed equations gives the obstruction
\[
\mathcal W_{\mathcal E}(D_V,D_\g)
=[L_{(D_V,D_\g)}\omega-d_\rho\lambda].
\]
Its vanishing characterizes extensibility, and it satisfies the
Lie algebra cocycle identity. Automorphism pairs have a pointed
Wells map on the full non-abelian cohomology set and an additive
Wells map on the group $\mathcal C_\rho$ of mixed-compatible pairs.
We determine all lifts, the kernels of the sequences, and the
factor sets controlling their splitting.

The full structural criterion and the classification by section
changes are recalled with citations. The proofs concern the
central obstruction, section invariance, and the two lifting
problems. Sparse crossed products occur only in
Section~\ref{sec:sparse}. Explicit examples show why zero
intermediate brackets and the distinction between $A_\rho$ and
$Z(V)$ cannot be omitted.

Throughout, $\F$ is a field of characteristic zero, $n\geq2$,
and all maps are $\F$-linear. The sparse specialization assumes
$n\geq3$. The quotient is always $\g$ and the kernel is always
$V$. We write $x_i, y_i\in\g$ and $a_i, a, b\in V$.  An empty sum is zero. Cohomological degrees used for
extensions are specified in Section~\ref{sec:centers-obstruction}.

\section{Full crossed products and non-abelian extensions}\label{sec:data}

\subsection{Preliminaries}
An $n$-Lie algebra is a vector space $L$ with an alternating
$n$-linear bracket satisfying
\begin{equation}\label{eq:FI}
[u_1,\ldots,u_{n-1},[v_1,\ldots,v_n]]
 =\sum_{i=1}^n
 [v_1,\ldots,[u_1,\ldots,u_{n-1},v_i],\ldots,v_n].
\end{equation}
A subspace $I$ is an ideal if $[I,L,\ldots,L]\subseteq I$.
A derivation $D$ satisfies
\begin{equation}\label{eq:derivation}
D[z_1,\ldots,z_n]
 =\sum_{i=1}^n[z_1,\ldots,Dz_i,\ldots,z_n].
\end{equation}
In particular, $\ad(u_1,\ldots,u_{n-1})z
=[u_1,\ldots,u_{n-1},z]$ is a derivation.
We use
\[
Z(L)=\{a\in L:[a,L,\ldots,L]=0\},\qquad
[L,\ldots,L]=\operatorname{span}\{[z_1,\ldots,z_n]\}.
\]
For an $n$-Lie algebra $L$, a representation on $M$ is a map
$\rho_M:\bigwedge^{n-1}L\to\End(M)$ satisfying
\begin{align}
[\rho_M(X),\rho_M(y_1,\ldots,y_{n-1})]
 &=\sum_{i=1}^{n-1}
 \rho_M(y_1,\ldots,[X,y_i],\ldots,y_{n-1}),\notag\\
\rho_M(x_1,\ldots,x_{n-2},[y_1,\ldots,y_n])
 &=\sum_{i=1}^n(-1)^{n-i}
 \rho_M(y_1,\ldots,\widehat y_i,\ldots,y_n)
 \rho_M(x_1,\ldots,x_{n-2},y_i).
\label{eq:representation-definition}
\end{align}
Here $X=(x_1,\ldots,x_{n-1})$ and $[X,y]=[x_1,\ldots,x_{n-1},y]$.
These are the standard representation identities; see
\cite{Kasymov,Takhtajan}.

\begin{definition}\label{def:extension}
An extension of $\g$ by $V$ is an exact sequence of $n$-Lie algebras
\begin{equation}\label{eq:extension}
\mathcal E:\quad
0\longrightarrow V\xrightarrow{i}E\xrightarrow{p}\g
\longrightarrow0.
\end{equation}
We identify $V$ with $i(V)$. Two extensions are equivalent if there
is an $n$-Lie isomorphism between their middle algebras that is the
identity on $V$ and induces the identity on $\g$.
Their equivalence classes form $\Ext_{\nab}(\g,V)$.
Here the term non-abelian allows an arbitrary kernel; it does not
require that the kernel bracket be nonzero.
\end{definition}

Fix a linear section $s$ of $p$. Every $e\in E$ has a unique expression
\[
e=s(x)+a,\qquad x=p(e),\quad a=e-s(p(e)).
\]
The coordinate isomorphism is
\begin{equation}\label{eq:coordinate-isomorphism}
F_s:\g\oplus V\longrightarrow E,\qquad F_s(x+a)=s(x)+a.
\end{equation}
Since $p$ preserves brackets, the following maps take values in $V$:
\begin{align}
\omega(x_1,\ldots,x_n)
 &=[s(x_1),\ldots,s(x_n)]-s[x_1,\ldots,x_n]_{\g},
 \label{eq:omega-section}\\
\rho_r(x_1,\ldots,x_{n-r})(a_1,\ldots,a_r)
 &=[s(x_1),\ldots,s(x_{n-r}),a_1,\ldots,a_r],
 \quad 1\leq r\leq n-1.
 \label{eq:rho-section}
\end{align}
Each map is alternating separately in its two displayed groups of
arguments. We set
\begin{equation}\label{eq:rho-n}
\rho=\rho_1,\qquad
\rho_n(a_1,\ldots,a_n)=[a_1,\ldots,a_n]_{V},
\qquad
\Omega=(\omega,\rho_1,\ldots,\rho_{n-1}).
\end{equation}
Thus $\rho_n$ is fixed, not part of the variable extension datum.
An empty list of $\g$-arguments is suppressed.

\subsection{The full bracket}
Conversely, let $\omega:\bigwedge^n\g\to V$ and
\[
\rho_r:\bigwedge^{n-r}\g\otimes\bigwedge^rV\longrightarrow V,
\qquad 1\leq r\leq n-1,
\]
be given. On $E=\g\oplus V$ define an alternating operation by
\begin{align}
[x_1,\ldots,x_n]_{\Omega}
 &=[x_1,\ldots,x_n]_{\g}+\omega(x_1,\ldots,x_n),
 \label{eq:bracket-pure}\\
[x_1,\ldots,x_{n-r},a_1,\ldots,a_r]_{\Omega}
 &=\rho_r(x_1,\ldots,x_{n-r})(a_1,\ldots,a_r),
 \quad 1\leq r\leq n.
 \label{eq:bracket-mixed}
\end{align}
These formulas specify the bracket on every pure input type.
For arbitrary $x_i+a_i$, the operation is obtained by multilinearity;
each occurrence of $a_i$ remains in its original slot until
alternation moves it to the right. No mixed component is set to zero.

For example, when $n=3$, write $\vartheta=\rho_2$. Then
\begin{align}\label{eq:ternary-bracket}
[x_1+a_1,x_2+a_2,x_3+a_3]_{\Omega}
={}&[x_1,x_2,x_3]_{\g}+\omega(x_1,x_2,x_3)\notag\\
&+\rho(x_1,x_2)a_3-\rho(x_1,x_3)a_2+\rho(x_2,x_3)a_1\notag\\
&+\vartheta(x_1)(a_2,a_3)-\vartheta(x_2)(a_1,a_3)
  +\vartheta(x_3)(a_1,a_2)\notag\\
&+[a_1,a_2,a_3]_{V}.
\end{align}
This is the full ternary bracket of~\cite{SongMakhloufTang,Zhang3Lie}.
For $n=4$ the two-kernel terms, for instance, are
\begin{align}\label{eq:four-mixed}
&\rho_2(x_1,x_2)(a_3,a_4)
-\rho_2(x_1,x_3)(a_2,a_4)
+\rho_2(x_1,x_4)(a_2,a_3)\notag\\
&\quad+\rho_2(x_2,x_3)(a_1,a_4)
-\rho_2(x_2,x_4)(a_1,a_3)
+\rho_2(x_3,x_4)(a_1,a_2).
\end{align}
They are additional to the one-kernel and three-kernel terms.

\subsection{The cocycle equations}
In the next formulas,
$X=(x_1,\ldots,x_{n-1})$,
$Y=(y_1,\ldots,y_n)$, and
$[X,y]=[x_1,\ldots,x_{n-1},y]_{\g}$.
The notation $\omega(X,y)$ has the same literal meaning.
For formulas involving $r$ outer kernel entries, put
$p=n-1-r$; for $s$ inner kernel entries, put $q=n-s$.
All sums with no indices are zero.

\begin{theorem}[\cite{ZhangExtending}]\label{thm:cocycle-criterion}
The operation \eqref{eq:bracket-pure}--\eqref{eq:bracket-mixed}
is an $n$-Lie bracket if and only if the following four families
hold:
\begin{align}
\rho(X)\omega(Y)+\omega(X,[y_1,\ldots,y_n]_{\g})
={}&\sum_{i=1}^n
 \omega(y_1,\ldots,[X,y_i],\ldots,y_n)\notag\\
&+\sum_{i=1}^n(-1)^{n-i}
 \rho(y_1,\ldots,\widehat y_i,\ldots,y_n)\omega(X,y_i);
\label{eq:NA0}
\end{align}
for $1\leq s\leq n$, with $q=n-s$,
\begin{align}
&\rho(X)\rho_s(y_1,\ldots,y_q)(b_1,\ldots,b_s)\notag\\
&=\sum_{i=1}^{q}
 \rho_s(y_1,\ldots,[X,y_i],\ldots,y_q)(b_1,\ldots,b_s)\notag\\
&\quad+\sum_{i=1}^{q}(-1)^{q-i}
 \rho_{s+1}(y_1,\ldots,\widehat y_i,\ldots,y_q)
       (\omega(X,y_i),b_1,\ldots,b_s)\notag\\
&\quad+\sum_{j=1}^{s}
 \rho_s(y_1,\ldots,y_q)
       (b_1,\ldots,\rho(X)b_j,\ldots,b_s);
\label{eq:NA1}
\end{align}
for $1\leq r\leq n-1$, with $p=n-1-r$,
\begin{align}
&(-1)^r\rho_r(x_1,\ldots,x_p,[y_1,\ldots,y_n]_{\g})(a_1,\ldots,a_r)\notag\\
&\quad+\rho_{r+1}(x_1,\ldots,x_p)
            (a_1,\ldots,a_r,\omega(Y))\notag\\
&=\sum_{i=1}^n(-1)^{n-i+r}
 \rho(y_1,\ldots,\widehat y_i,\ldots,y_n)
 \rho_r(x_1,\ldots,x_p,y_i)(a_1,\ldots,a_r);
\label{eq:NAr0}
\end{align}
and, for $1\leq r\leq n-1$, $1\leq s\leq n$,
\begin{align}
&\rho_{r+1}(x_1,\ldots,x_p)
 (a_1,\ldots,a_r,\rho_s(y_1,\ldots,y_q)(b_1,\ldots,b_s))
 \notag\\
&=\sum_{i=1}^{q}(-1)^{q-i+r}
 \rho_{s+1}(y_1,\ldots,\widehat y_i,\ldots,y_q)
 \bigl(\rho_r(x_1,\ldots,x_p,y_i)(a_1,\ldots,a_r),
       b_1,\ldots,b_s\bigr)\notag\\
&\quad+\sum_{j=1}^{s}
 \rho_s(y_1,\ldots,y_q)
 \bigl(b_1,\ldots,
 \rho_{r+1}(x_1,\ldots,x_p)(a_1,\ldots,a_r,b_j),
 \ldots,b_s\bigr).
\label{eq:NArs}
\end{align}
In \eqref{eq:NA1} and \eqref{eq:NArs}, the sum involving
$\rho_{s+1}$ is absent when $s=n$.
\end{theorem}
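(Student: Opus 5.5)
The plan is to check the Filippov identity \eqref{eq:FI} for $[\cdot,\ldots,\cdot]_\Omega$ on homogeneous inputs only, sorted by how many kernel entries occur inside and outside the inner bracket. Both sides of \eqref{eq:FI} are multilinear in the $2n-1$ arguments. Each argument is either in $\g$ or in $V$, so the identity holds on $E=\g\oplus V$ if and only if it holds when every $u_k$ and $v_l$ is homogeneous.

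Because the bracket is alternating, we may assume the outer arguments are $U=(x_1,\ldots,x_p,a_1,\ldots,a_r)$, with $p+r=n-1$. Likewise we may assume the inner ones are $(y_1,\ldots,y_q,b_1,\ldots,b_s)$, with $q+s=n$. Reordering only multiplies both sides by the same sign, so the input types are indexed by $(r,s)$ with $0\le r\le n-1$ and $0\le s\le n$.

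For each type we expand both sides with \eqref{eq:bracket-pure}--\eqref{eq:bracket-mixed} and split the result into its $\g$- and $V$-components.

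The type $(0,0)$ has $\g$-component equal to \eqref{eq:FI} in $\g$, which holds automatically. Its $V$-component is exactly \eqref{eq:NA0}. This uses $[X,s]$-free computation: $[X,[Y]_\g+\omega(Y)]_\Omega=[X,[Y]_\g]_\g+\omega(X,[Y]_\g)+\rho(X)\omega(Y)$. On the right-hand side, each term $[y_1,\ldots,[X,y_i]_\g+\omega(X,y_i),\ldots,y_n]_\Omega$ is moved to the last slot, which produces the sign $(-1)^{n-i}$.

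The type $(0,s)$ with $s\ge1$ gives \eqref{eq:NA1}. In this case $[X,b_j]_\Omega=\rho(X)b_j$. The inner bracket lies in $V$, and the term $[X,y_i]=[X,y_i]_\g+\omega(X,y_i)$ contributes a $\rho_s$ term and a $\rho_{s+1}$ term. Moving $\omega(X,y_i)$ to the front of the kernel block costs $(-1)^{q-i}$. When $s=n$ there are no $y$'s, which explains the stated convention.

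The type $(r,0)$ with $r\ge1$ gives \eqref{eq:NAr0}. Here the left side is $[x,a,[Y]_\g+\omega(Y)]_\Omega$. On the right, every term with $[x,a,y_i]$ contributes $\rho_r(x,y_i)(a)$, placed back in slot $i$ and then moved to the end. The terms with $[x,a,v_l]$ for $v_l$ in $\g$ are the ones listed, and the sign $(-1)^r$ comes from commuting $[Y]_\g$ past $a_1,\ldots,a_r$.

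Finally the type $(r,s)$ gives \eqref{eq:NArs}. The argument is the same, now keeping track of the kernel entries.

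The main obstacle I expect is sign bookkeeping. I would fix one convention: write every bracket with its $\g$-entries first, and record the sign of the permutation $(-1)^{(\text{number of kernel entries passed})}$ each time an entry is moved. With that convention, the counts $q-i$, $n-i+r$ and $q-i+r$ are immediate.

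The other point to check is that nothing is lost in the converse direction. In each type, the $V$-component of the $(2n-1)$-fold identity is precisely the listed equation, and the $\g$-component is nontrivial only when $r=s=0$. So the four families together are equivalent to \eqref{eq:FI} holding on all homogeneous inputs, which proves both directions at once.

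Since the families are stated up to the order of the types but one might also consider outer arguments with kernel entries not collected, alternation of both sides in $u$ and in $v$ separately justifies the reduction to ordered representatives.
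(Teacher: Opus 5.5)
Your proposal is correct. You reduce by multilinearity, and by separate alternation in the outer and inner arguments, to ordered homogeneous inputs of type $(r,s)$, then read off the $V$-component. This is the standard argument for this criterion. The paper itself only quotes the result from \cite{ZhangExtending}, but it performs exactly these type-by-type expansions when it uses the identities, for instance in deriving \eqref{eq:good-first-defect} and \eqref{eq:good-second-defect}.

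One sign in your sketch needs fixing. In types $(r,0)$ and $(r,s)$ the inner bracket is $[x_1,\ldots,x_p,a_1,\ldots,a_r,y_i]=(-1)^r\rho_r(x_1,\ldots,x_p,y_i)(a_1,\ldots,a_r)$, not $\rho_r(x_1,\ldots,x_p,y_i)(a_1,\ldots,a_r)$. This factor is the source of the $r$ in the exponents $n-i+r$ and $q-i+r$ on the right-hand sides.
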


This is the full crossed-product criterion of
\cite{ZhangExtending}, written with all mixed components and with the
kernel an ideal; see also the extension constructions in
\cite{AfiBasdouri,SongMakhloufTang}.
We retain its component identities for the subsequent calculations.

\begin{definition}\label{def:cocycle}
A family $\Omega$ satisfying
\eqref{eq:NA0}--\eqref{eq:NArs} is a \emph{non-abelian $2$-cocycle}.
Write $Z^2_{\nab}(\g,V)$ for their set, and
$\g\oplus_{\Omega}V$ for the corresponding $n$-Lie algebra.
The index $2$ is the extension-theoretic degree; it does not count
the number of arguments of $\omega$.
\end{definition}

\begin{remark}\label{rem:curvature}
Taking $s=n$ in \eqref{eq:NA1} proves
$\rho(X)\in\Der(V)$. Taking $s=1$ gives
\begin{align}\label{eq:curvature}
&[\rho(X),\rho(y_1,\ldots,y_{n-1})]a
 -\sum_{i=1}^{n-1}
 \rho(y_1,\ldots,[X,y_i],\ldots,y_{n-1})a\notag\\
&=\sum_{i=1}^{n-1}(-1)^{n-1-i}
 \rho_2(y_1,\ldots,\widehat y_i,\ldots,y_{n-1})
             (\omega(X,y_i),a).
\end{align}
Thus $\rho$ need not be a representation of $\g$.
For $n=2$, $\rho_2$ is the bracket of $V$, and
\eqref{eq:curvature} becomes
$[\rho(x),\rho(y)]-\rho([x,y])=\ad_{V}\omega(x,y)$.
For general $n$, \eqref{eq:NA0} remains linear in $\omega$ when
$\rho$ is fixed, but the full cocycle system is nonlinear.
\end{remark}

\begin{remark}\label{rem:full-crossed}
The family above is the full crossed-product datum of
\cite{ZhangExtending}, with $V$ as ideal and $\g$ as quotient:
its component with $k$ quotient entries is
$\gamma_k=\rho_{n-k}$. Retaining only $\rho_1$ and
$\rho_{n-1}$ gives the sparse crossed products of that reference
when $n\geq3$. Such a restriction is additional for $n\geq4$ and
is not imposed in the general theory.
\end{remark}

\section{Changes of section and non-abelian cohomology}
\label{sec:cohomology}

\subsection{Exact change-of-section formulas}
For $f\in\Hom(\g,V)$ put
\begin{equation}\label{eq:shear}
T_f(x+a)=x+f(x)+a,\qquad
T_f^{-1}=T_{-f},\qquad T_fT_k=T_{f+k}.
\end{equation}
Define $\Omega^f$ by
\begin{equation}\label{eq:gauge-bracket}
[z_1,\ldots,z_n]_{\Omega^f}
 =T_{-f}[T_fz_1,\ldots,T_fz_n]_{\Omega}.
\end{equation}
Since $T_f$ is the identity on $V$ and induces the identity on
$\g$, this operation again has the form
\eqref{eq:bracket-pure}--\eqref{eq:bracket-mixed}.

\begin{proposition}\label{prop:gauge}
The datum of the section $s+f$ is $\Omega^f$. For $m=n-r$ and
$1\leq r\leq n-1$, its components are
\begin{align}
\rho_r^f(x_1,\ldots,x_m)(a_1,\ldots,a_r)
 &=[x_1+f(x_1),\ldots,x_m+f(x_m),a_1,\ldots,a_r]_{\Omega},
 \label{eq:gauge-rho}\\
\omega^f(x_1,\ldots,x_n)
 &=[x_1+f(x_1),\ldots,x_n+f(x_n)]_{\Omega}
   -[x_1,\ldots,x_n]_{\g}
   -f([x_1,\ldots,x_n]_{\g}).
 \label{eq:gauge-omega}
\end{align}
Equivalently, their differences are the following finite sums:
\begin{align}
&(\rho_r^f-\rho_r)(x_1,\ldots,x_m)(a_1,\ldots,a_r)\notag\\
&=\sum_{i=1}^{m}
 [x_1,\ldots,x_{i-1},f(x_i),
   x_{i+1}+f(x_{i+1}),\ldots,x_m+f(x_m),
   a_1,\ldots,a_r]_{\Omega},
 \label{eq:gauge-rho-telescope}\\
&(\omega^f-\omega)(x_1,\ldots,x_n)\notag\\
&=\sum_{i=1}^{n}
 [x_1,\ldots,x_{i-1},f(x_i),
   x_{i+1}+f(x_{i+1}),\ldots,x_n+f(x_n)]_{\Omega}
   -f([x_1,\ldots,x_n]_{\g}).
 \label{eq:gauge-omega-telescope}
\end{align}
Every bracket in these sums lies in $V$ and is evaluated by
\eqref{eq:bracket-mixed}. In particular, the formulas include all
terms of every degree in $f$.
\end{proposition}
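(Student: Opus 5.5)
The plan is to transport everything through the coordinate isomorphism \eqref{eq:coordinate-isomorphism} and then expand by multilinearity.

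\emph{Identifying $\Omega^f$ with the datum of $s+f$.} Let $s'=s+f$, where $f$ is regarded as taking values in $V\subset E$. Then $F_{s'}(x+a)=s(x)+f(x)+a=F_s(T_f(x+a))$, so $F_{s'}=F_s\circ T_f$. By construction, $F_s$ is an $n$-Lie isomorphism from $\g\oplus_\Omega V$ onto $E$: its components \eqref{eq:omega-section}--\eqref{eq:rho-section} are exactly the pieces of the bracket \eqref{eq:bracket-pure}--\eqref{eq:bracket-mixed}, and multilinearity covers general inputs. Transporting the bracket of $E$ along $F_{s'}$ therefore gives
\[
F_{s'}^{-1}[F_{s'}z_1,\ldots,F_{s'}z_n]_E=T_{-f}[T_fz_1,\ldots,T_fz_n]_\Omega,
\]
which is \eqref{eq:gauge-bracket}. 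Since this transported bracket is the coordinate bracket for the section $s'$, its components are the datum of $s'$. This shows that $\Omega^f$ is the datum of $s+f$.

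\emph{The explicit components.} I evaluate \eqref{eq:gauge-bracket} on pure inputs.
\begin{itemize}
\item For $\rho^f_r$, apply $T_f$ to the inputs $x_1,\ldots,x_m,a_1,\ldots,a_r$; this fixes each $a_j$ and sends $x_i$ to $x_i+f(x_i)$. Because $r\ge1$ and $V$ is an ideal, the resulting bracket lies in $V$. The map $T_{-f}$ is the identity on $V$, which gives \eqref{eq:gauge-rho}.
\item For $\omega^f$, the bracket $[x_1+f(x_1),\ldots]_\Omega$ has $\g$-component $[x_1,\ldots,x_n]_\g$. Applying $T_{-f}$ subtracts $f$ of that component. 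Projecting onto $V$ then gives \eqref{eq:gauge-omega}.
\end{itemize}

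\emph{The telescoping formulas.} I write
\[
B_k=[x_1,\ldots,x_k,\,x_{k+1}+f(x_{k+1}),\ldots]_\Omega,
\]
so that $B_0$ is the fully shifted bracket and $B_m$ is the unshifted one. By linearity in slot $i$, each step is $B_{i-1}-B_i=[x_1,\ldots,x_{i-1},f(x_i),x_{i+1}+f(x_{i+1}),\ldots]_\Omega$. Summing over $i$ gives $B_0-B_m$, and $B_m$ equals $\rho_r(x)(a)$, respectively $[x]_\g+\omega(x)$. This yields \eqref{eq:gauge-rho-telescope} and \eqref{eq:gauge-omega-telescope}. Each summand contains the kernel element $f(x_i)$, so it lies in $V$ and is computed by \eqref{eq:bracket-mixed}. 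Expanding the remaining shifted slots produces terms in $\rho_{r+k}$ of every degree in $f$, including the pure kernel bracket $\rho_n$.

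The main obstacle is only bookkeeping, not substance: checking that $F_s$ is a homomorphism for the multilinear extension with the stated sign and slot conventions. This reduces to alternation and holds on pure types by definition.
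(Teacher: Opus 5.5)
Your proposal is correct and follows essentially the paper's argument: evaluate \eqref{eq:gauge-bracket} on pure inputs, using that $T_{-f}$ fixes $V$-valued mixed brackets and subtracts $f([x_1,\ldots,x_n]_\g)$ on pure quotient ones, then telescope slot by slot by multilinearity. The only difference is cosmetic. You identify $\Omega^f$ with the datum of $s+f$ through the factorization $F_{s+f}=F_s\circ T_f$ and transport of structure, whereas the paper substitutes $s(x_i)$ for $x_i$ in the same calculation.
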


\begin{proof}
For a mixed bracket the quotient component is zero. Hence $T_{-f}$
acts trivially on its value, which gives \eqref{eq:gauge-rho}.
For a pure quotient bracket the quotient component is
$[x_1,\ldots,x_n]_{\g}$; applying $T_{-f}$ subtracts
$f([x_1,\ldots,x_n]_{\g})$. Its remaining kernel component is
\eqref{eq:gauge-omega}.

For clarity, multilinearity gives the telescoping identity
\begin{align*}
&[v_1+w_1,\ldots,v_m+w_m,c_1,\ldots,c_{n-m}]
 -[v_1,\ldots,v_m,c_1,\ldots,c_{n-m}]\\
&=\sum_{i=1}^{m}
 [v_1,\ldots,v_{i-1},w_i,v_{i+1}+w_{i+1},
  \ldots,v_m+w_m,c_1,\ldots,c_{n-m}].
\end{align*}
Consecutive terms cancel when the sum is written as differences
between brackets with successively fewer shifted entries.
Taking $v_i=x_i$, $w_i=f(x_i)$ proves
\eqref{eq:gauge-rho-telescope} and
\eqref{eq:gauge-omega-telescope}. Finally, replacing $x_i$ in
these calculations by $s(x_i)$ gives exactly the defining maps
of $s+f$ in \eqref{eq:omega-section}--\eqref{eq:rho-section}.
\end{proof}

For the highest mixed component the formula reads
\begin{equation}\label{eq:gauge-top}
\rho_{n-1}^f(x)(a_1,\ldots,a_{n-1})
 =\rho_{n-1}(x)(a_1,\ldots,a_{n-1})
  +[f(x),a_1,\ldots,a_{n-1}]_{V}.
\end{equation}
For $n\geq3$, the next component is
\begin{align}\label{eq:gauge-next}
&\rho_{n-2}^f(x,y)(a_1,\ldots,a_{n-2})
 -\rho_{n-2}(x,y)(a_1,\ldots,a_{n-2})\notag\\
&=\rho_{n-1}(x)(f(y),a_1,\ldots,a_{n-2})
 -\rho_{n-1}(y)(f(x),a_1,\ldots,a_{n-2})\notag\\
&\quad+[f(x),f(y),a_1,\ldots,a_{n-2}]_{V}.
\end{align}
When $n=2$ these formulas reduce to
\begin{align}
\rho^f(x)a&=\rho(x)a+[f(x),a]_{V},\notag\\
\omega^f(x,y)
 &=\omega(x,y)+\rho(x)f(y)-\rho(y)f(x)
   +[f(x),f(y)]_{V}-f([x,y]_{\g}).
 \label{eq:binary-gauge}
\end{align}
When $n=3$, with $\vartheta=\rho_2$ and $f_i=f(x_i)$, one obtains
\begin{align}
\vartheta^f(x)(a,b)&=\vartheta(x)(a,b)+[f(x),a,b]_{V},\notag\\
\rho^f(x,y)a&=\rho(x,y)a
 +\vartheta(x)(f(y),a)-\vartheta(y)(f(x),a)+[f(x),f(y),a]_{V},
 \label{eq:ternary-gauge-rho}\\
\omega^f(x_1,x_2,x_3)
={}&\omega(x_1,x_2,x_3)
 +\rho(x_1,x_2)f_3-\rho(x_1,x_3)f_2+\rho(x_2,x_3)f_1
 \notag\\
&+\vartheta(x_1)(f_2,f_3)-\vartheta(x_2)(f_1,f_3)
 +\vartheta(x_3)(f_1,f_2)\notag\\
&+[f_1,f_2,f_3]_{V}-f([x_1,x_2,x_3]_{\g}).
 \label{eq:ternary-gauge-omega}
\end{align}
In particular, the change of $\omega$ is not generally a linear
coboundary.

\begin{lemma}\label{lem:gauge-action}
For $\Omega\in Z^2_{\nab}(\g,V)$ and $f,k\in\Hom(\g,V)$,
\begin{equation}\label{eq:gauge-action}
\Omega^f\in Z^2_{\nab}(\g,V),\qquad
(\Omega^f)^k=\Omega^{f+k},\qquad \Omega^0=\Omega.
\end{equation}
Moreover, $T_f:\g\oplus_{\Omega^f}V\to\g\oplus_{\Omega}V$
is an equivalence of extensions.
\end{lemma}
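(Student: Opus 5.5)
The proof is a transport-of-structure argument; no component identity from Theorem~\ref{thm:cocycle-criterion} needs to be checked by hand. Since $\Omega\in Z^2_{\nab}(\g,V)$, Theorem~\ref{thm:cocycle-criterion} says that $[\cdot,\ldots,\cdot]_\Omega$ is an $n$-Lie bracket on $\g\oplus V$. By \eqref{eq:shear}, $T_f$ is a linear bijection of $\g\oplus V$ with inverse $T_{-f}$. Definition \eqref{eq:gauge-bracket} then says exactly that
\[
T_f[z_1,\ldots,z_n]_{\Omega^f}=[T_fz_1,\ldots,T_fz_n]_{\Omega},
\]
so $[\cdot,\ldots,\cdot]_{\Omega^f}$ is the bracket transported from $\Omega$ along $T_f$. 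Transport along a linear isomorphism preserves being alternating and multilinear, and it preserves the Filippov identity \eqref{eq:FI}: applying $T_f$ to both sides of \eqref{eq:FI} for $\Omega^f$ turns it into \eqref{eq:FI} for $\Omega$ evaluated at $T_fu_i,T_fv_j$. Hence $[\cdot,\ldots,\cdot]_{\Omega^f}$ is an $n$-Lie bracket.

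As observed after \eqref{eq:gauge-bracket}, this bracket has the form \eqref{eq:bracket-pure}--\eqref{eq:bracket-mixed}. The one point to confirm is that its pure kernel component is unchanged. For $a_i\in V$ we have $T_fa_i=a_i$, and $T_{-f}$ fixes $V$, so $[a_1,\ldots,a_n]_{\Omega^f}=[a_1,\ldots,a_n]_V$. Its quotient component on pure $\g$-inputs is still $[x_1,\ldots,x_n]_\g$, because $T_{\pm f}$ induce the identity on $\g$. The converse direction of Theorem~\ref{thm:cocycle-criterion} now gives $\Omega^f\in Z^2_{\nab}(\g,V)$.

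For the action law, write both sides through \eqref{eq:gauge-bracket}. We get
\[
[z_1,\ldots,z_n]_{(\Omega^f)^k}=T_{-k}T_{-f}[T_fT_kz_1,\ldots,T_fT_kz_n]_\Omega .
\]
By $T_fT_k=T_{f+k}$ and $T_{-k}T_{-f}=T_{-(f+k)}$ this equals $[z_1,\ldots,z_n]_{\Omega^{f+k}}$. Brackets of the form \eqref{eq:bracket-pure}--\eqref{eq:bracket-mixed} are determined by their components and conversely, so $(\Omega^f)^k=\Omega^{f+k}$ as data. Since $T_0=\id$, we also have $\Omega^0=\Omega$.

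For the last claim, the displayed identity says that $T_f$ is an $n$-Lie isomorphism $\g\oplus_{\Omega^f}V\to\g\oplus_\Omega V$. It is the identity on $V$ and satisfies $p\circ T_f=p$, so it induces the identity on $\g$. This is exactly an equivalence of extensions in Definition~\ref{def:extension}.

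The only step that needs any care is identifying the transported bracket with the crossed-product form while keeping the same $\rho_n$ and the same $\g$-bracket; the two observations above settle it. Alternatively, Proposition~\ref{prop:gauge} identifies $\Omega^f$ with the datum of the section $s+f$ of an actual extension, which gives the cocycle property immediately.
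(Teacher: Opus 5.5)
Your proof is correct and follows the paper's argument essentially step for step: you transport the Filippov identity through $T_f$ via \eqref{eq:gauge-bracket}, compose shears using $T_fT_k=T_{f+k}$, and obtain the equivalence from $T_f|_V=\id$ and $pT_f=p$. You also check explicitly that the transported bracket keeps $[\cdot,\ldots,\cdot]_V$ and $[\cdot,\ldots,\cdot]_\g$, which is what lets Theorem~\ref{thm:cocycle-criterion} be applied to the components of $\Omega^f$; the paper leaves this point to its remark after \eqref{eq:gauge-bracket}.
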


\begin{proof}
For arbitrary $u_i,v_j\in E$, \eqref{eq:gauge-bracket} gives
\begin{align*}
&[u_1,\ldots,u_{n-1},[v_1,\ldots,v_n]_{\Omega^f}]_{\Omega^f}\\
&=T_{-f}[T_fu_1,\ldots,T_fu_{n-1},
                [T_fv_1,\ldots,T_fv_n]_{\Omega}]_{\Omega}\\
&=\sum_{i=1}^n T_{-f}
 [T_fv_1,\ldots,[T_fu_1,\ldots,T_fu_{n-1},T_fv_i]_{\Omega},
                   \ldots,T_fv_n]_{\Omega}\\
&=\sum_{i=1}^n
 [v_1,\ldots,[u_1,\ldots,u_{n-1},v_i]_{\Omega^f},
                   \ldots,v_n]_{\Omega^f}.
\end{align*}
Thus $\Omega^f$ is a cocycle by Theorem~\ref{thm:cocycle-criterion}.
Using \eqref{eq:shear} twice,
\begin{align*}
[z_1,\ldots,z_n]_{(\Omega^f)^k}
&=T_{-k}T_{-f}[T_fT_kz_1,\ldots,T_fT_kz_n]_{\Omega}\\
&=T_{-(f+k)}[T_{f+k}z_1,\ldots,T_{f+k}z_n]_{\Omega}.
\end{align*}
This is the bracket of $\Omega^{f+k}$. Equation
\eqref{eq:gauge-bracket} also gives
$T_f[z_1,\ldots,z_n]_{\Omega^f}
 =[T_fz_1,\ldots,T_fz_n]_{\Omega}$.
Together with $T_f|_{V}=\id$ and $pT_f=p$, this proves the last
assertion.
\end{proof}

\subsection{Classification}
\begin{definition}\label{def:Hnab}
Two non-abelian cocycles $\Omega,\Omega'$ are equivalent if
$\Omega'=\Omega^f$ for some $f\in\Hom(\g,V)$. Put
\[
H^2_{\nab}(\g,V)=Z^2_{\nab}(\g,V)/{\sim}.
\]
This is a set of equivalence classes.
\end{definition}

\begin{theorem}\label{thm:classification}
The assignment $\mathcal E\mapsto[\Omega_s]$ gives a bijection
\begin{equation}\label{eq:classification}
\Ext_{\nab}(\g,V)\ \cong\ H^2_{\nab}(\g,V).
\end{equation}
Its inverse sends $[\Omega]$ to the extension with middle algebra
$\g\oplus_{\Omega}V$, injection $a\mapsto0+a$, and projection
$x+a\mapsto x$.
\end{theorem}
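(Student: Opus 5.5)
The plan is to build the map in both directions and check that each is well defined and that the two composites are the identity. The ingredients are Proposition~\ref{prop:gauge} (change of section), Lemma~\ref{lem:gauge-action} (gauge action and the shear equivalence $T_f$), and Theorem~\ref{thm:cocycle-criterion}.

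\emph{Forward map.} Given an extension $\mathcal E$ and a section $s$, the coordinate isomorphism $F_s$ of \eqref{eq:coordinate-isomorphism} transports the bracket of $E$ to $\g\oplus V$. By \eqref{eq:omega-section}--\eqref{eq:rho-section} and multilinearity, the transported bracket is exactly $[\cdot]_{\Omega_s}$ of \eqref{eq:bracket-pure}--\eqref{eq:bracket-mixed}. On inputs in $V$ alone it is $[\cdot]_V$ because $V$ is a subalgebra. The pure part is $s[x_1,\ldots,x_n]_\g+\omega$. Since the transported bracket is an $n$-Lie bracket, Theorem~\ref{thm:cocycle-criterion} gives $\Omega_s\in Z^2_{\nab}(\g,V)$. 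Any other section has the form $s+f$ with $f\in\Hom(\g,V)$, and Proposition~\ref{prop:gauge} says its datum is $\Omega_s^f$. So the class $[\Omega_s]$ does not depend on $s$.

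\emph{Well defined on equivalence classes.} Let $\phi:E\to E'$ be an equivalence, so $\phi$ is the identity on $V$ and $p'\phi=p$. Then $\phi s$ is a section of $p'$. Because $\phi$ preserves brackets and fixes $V$, the formulas \eqref{eq:omega-section}--\eqref{eq:rho-section} give $\Omega'_{\phi s}=\Omega_s$ on the nose. Hence equivalent extensions give the same class.

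\emph{Inverse map.} For $\Omega$ a cocycle, $\g\oplus_\Omega V$ is an $n$-Lie algebra by Theorem~\ref{thm:cocycle-criterion}. The map $a\mapsto 0+a$ is a homomorphism by \eqref{eq:bracket-mixed} with $r=n$. The projection $x+a\mapsto x$ is a homomorphism because every bracket containing a kernel entry lies in $V$, and the pure bracket has $\g$-component $[x_1,\ldots,x_n]_\g$. Exactness is clear.

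This map respects the equivalence relation: if $\Omega'=\Omega^f$, then Lemma~\ref{lem:gauge-action} says $T_f$ is an equivalence of the corresponding extensions.

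\emph{The two composites.} Starting from $[\Omega]$, take the canonical section $x\mapsto x+0$ of $\g\oplus_\Omega V$. Its datum is $\Omega$ itself, read off from \eqref{eq:bracket-pure}--\eqref{eq:bracket-mixed}, so one composite is the identity. Starting from $\mathcal E$ with section $s$, the map $F_s:\g\oplus_{\Omega_s}V\to E$ is a bracket isomorphism by the forward step. It is the identity on $V$ and satisfies $pF_s=\mathrm{pr}_\g$, so it is an equivalence, and the other composite is the identity too.

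The main obstacle is the forward step. One has to verify that the transported bracket agrees with the full formula on arbitrary mixed inputs, including the sign bookkeeping when kernel entries are alternated to the right. I will handle this by expanding $[s(x_1)+a_1,\ldots]_E$ multilinearly. Each monomial is an $E$-bracket with a given set of slots filled by kernel elements. Alternation of the $E$-bracket moves those kernel entries to the right with the same sign that the definition of $[\cdot]_\Omega$ uses. So each monomial matches the corresponding $\rho_r$ term term-by-term.
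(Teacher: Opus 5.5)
Your proof is correct and is the standard argument for this result. The paper does not write it out: it cites \cite{ZhangExtending,AfiBasdouri} and adds only that equivalences in section coordinates are exactly the shears $T_f$; your argument supplies exactly this, via $F_s$, Proposition~\ref{prop:gauge} for section independence, the transport $\Omega'_{\phi s}=\Omega_s$ under an equivalence $\phi$, and Lemma~\ref{lem:gauge-action} for the inverse.
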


This is the established classification of non-abelian extensions
by crossed-product data modulo changes of section; see
\cite{ZhangExtending,AfiBasdouri}, and
\cite{SongMakhloufTang,Zhang3Lie} for the ternary case.
The equivalences in these coordinates are precisely the shears
$T_f$ from \eqref{eq:gauge-bracket}.

\begin{corollary}\label{cor:extension-split}
The extension $\mathcal E$ has an $n$-Lie algebra section if and
only if $\omega^f=0$ for some $f\in\Hom(\g,V)$.
Equivalently,
\begin{equation}\label{eq:section-split-equation}
f([x_1,\ldots,x_n]_{\g})
 =\omega(x_1,\ldots,x_n)
  +\sum_{i=1}^n
 [x_1,\ldots,x_{i-1},f(x_i),
  x_{i+1}+f(x_{i+1}),\ldots,x_n+f(x_n)]_{\Omega}.
\end{equation}
\end{corollary}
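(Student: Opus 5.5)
The plan is to identify $n$-Lie sections of $\mathcal E$ with sections whose curvature $\omega$ vanishes, and then to use Proposition~\ref{prop:gauge} to parametrize all linear sections by $f\in\Hom(\g,V)$.

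Fix the linear section $s$ used to define $\Omega$. Any other linear section of $p$ has the form $s+f$ with $f=s'-s\in\Hom(\g,V)$, because $p(s'-s)=0$. By \eqref{eq:omega-section}, a linear section $s'$ is an $n$-Lie homomorphism exactly when its curvature
\[
[s'(x_1),\ldots,s'(x_n)]-s'[x_1,\ldots,x_n]_{\g}
\]
vanishes identically. Proposition~\ref{prop:gauge} says that the datum of $s+f$ is $\Omega^f$, so its curvature is $\omega^f$. Hence $\mathcal E$ has an $n$-Lie section if and only if $\omega^f=0$ for some $f$. To make the identification explicit, transport through the coordinate isomorphism $F_s$. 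Then $(s+f)(x)=F_s(x+f(x))=F_sT_f(x)$, and the bracket of $E$ corresponds to $[\cdot]_\Omega$. The last sentence of the proof of Proposition~\ref{prop:gauge} already asserts that replacing $x_i$ by $s(x_i)$ yields the defining maps of $s+f$. One direction is that a homomorphic section $s'$ gives $\omega^{s'-s}=0$. Conversely, $\omega^f=0$ makes $s+f$ an $n$-Lie section.

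For the displayed equation \eqref{eq:section-split-equation}, set $\omega^f=0$ in the telescoped formula \eqref{eq:gauge-omega-telescope} and move $f([x_1,\ldots,x_n]_{\g})$ to the left-hand side. This gives exactly the stated identity, in which each bracket in the sum contains the entry $f(x_i)\in V$. Every such bracket is therefore evaluated through the mixed components \eqref{eq:bracket-mixed}, as remarked after Proposition~\ref{prop:gauge}.

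The only point requiring care is bookkeeping: the curvature of $s+f$ must be compared with $\omega^f$ computed in $\g\oplus_\Omega V$ rather than in $E$, and the sign of the $f([x_1,\ldots,x_n]_\g)$ term must come out right. Both are handled by the relation $F_sT_f=s+f$ on $\g$, together with the fact that $F_s$ is a bracket isomorphism by construction of $\Omega$. I expect no genuine obstacle beyond this.
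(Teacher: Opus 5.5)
Your proposal is correct and follows essentially the same route as the paper: every linear section is $s+f$, its curvature $[(s+f)x_1,\ldots,(s+f)x_n]-(s+f)[x_1,\ldots,x_n]_{\g}$ is $\omega^f$, and substituting $\omega^f=0$ into \eqref{eq:gauge-omega-telescope} gives \eqref{eq:section-split-equation}. Your explicit transport through $F_s$ and $T_f$ is harmless bookkeeping that the paper leaves implicit.
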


\begin{proof}
Every linear section is $s+f$. Directly,
\[
[(s+f)x_1,\ldots,(s+f)x_n]-(s+f)[x_1,\ldots,x_n]_{\g}
 =\omega^f(x_1,\ldots,x_n).
\]
Its vanishing is exactly preservation of the bracket.
Substitution of \eqref{eq:gauge-omega-telescope} gives
\eqref{eq:section-split-equation}.
\end{proof}

\section{Centers, outer actions, and the third cohomology obstruction}
\label{sec:centers-obstruction}

\subsection{Inner and outer derivations of the kernel}
For an $n$-Lie algebra $V$, put
\begin{align}
Z(V)&=\{a\in V:[a,u_2,\ldots,u_n]_V=0
                    \text{ for all }u_2,\ldots,u_n\in V\},\notag\\
\Inn(V)&=\operatorname{span}\{\ad_V(u_1,\ldots,u_{n-1}):u_i\in V\},
\qquad
\Out(V)=\Der(V)/\Inn(V).
\label{eq:inner-outer}
\end{align}
Both derivation spaces in the quotient are ordinary Lie algebras
with commutator bracket. In particular, $\Out(V)$ is a quotient,
not an equality between $\Der(V)$ and $\Inn(V)$.

\begin{proposition}\label{prop:outer-center}
The space $\Inn(V)$ is an ideal of $\Der(V)$.
The center $Z(V)$ is invariant under $\Der(V)$, and the action on
$Z(V)$ factors through $\Out(V)$.
\end{proposition}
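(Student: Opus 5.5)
The plan is to verify the three claims directly from the derivation identity \eqref{eq:derivation}, applied to the bracket of $V$.

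\emph{Ideal.} Let $D\in\Der(V)$ and $u_1,\ldots,u_{n-1}\in V$. For any $z\in V$, apply \eqref{eq:derivation} to $D[u_1,\ldots,u_{n-1},z]_V$ and isolate the term containing $Dz$. This gives
\[
[D,\ad_V(u_1,\ldots,u_{n-1})]
=\sum_{i=1}^{n-1}\ad_V(u_1,\ldots,Du_i,\ldots,u_{n-1}).
\]
The right-hand side lies in $\Inn(V)$, so $\Inn(V)$ is an ideal of $\Der(V)$. We also need $\Inn(V)\subseteq\Der(V)$, which is the remark after \eqref{eq:derivation}: it is exactly the Filippov identity \eqref{eq:FI}. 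This also shows that $\Out(V)$ is a Lie algebra.

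\emph{Invariance of the center.} Let $a\in Z(V)$ and $u_2,\ldots,u_n\in V$. Apply $D$ to $0=[a,u_2,\ldots,u_n]_V$ and use \eqref{eq:derivation}:
\[
0=[Da,u_2,\ldots,u_n]_V+\sum_{j\geq2}[a,u_2,\ldots,Du_j,\ldots,u_n]_V.
\]
Every term in the sum has $a$ in a slot, so it vanishes because $a\in Z(V)$. Hence $[Da,u_2,\ldots,u_n]_V=0$, that is, $Da\in Z(V)$. This gives a map $\Der(V)\to\End(Z(V))$, $D\mapsto D|_{Z(V)}$, which is a Lie algebra homomorphism because restriction commutes with commutators.

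\emph{Factoring through $\Out(V)$.} It suffices to show that $\Inn(V)$ lies in the kernel of the restriction map. For $a\in Z(V)$ we have
\[
\ad_V(u_1,\ldots,u_{n-1})a=[u_1,\ldots,u_{n-1},a]_V=\pm[a,u_1,\ldots,u_{n-1}]_V=0
\]
by alternation. Every element of $\Inn(V)$ is a linear combination of such maps, so each one restricts to zero on $Z(V)$. Therefore the restriction map factors through $\Der(V)/\Inn(V)=\Out(V)$.

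None of these steps is a real obstacle. The only point needing care is in the first step: $\Inn(V)$ is defined as a span, so the ideal property should be proved on generators, using bilinearity of the commutator. The sign conventions in \eqref{eq:derivation} must also be applied correctly when isolating the $Dz$ term.
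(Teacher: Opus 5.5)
Your proposal is correct and follows essentially the same route as the paper. Both use the commutator formula $[D,\ad_V(u_1,\ldots,u_{n-1})]=\sum_i\ad_V(u_1,\ldots,Du_i,\ldots,u_{n-1})$ for the ideal property, apply $D$ to $[a,u_2,\ldots,u_n]_V=0$ for invariance of $Z(V)$, and note that inner derivations kill $Z(V)$ for the factorization; your extra remarks (that $\Inn(V)\subseteq\Der(V)$ is the Filippov identity, and that restriction to $Z(V)$ is a Lie homomorphism) only make explicit what the paper leaves implicit.
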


\begin{proof}
For $D\in\Der(V)$ and $v\in V$,
\begin{align*}
[D,\ad_V(u_1,\ldots,u_{n-1})]v
&=D[u_1,\ldots,u_{n-1},v]_V
                       -[u_1,\ldots,u_{n-1},Dv]_V\\
&=\sum_{i=1}^{n-1}
 [u_1,\ldots,Du_i,\ldots,u_{n-1},v]_V.
\end{align*}
This is a sum of inner derivations.
For $a\in Z(V)$,
\[
0=D[a,u_2,\ldots,u_n]_V
 =[Da,u_2,\ldots,u_n]_V
   +\sum_{j=2}^n[a,u_2,\ldots,Du_j,\ldots,u_n]_V
 =[Da,u_2,\ldots,u_n]_V.
\]
Thus $Da\in Z(V)$, whereas every inner derivation kills $a$.
The asserted factorization follows.
\end{proof}

In an extension, every contraction of the mixed maps gives a
derivation of $V$:
\begin{align}\label{eq:mixed-contractions}
v\longmapsto
\rho_{k+1}(x_1,\ldots,x_{n-1-k})(u_1,\ldots,u_k,v)
&=[s(x_1),\ldots,s(x_{n-1-k}),u_1,\ldots,u_k,v],
\notag\\[-2pt]
&\hspace{22mm}0\leq k\leq n-1.
\end{align}
The assertion follows by applying the Filippov identity to an
inner bracket with all entries in $V$.
Only the case $k=n-1$ is necessarily inner in $V$.

Define two Lie algebras of operators on $V$:
\begin{align}
\mathfrak D_E(V)
 &=\operatorname{span}\{\ad_E(z_1,\ldots,z_{n-1})|_V:z_i\in E\},
\notag\\
\mathfrak I_E(V)
 &=\operatorname{span}\{\ad_E(z_1,\ldots,z_{n-1})|_V:
                                \text{some }z_i\in V\}.
\label{eq:extension-outer-operators}
\end{align}
For $n=2$, $\mathfrak I_E(V)=\Inn(V)$.
For $n>2$, this equality need not hold.

\begin{proposition}\label{prop:relative-outer-action}
The space $\mathfrak I_E(V)$ is an ideal of $\mathfrak D_E(V)$.
The map
\begin{equation}\label{eq:relative-outer-action}
\overline\rho_E:\bigwedge^{n-1}\g\longrightarrow
\mathfrak D_E(V)/\mathfrak I_E(V),\qquad
X\longmapsto\rho(X)+\mathfrak I_E(V),
\end{equation}
is independent of the section and preserves the fundamental
Leibniz bracket:
\[
[\overline\rho_E(X),\overline\rho_E(Y)]
 =\overline\rho_E([X,Y]_{\mathrm F}),\qquad
[X,Y]_{\mathrm F}
 =\sum_{i=1}^{n-1}
 (y_1,\ldots,[X,y_i]_\g,\ldots,y_{n-1}).
\]
Its coefficient space is
\begin{equation}\label{eq:annihilator-mixed-inner}
A_\rho=\{a\in V:Da=0,\, \forall D\in\mathfrak I_E(V)\}.
\end{equation}
\end{proposition}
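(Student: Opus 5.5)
The plan is to work entirely inside $E$, using the Filippov identity \eqref{eq:FI} and the derivation identity \eqref{eq:derivation}, in three steps.

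\emph{Step 1: $\mathfrak I_E(V)$ is an ideal of $\mathfrak D_E(V)$.} Since $V$ is an ideal of $E$, every $\ad_E(z_1,\ldots,z_{n-1})$ restricts to an operator on $V$. The identity \eqref{eq:FI} shows that $\ad_E(Z)$ is a derivation of $E$, so
\[
[\ad_E(Z),\ad_E(W)]=\sum_{i=1}^{n-1}\ad_E(w_1,\ldots,[Z,w_i]_E,\ldots,w_{n-1}).
\]
Restricting to $V$ shows at once that $\mathfrak D_E(V)$ is closed under commutators. Now suppose some $w_j\in V$. In the term with $i=j$ we have $[Z,w_j]_E\in V$ because $V$ is an ideal. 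In every other term $w_j$ is still present. Hence each summand has an entry in $V$, and the commutator lies in $\mathfrak I_E(V)$.

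\emph{Step 2: independence of the section.} Another section is $s+f$, and multilinearity gives
\[
\ad_E(s(x_1)+f(x_1),\ldots)|_V-\ad_E(s(x_1),\ldots)|_V
\]
as a sum of terms each containing some $f(x_i)\in V$, as in \eqref{eq:gauge-rho-telescope} with $r=1$. So $\rho^f(X)-\rho(X)\in\mathfrak I_E(V)$. One should also note that $\rho(X)=\ad_E(s(x_1),\ldots,s(x_{n-1}))|_V$ lies in $\mathfrak D_E(V)$. This makes $\overline\rho_E$ well defined, and since $\rho$ is alternating it descends to $\bigwedge^{n-1}\g$.

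\emph{Step 3: the bracket.} Put $Z=s(X)$ and $W=s(Y)$, and apply the commutator formula from Step 1 to $[\rho(X),\rho(Y)]$. This gives $\sum_i\ad_E(s(y_1),\ldots,[s(X),s(y_i)]_E,\ldots)|_V$. Using \eqref{eq:omega-section}, we have $[s(X),s(y_i)]_E=s([X,y_i]_\g)+\omega(X,y_i)$. The $\omega$-part puts a kernel element into an entry, so it contributes to $\mathfrak I_E(V)$; this is the invariant form of \eqref{eq:curvature}. What remains is $\sum_i\rho(y_1,\ldots,[X,y_i]_\g,\ldots,y_{n-1})=\rho([X,Y]_{\mathrm F})$, extended linearly. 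Passing to the quotient gives the stated identity.

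Finally, $A_\rho$ is the common kernel of the ideal $\mathfrak I_E(V)$, and it is by construction the space on which the quotient action of Step 1 is defined. The spanning elements of $\mathfrak I_E(V)$ are exactly the operators $b\mapsto[a,b,z_3,\ldots]$-type contractions with an entry in $V$. Up to alternation and relabeling, this annihilator coincides with the $A_\rho$ of the introduction. Invariance of $A_\rho$ under $\mathfrak D_E(V)$ follows from the ideal property: for $D\in\mathfrak D_E(V)$, $I\in\mathfrak I_E(V)$ and $a\in A_\rho$,
\[
I(Da)=D(Ia)-[D,I]a=0.
\]
Consequently $\mathfrak D_E(V)/\mathfrak I_E(V)$ acts on $A_\rho$.

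The main care point is Step 3's bookkeeping: checking that every residual term genuinely has an entry in $V$ and that the section-independence argument works for all mixed degrees. The telescoping identity of Proposition~\ref{prop:gauge} handles the latter.
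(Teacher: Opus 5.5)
Your proof is correct and is essentially the paper's argument: the Filippov commutator formula $[\ad_E(U),\ad_E(W)]=\sum_i\ad_E(w_1,\ldots,[U,w_i],\ldots,w_{n-1})$ gives the ideal property, and the telescoping change of section adds only operators with a kernel entry. Your expansion $[s(X),s(y_i)]=s([X,y_i]_\g)+\omega(X,y_i)$ is the paper's observation that the curvature terms in \eqref{eq:curvature} lie in $\mathfrak I_E(V)$; your wording for the coefficient space is slightly garbled (the spanning operators are $v\mapsto[z_1,\ldots,z_{n-1},v]$ with some $z_i\in V$, and at $a$ these give $\pm[a,b,z_3,\ldots,z_n]$ by alternation), but the conclusion and your extra check that $A_\rho$ is $\mathfrak D_E(V)$-stable are right.
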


\begin{proof}
For $U=(z_1,\ldots,z_{n-1})$ and
$W=(w_1,\ldots,w_{n-1})$, the Filippov identity gives
\[
[\ad_E(U),\ad_E(W)]
 =\sum_{i=1}^{n-1}
 \ad_E(w_1,\ldots,[U,w_i],\ldots,w_{n-1}).
\]
Restriction to $V$ is legitimate because $V$ is an ideal.
If one $w_i$ belongs to $V$, each summand retains a kernel entry,
including the summand in which that entry is replaced.
This proves the ideal assertion.
Replacing $s(x_i)$ by $s(x_i)+f(x_i)$ in $\rho(X)$ adds only
operators with a kernel entry. Hence its coset is unchanged.
Every curvature term on the right of \eqref{eq:curvature} also
has a kernel entry among its $n-1$ operator arguments, so the
curvature is zero in the quotient.
Finally, the operators in $\mathfrak I_E(V)$ kill $a$ exactly
when $[a,b,z_3,\ldots,z_n]=0$ for all indicated entries.
\end{proof}

Thus the quotient in \eqref{eq:relative-outer-action} acts on
$A_\rho$. If
\begin{equation}\label{eq:inner-mixed-hypothesis}
\mathfrak I_E(V)=\Inn(V),
\end{equation}
then $A_\rho=Z(V)$ and \eqref{eq:relative-outer-action} can be
viewed as a section-independent map into $\Out(V)$.
Even then, for $n>2$ the other mixed maps must still be specified.
The weaker condition
\begin{equation}\label{eq:center-compatible}
[Z(V),V,E,\ldots,E]=0
\end{equation}
already suffices for $A_\rho=Z(V)$ and for a canonical
$\g$-representation on the entire center.

\begin{example}[Why $\Out(V)$ alone does not define the coupling]
\label{ex:outer-section-failure}
Let $n\geq3$, let $S$ be a centerless non-abelian $n$-Lie algebra,
and let $N$ have basis $t_1,\ldots,t_{n-2},p,q$ with
$[t_1,\ldots,t_{n-2},p,q]=q$.
This is an $n$-Lie algebra: write its bracket as
$[v_1,\ldots,v_n]=\det(v_1,\ldots,v_n)q$.
Every inner operator is $D=q\otimes\ell$, and
$Dq=\ell(q)q=(\tr D)q$; hence
\[
D[v_1,\ldots,v_n]
=\det(v_1,\ldots,v_n)Dq
=\sum_i[v_1,\ldots,Dv_i,\ldots,v_n].
\]
Adjoin a central element $y$ and set
\[
E=S\oplus N\oplus\F y,\qquad
V=S\oplus\operatorname{span}\{p,q\},\qquad
\g=E/V.
\]
The kernel is an ideal, $Z(V)=\operatorname{span}\{p,q\}$,
and $\g$ is abelian with basis
$\bar t_1,\ldots,\bar t_{n-2},\bar y$.
For the evident section, $\rho_1=0$, whereas
\[
\rho_2(\bar t_1,\ldots,\bar t_{n-2})(p,q)=q,\qquad A_\rho=0.
\]
Change the section only at $\bar y$, taking
$s'(\bar y)=y+p$. Then
\[
\rho_1^{s'}(\bar t_1,\ldots,\bar t_{n-2},\bar y)q=q.
\]
Every inner derivation of $V$ kills $Z(V)$.
Thus this change of $\rho_1$ is nonzero even modulo $\Inn(V)$.
The original zero outer map and the new nonzero outer map arise
from the same extension. The coefficient module and relative
outer quotient in Propositions~\ref{prop:relative-outer-action}
and~\ref{prop:central-representation} avoid this dependence.
\end{example}

\subsection{The relative central coefficient space}
Define
\begin{align}
A_\rho
 &=\{a\in V:[a,b,z_3,\ldots,z_n]_{E}=0
                    \text{ for all }b\in V,\ z_j\in E\}\notag\\
 &=\{a\in V:\rho_r(x_1,\ldots,x_{n-r})
             (a,b_2,\ldots,b_r)=0,\quad 2\leq r\leq n\}.
\label{eq:relative-center}
\end{align}
For $n=2$ the first condition means $[a,V]=0$.
Thus $A_\rho\subseteq Z(V)$, but in higher arity it can be
strictly smaller than $Z(V)$.
The first description shows that $A_\rho$ does not depend on
the section. Here and below $Z(V)=\{a\in V:[a,V,\ldots,V]=0\}$.

\begin{proposition}\label{prop:central-representation}
The map
\[
\rho_A(x_1,\ldots,x_{n-1})
 =\rho(x_1,\ldots,x_{n-1})|_{A_\rho}
\]
is a representation of $\g$ on $A_\rho$, independent of the
section.
\end{proposition}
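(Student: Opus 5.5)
The proof has three parts, in this order: $\rho(X)$ maps $A_\rho$ into itself, the representation identities \eqref{eq:representation-definition} hold on $A_\rho$, and the restriction does not depend on the section.

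\emph{Invariance.} I would use the first, section-free description of $A_\rho$ in \eqref{eq:relative-center}. Fix $a\in A_\rho$, $b\in V$ and $z_3,\ldots,z_n\in E$, and set $D=\ad_E(s(x_1),\ldots,s(x_{n-1}))$. The Filippov identity \eqref{eq:FI} expands $D[a,b,z_3,\ldots,z_n]_E=0$ as
\[
0=[Da,b,z_3,\ldots,z_n]_E+[a,Db,z_3,\ldots,z_n]_E+\sum_{j\geq3}[a,b,\ldots,Dz_j,\ldots,z_n]_E .
\]
Since $V$ is an ideal, $Db\in V$. Hence every term except the first has the form $[a,b',z'_3,\ldots]$ with $b'\in V$, and these vanish because $a\in A_\rho$. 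So $[\rho(X)a,b,z_3,\ldots,z_n]_E=0$, i.e.\ $\rho(X)a\in A_\rho$. Equivalently, $A_\rho$ is invariant under all of $\mathfrak D_E(V)$; this is the same computation as in Proposition~\ref{prop:outer-center}.

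\emph{Representation identities.} For the first identity in \eqref{eq:representation-definition}, apply the curvature formula \eqref{eq:curvature} to $a\in A_\rho$. Each term on its right is $\rho_2(\cdots)(\omega(X,y_i),a)$, which by alternation is $\pm\rho_2(\cdots)(a,\omega(X,y_i))$. These terms vanish by the second description of $A_\rho$ with $r=2$. For the second identity, use \eqref{eq:NAr0} with $r=1$, $p=n-2$, $a_1=a$. It gives
\[
-\rho(x_1,\ldots,x_{n-2},[Y]_\g)a+\rho_2(x_1,\ldots,x_{n-2})(a,\omega(Y))=\sum_i(-1)^{n-i+1}\rho(\ldots\widehat y_i\ldots)\rho(x_1,\ldots,x_{n-2},y_i)a .
\]
The $\rho_2$ term is zero because $a\in A_\rho$. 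Multiplying by $-1$ then yields exactly the second identity in \eqref{eq:representation-definition}, after checking the sign $(-1)^{n-i}$. The composites that appear are well defined on $A_\rho$ by the invariance step. When $n=2$, $\rho_2$ is the kernel bracket and $A_\rho\subseteq Z(V)$, so the same argument applies.

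\emph{Section independence.} By Proposition~\ref{prop:gauge}, replacing $s$ by $s+f$ gives the difference \eqref{eq:gauge-rho-telescope} with $r=1$. Every summand there is a bracket containing $a$ together with at least one further kernel entry $f(x_i)$. Such brackets vanish by the first description of $A_\rho$, so $\rho^f(X)a=\rho(X)a$. Alternatively, this follows from Proposition~\ref{prop:relative-outer-action}, because the change lies in $\mathfrak I_E(V)$, which kills $A_\rho$.

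The main obstacle is sign bookkeeping. In the second identity one must match the sign $(-1)^{n-i+r}$ in \eqref{eq:NAr0} with $(-1)^{n-i}$ in \eqref{eq:representation-definition}, and one must confirm that the $\rho_2$ terms, with $a$ in a non-first slot, vanish by alternation. A further point needs care: the characterization $A_\rho=\{a:\rho_r(\cdots)(a,b_2,\ldots)=0\}$ uses brackets with the quotient entries lifted by $s$. So the invariance step should use the section-free form of the definition rather than the coordinate form.
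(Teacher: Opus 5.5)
Your proposal is correct and follows essentially the same route as the paper's proof. The steps match: invariance of $A_\rho$ from the Filippov identity applied to $[a,b,z_3,\ldots,z_n]_E$; the two representation identities from \eqref{eq:curvature} and from \eqref{eq:NAr0} with $r=1$, once the $\rho_2$-terms vanish on $A_\rho$; and section independence, because every extra term in $[s(x_1)+f(x_1),\ldots,s(x_{n-1})+f(x_{n-1}),a]$ contains $a$ together with a kernel entry.
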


\begin{proof}
Let $a\in A_\rho$, $b\in V$, and $z_3,\ldots,z_n\in E$.
With $S=(s(x_1),\ldots,s(x_{n-1}))$, the Filippov identity gives
\begin{align*}
0=[S,[a,b,z_3,\ldots,z_n]]
={}&[[S,a],b,z_3,\ldots,z_n]
 +[a,[S,b],z_3,\ldots,z_n]\\
&+\sum_{j=3}^n[a,b,z_3,\ldots,[S,z_j],\ldots,z_n].
\end{align*}
The last terms vanish by \eqref{eq:relative-center}, since
$[S,b]\in V$. Therefore $[S,a]\in A_\rho$.

The curvature formula \eqref{eq:curvature}, evaluated on $a$, has
zero right-hand side: every term contains $a$ and the kernel
element $\omega(X,y_i)$. Hence
\[
[\rho_A(X),\rho_A(y_1,\ldots,y_{n-1})]a
 =\sum_{i=1}^{n-1}
   \rho_A(y_1,\ldots,[X,y_i],\ldots,y_{n-1})a.
\]
Equation \eqref{eq:NAr0} with $r=1$ gives
\begin{align*}
&-\rho(x_1,\ldots,x_{n-2},[y_1,\ldots,y_n]_{\g})a
 +\rho_2(x_1,\ldots,x_{n-2})(a,\omega(y_1,\ldots,y_n))\\
&=\sum_{i=1}^n(-1)^{n-i+1}
 \rho(y_1,\ldots,\widehat y_i,\ldots,y_n)
 \rho(x_1,\ldots,x_{n-2},y_i)a.
\end{align*}
The $\rho_2$-term is zero. Multiplication by $-1$ yields the
second representation identity.
Finally, in
$[s(x_1)+f(x_1),\ldots,s(x_{n-1})+f(x_{n-1}),a]$,
every additional term contains both $a$ and a kernel entry
$f(x_i)$, and is zero. This proves section independence.
\end{proof}

We use extension degrees one and two. For a linear map
$f:\g\to V$ set
\begin{equation}\label{eq:relative-d1}
(d_\rho f)(x_1,\ldots,x_n)
 =\sum_{i=1}^n(-1)^{n-i}
 \rho(x_1,\ldots,\widehat x_i,\ldots,x_n)f(x_i)
 -f([x_1,\ldots,x_n]_{\g}).
\end{equation}
For an alternating $V$-valued $n$-linear map $\theta$, set
\begin{align}
(d_\rho\theta)(X;Y)
={}&\rho(X)\theta(Y)+\theta(X,[y_1,\ldots,y_n]_{\g})\notag\\
&-\sum_{i=1}^n\theta(y_1,\ldots,[X,y_i],\ldots,y_n)\notag\\
&-\sum_{i=1}^n(-1)^{n-i}
 \rho(y_1,\ldots,\widehat y_i,\ldots,y_n)\theta(X,y_i).
\label{eq:relative-d2}
\end{align}
On $A_\rho$-valued cochains these are the usual alternating
extension differentials for $\rho_A$; in particular $d_\rho^2f=0$
for $f:\g\to A_\rho$. The standard cohomology construction is
recalled from \cite{DaletskiiTakhtajan,Takhtajan,CasasLodayPirashvili}.
For arbitrary $V$-valued maps, \eqref{eq:relative-d1}--\eqref{eq:relative-d2}
are formulas only; no representation of $\g$ on all of $V$
is being assumed.
Write
\[
Z_\rho^2=\{\theta\in\Hom(\bigwedge^n\g,A_\rho):d_\rho\theta=0\}.
\]

\subsection{The representation class of the highest mixed map}
\label{sec:highest-representation}
The highest mixed operation has an additional intrinsic
interpretation, valid for full data in every arity.
For $U=(u_1,\ldots,u_{n-1})$ put $\ad_V(U)v=[U,v]_V$, and define
\begin{equation}\label{eq:chi-theta}
\chi_s(U)x=(-1)^{n-1}\rho_{n-1}(x)(U).
\end{equation}
Let $Z_{\mathrm{mod}}^1(V;\g)$ be the space of alternating maps
$q:\bigwedge^{n-1}V\to\Hom(\g,V)$ satisfying
\begin{align}
\ad_V(U)q(W)-\ad_V(W)q(U)
 &=\sum_i q(w_1,\ldots,[U,w_i]_V,\ldots,w_{n-1}),
\label{eq:theta-module1}\\
q(u_1,\ldots,u_{n-2},[v_1,\ldots,v_n]_V)
 &=\sum_i(-1)^{n-i}\ad_V(v_1,\ldots,\widehat v_i,\ldots,v_n)
                   q(u_1,\ldots,u_{n-2},v_i).
\label{eq:theta-module2}
\end{align}
The equalities are in $\Hom(\g,V)$.
For $f:\g\to V$ define
\[
(\partial_V f)(U)x=[U,f(x)]_V,\qquad
H_{\mathrm{mod}}^1(V;\g)
 =Z_{\mathrm{mod}}^1(V;\g)/\partial_V\Hom(\g,V).
\]
\begin{proposition}\label{prop:theta-module-class}
The maps
\[
\rho_V^s(U)(s(x)+v)
 =[U,v]_V+\chi_s(U)x
\]
form a representation of $V$ on $E$.
The sequence of $V$-representations
\begin{equation}\label{eq:theta-module-extension}
0\longrightarrow V_{\ad}\longrightarrow E_{\rho_V^s}
\longrightarrow\g_{\mathrm{triv}}\longrightarrow0
\end{equation}
has extension cocycle $\chi_s\in Z_{\mathrm{mod}}^1(V;\g)$.
For every section change,
\begin{equation}\label{eq:theta-module-boundary}
\chi_{s+f}=\chi_s+\partial_V f.
\end{equation}
The class $[\chi_s]$ is intrinsic, and it is zero
precisely when \eqref{eq:theta-module-extension} has a
$V$-equivariant linear section.
\end{proposition}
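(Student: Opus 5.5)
The plan is to realize $\rho_V^s$ as restricted adjoint operators of $E$ and read everything off from the Filippov identity in $E$. Identify $E$ with $\g\oplus V$ via $F_s$. For $U=(u_1,\ldots,u_{n-1})\in V^{n-1}$, compute $\ad_E(U)$ on $E$. For $v\in V$ we get $[U,v]_E=[U,v]_V$. For $s(x)$, alternation gives
\[
[u_1,\ldots,u_{n-1},s(x)]=(-1)^{n-1}[s(x),u_1,\ldots,u_{n-1}]=(-1)^{n-1}\rho_{n-1}(x)(U)=\chi_s(U)x .
\]
Hence $\rho_V^s(U)=\ad_E(U)$, with $E$ carrying the coordinate identification.

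Since $V$ is a subalgebra of $E$, the Filippov identity \eqref{eq:FI} in $E$, with all operator entries taken in $V$, gives the two representation identities \eqref{eq:representation-definition} for $U\mapsto\ad_E(U)$ as a representation of the $n$-Lie algebra $V$ on $E$. The first identity comes from $[\ad_E(U),\ad_E(W)]=\sum_i\ad_E(w_1,\ldots,[U,w_i],\ldots)$. The second comes from rewriting $[u_1,\ldots,u_{n-2},[v_1,\ldots,v_n],e]$ via \eqref{eq:FI} applied with outer entries $(v_1,\ldots,\widehat v_i,\ldots,v_n)$. This is the standard fact that $\ad$ restricted to a subalgebra is a representation on the ambient algebra.

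Next, $V_{\ad}\subset E$ is a subrepresentation, and the quotient $E/V\cong\g$ carries the trivial action, because $[U,e]\in V$. Writing the action in the block form $\bigl(\begin{smallmatrix}\ad_V&\chi_s\\0&0\end{smallmatrix}\bigr)$, I will project the two representation identities, applied to $s(x)$, onto $V$. This yields exactly \eqref{eq:theta-module1} and \eqref{eq:theta-module2} for $q=\chi_s$:
\[
\ad_V(U)\chi_s(W)x-\ad_V(W)\chi_s(U)x=\sum_i\chi_s(w_1,\ldots,[U,w_i]_V,\ldots)x
\]
and similarly for the second identity. Signs require care: the $\g$-component is killed by the trivial action, so only the $\ad_V\chi$ products survive on each side, and I must check that the ordering $\rho(\cdot)\rho(\cdot)$ in \eqref{eq:representation-definition} produces $\ad_V(\cdots)q(\cdots)$ rather than $q\,\ad$. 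The latter vanishes automatically, because $q$ lands in $V$ and is then acted on trivially on the $\g$-slot. This is the main bookkeeping obstacle. Alternating-ness of $\chi_s$ in $U$ is inherited from $\rho_{n-1}$.

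For \eqref{eq:theta-module-boundary}, use \eqref{eq:gauge-top}:
\[
\chi_{s+f}(U)x=(-1)^{n-1}\bigl(\rho_{n-1}(x)(U)+[f(x),U]_V\bigr)=\chi_s(U)x+[U,f(x)]_V,
\]
moving $f(x)$ across $n-1$ slots. Thus the class in $H^1_{\mathrm{mod}}$ is section-independent, because every section is $s+f$. Finally, a linear section of \eqref{eq:theta-module-extension} has the form $x\mapsto s(x)+g(x)$. It is $V$-equivariant if and only if $\ad_E(U)(s(x)+g(x))=0$, since the trivial action on $\g$ must be matched, i.e. $\chi_s(U)x+[U,g(x)]_V=0$, i.e. $\chi_s=\partial_V(-g)$. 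So equivariant sections exist exactly when $[\chi_s]=0$. Equivalently, this is the vanishing of $\chi_{s-g}$ computed above.
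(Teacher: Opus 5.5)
Your proposal is correct and follows essentially the same route as the paper. Both arguments identify $\rho_V^s$ with $\ad_E$ restricted to $V$, evaluate the two representation identities at $s(x)$ to obtain \eqref{eq:theta-module1}--\eqref{eq:theta-module2}, derive \eqref{eq:theta-module-boundary} from $[U,s(x)+f(x)]=[U,s(x)]+[U,f(x)]_V$, and characterize equivariant sections by $\chi_s+\partial_V g=0$. The paper also records that $\partial_V\Hom(\g,V)\subseteq Z^1_{\mathrm{mod}}$, which your argument gives implicitly because $\chi_s$ and $\chi_{s+f}$ are both cocycles, and it adds the converse that every $q\in Z^1_{\mathrm{mod}}$ defines such a representation extension.
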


\begin{proof}
Directly in $E$,
\[
[u_1,\ldots,u_{n-1},s(x)+v]
 =[U,v]_V+(-1)^{n-1}[s(x),u_1,\ldots,u_{n-1}].
\]
Thus $\rho_V^s$ is the restriction of the adjoint
representation of $E$ to $V$. Its restriction to $V$ is
$\ad_V$, and its induced action on $\g$ is zero.
To see its cocycle equations explicitly, evaluate the first
representation identity at $s(x)$:
\begin{align*}
[\rho_V^s(U),\rho_V^s(W)]s(x)
 &=\ad_V(U)\chi_s(W)x-\ad_V(W)\chi_s(U)x,\\
\sum_i\rho_V^s(w_1,\ldots,[U,w_i]_V,\ldots,w_{n-1})s(x)
 &=\sum_i\chi_s(w_1,\ldots,[U,w_i]_V,\ldots,w_{n-1})x.
\end{align*}
The second representation identity evaluated at $s(x)$ gives
\[
\chi_s(u_1,\ldots,u_{n-2},[v_1,\ldots,v_n]_V)x
 =\sum_i(-1)^{n-i}\ad_V(v_1,\ldots,\widehat v_i,\ldots,v_n)
                       \chi_s(u_1,\ldots,u_{n-2},v_i)x.
\]
These are \eqref{eq:theta-module1}--\eqref{eq:theta-module2}.
Conversely, these two equations for $q$, together with the
adjoint identities on $V$, verify both representation identities
for $v+x\mapsto\ad_V(U)v+q(U)x$.
Thus the displayed cocycle space describes exactly these
extensions of representations.

For $s'=s+f$, direct evaluation gives
\[
[U,s'(x)]=[U,s(x)]+[U,f(x)]_V,
\]
which proves \eqref{eq:theta-module-boundary}.
Taking $q=0$ and changing the section also proves that every
$\partial_V f$ satisfies the two cocycle identities.
Finally, $s+f$ is $V$-equivariant exactly when
$[U,s(x)+f(x)]=0$ for all $U,x$, that is,
$\chi_s+\partial_V f=0$.
This proves the last assertion and the classification modulo
the displayed boundaries.
\end{proof}
An equivariant section here need not be an algebra section.
The class concerns the underlying representation extension;
the other mixed maps and the pure component remain part of the
algebra extension datum.

Put
$\mathcal L=\bigwedge^{n-1}\g$ and
\[
\widetilde C^p(\g,A_\rho)
 =\Hom(\mathcal L^{\otimes p}\otimes\g,A_\rho),\qquad p\geq0.
\]
For $X_j=(x_{j1},\ldots,x_{j,n-1})$, set
\[
[X_j,X_k]_{\mathrm F}
 =\sum_{\ell=1}^{n-1}
 (x_{k1},\ldots,[X_j,x_{k\ell}],\ldots,x_{k,n-1}).
\]
If $X_p=(u_1,\ldots,u_{n-1})$, the standard differential on
$F\in\widetilde C^{p-1}$ is
\begin{align}
(\widetilde dF)(X_1,\ldots,X_p,z)
={}&\sum_{j<k}(-1)^j
 F(X_1,\ldots,\widehat X_j,\ldots,X_{k-1},
        [X_j,X_k]_{\mathrm F},X_{k+1},\ldots,X_p,z)\notag\\
&+\sum_j(-1)^j F(X_1,\ldots,\widehat X_j,\ldots,X_p,[X_j,z])
 \notag\\
&+\sum_j(-1)^{j+1}\rho_A(X_j)
                 F(X_1,\ldots,\widehat X_j,\ldots,X_p,z)\notag\\
&+\sum_{i=1}^{n-1}(-1)^{n+p-i+1}
 \rho_A(u_1,\ldots,\widehat u_i,\ldots,u_{n-1},z)
 F(X_1,\ldots,X_{p-1},u_i).
\label{eq:full-relative-differential}
\end{align}
The identity $\widetilde d^{\,2}=0$ is the established
cochain identity in
\cite{DaletskiiTakhtajan,Takhtajan}.

\subsection{Extension degrees and alternating primitives}
The preceding formulas define the coefficient complex in the
degrees needed here. We state the convention explicitly because
an arbitrary cochain in
$\Hom(\bigwedge^{n-1}\g\otimes\g,A_\rho)$ need not define an
alternating $n$-ary bracket.
For $n\geq3$ and a representation $\sigma$ on a vector space $A$,
put
\begin{align}
C_{\ext}^1(\g,A)&=\Hom(\g,A),&
C_{\ext}^2(\g,A)&=\Hom(\bigwedge^n\g,A),\notag\\
C_{\ext}^3(\g,A)&=\Hom(\bigwedge^{n-1}\g\otimes\bigwedge^n\g,A),
&
C_{\ext}^q(\g,A)&=\widetilde C^{\,q-1}(\g,A)\quad(q\geq4).
\label{eq:extension-complex}
\end{align}
The maps in the first three spaces are regarded as fundamental
cochains by keeping the displayed entries in order.
Use $d_\sigma$ from \eqref{eq:relative-d1}--\eqref{eq:relative-d2}
in the first two degrees, and $\widetilde d$ thereafter.
Here \eqref{eq:full-relative-differential} is used with $\rho_A$
replaced by $\sigma$. Its square is zero.
The first differential has alternating output, and the second is
alternating in the outer $n-1$ and inner $n$ entries.
Thus \eqref{eq:extension-complex} is a cochain complex.
For $n=2$ we use the usual alternating Chevalley--Eilenberg
complex instead; the same displayed differential restricts to it.
Write $H_{\ext}^q(\g,A;\sigma)$ for these cohomology spaces.
In particular,
\begin{align}
H_{\ext}^2(\g,A;\sigma)
 &=H_{\ab}^2(\g,A;\sigma),\notag\\
H_{\ext}^3(\g,A;\sigma)
 &=\frac{\{\kappa\in C_{\ext}^3(\g,A):\widetilde d\kappa=0\}}
 {d_\sigma\Hom(\bigwedge^n\g,A)}.
\label{eq:H3-definition}
\end{align}
For $n=2$ the numerator in the second line consists of alternating
Chevalley--Eilenberg $3$-cocycles.
The degrees $2,3$ used here correspond to fundamental-cochain
degrees $1,2$ in the standard $n$-Lie convention. In particular,
the extension degree is shifted by one from that used
in~\cite{TanXu,Xu}.

There is a natural map from $H_{\ext}^3$ to
$\widetilde H^2$ for $n\geq3$. Vanishing after this map is not,
without an additional argument, enough to remove an alternating
bracket defect: its primitive in $\widetilde C^1$ might not be
alternating in all $n$ entries. Definition~\eqref{eq:H3-definition}
retains exactly the required primitives.

\subsection{A finite identity for the Filippov defect}
We need an identity which does not assume the Filippov identity.
Let $W$ carry any alternating $n$-linear operation. For ordered
$(n-1)$-tuples $X,Y$ define
\[
D_Xz=[X,z],\qquad
X\circ Y=\sum_{i=1}^{n-1}
(y_1,\ldots,[X,y_i],\ldots,y_{n-1}).
\]
Operations with a sum of tuples are interpreted linearly.
Set
\begin{align}
J(X;Y,z)
 &=[X,[Y,z]]-[Y,[X,z]]
       -\sum_{i=1}^{n-1}
 [y_1,\ldots,[X,y_i],\ldots,y_{n-1},z],\notag\\
Q(X,Y)&=[D_X,D_Y]-D_{X\circ Y}.
\label{eq:universal-FI-defect}
\end{align}
Thus $Q(X,Y)z=J(X;Y,z)$.

\begin{lemma}[Finite Bianchi identity]\label{lem:finite-bianchi}
For $Z=(z_1,\ldots,z_{n-1})$ and $w\in W$,
\begin{align}
0={}&-J(X\circ Y;Z,w)-J(Y;X\circ Z,w)+J(X;Y\circ Z,w)
\notag\\
&-J(Y;Z,[X,w])+J(X;Z,[Y,w])-J(X;Y,[Z,w])
\notag\\
&+[X,J(Y;Z,w)]-[Y,J(X;Z,w)]+[Z,J(X;Y,w)]
\notag\\
&+\sum_{i=1}^{n-1}
[z_1,\ldots,z_{i-1},J(X;Y,z_i),z_{i+1},\ldots,z_{n-1},w].
\label{eq:finite-bianchi}
\end{align}
The identity is valid for every alternating operation.
\end{lemma}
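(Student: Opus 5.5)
The identity is purely formal, so the plan is to rewrite every term of \eqref{eq:finite-bianchi} as an operator applied to $w$ and to use only the definitions \eqref{eq:universal-FI-defect}. No Filippov identity and no structure of $W$ beyond multilinearity and alternation will be used. Throughout, I will use $J(X;Y,z)=Q(X,Y)z$ and that $X\mapsto D_X$ and $(X,Y)\mapsto X\circ Y$ are multilinear, with sums of tuples read linearly. For an operator $T$ on $W$ and a tuple $Z$, write $T\cdot Z=\sum_i(z_1,\ldots,Tz_i,\ldots,z_{n-1})$. Then $X\circ Z=D_X\cdot Z$, and the last line of \eqref{eq:finite-bianchi} equals $D_{Q(X,Y)\cdot Z}w$.

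\emph{Step 1 (grouping).} The second and third lines pair up into commutators:
\[
[X,J(Y;Z,w)]-J(Y;Z,[X,w])=[D_X,Q(Y,Z)]w,
\]
and similarly for the $Y$ and $Z$ terms. The right-hand side of \eqref{eq:finite-bianchi} therefore becomes the operator
\[
[D_X,Q(Y,Z)]-[D_Y,Q(X,Z)]+[D_Z,Q(X,Y)]
-Q(X\circ Y,Z)-Q(Y,X\circ Z)+Q(X,Y\circ Z)+D_{Q(X,Y)\cdot Z},
\]
applied to $w$, and it suffices to show this operator is zero.

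\emph{Step 2 (expansion).} I will substitute $Q(A,B)=[D_A,D_B]-D_{A\circ B}$ everywhere. The three double commutators $[D_X,[D_Y,D_Z]]-[D_Y,[D_X,D_Z]]+[D_Z,[D_X,D_Y]]$ cancel by the Jacobi identity for commutators of linear operators. The $Q$-terms of the first line contribute $-[D_{X\circ Y},D_Z]-[D_Y,D_{X\circ Z}]+[D_X,D_{Y\circ Z}]$ together with $D_{(X\circ Y)\circ Z}+D_{Y\circ(X\circ Z)}-D_{X\circ(Y\circ Z)}$. The terms $-[D_X,D_{Y\circ Z}]+[D_Y,D_{X\circ Z}]-[D_Z,D_{X\circ Y}]$ coming from Step 1 cancel these three commutators exactly; for the last one, note $-[D_Z,D_{X\circ Y}]=[D_{X\circ Y},D_Z]$. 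What remains is
\[
D_{(X\circ Y)\circ Z}+D_{Y\circ(X\circ Z)}-D_{X\circ(Y\circ Z)}+D_{Q(X,Y)\cdot Z}.
\]

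\emph{Step 3 (tuple identity, the main obstacle).} It remains to prove, at the level of formal linear combinations of tuples,
\[
X\circ(Y\circ Z)-Y\circ(X\circ Z)=(X\circ Y)\circ Z+Q(X,Y)\cdot Z .
\]
By definition $X\circ(Y\circ Z)=D_X\cdot(D_Y\cdot Z)$. Expanding, this splits into two kinds of terms. The first kind is the single-slot terms $(\ldots,D_XD_Yz_i,\ldots)$. The second kind is the two-slot terms $(\ldots,D_Yz_i,\ldots,D_Xz_j,\ldots)$ with $i\neq j$.

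The two-slot terms are symmetric under exchanging the roles of $X$ and $Y$, so they cancel in the antisymmetrization. The single-slot terms leave $[D_X,D_Y]\cdot Z$. Since $(X\circ Y)\circ Z=D_{X\circ Y}\cdot Z$, this difference is exactly $Q(X,Y)\cdot Z$, as required. Applying $D$ to this identity makes the remainder of Step 2 vanish.

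The care needed here is bookkeeping rather than depth. One must check that all identities hold for multilinear combinations of tuples, with no use of alternation of the $D$-arguments beyond linearity, and that the signs in Step 1 match those printed in \eqref{eq:finite-bianchi}. I would verify the latter first by specializing to $n=2$, where the statement reduces to a known commutator identity for the Jacobiator.
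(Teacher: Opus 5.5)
Your proof is correct and is essentially the paper's argument: rewrite the right-hand side as the operator $[D_X,Q(Y,Z)]-[D_Y,Q(X,Z)]+[D_Z,Q(X,Y)]-Q(X\circ Y,Z)-Q(Y,X\circ Z)+Q(X,Y\circ Z)+D_{Q(X,Y)\cdot Z}$, substitute $Q(U,T)=[D_U,D_T]-D_{U\circ T}$, cancel the commutators containing one composed tuple, kill the double commutators by the Jacobi identity, and remove the leftover $D$-terms with the tuple identity $X\circ(Y\circ Z)-Y\circ(X\circ Z)-(X\circ Y)\circ Z=Q(X,Y)\cdot Z$. The only differences are that the paper proves this tuple identity first rather than last, and that, as you note, it should be read after applying the multilinear map $D$ (equivalently, in $W^{\otimes(n-1)}$) so that $T\mapsto T\cdot Z$ is linear.
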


\begin{proof}
Expanding the nested insertions in $X\circ(Y\circ Z)$, those
involving two different slots of $Z$ cancel with the corresponding
terms of $Y\circ(X\circ Z)$. In slot $i$ the remaining term is
\[
[X,[Y,z_i]]-[Y,[X,z_i]]
  -\sum_{j=1}^{n-1}
  [y_1,\ldots,[X,y_j],\ldots,y_{n-1},z_i]
=Q(X,Y)z_i.
\]
Consequently,
\begin{equation}\label{eq:tuple-defect}
X\circ(Y\circ Z)-Y\circ(X\circ Z)-(X\circ Y)\circ Z
 =\sum_{i=1}^{n-1}
 (z_1,\ldots,Q(X,Y)z_i,\ldots,z_{n-1}).
\end{equation}
The right side of \eqref{eq:finite-bianchi}, before application
to $w$, is the operator
\begin{align*}
&[D_X,Q(Y,Z)]-[D_Y,Q(X,Z)]+[D_Z,Q(X,Y)]\\
&\quad-Q(X\circ Y,Z)-Q(Y,X\circ Z)+Q(X,Y\circ Z)\\
&\quad+D_{\,X\circ(Y\circ Z)-Y\circ(X\circ Z)-(X\circ Y)\circ Z}.
\end{align*}
Substitute $Q(U,T)=[D_U,D_T]-D_{U\circ T}$.
The terms involving one composed tuple cancel in pairs:
\begin{align*}
&-[D_X,D_{Y\circ Z}]+[D_X,D_{Y\circ Z}]=0,\\
&[D_Y,D_{X\circ Z}]-[D_Y,D_{X\circ Z}]=0,\\
&-[D_Z,D_{X\circ Y}]-[D_{X\circ Y},D_Z]=0,\\
&D_{(X\circ Y)\circ Z}+D_{Y\circ(X\circ Z)}
 -D_{X\circ(Y\circ Z)}
 +D_{X\circ(Y\circ Z)-Y\circ(X\circ Z)-(X\circ Y)\circ Z}=0.
\end{align*}
The remaining operator is
\[
[D_X,[D_Y,D_Z]]-[D_Y,[D_X,D_Z]]+[D_Z,[D_X,D_Y]]=0.
\]
For example its terms $D_XD_YD_Z$ and $-D_XD_YD_Z$
cancel, and the other five orders cancel in the same pairs.
Applying this equality to $w$ proves the formula.
\end{proof}

\subsection{The obstruction to completing mixed data}
\begin{definition}\label{def:pre-extension}
A mixed family $\rho_1,\ldots,\rho_{n-1}$ is
\emph{admissible} if there exists an alternating
$\omega_0:\bigwedge^n\g\to V$ satisfying
\eqref{eq:NA1}, \eqref{eq:NAr0}, and \eqref{eq:NArs}.
The resulting operation on $\g\oplus V$ is called a
\emph{pre-extension}. The pure equation \eqref{eq:NA0}
is not required.
\end{definition}
This is an existence condition for the mixed equations, not an
automatic property of an arbitrary map into $\Out(V)$.
In a pre-extension the Filippov defect vanishes if any of its
$2n-1$ arguments belongs to $V$.
The proof of Proposition~\ref{prop:central-representation} uses
only such identities, so $A_\rho$ and its representation $\rho_A$
are already defined for a pre-extension.
The same observation applies to
Proposition~\ref{prop:relative-outer-action}.

\begin{theorem}[Third cohomology obstruction]\label{thm:third-obstruction}
Let $\rho_1,\ldots,\rho_{n-1}$ be admissible, and choose
$\omega_0$ as in Definition~\ref{def:pre-extension}. Then
\begin{align}
\kappa_{\rho,\omega_0}(X;y_1,\ldots,y_n)
={}&\rho(X)\omega_0(y_1,\ldots,y_n)
       +\omega_0(X,[y_1,\ldots,y_n]_\g)\notag\\
&-\sum_{i=1}^n\omega_0(y_1,\ldots,[X,y_i]_\g,\ldots,y_n)
\notag\\
&-\sum_{i=1}^n(-1)^{n-i}
 \rho(y_1,\ldots,\widehat y_i,\ldots,y_n)\omega_0(X,y_i)
\label{eq:third-cocycle}
\end{align}
takes values in $A_\rho$ and is a $3$-cocycle in
\eqref{eq:H3-definition}. Its class
\begin{equation}\label{eq:third-obstruction}
\mathfrak o(\rho_1,\ldots,\rho_{n-1})
=[\kappa_{\rho,\omega_0}]
\in H_{\ext}^3(\g,A_\rho;\rho_A)
\end{equation}
is independent of $\omega_0$.
There exists an $n$-Lie extension with these mixed maps if and
only if this class is zero.
\end{theorem}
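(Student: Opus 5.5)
The proof has four steps, and throughout I work in the pre-extension $E_0=\g\oplus_{\Omega_0}V$ with $\Omega_0=(\omega_0,\rho_1,\ldots,\rho_{n-1})$. In order, I will prove: $\kappa$ lands in $A_\rho$; $\kappa$ is a cocycle; its class does not depend on $\omega_0$; and the class vanishes exactly when an extension exists.

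\emph{Step 1: values in $A_\rho$.} Comparing \eqref{eq:third-cocycle} with the proof of Theorem~\ref{thm:cocycle-criterion}, $\kappa_{\rho,\omega_0}(X;Y)$ is exactly the $V$-component of the Filippov defect $J(X;y_1,\ldots,y_{n-1},y_n)$ in $E_0$, taken on pure quotient entries. The $\g$-component of this defect vanishes by the Filippov identity of $\g$.

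To show this $V$-valued defect lies in $A_\rho$, I apply Lemma~\ref{lem:finite-bianchi} in $E_0$. Take $w=b\in V$ and arbitrary quotient tuples $X,Y,Z$, and let the slots of $Z$ range over mixtures of elements of $E_0$. In a pre-extension, every defect $J$ with at least one argument in $V$ vanishes. Hence all terms of \eqref{eq:finite-bianchi} drop out except those with purely quotient arguments, and these appear inside a bracket that also contains $b$.

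A careful choice of which entries lie in $V$ should isolate the single term $[z_1,\ldots,J(X;Y,z_i),\ldots,w]$. This gives $[\kappa,b,z_3,\ldots,z_n]=0$, i.e.\ $\kappa\in A_\rho$ by \eqref{eq:relative-center}. Alternating $\kappa$ in the inner entries needs no work, since $\kappa=d_\rho\omega_0$ formally in \eqref{eq:relative-d2}.

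\emph{Step 2: cocycle property.} I take all arguments of Lemma~\ref{lem:finite-bianchi} to be quotient elements $s(x)$. Every $J$ term in \eqref{eq:finite-bianchi} is then a value of $\kappa$, and we already know these values lie in $A_\rho$. So each outer bracket $[X,\kappa]$ equals $\rho_A(X)\kappa$. The brackets $[z_1,\ldots,\kappa,\ldots,w]$ equal $\rho_A$ applied to $\kappa$ with a sign, and any $\omega_0$-corrections that produce kernel entries next to $\kappa$ vanish because $\kappa\in A_\rho$.

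Matching the resulting identity term by term with \eqref{eq:full-relative-differential} for $p=2$ should give exactly $\widetilde d\kappa=0$. I expect this sign and index bookkeeping to be the main obstacle. In particular the last sum in \eqref{eq:full-relative-differential} has to be identified with the final line of the Bianchi identity. If a direct match fails, I would instead verify $\widetilde d\kappa=0$ by regarding $\kappa=d_\rho\omega_0$ formally. One then uses the curvature identity \eqref{eq:curvature} together with \eqref{eq:NAr0} to show that the failure of $d_\rho^2=0$ on $V$-valued cochains only involves $\rho_2$-terms evaluated at an element of $A_\rho$, and these vanish.

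\emph{Step 3: independence of $\omega_0$.} Let $\omega_0'$ be another admissible choice and set $\theta=\omega_0'-\omega_0$. Subtracting the instances of \eqref{eq:NA1} and \eqref{eq:NArs} satisfied by the two choices gives
\[
\rho_{s+1}(\ldots)(\theta(X,y_i),b_1,\ldots,b_s)=0 ,
\]
summed appropriately. Equation \eqref{eq:NAr0} gives $\rho_{r+1}(\ldots)(a_1,\ldots,a_r,\theta(Y))=0$. Together these should force $\theta$ to take values in $A_\rho$. Conversely, for any $\theta\in\Hom(\bigwedge^n\g,A_\rho)$, the map $\omega_0+\theta$ is again admissible, since every term in which $\theta$ enters contains an $A_\rho$ entry next to a kernel entry.

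Because \eqref{eq:relative-d2} is linear in $\omega$, we get $\kappa_{\omega_0'}=\kappa_{\omega_0}+d_\rho\theta$, and $d_\rho\theta$ agrees with the $\rho_A$ differential on $A_\rho$-valued cochains. So the class is unchanged.

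\emph{Step 4: existence.} Suppose an extension with these mixed maps exists, with pure part $\omega$. Then $\omega$ is admissible and $\kappa_{\rho,\omega}=0$ by \eqref{eq:NA0}, so the class vanishes. Conversely, if $\kappa_{\rho,\omega_0}=d_\rho\theta$ with $\theta$ alternating and $A_\rho$-valued, then $\omega=\omega_0-\theta$ is admissible by Step 3. Its $\kappa$ is zero, so $\omega$ satisfies \eqref{eq:NA0}, and Theorem~\ref{thm:cocycle-criterion} gives an $n$-Lie bracket with the prescribed mixed maps.
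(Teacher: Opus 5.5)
\textbf{Step~1 has a genuine gap.} Your overall plan (Bianchi identity, then the cocycle identity, then affine dependence on $\omega_0$, then existence) is the paper's, and Steps~3 and~4 are essentially its argument. The problem is that in Step~1 the kernel element sits in the wrong slot.

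Take $w=b\in V$ and $X,Y$ pure. Then every term of \eqref{eq:finite-bianchi} except the last sum vanishes. Since $J(X;Y,z)$ depends only on $p(z)$, that sum is
\[
\sum_{i=1}^{n-1}[z_1,\ldots,\kappa(X;Y,pz_i),\ldots,z_{n-1},b].
\]
A single term is isolated only when all the other $z_j$ lie in $V$. You then get $[\kappa,v_2,\ldots,v_{n-1},b]_V=0$, which says only that $\kappa\in Z(V)$.

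For $n\geq3$, membership in $A_\rho$ needs $[\kappa,b,z_3,\ldots,z_n]=0$ with the $z_j$ ranging over all of $E$. These are the conditions involving $\rho_2,\ldots,\rho_{n-1}$ in \eqref{eq:relative-center}. As soon as a second $z_j$ has a quotient component, several terms survive. You then learn only that $y\mapsto\kappa(X;Y,y)$ satisfies the signed-sum equations $\partial_r=0$ of \eqref{eq:F-rho}, not that the individual brackets vanish. Extracting the individual brackets would need a further argument that you do not give.

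So, as written, Step~1 proves only $\kappa\in Z(V)$. Step~2 depends on it: you need $\kappa\in A_\rho$ to read the outer brackets as $\rho_A$ and to treat $\kappa$ as a cochain in the $A_\rho$-complex at all.

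\textbf{The repair.} Put the kernel entry in the first outer tuple. Take $X=U=(u_1,\ldots,u_{n-1})$ with some $u_i\in V$ and the other $u_j\in E$ arbitrary, and take $Y,Z,w$ to be section elements.
\begin{itemize}
\item Every $J$ with $U$ among its arguments vanishes.
\item Every summand of $U\circ Y$ and of $U\circ Z$ has an entry $[U,s(y_i)]\in V$, and $[U,s(w)]\in V$, because $V$ is an ideal. So $J(U\circ Y;Z,w)$, $J(Y;U\circ Z,w)$ and $J(Y;Z,[U,w])$ also vanish.
\item The only surviving term is $[U,J(Y;Z,w)]=[U,\kappa(Y;Z,w)]$.
\end{itemize}
Its vanishing for all such $U$ is exactly \eqref{eq:relative-center}. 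With this, your Step~2 becomes the paper's computation.

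\textbf{Smaller points.}
\begin{itemize}
\item In Step~2, the $\omega_0$-components created in $X\circ Y$, $X\circ Z$ and $[X,w]$ vanish because they are kernel arguments of $J$, not because $\kappa\in A_\rho$.
\item Your Step~2 fallback is not sound as described. The curvature defect \eqref{eq:curvature} and the \eqref{eq:NAr0} defect of $\rho$ act on values of $\omega_0$, which need not lie in $A_\rho$.
\item In Step~3, \eqref{eq:NAr0} alone already forces $\theta$ to be $A_\rho$-valued; you do not need \eqref{eq:NA1}.
\end{itemize}
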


\begin{proof}
Apply Lemma~\ref{lem:finite-bianchi} to the pre-extension.
On pure section inputs its defect is exactly
\[
J(s(X);s(y_1),\ldots,s(y_n))
=\kappa_{\rho,\omega_0}(X;y_1,\ldots,y_n).
\]
The quotient component is zero by the Filippov identity in $\g$.
To prove centrality, take the first outer tuple in
\eqref{eq:finite-bianchi} to be
$U=(u_1,\ldots,u_{n-1})$ with at least one $u_i\in V$;
take the other entries to be section elements.
Every defect containing $U$ is zero.
Every summand of $U\circ s(Y)$ and $U\circ s(Z)$ has a kernel
entry because $V$ is an ideal.
Also $[U,s(w)]\in V$. Thus all terms except one vanish:
\[
0=[U,J(s(Y);s(Z),s(w))]
 =[U,\kappa_{\rho,\omega_0}(Y;Z,w)].
\]
Allowing arbitrary other entries of $U$, and using
\eqref{eq:relative-center}, proves that $\kappa$ takes values in
$A_\rho$.

Now make all entries in \eqref{eq:finite-bianchi} section
elements. A kernel component inside any argument of $J$ gives
zero. An outer bracket on a value of $J$ is the action $\rho_A$.
Consequently \eqref{eq:finite-bianchi} becomes
\begin{align*}
0={}&-\kappa([X,Y]_{\mathrm F};Z,w)
     -\kappa(Y;[X,Z]_{\mathrm F},w)
     +\kappa(X;[Y,Z]_{\mathrm F},w)\\
&-\kappa(Y;Z,[X,w]_\g)
  +\kappa(X;Z,[Y,w]_\g)-\kappa(X;Y,[Z,w]_\g)\\
&+\rho_A(X)\kappa(Y;Z,w)-\rho_A(Y)\kappa(X;Z,w)
  +\rho_A(Z)\kappa(X;Y,w)\\
&+\sum_{i=1}^{n-1}(-1)^{n-i}
 \rho_A(z_1,\ldots,\widehat z_i,\ldots,z_{n-1},w)
                  \kappa(X;Y,z_i).
\end{align*}
This is $\widetilde d\kappa=0$.
The defect is alternating in its outer $n-1$ entries and its
inner $n$ entries. For $n=2$ it is the alternating Jacobi
defect. Hence it lies in the required cochain space.

Let $\omega_1$ be another solution of the mixed equations and
put $\eta=\omega_1-\omega_0$.
Subtracting \eqref{eq:NAr0} for the two choices gives
\[
\rho_{r+1}(x_1,\ldots,x_{n-1-r})
 (a_1,\ldots,a_r,\eta(y_1,\ldots,y_n))=0,
\qquad1\leq r\leq n-1.
\]
Therefore $\eta$ is $A_\rho$-valued.
Conversely any alternating $A_\rho$-valued $\eta$ preserves all
mixed equations: its additional terms in \eqref{eq:NA1} and
\eqref{eq:NAr0} vanish by the definition of $A_\rho$, and
\eqref{eq:NArs} does not contain $\omega_0$.
The possible choices are consequently
\[
\omega_0+\Hom(\bigwedge^n\g,A_\rho),\qquad
\kappa_{\rho,\omega_0+\eta}
 =\kappa_{\rho,\omega_0}+d_{\rho_A}\eta.
\]
This proves independence of the class.
If $[\kappa]=0$, choose an alternating $\eta$ with
$d_{\rho_A}\eta=\kappa$. The datum with pure component
$\omega_0-\eta$ satisfies the mixed equations and has zero pure
defect. Theorem~\ref{thm:cocycle-criterion} makes it an extension.
Conversely, a completing pure component has zero defect, so the
same difference formula makes $[\kappa]=0$.
\end{proof}

\begin{proposition}[Section invariance]\label{prop:third-gauge}
The class \eqref{eq:third-obstruction} depends only on the orbit
of the mixed family under changes of section.
Under the natural identifications, its representative is unchanged
when the pre-extension is transported by $T_f$.
\end{proposition}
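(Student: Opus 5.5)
The plan is to transport the whole pre-extension by the shear $T_f$ and check that the defect $\kappa$ is literally preserved. Let $\rho_1,\ldots,\rho_{n-1}$ be admissible with chosen $\omega_0$, and write $\Omega_0=(\omega_0,\rho_1,\ldots,\rho_{n-1})$ for the pre-extension operation on $\g\oplus V$. For $f\in\Hom(\g,V)$ define $\Omega_0^f$ by \eqref{eq:gauge-bracket}; its components are given by Proposition~\ref{prop:gauge}, and these formulas never used the Filippov identity. The mixed components $\rho_r^f$ form the transported mixed family, and $\omega_0^f$ is the candidate pure component. The claim is that $\rho^f_\bullet$ is admissible with witness $\omega_0^f$, that $A_{\rho^f}=A_\rho$ and $\rho^f_A=\rho_A$, and that $\kappa_{\rho^f,\omega_0^f}=\kappa_{\rho,\omega_0}$. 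Independence of the choice of $\omega_0$ (Theorem~\ref{thm:third-obstruction}) then gives that the class depends only on the orbit.

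\textbf{Step 1: equivariance of the Filippov defect.} Let $J_\Omega$ denote the defect \eqref{eq:universal-FI-defect} of an alternating operation. Exactly as in the proof of Lemma~\ref{lem:gauge-action}, but without invoking the Filippov identity, one has
\[
J_{\Omega^f}(u;v)=T_{-f}\,J_{\Omega}(T_fu;T_fv).
\]
This holds because $T_f$ is a linear isomorphism intertwining the two operations, and $J$ is built polynomially from the operation.

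\textbf{Step 2: admissibility is preserved.} By the remark after Definition~\ref{def:pre-extension}, the mixed equations \eqref{eq:NA1}, \eqref{eq:NAr0}, \eqref{eq:NArs} hold exactly when $J_{\Omega}$ vanishes whenever some argument lies in $V$. Since $T_f$ preserves $V$ and is the identity there, while $T_f(x)=x+f(x)$ expands multilinearly into terms each of which is either pure or contains a kernel entry, a direct expansion is needed here. For an input with a kernel entry, every term of $J_{\Omega_0}(T_fu;T_fv)$ retains that kernel entry and so vanishes. Hence $\Omega_0^f$ is a pre-extension. The first description of $A_\rho$ in \eqref{eq:relative-center} is stated through brackets in the middle algebra, and $T_f$ is an isomorphism fixing $V$, so $A_{\rho^f}=A_\rho$. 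The argument at the end of Proposition~\ref{prop:central-representation}, which only uses that the extra terms contain $a$ together with a kernel entry, gives $\rho_A^f=\rho_A$.

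\textbf{Step 3: the representative is unchanged.} On pure inputs, Step~1 gives
\[
\kappa_{\rho^f,\omega_0^f}(X;Y)=T_{-f}J_{\Omega_0}\bigl(X+f(X);\,Y+f(Y)\bigr).
\]
Expand the right side multilinearly. The term with no $f$ is $\kappa_{\rho,\omega_0}(X;Y)$. Every other term has a kernel argument and therefore vanishes by Step~2. Finally, $T_{-f}$ acts trivially, because the value lies in $A_\rho\subseteq V$. So the representative is identical, which is the second assertion. Combined with Theorem~\ref{thm:third-obstruction}, this proves the orbit statement.

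The main obstacle is Step~2, and in particular the claim that the mixed equations are equivalent to the vanishing of $J$ whenever some argument lies in $V$ (the ``$2n-1$ arguments'' remark). I would justify it by matching each mixed equation with the component of $J$ having the corresponding numbers of kernel entries, as in Theorem~\ref{thm:cocycle-criterion}. One must also check that this equivalence uses only multilinearity and alternation, since Step~1 must not presuppose the Filippov identity.
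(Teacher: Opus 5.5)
Your proposal is correct and follows essentially the same route as the paper. Both arguments transport the defect by $J^{f}=T_{-f}J(T_f\,\cdot\,)$, note that every term with a kernel entry vanishes so that the pure representative $\kappa_{\rho,\omega_0}$ is literally unchanged, and check $A_{\rho^f}=A_\rho$ and $\rho_A^f=\rho_A$ directly from the bracket description. Your explicit check that $\Omega_0^f$ is again a pre-extension, using that the mixed equations are exactly the vanishing of $J$ on inputs with a kernel entry, is a step the paper leaves implicit.
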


\begin{proof}
Formula \eqref{eq:gauge-bracket} applies to any alternating
operation. Substitution in the two nested brackets defining $J$
gives
\[
J^{\,f}(z_1,\ldots,z_{n-1};w_1,\ldots,w_n)
 =T_{-f}J(T_fz_1,\ldots,T_fz_{n-1};T_fw_1,\ldots,T_fw_n).
\]
For a mixed input this is zero. For pure quotient inputs every
term containing $f(z_i)$ or $f(w_j)$ is zero, and $T_{-f}$
fixes the value in $V$. Hence
\[
\kappa_{\rho^f,\omega_0^f}=\kappa_{\rho,\omega_0}.
\]
The condition defining $A_\rho$ concerns brackets with at least
two kernel entries and is preserved by $T_f$.
For $a\in A_\rho$, every additional term of
$[s(x_1)+f(x_1),\ldots,s(x_{n-1})+f(x_{n-1}),a]$
contains $a$ and another kernel entry. It vanishes, proving
$\rho_A^f=\rho_A$. The coefficient complexes agree, as claimed.
\end{proof}

We call such an orbit a \emph{mixed coupling}. This terminology
records precisely the equivalence just defined; it does not
replace it by a homomorphism into $\Out(V)$.
Once the mixed equations admit a solution, third cohomology is
the remaining existence obstruction. When it vanishes,
Theorem~\ref{thm:affine-classification} below identifies the
extension fiber with an affine space over $\mathcal H_\rho^2$.

\begin{corollary}\label{cor:center-obstruction}
If \eqref{eq:center-compatible} holds for a pre-extension, then
$A_\rho=Z(V)$ and
\[
\mathfrak o(\rho_1,\ldots,\rho_{n-1})
\in H_{\ext}^3(\g,Z(V);\rho_A).
\]
If this class vanishes, the equivalence classes with fixed mixed
maps form a torsor for $H_{\ext}^2(\g,Z(V);\rho_A)$.
If $Z(V)=0$, every admissible family has a unique completing
pure component. Consequently, each admissible coupling
has exactly one extension class.
\end{corollary}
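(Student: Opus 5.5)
The argument has three parts, matching the three assertions of Corollary~\ref{cor:center-obstruction}. Throughout, I treat the pre-extension exactly as in Theorem~\ref{thm:third-obstruction}, taking the coefficient module to be $A_\rho$ with its representation $\rho_A$ from Proposition~\ref{prop:central-representation}.

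\emph{Step 1: $A_\rho=Z(V)$.} The inclusion $A_\rho\subseteq Z(V)$ is already recorded after \eqref{eq:relative-center}. For the reverse inclusion, take $a\in Z(V)$. Hypothesis \eqref{eq:center-compatible} says $[a,b,z_3,\ldots,z_n]_E=0$ for all $b\in V$ and $z_j\in E$, which is the first description of $A_\rho$ in \eqref{eq:relative-center}. For a pre-extension, one should check that \eqref{eq:center-compatible} is read in the operation $[\cdot]_\Omega$ on $\g\oplus V$; this involves nothing beyond the definitions. With $A_\rho=Z(V)$ in hand, the first displayed membership follows immediately from Theorem~\ref{thm:third-obstruction}, with $\rho_A$ now acting on $Z(V)$.

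\emph{Step 2: the torsor statement.} Assume $\mathfrak o=0$. By Theorem~\ref{thm:third-obstruction} there is a completing pure component $\omega$. The proof of that theorem shows that the set of completions with the fixed mixed maps is exactly $\omega+\{\eta\in\Hom(\bigwedge^n\g,A_\rho): d_{\rho_A}\eta=0\}$, since $\kappa_{\rho,\omega+\eta}=\kappa_{\rho,\omega}+d_{\rho_A}\eta$ and $\kappa_{\rho,\omega}=0$. Next we determine which of these completions give equivalent extensions. By Lemma~\ref{lem:gauge-action} and Theorem~\ref{thm:classification}, two completions are equivalent exactly when they differ by a shear $T_f$, and $T_f$ must preserve all mixed maps. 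The decisive point is to show that under \eqref{eq:center-compatible} such $f$ produce exactly the coboundaries $d_\rho f$ with $f:\g\to Z(V)$. I expect this to be the main obstacle. The forward text promises this in Theorem~\ref{thm:affine-classification} through the space $\mathcal F_\rho$; the corollary amounts to the claim that here $d_\rho\mathcal F_\rho=d_{\rho_A}\Hom(\g,Z(V))$.

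To attack that claim, I would first use \eqref{eq:gauge-top}: fixing $\rho_{n-1}$ forces $[f(x),V,\ldots,V]=0$, so $f$ is $Z(V)$-valued. Then \eqref{eq:center-compatible} kills every term of \eqref{eq:gauge-rho-telescope} in which $f(x_i)$ sits beside another kernel entry. Hence all mixed maps are preserved by any $f:\g\to Z(V)$. For such $f$, the only surviving terms of \eqref{eq:gauge-omega-telescope} are the linear ones, so $\omega^f-\omega=d_\rho f$. Conversely, if some $f$ is not $Z(V)$-valued, it already changes $\rho_{n-1}$. So the admissible gauges are exactly $\Hom(\g,Z(V))$, and the fiber is a torsor for $Z^2/d\Hom(\g,Z(V))=H^2_{\ext}(\g,Z(V);\rho_A)$.

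\emph{Step 3: the case $Z(V)=0$.} Then $A_\rho=0$, so every cohomology group in sight vanishes. In particular $\mathfrak o=0$, and completions are unique because their differences lie in $\Hom(\bigwedge^n\g,A_\rho)=0$. Here \eqref{eq:center-compatible} holds vacuously, so Steps 1 and 2 apply. Finally, since $A_\rho=0$ and $A_\rho$ together with the obstruction class are invariant under changes of section by Proposition~\ref{prop:third-gauge}, every member of the section orbit (the coupling) has a unique completion. The resulting cocycles are related by shears, so by Theorem~\ref{thm:classification} they give one extension class.
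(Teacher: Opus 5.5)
Your proposal is correct and follows essentially the paper's argument. You identify $A_\rho=Z(V)$, and you use \eqref{eq:gauge-top} together with \eqref{eq:center-compatible} to show that the mixed-preserving gauges are exactly $\Hom(\g,Z(V))$, i.e.\ $\mathcal F_\rho=\Hom(\g,A_\rho)$. The fixed-mixed fiber is therefore a torsor for the ordinary quotient, and for $Z(V)=0$ you conclude from $A_\rho=0$ and transport under section changes. The only difference is that you verify this case of Theorem~\ref{thm:affine-classification} directly from the telescoping formulas instead of citing it.
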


\begin{proof}
Under \eqref{eq:center-compatible},
$\Hom(\g,Z(V))=\Hom(\g,A_\rho)$.
Every such map preserves the mixed brackets.
Conversely, the highest mixed equation forces the image of every
mixed-preserving correction into $Z(V)$; see
\eqref{eq:gauge-top}. Thus
$\mathcal F_\rho=\Hom(\g,A_\rho)$, and the second-cohomology
quotient of Theorem~\ref{thm:affine-classification} is the ordinary
alternating extension quotient.
If $Z(V)=0$, then $A_\rho=0$, so the defect in
Theorem~\ref{thm:third-obstruction} is zero and the completing
pure component is unique. Transport under section changes gives
the assertion for the coupling.
\end{proof}

For $n=2$, a mixed coupling is the usual outer action
$\g\to\Out(V)$, the mixed equation is
$
[\rho(x),\rho(y)]-\rho([x,y])=\ad_V\omega_0(x,y)
$, and \eqref{eq:third-cocycle} is its Chevalley--Eilenberg
$3$-cocycle. Thus the result recovers the classical
$H^3(\g,Z(V))$ obstruction and $H^2(\g,Z(V))$ torsor
of~\cite{AlekseevskyMichorRuppert}.
For $n>2$, mixed realizability and the full coupling are
essential additional data.
In particular, $\Der(V)=\Inn(V)$ means $\Out(V)=0$; it does not
remove the mixed equations from the hypotheses of our theorem.

\subsection{Naturality and the two obstruction stages}
Let $c=(\beta,\alpha)\in\Aut(V)\times\Aut(\g)$.
Transport of a pre-extension is defined by the same formulas
\eqref{eq:twist-omega}--\eqref{eq:twist-rho} as transport of an
extension. It gives
\begin{align}
A_{{}^c\rho}&=\beta A_\rho,\notag\\
({}^c\kappa)(X;y_1,\ldots,y_n)
 &=\beta\kappa(\alpha^{-1}X;
                       \alpha^{-1}y_1,\ldots,\alpha^{-1}y_n),\notag\\
\kappa_{{}^c\rho,{}^c\omega_0}&={}^c\kappa_{\rho,\omega_0}.
\label{eq:obstruction-naturality}
\end{align}
Indeed, applying $\beta$ to \eqref{eq:third-cocycle} and using
bracket preservation by $\alpha$ gives each term of the last
identity. The same substitution commutes with every term of
\eqref{eq:full-relative-differential}, so it induces the
corresponding isomorphism in third cohomology.
If $c$ preserves the mixed coupling, then
$ {}^c\rho=\rho^f$ for some $f$, and
Proposition~\ref{prop:third-gauge} gives
\begin{equation}\label{eq:obstruction-stabilizer}
c\cdot\mathfrak o(\rho)=\mathfrak o(\rho).
\end{equation}
Thus mixed symmetries preserve the existence obstruction.
When $\mathfrak o(\rho)=0$, one chooses a completing $\omega$.
The Wells obstructions in Sections~\ref{sec:derivation-wells}
and~\ref{sec:Wells} then decide whether a mixed symmetry
preserves this particular extension class.
The third-cohomology obstruction concerns existence of an
extension; the second-cohomology Wells obstruction concerns a
lift for an extension already chosen.

\begin{proposition}\label{prop:infinitesimal-H3}
Let $P$ preserve $V$ in a pre-extension, induce
$d=(D_V,D_\g)\in\Der(V)\oplus\Der(\g)$, and satisfy the
derivation identity on every mixed bracket. Write
$P(s(x))=s(D_\g x)+\lambda(x)$ and set
\[
\Theta=L_d\omega_0-d_\rho\lambda.
\]
Then $\Theta$ takes values in $A_\rho$ and
\begin{equation}\label{eq:infinitesimal-H3}
L_d\kappa_{\rho,\omega_0}=d_{\rho_A}\Theta.
\end{equation}
Here $L_d$ applies $D_V$ to the value and subtracts insertion
of $D_\g$ in each of the $2n-1$ quotient arguments.
In particular, the induced action of $d$ annihilates the
third-cohomology obstruction class.
\end{proposition}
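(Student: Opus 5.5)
The plan is to run the argument through the Filippov defect $J$ of the pre-extension, since $\kappa_{\rho,\omega_0}$ is exactly $J$ on section inputs (proof of Theorem~\ref{thm:third-obstruction}). First, $P$ is a linear map on $E=\g\oplus V$ that preserves $V$. It satisfies the derivation identity on every bracket with at least one kernel entry, but not necessarily on pure section brackets. Its defect there is
\[
\delta_P(x_1,\ldots,x_n)=P[s x_1,\ldots,s x_n]-\sum_i[s x_1,\ldots,P s x_i,\ldots,s x_n].
\]
Expanding with $P(s(x))=s(D_\g x)+\lambda(x)$ and the bracket \eqref{eq:bracket-pure}, the quotient component vanishes because $D_\g\in\Der(\g)$. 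The kernel component is $D_V\omega_0-\sum_i\omega_0(\ldots,D_\g x_i,\ldots)+\lambda([x_1,\ldots,x_n]_\g)-\sum_i(-1)^{n-i}\rho(\ldots\widehat x_i\ldots)\lambda(x_i)$, that is, $\delta_P=L_d\omega_0-d_\rho\lambda=\Theta$. This identification is a sign bookkeeping check against \eqref{eq:relative-d1}.

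To show $\Theta$ is $A_\rho$-valued, apply $P$ to a bracket $[s x_1,\ldots,s x_n,\ldots]$ nested inside a mixed bracket. Concretely, take $U$ with a kernel entry and compare $P[U,[sY]]$ computed in two ways. Because $P$ is a derivation on the mixed bracket $[U,v]$ with $v=[sY]$, the derivation identity for $P$ applied to the outer bracket involves $P$ of $v$. Since the Filippov identity holds whenever a kernel entry is present, expanding $[U,[sY]]$ via Filippov and applying $P$ termwise produces terms which all have kernel entries, except for the insertion of $\delta_P(Y)$. The result is $[U,\Theta(Y)]=0$ for all such $U$, so $\Theta(Y)\in A_\rho$ by \eqref{eq:relative-center}. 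I expect this step to need care: one must check that every term produced by the two expansions, apart from $[U,\delta_P(Y)]$, is a bracket with a kernel entry where both the derivation property and Filippov are available.

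Next, apply the commutator of $P$ with the defect. The trilinear form $J(X;Y,w)$ is built from nested brackets, and for any linear $P$,
\[
PJ(X;Y,w)-\sum(\text{insertions of }P\text{ into each entry})
\]
equals a signed sum of terms in which one inner or outer bracket is replaced by $\delta_P$. This is the "Leibniz rule for the defect" and follows formally by expanding each of the three terms of $J$. On section inputs the left side is $L_d\kappa$. The insertions of $\lambda$ into entries contribute $J$ with a kernel entry, hence zero. The insertions of $s(D_\g x)$ give the $D_\g$ terms of $L_d$. On the right, $\delta_P$ appears only where a bracket of $n$ section elements occurs. The outer brackets applied to $\delta_P=\Theta\in A_\rho$ become $\rho_A$. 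Inner brackets containing $\Theta(\ldots)$ as an entry also give $\rho_A$ terms, and brackets with all section entries but one being $\Theta$-shifted reduce via the quotient bracket. Collecting these terms according to \eqref{eq:relative-d2} yields $d_{\rho_A}\Theta$, proving \eqref{eq:infinitesimal-H3}. The main obstacle is this final matching of signs and terms against \eqref{eq:relative-d2}. I would organize it by writing $J=[X,[Y,w]]-[Y,[X,w]]-[X\circ Y,w]$ in the fundamental-object form with $Y=(y_1,\ldots,y_n)$ inner, and tracking only the $\delta_P$ contributions. The final assertion is immediate: $L_d\kappa$ is a coboundary, so the induced action on $H^3_{\ext}(\g,A_\rho;\rho_A)$ kills $\mathfrak o$.
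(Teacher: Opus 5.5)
Your proposal is correct and is essentially the paper's proof. The paper also establishes the covariance identity for $L_PJ$ (your ``Leibniz rule for the defect'', valid for any linear $P$ and any alternating operation) and specializes it twice: once with a kernel entry in $U$, which leaves only $[U,\Theta]=0$ and so gives $\Theta\in A_\rho$, and once on pure section inputs, which gives $L_d\kappa_{\rho,\omega_0}=d_{\rho_A}\Theta$.

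The one step you treat as immediate is that $d$ actually induces an action on $H^3_{\ext}(\g,A_\rho;\rho_A)$. You should check this explicitly, as the paper does. Apply the mixed derivation identity of $P$ to $[a,b,z_3,\ldots,z_n]$ and to $[s(x_1),\ldots,s(x_{n-1}),a]$ with $a\in A_\rho$. This gives $D_V(A_\rho)\subseteq A_\rho$ and $[D_V,\rho_A(X)]=\rho_A(D_\g X)$, so $L_d$ commutes with the coefficient differential and your coboundary identity descends to the class.
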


\begin{proof}
Put $\eta(z_1,\ldots,z_n)
=P[z_1,\ldots,z_n]-\sum_i[z_1,\ldots,Pz_i,\ldots,z_n]$.
This map is zero on mixed tuples and its pure value is
$\Theta$. For $U=(u_1,\ldots,u_{n-1})$ and
$Y=(y_1,\ldots,y_n)$, applying $P$ to each nested bracket
in the definition of $J$ gives
\begin{align}
&(L_PJ)(U;Y)\notag\\
&=\eta(U,[Y])+[U,\eta(Y)]
-\sum_i\eta(y_1,\ldots,[U,y_i],\ldots,y_n)
-\sum_i[y_1,\ldots,\eta(U,y_i),\ldots,y_n].
\label{eq:defect-covariance}
\end{align}
Here $L_PJ$ means $PJ$ minus insertion of $P$ in each input.
For instance,
\[
P[U,[Y]]
=\sum_j[u_1,\ldots,Pu_j,\ldots,u_{n-1},[Y]]
 +[U,P[Y]]+\eta(U,[Y]),
\]
and
$P[Y]=\sum_i[y_1,\ldots,Py_i,\ldots,y_n]+\eta(Y)$.
The analogous expansion in each right-hand nested bracket
cancels precisely the input-insertion terms of $L_PJ$,
leaving \eqref{eq:defect-covariance}.
No Filippov identity for pure inputs has been used.

If $U$ has a kernel entry, the left side is zero since $P$
preserves $V$. On the right only $[U,\Theta(pY)]$ can remain.
Its vanishing proves $\Theta(pY)\in A_\rho$.
For pure section entries the left side is $L_d\kappa$:
every $\lambda$-insertion is a mixed defect and vanishes.
The right side is exactly $d_{\rho_A}\Theta$, proving
\eqref{eq:infinitesimal-H3}.
The mixed identities for $P$ also imply
$D_V(A_\rho)\subseteq A_\rho$ and
$[D_V,\rho_A(X)]=\rho_A(D_\g X)$, by the calculation of
Lemma~\ref{lem:compatible-module}.
Thus $L_d$ commutes with the coefficient differential and
acts on third cohomology. Equation \eqref{eq:infinitesimal-H3}
makes its value on $[\kappa]$ zero.
\end{proof}

\section{Section invariance and the non-abelian good criterion}
\label{sec:nonabelian-good}
Fix an actual extension and a linear section $s$.
The following finite calculations identify the allowed
corrections and then characterize goodness.
\subsection{Section changes preserving the mixed brackets}
For $f:\g\to V$ and $m=n-r$ define
\begin{equation}\label{eq:partial-f}
(\partial_r f)(x_1,\ldots,x_m)(a_1,\ldots,a_r)
 =\sum_{i=1}^{m}(-1)^{m-i}
 \rho_{r+1}(x_1,\ldots,\widehat x_i,\ldots,x_m)
       (f(x_i),a_1,\ldots,a_r),
\end{equation}
and put
\begin{equation}\label{eq:F-rho}
\mathcal F_\rho
 =\{f\in\Hom(\g,V):\partial_r f=0,\quad1\leq r\leq n-1\}.
\end{equation}
The equation with $r=n-1$ implies $f(\g)\subseteq Z(V)$.
The stronger requirement $f(\g)\subseteq A_\rho$ is sufficient
but is not necessary in general.
Let $\delta_f=i f p$, so that
$\delta_f(s(x)+a)=f(x)$ and $\delta_f^2=0$.

\begin{lemma}\label{lem:partial-shears}
The following conditions are equivalent:
\begin{enumerate}
\item $f\in\mathcal F_\rho$;
\item $\delta_f=ifp$ satisfies the derivation identity on every
      bracket with at least one argument in $V$;
\item for the section $s'=s+f$, one has $\rho_r^{s'}=\rho_r^s$
      for all $1\leq r\leq n-1$.
\end{enumerate}
For every such $f$,
\begin{equation}\label{eq:partial-gauge-affine}
\rho_r^f=\rho_r,\qquad \omega^f=\omega+d_\rho f,
\qquad d_\rho f\in Z_\rho^2.
\end{equation}
\end{lemma}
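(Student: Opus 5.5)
The plan is to compute everything through the telescoping formula \eqref{eq:gauge-rho-telescope} of Proposition~\ref{prop:gauge} and to organize it by the number of $f$-insertions. The key observation is that $\delta_f$ vanishes on $V$ and has image in $V$.

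\emph{Reformulating (2).} Since $\delta_f$ is zero on $V$ and $\delta_f^2=0$, the derivation defect
\[
\delta_f[z_1,\ldots,z_n]-\sum_i[z_1,\ldots,\delta_f z_i,\ldots,z_n]
\]
simplifies on mixed inputs. Take $r\ge1$ kernel entries and $m=n-r$ section entries $s(x_1),\ldots,s(x_m)$. The first term vanishes because the bracket lies in $V$. The terms where $\delta_f$ hits a kernel entry also vanish. What remains is
\[
-\sum_i[s(x_1),\ldots,f(x_i),\ldots,s(x_m),a_1,\ldots,a_r].
\]
Moving $f(x_i)$ to the kernel block costs the sign $(-1)^{m-i}$, and the result is exactly $-(\partial_r f)(x_1,\ldots,x_m)(a_1,\ldots,a_r)$ by \eqref{eq:rho-section}. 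Hence (1)$\Leftrightarrow$(2) is immediate once the signs are checked.

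\emph{(1)$\Leftrightarrow$(3).} The telescoping sum expresses $\rho_r^f-\rho_r$ as a sum over nonempty subsets $S\subseteq\{1,\ldots,m\}$ of insertion patterns. Each pattern has $f(x_i)$ for $i\in S$, $x_j$ elsewhere, and $r$ kernel entries, so it equals $\pm\rho_{r+|S|}(\ldots)(f(x_{i_1}),\ldots,a)$. The $|S|=1$ part is $\partial_r f$. I will argue by descending induction on $r$, starting from $r=n-1$, where \eqref{eq:gauge-top} gives $\rho^f_{n-1}-\rho_{n-1}=\partial_{n-1}f$. Assume $\partial_{r'}f=0$ for all $r'>r$. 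Every term with $|S|\ge2$ is obtained from a term with $|S|-1$ insertions and $r+1$ kernel entries, namely by viewing $f(x_{i_1})$ as an extra kernel argument. Grouping the $|S|\ge2$ terms according to the remaining insertions, each group is a contraction of $\partial_{r+k}f$ with $k\ge1$, hence zero. Therefore $\rho_r^f-\rho_r=\partial_r f$ modulo terms killed by the higher equations. Together with the base case, this shows the system $\{\rho_r^f=\rho_r\}_{r}$ is equivalent to $\{\partial_r f=0\}_{r}$.

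I expect this regrouping, and the matching of signs with $(-1)^{m-i}$, to be the main obstacle. If the regrouping proves awkward, I would fall back on the alternative route (2)$\Rightarrow$(3). By Lemma~\ref{lem:gauge-action}, $T_f=\id+\delta_f$ is an isomorphism. Writing $\exp(\delta_f)=\id+\delta_f$, the derivation identity on mixed brackets gives $[T_fz_1,\ldots]=T_f[z_1,\ldots]$ whenever some $z_i\in V$, by expanding in powers of $\delta_f$. The higher-order terms are handled by the same derivation identity applied iteratively, using $\delta_f^2=0$ and the fact that $\delta_f$ kills $V$. Comparing with \eqref{eq:gauge-rho} then gives (3). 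For (3)$\Rightarrow$(1), use the descending induction only.

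\emph{The affine formula.} Given the above, $\rho_r^f=\rho_r$ holds. For $\omega^f$, apply \eqref{eq:gauge-omega-telescope}. The single-insertion terms give $\sum_i(-1)^{n-i}\rho(\ldots\widehat x_i\ldots)f(x_i)$, so together with $-f[x]$ they produce $d_\rho f$. The multi-insertion terms group into contractions of $\partial_r f$ with $r\ge1$, which vanish. Hence $\omega^f=\omega+d_\rho f$.

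Finally, both $\Omega$ and $\Omega^f$ are cocycles by Lemma~\ref{lem:gauge-action}, and they have the same $\rho$. Subtracting the two instances of \eqref{eq:NAr0} shows that $d_\rho f$ is $A_\rho$-valued, exactly as in the proof of Theorem~\ref{thm:third-obstruction}. Subtracting the two instances of \eqref{eq:NA0}, which is linear in $\omega$ for fixed $\rho$ (Remark~\ref{rem:curvature}), gives $d_\rho(d_\rho f)=0$ in the sense of \eqref{eq:relative-d2}. Thus $d_\rho f\in Z^2_\rho$.
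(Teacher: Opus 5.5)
Your (1)$\Leftrightarrow$(2) computation and your final step (subtracting \eqref{eq:NAr0} and \eqref{eq:NA0}, which is Proposition~\ref{prop:fixed-mixed-differences}) match the paper's. The gap is in the regrouping step. You use it for (1)$\Leftrightarrow$(3), for $\omega^f=\omega+d_\rho f$, and implicitly in the fallback.

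Write $B_S$ for the term of the expansion with $f(x_i)$ in the slots $i\in S$. The terms with $|S|=k\ge2$ cannot be partitioned into complete contractions of $\partial_{r+k-1}f$. For $m=3$, $k=2$ there are three terms $B_{12},B_{13},B_{23}$. Each contraction (kernel entries $f(x_j)$ and $a_1,\ldots,a_r$, summed over one further insertion) contains two of them: $B_{12}+B_{13}$, $B_{12}+B_{23}$, or $B_{13}+B_{23}$. In general each $B_S$ lies in exactly $k$ such groups, one for each choice of the last insertion. So the vanishing of the higher $\partial$'s only yields
\[
k\sum_{|S|=k}B_S=0,
\]
and you must divide by $k$. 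This is where the standing characteristic-zero hypothesis is used, and your proposal never invokes it. The fallback has the same feature. Iterating the mixed derivation identity gives $0=\delta_fS_k=(k+1)S_{k+1}$, which is literally the paper's identity, and again requires division by $k+1$.

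Once this multiplicity is accounted for, your argument is correct, and your treatment of the converse differs from the paper's. The paper proves (2)$\Rightarrow$(3) by $\delta_fS_k=(k+1)S_{k+1}$. It proves (3)$\Rightarrow$(2) by expanding the preservation identity on every summand of every $S_k$, which gives the triangular system $\sum_{j>k}\binom jk S_j=0$, solved downward in $j$.

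Your approach rests on a single claim: if $\partial_{r'}f=0$ for all $r'>r$, then $\rho_r^f-\rho_r=\partial_rf$. You prove it by descending induction on the number $r$ of kernel entries, starting from \eqref{eq:gauge-top}. This gives both directions at once and avoids the binomial triangle. It also shows that, once the higher levels are known, condition (3) at level $r$ is exactly $\partial_rf=0$.
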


\begin{proof}
Write $\delta=\delta_f$. On a mixed bracket, $\delta$ annihilates
both its value and every kernel argument. Its remaining
derivation terms are
\begin{align*}
&\sum_{i=1}^{m}
[s(x_1),\ldots,f(x_i),\ldots,s(x_m),a_1,\ldots,a_r]\\
&\quad=\sum_{i=1}^{m}(-1)^{m-i}
\rho_{r+1}(x_1,\ldots,\widehat x_i,\ldots,x_m)
             (f(x_i),a_1,\ldots,a_r)=\partial_rf.
\end{align*}
This proves the equivalence of the first two conditions.

We give a finite multilinear calculation for the section change.
For fixed $z_1,\ldots,z_n$, let $S_k$ be the sum of the terms in
$[z_1+\delta z_1,\ldots,z_n+\delta z_n]$ with exactly $k$
entries replaced by $\delta z_i$, in their original slots.
Thus
\begin{align*}
S_0&=[z_1,\ldots,z_n],\\
S_1&=\sum_i[z_1,\ldots,\delta z_i,\ldots,z_n],\\
S_2&=\sum_{i<j}
 [z_1,\ldots,\delta z_i,\ldots,\delta z_j,\ldots,z_n],\\
[z_1+\delta z_1,\ldots,z_n+\delta z_n]&=S_0+S_1+\cdots+S_n.
\end{align*}
Suppose the mixed derivation identity holds.
Each term of $S_k$ with $k\geq1$ is mixed and has value in
$V$. Applying $\delta$ to that term gives zero on the left.
On the right, an already replaced slot contributes
$\delta^2z_i=0$. Every term with $k+1$ replaced slots occurs
exactly $k+1$ times when these identities are added. Consequently,
\[
0=\delta S_k=(k+1)S_{k+1}\quad(1\leq k<n),
\qquad S_2=\cdots=S_n=0.
\]
For an originally mixed tuple, the same calculation at $k=0$
also gives $S_1=\delta S_0=0$. Hence
\[
[z_1+\delta z_1,\ldots,z_n+\delta z_n]=[z_1,\ldots,z_n]
\]
on every mixed tuple, proving the third condition.

Conversely, assume the third condition. Then $T_f=\id+\delta$
preserves every mixed bracket and fixes its value in $V$.
For an originally mixed tuple, every summand of every $S_k$
is again mixed. Expanding the preservation identity for each
such summand and adding, one obtains
\begin{equation}\label{eq:finite-insertion-triangle}
0=\sum_{j=k+1}^{n}\binom jk S_j,
\qquad 0\leq k<n.
\end{equation}
Indeed, a term with $j$ replaced slots is obtained once for
each choice of the $k$ slots already replaced; hence its
coefficient is $\binom jk$.
For $k=n-1$, \eqref{eq:finite-insertion-triangle} gives $nS_n=0$.
Descending successively gives
$S_n=S_{n-1}=\cdots=S_1=0$, since the characteristic is zero.
Thus the mixed derivation identity holds.

Finally take $z_i=s(x_i)$ and use the calculation
$S_2=\cdots=S_n=0$ already proved for $f\in\mathcal F_\rho$.
For $s'=s+f$ we obtain directly
\begin{align*}
\omega^{f}(x_1,\ldots,x_n)
 &=[s(x_1)+f(x_1),\ldots,s(x_n)+f(x_n)]
                -(s+f)[x_1,\ldots,x_n]_\g\\
 &=\omega(x_1,\ldots,x_n)
   +\sum_i[s(x_1),\ldots,f(x_i),\ldots,s(x_n)]
   -f([x_1,\ldots,x_n]_\g)\\
 &=\omega(x_1,\ldots,x_n)
   +\sum_i(-1)^{n-i}\rho(x_1,\ldots,\widehat x_i,\ldots,x_n)f(x_i)
   -f([x_1,\ldots,x_n]_\g)\\
 &=\omega(x_1,\ldots,x_n)+(d_\rho f)(x_1,\ldots,x_n).
\end{align*}
The two section data are cocycles with the same mixed maps.
Proposition~\ref{prop:fixed-mixed-differences} therefore gives
$d_\rho f\in Z_\rho^2$.
\end{proof}

\subsection{The exact scope of section independence}
\begin{definition}\label{def:relative-good}
Fix a linear subspace $\mathcal S\subseteq\Hom(\g,V)$.
The section $s$ is \emph{good relative to $\mathcal S$} if
\[
\rho_r^{s+f}=\rho_r^s
\quad(f\in\mathcal S,\ 1\leq r\leq n-1),
\]
and $\rho=\rho_1^s$ is a representation of $\g$ on $V$.
It is \emph{centrally good} if
$\mathcal S=\Hom(\g,Z(V))$.
An extension is \emph{absolutely good} if the same conditions
hold for $\mathcal S=\Hom(\g,V)$.
\end{definition}
Relative goodness concerns the specified family of sections
$s+\mathcal S$. It does not assert invariance under changes
outside that family. For abelian $V$, absolute and central goodness agree.

\begin{proposition}[Non-abelian good criterion]
\label{prop:good-criterion}
Fix $s$, write $\rho_r=\rho_r^s$, and put $\omega=\omega_s$.
The section is good relative to $\mathcal S$ precisely when,
for every $f\in\mathcal S$ and all arguments,
\begin{align}
0&=\sum_{i=1}^{n-r}(-1)^{n-r-i}
\rho_{r+1}(x_1,\ldots,\widehat x_i,\ldots,x_{n-r})
       (f(x_i),a_1,\ldots,a_r),
\quad 1\leq r\leq n-1,
\label{eq:good-independence}\\
0&=\rho_2(x_1,\ldots,x_{n-2})
       (\omega(y_1,\ldots,y_n),a).
\label{eq:good-omega}
\end{align}
For $n=2$, \eqref{eq:good-omega} means
$[\omega(y_1,y_2),a]_V=0$.
Under these conditions, the direct section-change formula is
\begin{equation}\label{eq:good-direct-change}
\omega_{s+f}=\omega_s+d_\rho f,\qquad
\rho_r^{s+f}=\rho_r^s,\qquad f\in\mathcal S.
\end{equation}
\end{proposition}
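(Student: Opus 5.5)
The plan is to split Definition~\ref{def:relative-good} into its two requirements and match each with one of the two displayed conditions. The first requirement says that $\rho_r^{s+f}=\rho_r^s$ for every $f\in\mathcal S$ and every $1\leq r\leq n-1$. For a single fixed $f$, Lemma~\ref{lem:partial-shears} shows this is equivalent to $f\in\mathcal F_\rho$, that is, to $\partial_r f=0$ for $1\leq r\leq n-1$. Writing out \eqref{eq:partial-f} with $m=n-r$ gives exactly \eqref{eq:good-independence}. Since this holds for each $f\in\mathcal S$ separately, the first requirement is equivalent to $\mathcal S\subseteq\mathcal F_\rho$, which is \eqref{eq:good-independence} for all $f\in\mathcal S$. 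No subspace property of $\mathcal S$ is needed here beyond quantifying over its elements.

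Next I treat the second requirement: $\rho=\rho_1^s$ should be a representation of $\g$ on $V$. I compare the two identities of \eqref{eq:representation-definition} with the cocycle equations, which hold because we have an actual extension. For the first identity I use the curvature formula \eqref{eq:curvature}. Its defect is $\sum_i(-1)^{n-1-i}\rho_2(y_1,\ldots,\widehat y_i,\ldots,y_{n-1})(\omega(X,y_i),a)$, and each summand is an instance of $\rho_2(\cdots)(\omega(\cdots),a)$. So \eqref{eq:good-omega} kills it. For the second identity I use \eqref{eq:NAr0} with $r=1$. The cocycle equation and the representation identity differ exactly by the term $\rho_2(x_1,\ldots,x_{n-2})(a,\omega(Y))$, up to an overall sign $-1$, as in the proof of Proposition~\ref{prop:central-representation}. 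Since $\rho_2$ is alternating in its kernel arguments, this term is $-\rho_2(x_1,\ldots,x_{n-2})(\omega(Y),a)$. Hence \eqref{eq:good-omega} implies that $\rho$ is a representation.

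For the converse, suppose $\rho$ is a representation. Then the second identity in \eqref{eq:representation-definition}, subtracted from \eqref{eq:NAr0} with $r=1$, leaves $\rho_2(x_1,\ldots,x_{n-2})(a,\omega(y_1,\ldots,y_n))=0$ for all arguments. This is \eqref{eq:good-omega}, so the converse needs nothing beyond the $r=1$ mixed equation. When $n=2$, we have $\rho_2=[\cdot,\cdot]_V$ and the list $x_1,\ldots,x_{n-2}$ is empty, so the condition reads $[\omega(y_1,y_2),a]_V=0$, matching Remark~\ref{rem:curvature}.

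Finally, assuming the conditions, every $f\in\mathcal S$ lies in $\mathcal F_\rho$. Formula \eqref{eq:partial-gauge-affine} of Lemma~\ref{lem:partial-shears} then gives $\rho_r^{s+f}=\rho_r^s$ and $\omega_{s+f}=\omega_s+d_\rho f$, which is \eqref{eq:good-direct-change}. I expect the main obstacle to be sign bookkeeping in the $r=1$ comparison: checking that the $\rho_2$ term in \eqref{eq:NAr0} is exactly the difference between the cocycle equation and the representation identity, with no leftover sign. I would first check this in the $n=2$ and $n=3$ cases using \eqref{eq:ternary-bracket}.
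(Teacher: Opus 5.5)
Your proposal is correct and follows essentially the paper's proof. You reduce section independence along $\mathcal S$ to the one-entry sums $\partial_r f$ via the finite insertion argument of Lemma~\ref{lem:partial-shears}, and you match the representation property with \eqref{eq:good-omega} through the two defect formulas from \eqref{eq:curvature} and from \eqref{eq:NAr0} with $r=1$; the paper rederives these from the Filippov identity on section inputs, which is the same computation, and your sign bookkeeping agrees with \eqref{eq:good-second-defect}.
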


\begin{proof}
Fix $f\in\mathcal S$ and set $s'=s+f$.
On a mixed tuple, multilinearity gives
\begin{align*}
&\rho_r^{s'}(x_1,\ldots,x_{n-r})(a_1,\ldots,a_r)\\
&=[s(x_1)+f(x_1),\ldots,s(x_{n-r})+f(x_{n-r}),
                                                    a_1,\ldots,a_r].
\end{align*}
The sum of terms with exactly one changed section entry is
\[
\sum_{i=1}^{n-r}(-1)^{n-r-i}
\rho_{r+1}(x_1,\ldots,\widehat x_i,\ldots,x_{n-r})
             (f(x_i),a_1,\ldots,a_r).
\]
If all these sums vanish, the map $\delta_f=fp$ satisfies
the derivation identity on all mixed brackets. The finite
insertion calculation in Lemma~\ref{lem:partial-shears}
then makes every sum with two or more changed entries vanish.
Therefore $\rho_r^{s'}=\rho_r^s$.
Conversely, if all mixed maps agree, expansion of their
preservation identities on tuples with successively more
entries $f(x_i)$ gives the triangular equations
\eqref{eq:finite-insertion-triangle}. Descending through
those equations makes the one-entry sum zero.
This proves that \eqref{eq:good-independence} is exactly
section independence along $\mathcal S$.
For the highest mixed map this calculation is simply
\begin{equation}\label{eq:highest-good-change}
\rho_{n-1}^{s+f}(x)(a_1,\ldots,a_{n-1})
-\rho_{n-1}^s(x)(a_1,\ldots,a_{n-1})
=[f(x),a_1,\ldots,a_{n-1}]_V.
\end{equation}
In particular every allowed $f$ must take values in $Z(V)$.

It remains to verify precisely when $\rho$ is a representation.
Let $X=(x_1,\ldots,x_{n-1})$ and
$Y=(y_1,\ldots,y_{n-1})$.
Apply the Filippov identity to
$s(x_1),\ldots,s(x_{n-1})$ and
$s(y_1),\ldots,s(y_{n-1}),a$.
Expanding
$[s(X),s(y_i)]=s([X,y_i]_\g)+\omega(X,y_i)$ gives
\begin{align*}
\rho(X)\rho(Y)a
={}&\sum_{i=1}^{n-1}
 \rho(y_1,\ldots,[X,y_i]_\g,\ldots,y_{n-1})a
 +\rho(Y)\rho(X)a\\
&+\sum_{i=1}^{n-1}(-1)^{n-1-i}
 \rho_2(y_1,\ldots,\widehat y_i,\ldots,y_{n-1})
                   (\omega(X,y_i),a).
\end{align*}
Consequently the first representation defect is
\begin{align}
&\left([\rho(X),\rho(Y)]
 -\sum_i\rho(y_1,\ldots,[X,y_i]_\g,\ldots,y_{n-1})\right)a\notag\\
&\quad=\sum_{i=1}^{n-1}(-1)^{n-1-i}
\rho_2(y_1,\ldots,\widehat y_i,\ldots,y_{n-1})
                   (\omega(X,y_i),a).
\label{eq:good-first-defect}
\end{align}
For the second defect use outer entries
$s(x_1),\ldots,s(x_{n-2}),a$ and inner entries
$s(y_1),\ldots,s(y_n)$.
The left side is
\[
-\rho(x_1,\ldots,x_{n-2},[y_1,\ldots,y_n]_\g)a
+\rho_2(x_1,\ldots,x_{n-2})(a,\omega(y_1,\ldots,y_n)).
\]
Its $i$th right-hand term is
\[
(-1)^{n-i+1}
\rho(y_1,\ldots,\widehat y_i,\ldots,y_n)
\rho(x_1,\ldots,x_{n-2},y_i)a.
\]
Moving $\omega(Y)$ past $a$ and multiplying by $-1$ yields
\begin{align}
&\left(\rho(x_1,\ldots,x_{n-2},[y_1,\ldots,y_n]_\g)
 -\sum_i(-1)^{n-i}\rho(y_1,\ldots,\widehat y_i,\ldots,y_n)
                    \rho(x_1,\ldots,x_{n-2},y_i)\right)a\notag\\
&\quad=-\rho_2(x_1,\ldots,x_{n-2})(\omega(y_1,\ldots,y_n),a).
\label{eq:good-second-defect}
\end{align}
Condition \eqref{eq:good-omega} makes both defects zero.
Conversely, the second representation identity makes the
right side of \eqref{eq:good-second-defect} zero, proving
\eqref{eq:good-omega}.

Finally, the same finite expansion on pure section inputs gives
\begin{align*}
\omega_{s'}(x_1,\ldots,x_n)-\omega_s(x_1,\ldots,x_n)
 &=\sum_i[s(x_1),\ldots,f(x_i),\ldots,s(x_n)]
                                      -f([x_1,\ldots,x_n]_\g)\\
 &=\sum_i(-1)^{n-i}
\rho(x_1,\ldots,\widehat x_i,\ldots,x_n)f(x_i)
                                      -f([x_1,\ldots,x_n]_\g).
\end{align*}
This is $d_\rho f$, proving \eqref{eq:good-direct-change}.
\end{proof}

\begin{corollary}\label{cor:absolute-good-abelian}
If $\g\ne0$ and the extension is absolutely good, then $V$ is
abelian. For abelian $V$, the highest equation in
\eqref{eq:good-independence} is automatic, so the criterion
reduces to the equations with $1\leq r\leq n-2$ together
with \eqref{eq:good-omega}.
\end{corollary}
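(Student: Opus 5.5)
The plan is to derive both assertions from Proposition~\ref{prop:good-criterion} applied with $\mathcal S=\Hom(\g,V)$, using only its highest equation (the case $r=n-1$ of \eqref{eq:good-independence}) for the first claim.

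For the first claim, take $r=n-1$. The sum in \eqref{eq:good-independence} then has a single term, and by \eqref{eq:highest-good-change} it equals
\[
[f(x),a_1,\ldots,a_{n-1}]_V=0
\]
for every $f\in\Hom(\g,V)$, every $x\in\g$ and all $a_j\in V$. Since $\g\neq0$, fix $x\neq0$ and a linear functional $\ell$ on $\g$ with $\ell(x)=1$. Given any $b\in V$, the map $f=b\,\ell$ lies in $\Hom(\g,V)$ and satisfies $f(x)=b$. Hence $[b,a_1,\ldots,a_{n-1}]_V=0$ for all $b,a_j\in V$, so $Z(V)=V$ and $V$ is abelian. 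This step is where the hypothesis $\g\ne0$ enters; without it $\Hom(\g,V)=0$ and no constraint arises.

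For the second claim, assume $V$ is abelian. The highest equation is the displayed bracket $[f(x),a_1,\ldots,a_{n-1}]_V$ with $n$ kernel arguments, so it vanishes identically by abelianness. The remaining conditions of Proposition~\ref{prop:good-criterion} are exactly \eqref{eq:good-independence} for $1\leq r\leq n-2$ together with \eqref{eq:good-omega}, which is the claimed reduction.

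I should check that \eqref{eq:good-omega} does not also degenerate; it need not, since $\rho_2$ involves only two kernel entries. The exception is $n=2$, where $\rho_2=[\,\cdot,\cdot\,]_V$ and both the highest equation and \eqref{eq:good-omega} become automatic. This is consistent with the statement, since the range $1\leq r\leq n-2$ is then empty. I do not expect a genuine obstacle; the only care needed is to confirm that the single-term formula \eqref{eq:highest-good-change} is exactly the $r=n-1$ instance of \eqref{eq:good-independence}, with the sign $(-1)^{0}$.
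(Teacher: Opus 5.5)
Your proposal is correct and follows essentially the same argument as the paper. Both apply \eqref{eq:highest-good-change}, the one-term $r=n-1$ case of \eqref{eq:good-independence}, with a linear $f$ sending a fixed $x\ne0$ to an arbitrary kernel element, which kills the kernel bracket. Conversely, both note that this equation is a pure kernel bracket and so holds automatically when $V$ is abelian. Your side remark is also accurate: \eqref{eq:good-omega} does not degenerate for $n\ge3$ and does degenerate for $n=2$.
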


\begin{proof}
Choose $0\ne x\in \g$. Given any $a_0\in V$, choose a linear
$f:\g\to V$ with $f(x)=a_0$. Equation
\eqref{eq:highest-good-change} and absolute goodness give
\[
[a_0,a_1,\ldots,a_{n-1}]_V=0
\]
for arbitrary $a_1,\ldots,a_{n-1}$. Thus the kernel bracket
is zero. The converse simplification follows by substituting
that zero bracket in the same equation.
\end{proof}

The hypothesis $\g\ne0$ is necessary: if $\g=0$, there is only
one section and the section-independence condition has no
content. In particular, extending the abelian criterion by
omitting the highest mixed equation would miss a genuine
non-abelian obstruction.

For example, a direct product $V\oplus \g$ with $V$ non-abelian
and $\g\ne0$ has $\rho_r=0$ for every mixed $r$ and $\omega=0$.
It satisfies both equations of the abelian criterion when
the range is truncated to $r\leq n-2$.
But \eqref{eq:highest-good-change} is nonzero for a suitable
$f$, so the extension is not absolutely good.

\section{Cohomology classes and the Lie algebra of compatible pairs}
\label{sec:compatible-derivations}

Fix an extension with mixed family $\rho_1,\ldots,\rho_{n-1}$.
The coefficient module $A_\rho$ and the space $\mathcal F_\rho$
are those of Sections~\ref{sec:centers-obstruction} and
\ref{sec:nonabelian-good}.

\subsection{The affine space of extensions with fixed mixed maps}
\begin{proposition}[Differences with fixed mixed brackets]
\label{prop:fixed-mixed-differences}
With the mixed family fixed, $(\omega+\theta,\rho_1,\ldots,\rho_{n-1})$
is a non-abelian cocycle precisely when $\theta\in Z_\rho^2$.
\end{proposition}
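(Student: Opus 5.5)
Throughout, $\Omega=(\omega,\rho_1,\ldots,\rho_{n-1})$ is the cocycle of the fixed extension. We want to show that $\Omega_\theta=(\omega+\theta,\rho_1,\ldots,\rho_{n-1})$ satisfies \eqref{eq:NA0}--\eqref{eq:NArs} exactly when $\theta$ is $A_\rho$-valued and $d_\rho\theta=0$. The plan is to use Theorem~\ref{thm:cocycle-criterion} and check the four families one at a time. Equation \eqref{eq:NArs} contains no $\omega$, so it holds for $\Omega_\theta$ because it holds for $\Omega$. The remaining three families involve $\omega$, and in each we subtract the identity for $\Omega$ from the identity for $\Omega_\theta$.

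\emph{Equation \eqref{eq:NAr0}.} This family is linear in $\omega$, so the difference is
\[
\rho_{r+1}(x_1,\ldots,x_{n-1-r})(a_1,\ldots,a_r,\theta(Y))=0,\qquad 1\le r\le n-1 .
\]
By the second description of $A_\rho$ in \eqref{eq:relative-center}, this says exactly that $\theta(Y)\in A_\rho$; the case $r=n-1$ is the condition $[\theta(Y),V,\ldots,V]_V=0$. The same computation appears in the independence part of the proof of Theorem~\ref{thm:third-obstruction}. So \eqref{eq:NAr0} for $\Omega_\theta$ is equivalent to $\theta$ being $A_\rho$-valued.

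\emph{Equation \eqref{eq:NA1}.} Here $\omega$ enters only through the terms $\rho_{s+1}(\ldots)(\omega(X,y_i),b_1,\ldots,b_s)$. Once $\theta$ is $A_\rho$-valued, the corresponding $\theta$-terms vanish, so \eqref{eq:NA1} is automatic for $\Omega_\theta$.

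\emph{Equation \eqref{eq:NA0}.} This is where care is needed, because the original $\omega$ need not be $A_\rho$-valued. The identity is affine in $\omega$ when $\rho$ is fixed: each term is either $\rho(\cdot)$ applied to an $\omega$-value or $\omega$ evaluated at quotient brackets, and there are no $\omega\cdot\omega$ terms. Its linear part is exactly $d_\rho$ from \eqref{eq:relative-d2}. Hence the difference of the two \eqref{eq:NA0} identities is $(d_\rho\theta)(X;Y)=0$. For $A_\rho$-valued $\theta$, the map $d_\rho\theta$ is the coefficient differential for $\rho_A$, so the condition is that $\theta\in Z_\rho^2$.

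\emph{Conclusion.} Necessity: if $\Omega_\theta$ is a cocycle, then \eqref{eq:NAr0} forces $\theta$ to be $A_\rho$-valued, and \eqref{eq:NA0} then forces $d_\rho\theta=0$. Sufficiency: if $\theta\in Z_\rho^2$, all four families hold by the arguments above. One should also check that $\theta\in Z_\rho^2$ requires $\theta$ to be alternating. This holds because the bracket on $\g\oplus V$ is alternating and both $\omega$ and $\omega+\theta$ are alternating.
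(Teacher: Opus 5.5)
Your proposal is correct and follows essentially the same route as the paper's proof. Both subtract the four families of Theorem~\ref{thm:cocycle-criterion}: \eqref{eq:NArs} contains no $\omega$, the difference of \eqref{eq:NAr0} is exactly the condition that $\theta$ is $A_\rho$-valued (which then makes the new $\theta$-terms in \eqref{eq:NA1} vanish), and the difference of \eqref{eq:NA0} is $d_\rho\theta$.
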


\begin{proof}
Suppose both data are cocycles. Subtracting their equations
\eqref{eq:NA0} gives $d_\rho\theta=0$.
Subtracting \eqref{eq:NAr0} gives, for $1\leq r\leq n-1$,
\[
\rho_{r+1}(x_1,\ldots,x_{n-1-r})
       (a_1,\ldots,a_r,\theta(y_1,\ldots,y_n))=0.
\]
These are exactly the conditions that every value of $\theta$
belongs to $A_\rho$.

Conversely, let $\theta\in Z_\rho^2$.
Equation \eqref{eq:NA0} changes by $d_\rho\theta=0$.
The additional terms in \eqref{eq:NA1} are
\[
\sum_{i=1}^{n-s}(-1)^{n-s-i}
\rho_{s+1}(y_1,\ldots,\widehat y_i,\ldots,y_{n-s})
        (\theta(X,y_i),b_1,\ldots,b_s)=0,
\]
and those in \eqref{eq:NAr0} are the preceding displayed
expressions, again zero by the definition of $A_\rho$.
Equation \eqref{eq:NArs} does not contain $\omega$.
Thus every cocycle identity is preserved.
\end{proof}

We therefore obtain the vector spaces
\begin{equation}\label{eq:relative-cohomology}
B_\rho^2=d_\rho\mathcal F_\rho,\qquad
\mathcal H_\rho^2=Z_\rho^2/B_\rho^2,\qquad
Z_\Omega=\{f\in\mathcal F_\rho:d_\rho f=0\}.
\end{equation}
Since $\Hom(\g,A_\rho)\subseteq\mathcal F_\rho$, there is a
canonical surjection
\begin{equation}\label{eq:ordinary-to-relative}
H_{\ab}^2(\g,A_\rho;\rho_A)\longrightarrow\mathcal H_\rho^2.
\end{equation}
Here $H_{\ab}^2(\g,A_\rho;\rho_A)
=Z_\rho^2/d_\rho\Hom(\g,A_\rho)$ denotes the fully alternating
extension part of the usual coefficient cohomology.
The larger boundary space in \eqref{eq:relative-cohomology}
is needed because an equivalence preserving the mixed maps can
have its correction in $Z(V)$ without taking values in $A_\rho$.
More precisely, there is a short exact sequence of vector spaces
\begin{equation}\label{eq:H2-comparison-exact}
0\longrightarrow
\frac{d_\rho\mathcal F_\rho}{d_\rho\Hom(\g,A_\rho)}
\longrightarrow H_{\ext}^2(\g,A_\rho;\rho_A)
\longrightarrow\mathcal H_\rho^2\longrightarrow0.
\end{equation}
The middle class maps to zero precisely when its representative
is $d_\rho f$ for some $f\in\mathcal F_\rho$, which proves
exactness. This records the extra identifications between
extensions allowed by kernel-central section changes.

\begin{theorem}\label{thm:affine-classification}
Fix one cocycle $\Omega_0=(\omega_0,\rho_1,\ldots,\rho_{n-1})$.
The map
\[
[\theta]\longmapsto[\omega_0+\theta,\rho_1,\ldots,\rho_{n-1}]
\]
is a bijection from $\mathcal H_\rho^2$ to the equivalence classes
of extensions represented by these fixed mixed maps.
Changing $\omega_0$ translates this bijection.
\end{theorem}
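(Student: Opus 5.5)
The proof has three parts: well-definedness, surjectivity, and injectivity. Throughout, ``extensions represented by these fixed mixed maps'' means classes in $H^2_{\nab}(\g,V)$ that contain a cocycle whose mixed components are exactly $\rho_1,\ldots,\rho_{n-1}$.

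\emph{Well-definedness.} By Proposition~\ref{prop:fixed-mixed-differences}, for $\theta\in Z_\rho^2$ the datum $(\omega_0+\theta,\rho_1,\ldots,\rho_{n-1})$ is a non-abelian cocycle with the prescribed mixed maps, so the map is defined on $Z_\rho^2$. If $\theta'=\theta+d_\rho f$ with $f\in\mathcal F_\rho$, apply Lemma~\ref{lem:partial-shears} to the cocycle $\Omega_\theta=(\omega_0+\theta,\rho_\bullet)$. Since $\mathcal F_\rho$ is defined only through $\rho_2,\ldots,\rho_{n-1}$, the same $f$ lies in $\mathcal F_\rho$ for $\Omega_\theta$. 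The lemma then gives $\Omega_\theta^f=(\omega_0+\theta+d_\rho f,\rho_\bullet)=\Omega_{\theta'}$. Here $d_\rho$ involves only $\rho=\rho_1$, which is also unchanged. By Lemma~\ref{lem:gauge-action} and Theorem~\ref{thm:classification}, $\Omega_\theta$ and $\Omega_{\theta'}$ define the same extension class.

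\emph{Surjectivity.} Let $\Omega'$ be any cocycle with mixed maps $\rho_1,\ldots,\rho_{n-1}$, and set $\theta=\omega'-\omega_0$. The converse direction of Proposition~\ref{prop:fixed-mixed-differences} gives $\theta\in Z_\rho^2$. Hence $[\Omega']$ is the image of $[\theta]$.

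\emph{Injectivity.} This is the main step. Suppose $\Omega_\theta$ and $\Omega_{\theta'}$ are equivalent. By Definition~\ref{def:Hnab}, $\Omega_{\theta'}=\Omega_\theta^f$ for some $f\in\Hom(\g,V)$; equivalently, by Theorem~\ref{thm:classification}, the shear $T_f$ is the equivalence between them. Both data have the same mixed components, so $\rho_r^f=\rho_r$ for all $1\le r\le n-1$. This is condition (3) of Lemma~\ref{lem:partial-shears} for $\Omega_\theta$. The point to check is that the lemma's equivalence (3)$\Leftrightarrow$(1), proved by the triangular system \eqref{eq:finite-insertion-triangle}, applies verbatim to the algebra $\g\oplus_{\Omega_\theta}V$. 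It does, because that system uses only multilinearity and characteristic zero, and $\mathcal F_\rho$ depends only on the fixed mixed maps. Hence $f\in\mathcal F_\rho$, and \eqref{eq:partial-gauge-affine} gives $\omega_0+\theta'=\omega_0+\theta+d_\rho f$. So $\theta'-\theta\in B_\rho^2$ and $[\theta]=[\theta']$ in $\mathcal H_\rho^2$.

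\emph{Changing $\omega_0$.} Replace $\omega_0$ by another admissible $\omega_0'$ with the same mixed maps. By Proposition~\ref{prop:fixed-mixed-differences}, $\omega_0'=\omega_0+\theta_0$ with $\theta_0\in Z_\rho^2$. The new bijection is $[\theta]\mapsto[\omega_0+\theta_0+\theta,\rho_\bullet]$, so it is the old bijection precomposed with translation by $[\theta_0]$. This translation is well defined because $B_\rho^2$ is a linear subspace: $d_\rho$ is linear and $\mathcal F_\rho$ is a vector space.
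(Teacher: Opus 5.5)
Your proposal is correct and follows the paper's proof essentially step for step. Proposition~\ref{prop:fixed-mixed-differences} parametrizes the cocycles with the fixed mixed maps as $\omega_0+Z_\rho^2$; Lemma~\ref{lem:partial-shears} shows that any gauge map relating two of them lies in $\mathcal F_\rho$ and shifts the pure part by $d_\rho f$, and conversely every such boundary is realized by $T_f$; changing $\omega_0$ translates by $[\theta_0]$. You also note explicitly that $\mathcal F_\rho$, $d_\rho$, $A_\rho$ and $Z_\rho^2$ do not involve the pure component, so the lemma applies to each member $\Omega_\theta$ of the family; the paper leaves this point implicit.
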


\begin{proof}
Proposition~\ref{prop:fixed-mixed-differences} says that all such
cocycles are uniquely $\omega_0+\theta$, $\theta\in Z_\rho^2$.
By the extension classification, two of them are equivalent
exactly when related by a gauge map $f$. Equality of their mixed
maps gives $f\in\mathcal F_\rho$ by
Lemma~\ref{lem:partial-shears}, and their pure components then
differ by $d_\rho f$. Conversely, every such boundary is realized
by the shear $T_f$. Thus the equivalence relation is precisely
the quotient in \eqref{eq:relative-cohomology}.
If $\omega_1=\omega_0+\theta_1$, the coordinate of
$\omega_0+\theta$ relative to $\omega_1$ is
$[\theta]-[\theta_1]$, proving the last assertion.
\end{proof}

\subsection{Compatible pairs and their correction maps}
Let $d=(D_V,D_\g)\in\Der(V)\oplus\Der(\g)$.
For every mixed map define
\begin{align}
(L_d\rho_r)(x_1,\ldots,x_m)(a_1,\ldots,a_r)
={}&D_V\rho_r(x_1,\ldots,x_m)(a_1,\ldots,a_r)\notag\\
&-\sum_{i=1}^m
 \rho_r(x_1,\ldots,D_\g x_i,\ldots,x_m)(a_1,\ldots,a_r)\notag\\
&-\sum_{j=1}^r
 \rho_r(x_1,\ldots,x_m)(a_1,\ldots,D_V a_j,\ldots,a_r).
\label{eq:mixed-Ld}
\end{align}

\begin{definition}\label{def:full-compatible}
A pair $d$ is \emph{compatible with the mixed brackets} if there
is $\lambda:\g\to V$ such that
\begin{equation}\label{eq:corrected-compatibility}
L_d\rho_r=\partial_r\lambda,\qquad1\leq r\leq n-1.
\end{equation}
The set of such pairs is denoted by $\mathfrak G_\rho$.
The pairs satisfying $L_d\rho_r=0$ for all $r$ form the
subspace $G_\rho^0$, called the strictly compatible pairs.
\end{definition}

For a chosen correction, put
\begin{equation}\label{eq:partial-lift-P}
P_{d,\lambda}(s(x)+a)=s(D_\g x)+\lambda(x)+D_V a.
\end{equation}
Equation \eqref{eq:corrected-compatibility} says exactly that
$P_{d,\lambda}$ satisfies the derivation identity on mixed
brackets. Indeed, the left-hand side of that identity is
$D_V\rho_r(x_1,\ldots,x_m)(a_1,\ldots,a_r)$ and its
right-hand side is
\begin{align*}
&\sum_{i=1}^m\rho_r(x_1,\ldots,D_\g x_i,\ldots,x_m)(a_1,\ldots,a_r)\\
&\quad+\sum_{j=1}^r\rho_r(x_1,\ldots,x_m)(a_1,\ldots,D_V a_j,\ldots,a_r)
       +(\partial_r\lambda)(x_1,\ldots,x_m)(a_1,\ldots,a_r).
\end{align*}
On all-kernel inputs the assertion is $D_V\in\Der(V)$.

\begin{theorem}\label{thm:compatible-Lie-algebra}
The spaces $\mathfrak G_\rho$ and $G_\rho^0$ are Lie subalgebras
of $\Der(V)\oplus\Der(\g)$.
If $\lambda,\mu$ are corrections for
$d=(D_V,D_\g)$ and $e=(E_V,E_\g)$, respectively, then
\begin{equation}\label{eq:compatible-bracket-correction}
\nu=D_V\mu-\mu D_\g-E_V\lambda+\lambda E_\g
\end{equation}
is a correction for $[d,e]$.
All corrections for a fixed compatible pair form
$\lambda+\mathcal F_\rho$.
\end{theorem}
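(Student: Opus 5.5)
The cleanest route is through the operators $P_{d,\lambda}$ of \eqref{eq:partial-lift-P}, rather than through the component equations \eqref{eq:corrected-compatibility} directly. As observed after Definition~\ref{def:full-compatible}, $d\in\mathfrak G_\rho$ with correction $\lambda$ exactly when $P_{d,\lambda}$ satisfies the derivation identity on every bracket with at least one entry in $V$ (call these the mixed brackets). On all-kernel inputs this identity is just $D_V\in\Der(V)$.

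So take $\lambda,\mu$ for $d,e$ and form the commutator $[P_{d,\lambda},P_{e,\mu}]$. The operators satisfying the derivation identity on a fixed class of input tuples are closed under commutators, provided that class is stable under the operators and under taking values. The mixed tuples qualify. Each $P$ preserves $V$, so applying $P$ to one entry of a mixed tuple leaves a kernel entry in it, possibly in another slot (for instance, $P$ may turn $s(x)$ into $s(D_\g x)+\lambda(x)$). Also, values of mixed brackets lie in $V$.

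The routine step is to expand $[P,Q][z_1,\ldots,z_n]$ as in the usual proof that $\Der$ is a Lie algebra. Each application of the identity is to a tuple of the form $(z_1,\ldots,Qz_i,\ldots,z_n)$. That tuple is mixed, since any kernel entry $z_j$ with $j\neq i$ survives, and if $i$ is the only kernel slot then $Qz_i\in V$. Hence the commutator satisfies the derivation identity on mixed brackets.

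Next I identify the commutator explicitly. We have $P(s(x))=s(D_\g x)+\lambda(x)$ and $P(a)=D_Va$. Then
\[
P_{d,\lambda}P_{e,\mu}(s(x))=s(D_\g E_\g x)+\lambda(E_\g x)+D_V\mu(x),
\]
and symmetrically for the other composite. Subtracting gives $[P_{d,\lambda},P_{e,\mu}]=P_{[d,e],\nu}$ with $\nu$ exactly \eqref{eq:compatible-bracket-correction}, and $[d,e]=([D_V,E_V],[D_\g,E_\g])$. So $\nu$ is a correction for $[d,e]$, and $\mathfrak G_\rho$ is closed under brackets. Linearity follows because $L_d$ is linear in $d$ and $\partial_r$ is linear in $\lambda$, so corrections add. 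Thus $\mathfrak G_\rho$ is a subspace, hence a Lie subalgebra.

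For $G_\rho^0$, the definition $L_d\rho_r=0$ says that $\lambda=0$ is a correction. Then $\nu=0$ from \eqref{eq:compatible-bracket-correction}, so $[d,e]\in G_\rho^0$.

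For the last assertion, $\lambda'$ is another correction for the same $d$ iff $\partial_r(\lambda'-\lambda)=0$ for all $r$. This is exactly $\lambda'-\lambda\in\mathcal F_\rho$ by \eqref{eq:F-rho}.

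The main obstacle is verifying that the commutator expansion legitimately uses only mixed instances of the identity. In particular, the "value" terms $P[\ldots,Qz_i,\ldots]$ require the inner bracket's tuple to be mixed, which was checked above. There is also a sign subtlety if one prefers a component computation with $L_{[d,e]}\rho_r=[L_d,L_e]\rho_r$-type identities together with the commutation of $\partial_r$ with $L$. The operator argument avoids that bookkeeping.
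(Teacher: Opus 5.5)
Your proposal is correct and follows the paper's proof essentially step for step. The paper also gets closure under linear combinations from the linearity of \eqref{eq:corrected-compatibility}, and shows that $[P_{d,\lambda},P_{e,\mu}]$ satisfies the derivation identity on mixed tuples because both operators preserve $V$. It then evaluates on $s(x)$ to identify the commutator as $P_{[d,e],\nu}$, takes $\lambda=\mu=0$ for $G_\rho^0$, and subtracts the mixed equations for the last assertion; your explicit check that each tuple $(z_1,\ldots,Qz_i,\ldots,z_n)$ is again mixed is exactly what the paper compresses into ``all brackets below remain mixed.''
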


\begin{proof}
Linearity of \eqref{eq:corrected-compatibility} proves closure
under linear combinations. Let $P=P_{d,\lambda}$ and
$Q=P_{e,\mu}$, and take a mixed tuple $z_1,\ldots,z_n$.
Both operators preserve $V$, so all brackets below remain
mixed. Applying their derivation identities twice gives
\begin{align*}
PQ[z_1,\ldots,z_n]
 &=\sum_i[z_1,\ldots,PQz_i,\ldots,z_n]\\
 &\quad+\sum_{i\ne j}
 [z_1,\ldots,Qz_i,\ldots,Pz_j,\ldots,z_n],\\
QP[z_1,\ldots,z_n]
 &=\sum_i[z_1,\ldots,QPz_i,\ldots,z_n]\\
 &\quad+\sum_{i\ne j}
 [z_1,\ldots,Pz_i,\ldots,Qz_j,\ldots,z_n].
\end{align*}
The double sums cancel after exchanging $i,j$.
Thus $[P,Q]$ also satisfies the derivation identity on mixed
brackets. On a section element,
\begin{align*}
PQ(s(x))&=s(D_\g E_\g x)+\lambda(E_\g x)+D_V\mu(x),\\
QP(s(x))&=s(E_\g D_\g x)+\mu(D_\g x)+E_V\lambda(x).
\end{align*}
Consequently $[P,Q]=P_{[d,e],\nu}$ with $\nu$ as in
\eqref{eq:compatible-bracket-correction}.
This proves closure of $\mathfrak G_\rho$.
If $\lambda=\mu=0$, then $\nu=0$, proving closure of $G_\rho^0$.
Finally, subtracting the mixed equations for two corrections
gives $\partial_r(\lambda-\lambda')=0$ for every $r$, which
is exactly $\lambda-\lambda'\in\mathcal F_\rho$.
\end{proof}

\subsection{The action on cochains and cohomology}
For $d\in\mathfrak G_\rho$ define
\begin{align}
L_df&=D_V f-fD_\g,\label{eq:Ld-f}\\
(L_d\theta)(x_1,\ldots,x_n)
 &=D_V\theta(x_1,\ldots,x_n)
   -\sum_{i=1}^n\theta(x_1,\ldots,D_\g x_i,\ldots,x_n).
\label{eq:Ld-theta}
\end{align}

\begin{lemma}\label{lem:compatible-module}
Every $d\in\mathfrak G_\rho$ preserves $A_\rho$ and induces a
compatible derivation pair $(D_V|_{A_\rho},D_\g)$ for $\rho_A$.
Moreover $L_d\mathcal F_\rho\subseteq\mathcal F_\rho$.
\end{lemma}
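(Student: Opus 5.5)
The natural tool is the operator $P=P_{d,\lambda}$ from \eqref{eq:partial-lift-P}, for a chosen correction $\lambda$. By \eqref{eq:corrected-compatibility} it satisfies the derivation identity on every bracket with at least one kernel entry. Working with $P$ avoids handling the $\partial_r\lambda$ terms separately. All three assertions then come from applying $P$ to suitable mixed brackets and reading off components.

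\emph{Invariance of $A_\rho$.} Let $a\in A_\rho$, $b\in V$ and $z_3,\ldots,z_n\in E$. The tuple $(a,b,z_3,\ldots,z_n)$ is mixed, so the derivation identity gives
\[
0=P[a,b,z_3,\ldots,z_n]=[Pa,b,z_3,\ldots,z_n]+[a,Pb,z_3,\ldots]+\sum_j[a,b,\ldots,Pz_j,\ldots].
\]
Since $P$ preserves $V$, we have $Pb\in V$, and every term except the first contains $a$ together with another kernel entry. Those terms vanish by \eqref{eq:relative-center}. Because $Pa=D_Va$, this yields $D_Va\in A_\rho$.

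\emph{Compatibility with $\rho_A$.} Next apply the identity to $[s(x_1),\ldots,s(x_{n-1}),a]$. We get
\[
D_V\rho(X)a=\sum_i[s(x_1),\ldots,s(D_\g x_i)+\lambda(x_i),\ldots,a]+\rho(X)D_Va.
\]
Each $\lambda$-term contains $a$ and a kernel entry, so it vanishes. What remains is $[D_V,\rho_A(X)]=\sum_i\rho_A(x_1,\ldots,D_\g x_i,\ldots)$. This is exactly the statement that $(D_V|_{A_\rho},D_\g)$ is a compatible pair for the representation $\rho_A$.

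\emph{Stability of $\mathcal F_\rho$.} The last claim is the main computation. For $f\in\mathcal F_\rho$, Lemma~\ref{lem:partial-shears} shows that $\delta_f$ satisfies the derivation identity on mixed brackets. I would then take the commutator $[P,\delta_f]$. The argument in the proof of Theorem~\ref{thm:compatible-Lie-algebra} applies verbatim: both operators preserve $V$ and are derivations on mixed brackets, so the double sums cancel and $[P,\delta_f]$ is again a derivation on mixed brackets. A direct evaluation gives
\[
[P,\delta_f](s(x)+a)=D_Vf(x)-f(D_\g x)=(L_df)(x).
\]
The $\lambda$ contributions drop out because $\delta_f$ kills $V$ and $\delta_f\lambda=0$. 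Hence $[P,\delta_f]=\delta_{L_df}$, and a second application of Lemma~\ref{lem:partial-shears}, via (2)$\Rightarrow$(1), gives $L_df\in\mathcal F_\rho$.

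The expected obstacle is confirming that the commutator argument really requires the derivation identity only on mixed tuples. In the expansion, $\delta_f$ or $P$ is applied to a bracket that is itself mixed, and every intermediate bracket stays mixed because both operators map $V$ into $V$. This is exactly what the proof of Theorem~\ref{thm:compatible-Lie-algebra} uses, so the argument goes through unchanged. If any doubt remains, the fallback is to expand $\partial_r(L_df)$ directly, using $L_d\rho_{r+1}=\partial_{r+1}\lambda$ and $\partial_rf=0$, and to check the cancellation of the $\lambda$–$f$ cross terms from the $S_2=0$ relation in Lemma~\ref{lem:partial-shears}.
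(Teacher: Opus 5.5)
Your proposal is correct and follows the paper's proof essentially step for step. You use the same operator $P_{d,\lambda}$ on the brackets $[a,b,z_3,\ldots,z_n]$ and $[s(x_1),\ldots,s(x_{n-1}),a]$, and you get the stability of $\mathcal F_\rho$ from the identity $[P,\delta_f]=\delta_{L_df}$, using the commutator argument of Theorem~\ref{thm:compatible-Lie-algebra} and Lemma~\ref{lem:partial-shears}.
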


\begin{proof}
Choose a correction $\lambda$ and use $P=P_{d,\lambda}$.
For $a\in A_\rho$, $b\in V$ and $z_j\in E$, the mixed
derivation identity gives
\begin{align*}
0=P[a,b,z_3,\ldots,z_n]
={}&[D_V a,b,z_3,\ldots,z_n]+[a,D_V b,z_3,\ldots,z_n]\\
&+\sum_{j=3}^n[a,b,z_3,\ldots,Pz_j,\ldots,z_n].
\end{align*}
All terms except the first vanish by
\eqref{eq:relative-center}. Hence $D_V a\in A_\rho$.
On $[s(x_1),\ldots,s(x_{n-1}),a]$, the correction terms are
\[
\sum_i[s(x_1),\ldots,\lambda(x_i),\ldots,s(x_{n-1}),a]=0,
\]
since they contain both $a\in A_\rho$ and a kernel element.
The remaining terms give
\begin{equation}\label{eq:restricted-compatible}
[D_V|_{A_\rho},\rho_A(X)]
 =\sum_{i=1}^{n-1}\rho_A(x_1,\ldots,D_\g x_i,\ldots,x_{n-1}).
\end{equation}
If $f\in\mathcal F_\rho$, both $P$ and $\delta_f$ satisfy the
mixed derivation identity. Their commutator does as well by
the calculation in Theorem~\ref{thm:compatible-Lie-algebra}.
Direct evaluation gives
\[
[P,\delta_f](s(x)+a)=D_V f(x)-f(D_\g x)=\delta_{L_df}(s(x)+a).
\]
Lemma~\ref{lem:partial-shears} now proves $L_df\in\mathcal F_\rho$.
\end{proof}

\begin{lemma}[Cocycles and boundaries]\label{lem:relative-cochain-action}
For $d\in\mathfrak G_\rho$,
\[
L_d Z_\rho^2\subseteq Z_\rho^2,\qquad
L_d(d_\rho f)=d_\rho(L_df)\quad(f\in\mathcal F_\rho).
\]
\end{lemma}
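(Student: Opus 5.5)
The plan is to realize both $\theta$-type data as Filippov defects of operators on $E$ and use the derivation $P=P_{d,\lambda}$ from Definition~\ref{def:full-compatible}. This lets us avoid expanding $d_\rho$ term by term.

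\emph{First claim.} Let $\theta\in Z_\rho^2$ and choose a correction $\lambda$ for $d$. By Lemma~\ref{lem:compatible-module}, $D_V$ preserves $A_\rho$, so $L_d\theta$ is again $A_\rho$-valued. It remains to show $d_\rho(L_d\theta)=0$. On $A_\rho$-valued cochains, $d_\rho$ is the coefficient differential for $\rho_A$, and \eqref{eq:restricted-compatible} says that $(D_V|_{A_\rho},D_\g)$ is a compatible derivation pair for $\rho_A$. We then check the commutation
\[
L_d(d_\rho\theta)=d_\rho(L_d\theta)
\]
directly on the four terms of \eqref{eq:relative-d2}:
\begin{itemize}
\item For the term $\rho(X)\theta(Y)$, apply $D_V$ and use \eqref{eq:restricted-compatible} to move $D_V$ past $\rho_A(X)$. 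This produces the insertions of $D_\g$ into $X$.
\item For the terms involving $[\,\cdot\,]_\g$, use that $D_\g\in\Der(\g)$ to distribute the insertions.
\item For the last sum, the insertion of $D_\g$ into $\rho(\dots,\widehat y_i,\dots)$ is again absorbed by \eqref{eq:restricted-compatible}, since $\theta(X,y_i)\in A_\rho$.
\end{itemize}
The bookkeeping is the standard naturality of the Chevalley–Eilenberg-type differential under a compatible pair. Since $d_\rho\theta=0$, we get $d_\rho(L_d\theta)=0$.

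\emph{Second claim.} For $f\in\mathcal F_\rho$, $d_\rho f$ need not be $A_\rho$-valued termwise, so we cannot argue inside the $\rho_A$-complex. Instead we use operators. Both $P$ and $\delta_f$ satisfy the mixed derivation identity, and by Lemma~\ref{lem:compatible-module}, $[P,\delta_f]=\delta_{L_df}$.

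For any linear $\phi$ satisfying the mixed derivation identity with $\phi(s(x))=\psi(x)\in V$ and $\phi(V)=0$ (so $\phi=\delta_\psi$), the pure derivation defect is
\[
\eta_\phi(s(x_1),\dots,s(x_n))=\delta_\psi[s(x_1),\dots,s(x_n)]-\sum_i[s(x_1),\dots,\psi(x_i),\dots,s(x_n)].
\]
Since $\delta_\psi\omega=0$, the first term equals $\psi([x_1,\dots,x_n]_\g)$. The sum equals $\sum_i(-1)^{n-i}\rho(\dots,\widehat x_i,\dots)\psi(x_i)$. Hence
\[
\eta_{\delta_\psi}=-d_\rho\psi \quad\text{on pure section inputs.}
\]

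Next define the defect $\eta_\phi(z_1,\dots,z_n)=\phi[z_1,\dots,z_n]-\sum_i[\dots,\phi z_i,\dots]$. For two operators the commutator formula is
\[
\eta_{[P,Q]}=P\eta_Q-\eta_Q\circ{}^{\!*}P-(Q\eta_P-\eta_P\circ{}^{\!*}Q),
\]
where $\eta\circ{}^{\!*}P$ denotes the sum of insertions of $P$ in each argument. This follows from the same double-sum cancellation as in Theorem~\ref{thm:compatible-Lie-algebra}, but now keeping the defects.

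Apply this with $Q=\delta_f$ on pure section inputs:
\begin{itemize}
\item $\eta_Q$ vanishes on mixed tuples, and $P(s(x))=s(D_\g x)+\lambda(x)$. So $P\eta_Q-\eta_Q\circ{}^{\!*}P$ evaluates to $-L_d(d_\rho f)$; the $\lambda$-insertions land in mixed tuples and vanish.
\item $Q$ kills $V$, so $Q\eta_P=\delta_f\Theta=0$, since $\Theta$ is $V$-valued.
\item $\eta_P\circ{}^{\!*}Q$ inserts $f(x_i)$, giving mixed tuples on which $\eta_P=0$.
\end{itemize}
Therefore $-d_\rho(L_df)=\eta_{\delta_{L_df}}=-L_d(d_\rho f)$, which is the second identity.

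The main obstacle is getting the signs in this commutator-of-defects identity right and checking that every cross term dies for the stated reason (mixedness or $\delta_f(V)=0$). If that step resists, the fallback is to expand $d_\rho(D_Vf-fD_\g)$ directly and use $L_d\rho=\partial_1\lambda$ together with $\partial_1 f=0$. The resulting extra terms are of the form $\rho_2(\dots)(\lambda(x_i),f(x_j))$, and these cancel by antisymmetry in $i,j$ using $\partial_2$-type identities.
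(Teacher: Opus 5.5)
Your argument is correct. For the inclusion $L_dZ_\rho^2\subseteq Z_\rho^2$ you follow the paper. $D_V$ preserves $A_\rho$ and the values of $\theta$ lie in $A_\rho$, so \eqref{eq:restricted-compatible} moves $D_V$ past every $\rho$-factor, and the derivation rule for $D_\g$ distributes the remaining insertions.

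For the boundary identity your route differs. The paper expands directly to
\[
L_d(d_\rho f)-d_\rho(L_df)=\sum_i(-1)^{n-i}(L_d\rho_1)(x_1,\ldots,\widehat x_i,\ldots,x_n)f(x_i).
\]
It then substitutes $L_d\rho_1=\partial_1\lambda$. The result is a double sum of brackets containing $f(x_i)$ and $\lambda(x_j)$, which it kills by applying, for each fixed $j$, the mixed derivation identity of $\delta_f$.

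You instead identify $-d_\rho\psi$ with the pure component of $\eta_{\delta_\psi}$, and $L_d$ with $[P,\cdot\,]$ via $[P,\delta_f]=\delta_{L_df}$. You then apply the formal commutator-of-defects identity. The paper derives that identity only later, in the proof of Theorem~\ref{thm:relative-Lie-cocycle}, but its derivation does not use the present lemma, so there is no circularity. Your three term checks are right.

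Both proofs use exactly the same two facts:
\begin{itemize}
\item Your vanishing of $\eta_{\delta_f}$ on $(s(x_1),\ldots,\lambda(x_i),\ldots,s(x_n))$ is $\partial_1f=0$ with kernel argument $\lambda(x_i)$.
\item Your vanishing of $\eta_P$ on $(s(x_1),\ldots,f(x_i),\ldots,s(x_n))$ is $L_d\rho_1=\partial_1\lambda$ evaluated at $f(x_i)$.
\end{itemize}
The paper's expansion isolates the single discrepancy term, which would vanish automatically for $A_\rho$-valued $f$, and shows exactly how membership in $\mathcal F_\rho$ kills it. Your version removes the index bookkeeping. It also places the lemma in the same defect calculus that later gives \eqref{eq:Theta-bracket} and the section independence of $\mathcal W_{\mathcal E}$.

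One correction to your fallback. The terms $\rho_2(\dots)(\lambda(x_j),f(x_i))$ do not cancel under $i\leftrightarrow j$, and no $\partial_2$-identity is involved. For each fixed $j$, the sum over $i\neq j$ vanishes by $\partial_1f=0$ at the kernel argument $\lambda(x_j)$, which is precisely the paper's proof.
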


\begin{proof}
Write $D=D_\g$. For $\theta\in Z_\rho^2$, direct expansion of
\eqref{eq:relative-d2} gives
\begin{align*}
&d_\rho L_d\theta(X;Y)\\
&=\rho(X)D_V\theta(Y)
 -\sum_k\rho(X)\theta(y_1,\ldots,Dy_k,\ldots,y_n)\\
&\quad+D_V\theta(X,[y_1,\ldots,y_n]_{\g})
 -\sum_j\theta(x_1,\ldots,Dx_j,\ldots,x_{n-1},[y_1,\ldots,y_n]_{\g})
 -\theta(X,D[y_1,\ldots,y_n]_{\g})\\
&\quad-\sum_iD_V\theta(y_1,\ldots,[X,y_i],\ldots,y_n)\\
&\quad+\sum_i\sum_{k\ne i}
 \theta(y_1,\ldots,Dy_k,\ldots,[X,y_i],\ldots,y_n)
 +\sum_i\theta(y_1,\ldots,D[X,y_i],\ldots,y_n)\\
&\quad-\sum_i(-1)^{n-i}
 \rho(y_1,\ldots,\widehat y_i,\ldots,y_n)D_V\theta(X,y_i)\\
&\quad+\sum_i\sum_j(-1)^{n-i}
 \rho(y_1,\ldots,\widehat y_i,\ldots,y_n)
 \theta(x_1,\ldots,Dx_j,\ldots,x_{n-1},y_i)\\
&\quad+\sum_i(-1)^{n-i}
 \rho(y_1,\ldots,\widehat y_i,\ldots,y_n)\theta(X,Dy_i).
\end{align*}
All values of $\theta$ lie in $A_\rho$. We can therefore use
\eqref{eq:restricted-compatible} to move $D_V$ to the left:
\begin{align*}
\rho(X)D_V\theta(Y)
 &=D_V\rho(X)\theta(Y)
   -\sum_j\rho(x_1,\ldots,Dx_j,\ldots,x_{n-1})\theta(Y),\\
D[y_1,\ldots,y_n]_{\g}&=\sum_k[y_1,\ldots,Dy_k,\ldots,y_n]_{\g},\\
D[X,y_i]&=\sum_j[x_1,\ldots,Dx_j,\ldots,x_{n-1},y_i]
                  +[X,Dy_i].
\end{align*}
The same commutation formula applies to each action omitting $y_i$.
Collecting the terms in which $D$ occupies the same argument yields
\begin{align*}
d_\rho L_d\theta(X;Y)
={}&D_V(d_\rho\theta)(X;Y)\\
&-\sum_j(d_\rho\theta)(x_1,\ldots,Dx_j,\ldots,x_{n-1};Y)\\
&-\sum_k(d_\rho\theta)(X;y_1,\ldots,Dy_k,\ldots,y_n)=0.
\end{align*}
Alternation is preserved, proving the cocycle assertion.

For a boundary we must account for the fact that $f$ need not
take values in $A_\rho$. Expanding \eqref{eq:relative-d1}
and cancelling the bracket terms using $D\in\Der(\g)$ gives
\begin{align*}
&L_d(d_\rho f)(x_1,\ldots,x_n)-d_\rho(L_df)(x_1,\ldots,x_n)\\
&=\sum_i(-1)^{n-i}
 (L_d\rho_1)(x_1,\ldots,\widehat x_i,\ldots,x_n)f(x_i)\\
&=\sum_i(-1)^{n-i}
 (\partial_1\lambda)(x_1,\ldots,\widehat x_i,\ldots,x_n)f(x_i)\\
&=\sum_j\sum_{i\ne j}
 [s(x_1),\ldots,f(x_i),\ldots,\lambda(x_j),\ldots,s(x_n)].
\end{align*}
In the last line the replacements occupy their original slots.
For fixed $j$, apply the mixed derivation identity for $\delta_f$
to the bracket with $\lambda(x_j)$ in slot $j$. Its value and
its $j$th argument are annihilated by $\delta_f$, so the inner
sum is zero. Thus $L_d(d_\rho f)=d_\rho(L_df)$.
The right-hand side is a permitted boundary because
$L_df\in\mathcal F_\rho$.
\end{proof}

\begin{theorem}\label{thm:Phi-relative}
The map
\begin{equation}\label{eq:Phi-relative}
\Phi_\rho:\mathfrak G_\rho\longrightarrow\End(\mathcal H_\rho^2),
\qquad
\Phi_\rho(d)[\theta]=[L_d\theta],
\end{equation}
is a Lie algebra representation.
\end{theorem}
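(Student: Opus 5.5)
Three things have to be checked: that $\Phi_\rho(d)$ is a well-defined linear endomorphism of $\mathcal H^2_\rho$, that $\Phi_\rho$ is linear in $d$, and that it preserves brackets. Lemma~\ref{lem:relative-cochain-action} already does the cochain-level work for well-definedness. For $d\in\mathfrak G_\rho$, $L_d$ maps $Z^2_\rho$ into itself. For a boundary $d_\rho f$ with $f\in\mathcal F_\rho$, we have $L_d(d_\rho f)=d_\rho(L_df)$, and $L_df\in\mathcal F_\rho$ by Lemma~\ref{lem:compatible-module}. Hence $L_d B^2_\rho\subseteq B^2_\rho$, and $[\theta]\mapsto[L_d\theta]$ is a well-defined linear map on $\mathcal H^2_\rho=Z^2_\rho/B^2_\rho$. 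Linearity of $d\mapsto L_d$ is clear from \eqref{eq:Ld-theta}, since $L_d\theta$ depends linearly on the pair $(D_V,D_\g)$. Finally, $\mathfrak G_\rho$ is a Lie algebra by Theorem~\ref{thm:compatible-Lie-algebra}, so the bracket $[d,e]$ lies in the domain.

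For the homomorphism property I will prove the identity
\[
L_{[d,e]}\theta=L_dL_e\theta-L_eL_d\theta
\]
directly at the cochain level, for every alternating $\theta:\bigwedge^n\g\to V$. This gives $\Phi_\rho([d,e])=[\Phi_\rho(d),\Phi_\rho(e)]$ on classes with no further boundary correction. Notably, the corrections $\lambda,\mu,\nu$ from Theorem~\ref{thm:compatible-Lie-algebra} play no role, because $L_d$ on $2$-cochains is defined without them.

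The computation is the standard one for the natural action of $\mathfrak{gl}(V)\oplus\mathfrak{gl}(\g)$ on $\Hom(\g^{\otimes n},V)$. Expanding $L_dL_e\theta$ produces four kinds of terms:
\begin{itemize}
\item[(i)] $D_VE_V\theta(x_1,\ldots,x_n)$;
\item[(ii)] the cross terms $-D_V\theta(\ldots,E_\g x_i,\ldots)$ and $-E_V\theta(\ldots,D_\g x_i,\ldots)$;
\item[(iii)] the terms $\theta(\ldots,D_\g x_j,\ldots,E_\g x_i,\ldots)$ with $i\ne j$;
\item[(iv)] the terms $\theta(\ldots,E_\g D_\g x_i,\ldots)$.
\end{itemize}
Note that the terms in (ii) of the form $-E_V\theta(\ldots,D_\g x_i,\ldots)$ come with $E_V$ acting outside, and those of the form $-D_V\theta(\ldots,E_\g x_i,\ldots)$ with $D_V$ outside. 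Both families are symmetric in $d\leftrightarrow e$, so they cancel in the commutator. The terms in (iii) are symmetric after exchanging $i$ and $j$, so they also cancel. What survives is
\[
[D_V,E_V]\theta(x_1,\ldots,x_n)-\sum_i\theta(x_1,\ldots,[D_\g,E_\g]x_i,\ldots,x_n)=(L_{[d,e]}\theta)(x_1,\ldots,x_n).
\]

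No step is a genuine obstacle. The only subtle point, namely that boundaries are taken with respect to the larger space $\mathcal F_\rho$ rather than $\Hom(\g,A_\rho)$, is already handled by Lemmas~\ref{lem:compatible-module} and~\ref{lem:relative-cochain-action}. The remaining care is bookkeeping the signs in the commutator expansion.
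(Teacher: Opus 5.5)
Your proposal is correct and follows essentially the same route as the paper. Both obtain well-definedness on classes from Lemma~\ref{lem:relative-cochain-action} (with $L_d\mathcal F_\rho\subseteq\mathcal F_\rho$ from Lemma~\ref{lem:compatible-module}). Both then expand $L_dL_e\theta-L_eL_d\theta$ at the cochain level: the mixed-output cross terms and the distinct-slot terms cancel, leaving $L_{[d,e]}\theta$.
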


\begin{proof}
Lemma~\ref{lem:relative-cochain-action} shows that the map is
well defined on classes. Let $d=(D_V,D)$, $e=(E_V,E)$.
On $x_1,\ldots,x_n$, expansion of the first composition gives
\begin{align*}
L_dL_e\theta
={}&D_V E_V\theta(x_1,\ldots,x_n)
 -\sum_iD_V\theta(x_1,\ldots,Ex_i,\ldots,x_n)\\
&-\sum_iE_V\theta(x_1,\ldots,Dx_i,\ldots,x_n)
 +\sum_i\theta(x_1,\ldots,EDx_i,\ldots,x_n)\\
&+\sum_{i\ne j}\theta(x_1,\ldots,Dx_i,\ldots,Ex_j,\ldots,x_n).
\end{align*}
Interchange $d,e$ to obtain $L_eL_d\theta$.
The distinct-slot sums cancel after exchanging their indices,
and the remaining difference is
\[
[D_V,E_V]\theta(x_1,\ldots,x_n)
 -\sum_i\theta(x_1,\ldots,[D,E]x_i,\ldots,x_n)
 =L_{[d,e]}\theta(x_1,\ldots,x_n).
\]
Consequently $[\Phi_\rho(d),\Phi_\rho(e)]=\Phi_\rho([d,e])$.
\end{proof}

\subsection{The action in all cohomological degrees}
Use the complex \eqref{eq:full-relative-differential}.
\begin{proposition}\label{prop:relative-full-action}
The operators
\begin{align*}
L_dF(X_1,\ldots,X_p,z)
={}&D_V F(X_1,\ldots,X_p,z)
 -\sum_{j=1}^p F(X_1,\ldots,D_\g X_j,\ldots,X_p,z)\\
&-F(X_1,\ldots,X_p,D_\g z),\\
D_\g X_j
={}&\sum_{\ell=1}^{n-1}
(x_{j1},\ldots,D_\g x_{j\ell},\ldots,x_{j,n-1})
\end{align*}
commute with $\widetilde d$ and induce a Lie algebra action on
every $\widetilde H^p(\g,A_\rho)$.
They preserve the alternating subspaces in
\eqref{eq:extension-complex} and hence act on
$H_{\ext}^2(\g,A_\rho)$ and $H_{\ext}^3(\g,A_\rho)$ as well.
In extension degree, this action descends further through
\eqref{eq:ordinary-to-relative} to $\Phi_\rho$.
\end{proposition}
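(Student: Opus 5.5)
The plan is to treat $L_d$ as the natural action of a pair of compatible derivations on a cochain complex built functorially from the representation $(\g,A_\rho,\rho_A)$. By Lemma~\ref{lem:compatible-module}, $D_V$ preserves $A_\rho$ and satisfies \eqref{eq:restricted-compatible}, that is, $[D_V|_{A_\rho},\rho_A(X)]=\rho_A(D_\g X)$. Also $D_\g\in\Der(\g)$, so $D_\g[X,z]=[D_\g X,z]+[X,D_\g z]$ and $D_\g[X_j,X_k]_{\mathrm F}=[D_\g X_j,X_k]_{\mathrm F}+[X_j,D_\g X_k]_{\mathrm F}$. Here $D_\g$ acts on tuples by the displayed Leibniz rule, extended linearly to sums of tuples. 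These are the only inputs I would use. The action therefore depends only on $(D_V|_{A_\rho},D_\g)$, and the correction $\lambda$ plays no role in these degrees.

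\emph{Step 1: $L_d\widetilde d=\widetilde dL_d$.} I would expand $(\widetilde d L_dF)(X_1,\ldots,X_p,z)$ and $(L_d\widetilde dF)(X_1,\ldots,X_p,z)$ term by term in \eqref{eq:full-relative-differential}, treating each of the four groups of terms in turn.
\begin{itemize}
\item[(a)] \emph{Bracket terms.} Applying $D_V$ to $F(\ldots,[X_j,X_k]_{\mathrm F},\ldots)$ and subtracting the insertions of $D_\g$ produces $D_\g$ acting on $[X_j,X_k]_{\mathrm F}$. By the derivation property this splits into the insertions into $X_j$ and into $X_k$, which are exactly the terms coming from $\widetilde d$ applied after inserting $D_\g$. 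The same argument handles $[X_j,z]$.
\item[(b)] \emph{Action terms.} For $\rho_A(X_j)F(\ldots)$ I would use \eqref{eq:restricted-compatible} to commute $D_V$ past $\rho_A(X_j)$. The defect $\rho_A(D_\g X_j)$ is precisely the missing insertion term.
\item[(c)] \emph{Last group.} Here the action is by $\rho_A(u_1,\ldots,\widehat u_i,\ldots,u_{n-1},z)$ and the argument $u_i$ sits inside $F$. The same commutation formula produces insertions into each $u_\ell$ with $\ell\ne i$ and into $z$. Together with the insertion of $D_\g$ into the $u_i$ argument of $F$, these reassemble into $D_\g X_p$ and $D_\g z$.
\end{itemize}
This is the same bookkeeping as in Lemma~\ref{lem:relative-cochain-action}, now in arbitrary degree. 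It is the main obstacle: keeping the signs $(-1)^j$ and $(-1)^{n+p-i+1}$ aligned while the insertion in $X_p$ splits across the $u_i$. I would handle it by grouping terms according to which argument $D_\g$ occupies, which makes the signs match automatically because the insertions never move arguments. The step also covers the degrees $C^1_{\ext}$ and $C^2_{\ext}$ with $d_\sigma$; those cases were essentially done in Lemma~\ref{lem:relative-cochain-action} for $A_\rho$-valued $f$.

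\emph{Step 2: Lie action.} The identity $[L_d,L_e]=L_{[d,e]}$ on cochains follows exactly as in the proof of Theorem~\ref{thm:Phi-relative}. The cross terms with $D_\g$ and $E_\g$ inserted in distinct slots cancel, including the distinct-slot terms inside a single tuple $X_j$. What remains is $[D_V,E_V]$ and $[D_\g,E_\g]$. Since $\mathfrak G_\rho$ is a Lie algebra by Theorem~\ref{thm:compatible-Lie-algebra}, Step~1 and this identity give a representation on every $\widetilde H^p(\g,A_\rho)$.

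\emph{Step 3: alternating subspaces and descent.} The operator $L_d$ inserts $D_\g$ symmetrically into every argument. Hence it commutes with permutations of the arguments, so it preserves $\Hom(\g,A)$, $\Hom(\bigwedge^n\g,A)$ and $\Hom(\bigwedge^{n-1}\g\otimes\bigwedge^n\g,A)$. With Step~1 in those degrees, it preserves cocycles and the boundaries $d_\sigma\Hom(\cdot,A)$, so it acts on $H^2_{\ext}$ and $H^3_{\ext}$.

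For descent, on $H^2_{\ext}=Z^2_\rho/d_\rho\Hom(\g,A_\rho)$ the action is $[\theta]\mapsto[L_d\theta]$, the same formula as $\Phi_\rho$. The surjection \eqref{eq:ordinary-to-relative} is induced by the identity on $Z^2_\rho$, so the two actions are intertwined. It remains only that $L_d$ preserves the kernel $d_\rho\mathcal F_\rho/d_\rho\Hom(\g,A_\rho)$. This follows from Lemma~\ref{lem:relative-cochain-action}, which gives $L_dd_\rho f=d_\rho L_df$, together with Lemma~\ref{lem:compatible-module}, which gives $L_df\in\mathcal F_\rho$.
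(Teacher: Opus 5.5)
Your proposal is correct and follows the paper's proof essentially step for step. It uses the same four identities, namely the derivation rule for $[X_j,X_k]_{\mathrm F}$ and $[X_j,z]$, and \eqref{eq:restricted-compatible} for the two coefficient-action lines, to get $\widetilde d L_d=L_d\widetilde d$ by grouping terms according to the slot receiving $D_\g$; the distinct-slot cancellation to get $[L_d,L_e]=L_{[d,e]}$; and Lemma~\ref{lem:relative-cochain-action} to preserve $d_\rho\mathcal F_\rho$ for the descent to $\Phi_\rho$. Your explicit remark that \eqref{eq:ordinary-to-relative} is induced by the identity on $Z^2_\rho$ just spells out what the paper leaves implicit.
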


\begin{proof}
The identities needed for the four lines of
\eqref{eq:full-relative-differential} are
\begin{align*}
D_\g[X,Y]_{\mathrm F}
 &=[D_\g X,Y]_{\mathrm F}+[X,D_\g Y]_{\mathrm F},\\
D_\g[X,z]&=[D_\g X,z]+[X,D_\g z],\\
D_V\rho_A(X)-\rho_A(X)D_V&=\rho_A(D_\g X),\\
[D_V,\rho_A(u_1,\ldots,\widehat u_i,\ldots,u_{n-1},z)]
 &=\sum_{j\ne i}\rho_A(u_1,\ldots,D_\g u_j,\ldots,
                         \widehat u_i,\ldots,u_{n-1},z)\\
 &\quad+\rho_A(u_1,\ldots,\widehat u_i,\ldots,u_{n-1},D_\g z).
\end{align*}
The first two follow by applying the derivation rule to each
bracket; the last two are \eqref{eq:restricted-compatible}.
In the first differential line, replacing
$D_\g[X_j,X_k]_{\mathrm F}$ by its two terms accounts exactly
for inserting $D_\g$ into $X_j$ and $X_k$ before the differential.
The second line is treated by the displayed rule for $[X_j,z]$.
For the third line, the commutator with the coefficient action
supplies the insertion in $X_j$. In the last line it supplies
the insertions in all $u_j$ with $j\ne i$ and in $z$; the
remaining insertion in $u_i$ occurs inside $F$.
Thus each argument receiving $D_\g$ appears once, with its
unchanged displayed sign, and
$\widetilde dL_dF=L_d\widetilde dF$.

In the commutator of two operators, insertions in distinct
individual slots cancel, whereas insertions in the same slot
give $[D_\g,E_\g]$. The output term gives $[D_V,E_V]$.
Hence $[L_d,L_e]=L_{[d,e]}$ in every degree.
On alternating $n$-cochains this is the action already proved;
Lemma~\ref{lem:relative-cochain-action} also preserves the
additional boundaries $d_\rho\mathcal F_\rho$.
\end{proof}

\section{Extensibility of derivations and the derivation Wells sequence}
\label{sec:derivation-wells}

We now combine the mixed compatibility equations with the pure
extension equation. The resulting obstruction takes values in
$\mathcal H_\rho^2$, on which the full compatible Lie algebra
$\mathfrak G_\rho$ acts by Theorem~\ref{thm:Phi-relative}.

\subsection{The cocycle attached to a compatible pair}
For a linear operator $P:E\to E$ preserving $V$, whose restriction
and quotient are derivations, define its bracket defect by
\begin{equation}\label{eq:general-defect}
\eta_P(z_1,\ldots,z_n)
 =P[z_1,\ldots,z_n]
  -\sum_{i=1}^n[z_1,\ldots,Pz_i,\ldots,z_n].
\end{equation}
It is $V$-valued and vanishes on all-kernel inputs.
The same calculation supplies the full infinitesimal obstruction space.

\begin{lemma}\label{lem:defect-cocycle}
For arbitrary $u_j,v_i\in E$, the defect satisfies
\begin{align}
&\eta_P(u_1,\ldots,u_{n-1},[v_1,\ldots,v_n])
 +[u_1,\ldots,u_{n-1},\eta_P(v_1,\ldots,v_n)]\notag\\
&=\sum_{i=1}^n
 \eta_P(v_1,\ldots,[u_1,\ldots,u_{n-1},v_i],\ldots,v_n)\notag\\
&\quad+\sum_{i=1}^n
 [v_1,\ldots,\eta_P(u_1,\ldots,u_{n-1},v_i),\ldots,v_n].
\label{eq:linear-FI}
\end{align}
If $P$ satisfies the derivation identity on all mixed brackets,
then
\[
\theta_P(x_1,\ldots,x_n)=\eta_P(s(x_1),\ldots,s(x_n))
\]
belongs to $Z_\rho^2$.
\end{lemma}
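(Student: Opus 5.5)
The identity \eqref{eq:linear-FI} is the linearization of the Filippov identity, and I will derive it by applying $P$ to both sides of \eqref{eq:FI} in $E$. Put $U=(u_1,\ldots,u_{n-1})$. Expanding $P[U,[v_1,\ldots,v_n]]$ with \eqref{eq:general-defect} twice gives
\[
\eta_P(U,[V])+\sum_j[u_1,\ldots,Pu_j,\ldots,[V]]+[U,\eta_P(V)]+\sum_i[U,[v_1,\ldots,Pv_i,\ldots,v_n]].
\]
I expand each summand $P[v_1,\ldots,[U,v_i],\ldots,v_n]$ on the right of \eqref{eq:FI} in the same way. The terms in which $P$ is inserted into a single argument are then grouped. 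They reassemble into the Filippov identity \eqref{eq:FI} applied to the tuples $(u_1,\ldots,Pu_j,\ldots)$, $(v_1,\ldots,Pv_i,\ldots)$, and so on. The cancellation is the same bookkeeping as in the proof of Proposition~\ref{prop:infinitesimal-H3}, where \eqref{eq:defect-covariance} was obtained. What survives is exactly \eqref{eq:linear-FI}. This step only needs $E$ to be an $n$-Lie algebra; no hypothesis on $P$ is required beyond linearity.

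For the second assertion, assume $P$ satisfies the derivation identity on mixed brackets, so $\eta_P$ vanishes on every tuple containing a kernel entry. Since $p\circ P$ covers the derivation $D_\g$ on the quotient, $\eta_P$ is $V$-valued. Hence $\eta_P(z_1,\ldots,z_n)$ depends only on $p(z_i)$: writing $z_i=s(x_i)+a_i$ and expanding multilinearly, every term containing some $a_i$ is mixed and vanishes. So $\eta_P=\theta_P\circ p^{\otimes n}$, and $\theta_P$ is alternating.

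Next I show that $\theta_P$ takes values in $A_\rho$. Take $v_i=s(y_i)$ in \eqref{eq:linear-FI}, and take $U$ with $u_1=b\in V$ and the other $u_j\in E$ arbitrary. Every $\eta_P$ term containing $U$ or an entry of $U$ is mixed, because $[U,v_i]\in V$ as $V$ is an ideal, so these terms vanish. The term $\eta_P(U,[V])$ also contains $b$ and vanishes. Only $[b,u_2,\ldots,u_{n-1},\theta_P(Y)]=0$ survives. By the first description in \eqref{eq:relative-center}, this gives $\theta_P(Y)\in A_\rho$.

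Finally I derive the cocycle condition. Take all entries to be sections, $u_j=s(x_j)$ and $v_i=s(y_i)$. I replace each inner bracket $[s(X),s(y_i)]$ by $s([X,y_i]_\g)+\omega(X,y_i)$, and $[s(Y)]$ by $s([Y]_\g)+\omega(Y)$. The $\omega$ parts then enter $\eta_P$ as kernel entries and drop out. The outer brackets become $[s(X),\theta_P(Y)]=\rho(X)\theta_P(Y)$ and $[s(y_1),\ldots,\theta_P(X,y_i),\ldots,s(y_n)]=(-1)^{n-i}\rho(\ldots\widehat y_i\ldots)\theta_P(X,y_i)$. The sign of the latter comes from moving the kernel entry to the last slot. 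Comparing with \eqref{eq:relative-d2}, the identity reads $d_\rho\theta_P=0$, so $\theta_P\in Z_\rho^2$.

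The main obstacle is the sign and cancellation bookkeeping in the first step. I would handle it via the operator form used for Lemma~\ref{lem:finite-bianchi}, namely $[P,D_U]=D_{PU}+\eta_P(U,\cdot)$, applied to both sides of \eqref{eq:FI}.
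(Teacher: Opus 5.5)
Your proposal is correct and follows the paper's proof. You apply $P$ to both sides of \eqref{eq:FI} and cancel the single-insertion terms using the Filippov identity for the tuples $U_j^P,Y$ and for $U$ with $Pv_i$ inserted. You then specialize \eqref{eq:linear-FI} to a $U$ with a kernel entry, where only $[U,\theta_P(Y)]$ survives, and finally to pure section inputs to read off $d_\rho\theta_P=0$. The only difference is cosmetic: you deduce $\theta_P(Y)\in A_\rho$ from the intrinsic description in \eqref{eq:relative-center} with arbitrary $E$-entries, whereas the paper uses section and kernel entries and the $\rho_{r+1}$-description.
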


\begin{proof}
Write $U=(u_1,\ldots,u_{n-1})$ and  $Y=(v_1,\ldots,v_n)$.
For this calculation only, $U_j^P$ means $U$ with its $j$th
entry replaced by $Pu_j$. Expanding the left side of the
Filippov identity after applying $P$ gives
\begin{align*}
P[U,[Y]]
={}&\sum_j[U_j^P,[Y]]
 +\sum_i[U,[v_1,\ldots,Pv_i,\ldots,v_n]]\\
&+[U,\eta_P(Y)]+\eta_P(U,[Y]).
\end{align*}
Expanding the right side gives
\begin{align*}
P\sum_i[v_1,\ldots,[U,v_i],\ldots,v_n]
={}&\sum_i\sum_{k\ne i}
 [v_1,\ldots,Pv_k,\ldots,[U,v_i],\ldots,v_n]\\
&+\sum_i\sum_j[v_1,\ldots,[U_j^P,v_i],\ldots,v_n]\\
&+\sum_i[v_1,\ldots,[U,Pv_i],\ldots,v_n]\\
&+\sum_i[v_1,\ldots,\eta_P(U,v_i),\ldots,v_n]\\
&+\sum_i\eta_P(v_1,\ldots,[U,v_i],\ldots,v_n).
\end{align*}
The first sum in the first expansion equals the second double
sum in the second expansion by the Filippov identity for $U_j^P,Y$.
The second sum in the first expansion equals the first double
sum and the third sum in the second expansion, by applying the
same identity with $Pv_i$ in the $i$th inner slot.
The remaining terms give \eqref{eq:linear-FI}.

Now suppose $\eta_P$ vanishes on mixed tuples. Put
\[
U=(s(x_1),\ldots,s(x_{n-1-r}),a_1,\ldots,a_r),
\qquad Y=(s(y_1),\ldots,s(y_n)),\quad r\geq1.
\]
Every term of \eqref{eq:linear-FI} except $[U,\theta_P(Y)]$
vanishes: an argument of each defect is in $V$, and
$[U,s(y_i)]\in V$. Hence
\[
\rho_{r+1}(x_1,\ldots,x_{n-1-r})
 (a_1,\ldots,a_r,\theta_P(Y))=0,\qquad1\leq r\leq n-1.
\]
Thus $\theta_P(Y)\in A_\rho$.
On pure section inputs \eqref{eq:linear-FI} becomes
\begin{align*}
&\theta_P(X,[Y]_\g)+\rho(X)\theta_P(Y)\\
&=\sum_i\theta_P(y_1,\ldots,[X,y_i],\ldots,y_n)
 +\sum_i(-1)^{n-i}\rho(y_1,\ldots,\widehat y_i,\ldots,y_n)
                  \theta_P(X,y_i).
\end{align*}
The extra kernel terms inside the defects vanish because the
defects vanish on mixed tuples. The displayed identity is
$d_\rho\theta_P=0$.
\end{proof}

For $d=(D_V,D_\g)\in\mathfrak G_\rho$, choose a mixed
correction $\lambda$ as in \eqref{eq:corrected-compatibility}.
Define
\begin{align}
\Theta^\omega_{d,\lambda}(x_1,\ldots,x_n)
={}&D_V\omega(x_1,\ldots,x_n)
 -\sum_i\omega(x_1,\ldots,D_\g x_i,\ldots,x_n)\notag\\
&-\sum_i(-1)^{n-i}
 \rho(x_1,\ldots,\widehat x_i,\ldots,x_n)\lambda(x_i)
 +\lambda([x_1,\ldots,x_n]_\g)\notag\\
={}&(L_d\omega-d_\rho\lambda)(x_1,\ldots,x_n).
\label{eq:relative-obstruction}
\end{align}

\begin{proposition}\label{prop:relative-obstruction}
One has $\Theta^\omega_{d,\lambda}\in Z_\rho^2$, and its class
is independent of the chosen mixed correction. The map
\begin{equation}\label{eq:relative-derivation-Wells}
\mathcal W_{\mathcal E}:\mathfrak G_\rho\longrightarrow\mathcal H_\rho^2,
\qquad
\mathcal W_{\mathcal E}(d)=[\Theta^\omega_{d,\lambda}]
\end{equation}
is linear.
\end{proposition}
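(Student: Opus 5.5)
The plan is to realize $\Theta^\omega_{d,\lambda}$ as the bracket defect of the partial lift $P=P_{d,\lambda}$ from \eqref{eq:partial-lift-P}, and then apply Lemma~\ref{lem:defect-cocycle}.

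First, by \eqref{eq:corrected-compatibility}, $P$ satisfies the derivation identity on every mixed bracket, as observed right after \eqref{eq:partial-lift-P}. On all-kernel inputs this holds because $D_V\in\Der(V)$. Hence $\eta_P$ vanishes on mixed tuples, and Lemma~\ref{lem:defect-cocycle} gives $\theta_P\in Z_\rho^2$.

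It then remains to compute $\theta_P(x_1,\ldots,x_n)=\eta_P(s(x_1),\ldots,s(x_n))$. We have
\[
P[s(x_1),\ldots,s(x_n)]=P\bigl(s[x_1,\ldots,x_n]_\g+\omega(x_1,\ldots,x_n)\bigr)
=s(D_\g[x_1,\ldots,x_n]_\g)+\lambda([x_1,\ldots,x_n]_\g)+D_V\omega(x_1,\ldots,x_n).
\]
Also $[s(x_1),\ldots,Ps(x_i),\ldots,s(x_n)]$ equals $s[x_1,\ldots,D_\g x_i,\ldots,x_n]_\g+\omega(x_1,\ldots,D_\g x_i,\ldots,x_n)$ plus the term $[s(x_1),\ldots,\lambda(x_i),\ldots,s(x_n)]=(-1)^{n-i}\rho(x_1,\ldots,\widehat x_i,\ldots,x_n)\lambda(x_i)$. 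The section components cancel because $D_\g\in\Der(\g)$. What remains is exactly $L_d\omega-d_\rho\lambda=\Theta^\omega_{d,\lambda}$, so $\Theta^\omega_{d,\lambda}\in Z_\rho^2$. The only real care needed is this sign bookkeeping.

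For independence of the correction, Theorem~\ref{thm:compatible-Lie-algebra} says any other correction has the form $\lambda+f$ with $f\in\mathcal F_\rho$. Since $\Theta^\omega_{d,\lambda}$ is affine in $\lambda$ through $-d_\rho\lambda$, we get $\Theta^\omega_{d,\lambda+f}=\Theta^\omega_{d,\lambda}-d_\rho f$. Because $d_\rho f\in B_\rho^2$, the class in $\mathcal H_\rho^2$ is unchanged.

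For linearity, suppose $\lambda$ and $\mu$ are corrections for $d$ and $e$. Since \eqref{eq:corrected-compatibility} is linear, $\alpha\lambda+\beta\mu$ is a correction for $\alpha d+\beta e$. The formula $L_d\omega-d_\rho\lambda$ is linear jointly in $(d,\lambda)$, so
\[
\Theta^\omega_{\alpha d+\beta e,\alpha\lambda+\beta\mu}=\alpha\Theta^\omega_{d,\lambda}+\beta\Theta^\omega_{e,\mu}.
\]
Passing to classes and using the independence just proved, $\mathcal W_{\mathcal E}$ is linear.
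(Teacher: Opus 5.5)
Your proposal is correct and follows the paper's proof essentially step for step. You identify $\Theta^\omega_{d,\lambda}$ with $\theta_P$ for $P=P_{d,\lambda}$ and invoke Lemma~\ref{lem:defect-cocycle}, then use $\lambda'-\lambda\in\mathcal F_\rho$ for independence of the correction and the joint linearity of $(d,\lambda)\mapsto L_d\omega-d_\rho\lambda$ for linearity, with the sign $(-1)^{n-i}$ handled correctly.
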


\begin{proof}
Use $P=P_{d,\lambda}$ from \eqref{eq:partial-lift-P}.
The mixed equations make $\eta_P$ zero on mixed tuples.
On pure section inputs,
\begin{align*}
&P[s(x_1),\ldots,s(x_n)]\\
&\quad=s(D_\g[x_1,\ldots,x_n]_\g)
   +\lambda([x_1,\ldots,x_n]_\g)+D_V\omega(x_1,\ldots,x_n),\\
&\sum_i[s(x_1),\ldots,Ps(x_i),\ldots,s(x_n)]\\
&\quad=s\left(\sum_i[x_1,\ldots,D_\g x_i,\ldots,x_n]_\g\right)
       +\sum_i\omega(x_1,\ldots,D_\g x_i,\ldots,x_n)\\
&\qquad+\sum_i(-1)^{n-i}
 \rho(x_1,\ldots,\widehat x_i,\ldots,x_n)\lambda(x_i).
\end{align*}
The section terms cancel since $D_\g$ is a derivation.
Thus $\theta_P=\Theta^\omega_{d,\lambda}$, and
Lemma~\ref{lem:defect-cocycle} proves the cocycle assertion.

If $\lambda'$ is another correction, then
$f=\lambda'-\lambda\in\mathcal F_\rho$ and
\[
\Theta^\omega_{d,\lambda'}-\Theta^\omega_{d,\lambda}=-d_\rho f.
\]
The class is therefore independent of $\lambda$.
If $\mu$ corrects $e$, then $a\lambda+b\mu$ corrects $ad+be$ and
\[
\Theta^\omega_{ad+be,a\lambda+b\mu}
 =a\Theta^\omega_{d,\lambda}+b\Theta^\omega_{e,\mu}.
\]
This proves linearity without choosing corrections globally.
\end{proof}

\subsection{Extensibility and all derivation lifts}
\begin{theorem}\label{thm:relative-extensibility}
A pair $d\in\Der(V)\oplus\Der(\g)$ is induced by a derivation
of $E$ preserving $V$ precisely when
\[
d\in\mathfrak G_\rho,\qquad \mathcal W_{\mathcal E}(d)=0.
\]
If $\lambda$ is any mixed correction, the lifts are exactly
\begin{equation}\label{eq:relative-all-derivations}
D(s(x)+a)=s(D_\g x)+\lambda(x)+f(x)+D_V a,
\qquad
f\in\mathcal F_\rho,\quad d_\rho f=\Theta^\omega_{d,\lambda}.
\end{equation}
Whenever one lift exists, all lifts form an affine space over
$Z_\Omega$.
\end{theorem}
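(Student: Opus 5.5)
The proof reduces everything to the operators $P_{d,\lambda}$ of \eqref{eq:partial-lift-P} and their bracket defects \eqref{eq:general-defect}.

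\emph{Shape of a lift.} Let $D$ be a derivation of $E$ preserving $V$ and inducing $d$. Then $D|_V=D_V$ and $pD=D_\g p$, so $D(s(x))-s(D_\g x)\in V$. Hence there is a unique $\mu:\g\to V$ with
\[
D(s(x)+a)=s(D_\g x)+\mu(x)+D_V a,
\]
that is, $D=P_{d,\mu}$. The derivation identity for $D$ then splits by input type into two parts:
\begin{itemize}
\item on mixed tuples it is \eqref{eq:corrected-compatibility} with correction $\mu$, as already noted after \eqref{eq:partial-lift-P};
\item on pure section tuples it is $\theta_{P_{d,\mu}}=0$, and the computation in the proof of Proposition~\ref{prop:relative-obstruction} identifies this with $\Theta^\omega_{d,\mu}=0$.
\end{itemize}
All-kernel inputs impose only $D_V\in\Der(V)$. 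Multilinearity reduces the identity on arbitrary inputs to these pure input types.

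\emph{Characterizing lifts.} A lift of $d$ is therefore exactly a map $\mu:\g\to V$ satisfying $L_d\rho_r=\partial_r\mu$ for all $r$ and $L_d\omega=d_\rho\mu$.

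For necessity, the first condition gives $d\in\mathfrak G_\rho$ with correction $\mu$. The second gives $\Theta^\omega_{d,\mu}=0$, so $\mathcal W_{\mathcal E}(d)=0$.

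Conversely, suppose $d\in\mathfrak G_\rho$ and fix any correction $\lambda$. By Theorem~\ref{thm:compatible-Lie-algebra}, every correction has the form $\mu=\lambda+f$ with $f\in\mathcal F_\rho$. The proof of Proposition~\ref{prop:relative-obstruction} gives the linear formula
\[
\Theta^\omega_{d,\lambda+f}=\Theta^\omega_{d,\lambda}-d_\rho f,
\]
since $\Theta$ is affine in the correction. So $P_{d,\lambda+f}$ is a lift precisely when $d_\rho f=\Theta^\omega_{d,\lambda}$, which is \eqref{eq:relative-all-derivations}.

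Such an $f$ exists iff $\Theta^\omega_{d,\lambda}\in d_\rho\mathcal F_\rho=B^2_\rho$. By \eqref{eq:relative-cohomology} this means $\mathcal W_{\mathcal E}(d)=0$. Here $\Theta^\omega_{d,\lambda}\in Z^2_\rho$ by Proposition~\ref{prop:relative-obstruction}, so the condition is meaningful in $\mathcal H^2_\rho$.

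\emph{Affine structure.} Fix one lift $\lambda+f_0$. The solutions $f$ of $d_\rho f=\Theta^\omega_{d,\lambda}$ with $f\in\mathcal F_\rho$ are $f_0+g$ with $g\in\mathcal F_\rho$ and $d_\rho g=0$, i.e.\ $g\in Z_\Omega$. The map $f\mapsto P_{d,\lambda+f}$ is injective. Hence the lifts form an affine space over $Z_\Omega$, with $g\in Z_\Omega$ acting by adding the derivation $\delta_g$.

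\emph{Main obstacle.} The only delicate point is justifying that the derivation identity on all of $E$ reduces to the three pure input types. Every term is multilinear, so expanding each $z_i=s(x_i)+a_i$ reduces to tuples of section elements and kernel elements. Alternation then lets us move kernel entries to the right, putting every tuple into one of the forms handled by \eqref{eq:bracket-pure}--\eqref{eq:bracket-mixed}.
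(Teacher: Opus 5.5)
Your proposal is correct and follows essentially the same route as the paper: write any lift as $P_{d,\mu}$, split the derivation identity into its mixed, pure and all-kernel parts, use that the corrections form $\lambda+\mathcal F_\rho$ together with $\Theta^\omega_{d,\lambda+f}=\Theta^\omega_{d,\lambda}-d_\rho f$, and read off the affine structure over $Z_\Omega$. The multilinearity-and-alternation reduction you single out as the main obstacle is exactly the justification the paper itself gives.
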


\begin{proof}
Any linear lift of the prescribed pair has the unique form
$P_{d,\ell}$ for some $\ell:\g\to V$.
For $m=n-r$, its derivation identity on a mixed tuple reads
\begin{align*}
D_V\rho_r(x_1,\ldots,x_m)(a_1,\ldots,a_r)
={}&\sum_{i=1}^m
\rho_r(x_1,\ldots,D_\g x_i,\ldots,x_m)(a_1,\ldots,a_r)\\
&+\sum_{j=1}^r
\rho_r(x_1,\ldots,x_m)(a_1,\ldots,D_V a_j,\ldots,a_r)\\
&+\sum_{i=1}^m(-1)^{m-i}
\rho_{r+1}(x_1,\ldots,\widehat x_i,\ldots,x_m)
                  (\ell(x_i),a_1,\ldots,a_r).
\end{align*}
Consequently $\ell$ is a mixed correction and
$d\in\mathfrak G_\rho$.
The pure calculation in Proposition~\ref{prop:relative-obstruction}
shows that the remaining derivation identity is
\[
L_d\omega=d_\rho\ell.
\]
Fixing one mixed correction $\lambda$, every other one is
$\ell=\lambda+f$, $f\in\mathcal F_\rho$. The pure equation is
then $d_\rho f=L_d\omega-d_\rho\lambda$.
It has a solution exactly when the class in
\eqref{eq:relative-derivation-Wells} is zero.

Conversely, a solution in \eqref{eq:relative-all-derivations}
satisfies the displayed mixed identity, the pure identity, and
the all-kernel identity for $D_V$. Alternation and multilinearity
give the derivation rule on every $z_i=s(x_i)+a_i$.
If $f_0$ is one solution, all others satisfy
\[
\partial_r(f-f_0)=0,\qquad d_\rho(f-f_0)=0,
\]
and hence are exactly $f_0+Z_\Omega$.
\end{proof}

\subsection{The Lie cocycle identity and exactness}
\begin{theorem}\label{thm:relative-Lie-cocycle}
The derivation Wells map is a Lie algebra $1$-cocycle for
$\Phi_\rho$:
\begin{equation}\label{eq:relative-Lie-cocycle}
\mathcal W_{\mathcal E}([d,e])
 =\Phi_\rho(d)\mathcal W_{\mathcal E}(e)
  -\Phi_\rho(e)\mathcal W_{\mathcal E}(d).
\end{equation}
\end{theorem}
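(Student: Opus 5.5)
The plan is to prove the identity at cochain level, using the operators $P=P_{d,\lambda}$ and $Q=P_{e,\mu}$ from \eqref{eq:partial-lift-P} together with the bracket-defect formalism \eqref{eq:general-defect}. By Theorem~\ref{thm:compatible-Lie-algebra}, $[P,Q]=P_{[d,e],\nu}$ with $\nu$ as in \eqref{eq:compatible-bracket-correction}. The proof of Proposition~\ref{prop:relative-obstruction} identifies $\theta_P=\Theta^\omega_{d,\lambda}$, $\theta_Q=\Theta^\omega_{e,\mu}$ and $\theta_{[P,Q]}=\Theta^\omega_{[d,e],\nu}$. Since the class of $\Theta^\omega$ does not depend on the correction, it suffices to show
\[
\theta_{[P,Q]}=L_d\theta_Q-L_e\theta_P
\]
exactly, or at least up to an element of $d_\rho\mathcal F_\rho$.

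\textbf{Step 1: a defect identity for commutators.} For arbitrary operators $P,Q$ preserving $V$, expand $PQ[z_1,\ldots,z_n]$ twice using \eqref{eq:general-defect}:
\[
PQ[z]=P\Bigl(\sum_i[\ldots,Qz_i,\ldots]+\eta_Q(z)\Bigr)
=\sum_i[\ldots,PQz_i,\ldots]+\sum_{i\ne j}[\ldots,Qz_i,\ldots,Pz_j,\ldots]
+\sum_i\eta_P(\ldots,Qz_i,\ldots)+P\eta_Q(z).
\]
Subtracting the same expansion with $P,Q$ exchanged, the double sums cancel as in Theorem~\ref{thm:compatible-Lie-algebra}. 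This gives
\[
\eta_{[P,Q]}(z)=P\eta_Q(z)-\sum_i\eta_Q(\ldots,Pz_i,\ldots)-Q\eta_P(z)+\sum_i\eta_P(\ldots,Qz_i,\ldots),
\]
which is a purely formal identity.

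\textbf{Step 2: specialization to section inputs.} Take $z_i=s(x_i)$. Because $\eta_P$ and $\eta_Q$ vanish on mixed tuples, in $\eta_Q(\ldots,Ps(x_i),\ldots)$ only the $s(D_\g x_i)$ part of $Ps(x_i)=s(D_\g x_i)+\lambda(x_i)$ survives; the $\lambda(x_i)$ part produces a mixed tuple. Since $\eta_Q(s(x))=\theta_Q(x)\in V$, we get $P\eta_Q(s(x))=D_V\theta_Q(x)$. The first two terms therefore give $(L_d\theta_Q)(x)$, and symmetrically the last two give $-(L_e\theta_P)(x)$. Hence
\[
\Theta^\omega_{[d,e],\nu}=L_d\Theta^\omega_{e,\mu}-L_e\Theta^\omega_{d,\lambda}.
\]
Passing to classes, using Theorem~\ref{thm:Phi-relative} (which shows $\Phi_\rho$ is well defined via Lemma~\ref{lem:relative-cochain-action}) and the independence of $\mathcal W_{\mathcal E}$ from the correction, yields \eqref{eq:relative-Lie-cocycle}.

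\textbf{Main obstacle.} The delicate point is bookkeeping in Step 1. One must check that the cross terms $[\ldots,Qz_i,\ldots,Pz_j,\ldots]$ with $i\ne j$ really cancel. They do, because both orders appear with the slots kept in place, exactly as in the proof of Theorem~\ref{thm:compatible-Lie-algebra}. One must also check that applying $P$ to $\sum_i[\ldots,Qz_i,\ldots]$ produces the defect term $\eta_P(\ldots,Qz_i,\ldots)$ with the correct sign. As a cross-check, the terms $-\sum\rho\,\nu$ and $\nu([x]_\g)$ inside $\Theta^\omega_{[d,e],\nu}=L_{[d,e]}\omega-d_\rho\nu$ can be compared directly with $L_dL_e\omega-L_eL_d\omega$ plus the $\lambda,\mu$ terms, using the identity $L_d(d_\rho\mu)-d_\rho(L_d\mu)=\sum(-1)^{n-i}(\partial_1\lambda)\mu$ from the proof of Lemma~\ref{lem:relative-cochain-action}. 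The operator argument above avoids this computation.
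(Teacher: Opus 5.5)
Your proposal is correct and follows essentially the same route as the paper. Both arguments use the formal commutator identity for $\eta_{[P,Q]}$ and then evaluate it on section inputs, where the $\lambda$- and $\mu$-insertions drop out because the defects vanish on mixed tuples; this gives $\Theta^\omega_{[d,e],\nu}=L_d\Theta^\omega_{e,\mu}-L_e\Theta^\omega_{d,\lambda}$ at cochain level before passing to classes.
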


\begin{proof}
Choose mixed corrections $\lambda,\mu$ and write
$P=P_{d,\lambda}$, $Q=P_{e,\mu}$.
To see the defect of their commutator, expand on arbitrary
$z_1,\ldots,z_n$:
\begin{align*}
PQ[z_1,\ldots,z_n]
={}&\sum_i[z_1,\ldots,PQz_i,\ldots,z_n]\\
&+\sum_{i\ne j}[z_1,\ldots,Qz_i,\ldots,Pz_j,\ldots,z_n]\\
&+\sum_i\eta_P(z_1,\ldots,Qz_i,\ldots,z_n)+P\eta_Q(z_1,\ldots,z_n).
\end{align*}
Subtract the expansion with $P,Q$ interchanged.
The distinct-slot sums cancel, leaving
\begin{align*}
\eta_{[P,Q]}(z_1,\ldots,z_n)
={}&P\eta_Q(z_1,\ldots,z_n)
  -\sum_i\eta_Q(z_1,\ldots,Pz_i,\ldots,z_n)\\
&-Q\eta_P(z_1,\ldots,z_n)
  +\sum_i\eta_P(z_1,\ldots,Qz_i,\ldots,z_n).
\end{align*}
The defects vanish on mixed tuples and take values in $A_\rho$.
Thus, on section inputs, terms involving $\lambda(x_i)$ or
$\mu(x_i)$ inside a defect vanish. The remaining equality is
\begin{equation}\label{eq:Theta-bracket}
\Theta^\omega_{[d,e],\nu}
 =L_d\Theta^\omega_{e,\mu}-L_e\Theta^\omega_{d,\lambda},
\qquad
\nu=D_V\mu-\mu D_\g-E_V\lambda+\lambda E_\g.
\end{equation}
Theorem~\ref{thm:compatible-Lie-algebra} ensures that $\nu$ is
a valid correction. Passing to classes proves
\eqref{eq:relative-Lie-cocycle}.
\end{proof}

Put
\[
\Der_V(E)=\{D\in\Der(E):D(V)\subseteq V\},
\qquad \tau_*(D)=(D|_V,\overline D).
\]

\begin{theorem}[Derivation Wells sequence]\label{thm:relative-derivation-exact}
There is an exact sequence of vector spaces
\begin{equation}\label{eq:relative-derivation-exact}
0\longrightarrow Z_\Omega
\xrightarrow{\,f\mapsto\delta_f\,}\Der_V(E)
\xrightarrow{\,\tau_*\,}\mathfrak G_\rho
\xrightarrow{\,\mathcal W_{\mathcal E}\,}\mathcal H_\rho^2.
\end{equation}
The first two arrows are Lie algebra homomorphisms, with
$Z_\Omega$ given its zero bracket. The last arrow is the
$1$-cocycle in \eqref{eq:relative-Lie-cocycle}.
In particular, with $\mathfrak k_{\mathcal E}=\ker\mathcal W_{\mathcal E}$,
\begin{equation}\label{eq:derivation-short}
0\longrightarrow Z_\Omega\longrightarrow\Der_V(E)
\xrightarrow{\tau_*}\mathfrak k_{\mathcal E}\longrightarrow0
\end{equation}
is a short exact sequence of Lie algebras with abelian kernel.
\end{theorem}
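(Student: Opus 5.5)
The proof assembles results already established; I would check exactness at each term in turn.

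\emph{Injectivity at $Z_\Omega$.} For $f\in Z_\Omega$, the operator $\delta_f=ifp$ is the lift $P_{0,f}$ of the zero pair $d=0$ with correction $f$. Since $f\in\mathcal F_\rho$, it satisfies the mixed derivation identity by Lemma~\ref{lem:partial-shears}. Its pure defect is $-d_\rho f=0$, by the computation in Proposition~\ref{prop:relative-obstruction} with $d=0$. Hence $\delta_f\in\Der_V(E)$. Injectivity is clear, because $\delta_f(s(x))=f(x)$.

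\emph{Exactness at $\Der_V(E)$.} We have $\tau_*\delta_f=(0,0)$, since $\delta_f$ kills $V$ and takes values in $V$. Conversely, if $\tau_*D=0$, write $D=P_{0,\ell}$. Theorem~\ref{thm:relative-extensibility} with $d=0$ and $\lambda=0$ (a valid correction, since $L_0\rho_r=0$) gives $\ell\in\mathcal F_\rho$ with $d_\rho\ell=\Theta^\omega_{0,0}=0$, that is, $\ell\in Z_\Omega$. Its first assertion also shows $\tau_*(\Der_V(E))\subseteq\mathfrak G_\rho$, since the restriction and the quotient of a derivation preserving $V$ are derivations.

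\emph{Exactness at $\mathfrak G_\rho$.} This is exactly the characterization in Theorem~\ref{thm:relative-extensibility}: $d$ is of the form $\tau_*D$ if and only if $d\in\mathfrak G_\rho$ and $\mathcal W_{\mathcal E}(d)=0$. Thus $\im\tau_*=\ker\mathcal W_{\mathcal E}$. Linearity of $\mathcal W_{\mathcal E}$ is Proposition~\ref{prop:relative-obstruction}, and the cocycle property is Theorem~\ref{thm:relative-Lie-cocycle}.

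\emph{Homomorphism properties.} $\tau_*$ respects commutators, because restriction to the invariant subspace $V$ and passage to the quotient $\g$ are compatible with composition. For $f,k\in Z_\Omega$ we have $\delta_f\delta_k=ifpifp=0$, since $pi=0$. So $[\delta_f,\delta_k]=0$, matching the zero bracket on $Z_\Omega$. Its image $\ker\tau_*$ is an ideal of $\Der_V(E)$, being the kernel of a homomorphism. One can also see this directly from $[D,\delta_f]=\delta_{L_{\tau_*D}f}$, as in Lemma~\ref{lem:compatible-module}.

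\emph{The short exact sequence \eqref{eq:derivation-short}.} First, $\mathfrak k_{\mathcal E}$ is a Lie subalgebra. If $\mathcal W_{\mathcal E}(d)=\mathcal W_{\mathcal E}(e)=0$, then \eqref{eq:relative-Lie-cocycle} gives $\mathcal W_{\mathcal E}([d,e])=\Phi_\rho(d)0-\Phi_\rho(e)0=0$. Alternatively, $\mathfrak k_{\mathcal E}=\im\tau_*$ is the image of a Lie homomorphism. Surjectivity onto $\mathfrak k_{\mathcal E}$ and exactness at the other terms follow from the previous steps.

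The only point requiring any care is the identification $\ker\tau_*=\delta(Z_\Omega)$. There one must note that $\lambda=0$ is a legitimate correction for $d=0$, and that the lift formula \eqref{eq:relative-all-derivations} then forces $f\in\mathcal F_\rho$ together with $d_\rho f=0$. Every other step is a direct citation.
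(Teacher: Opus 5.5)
Your proof is correct and follows essentially the same route as the paper. In both, $\delta_f$ is a derivation because $\partial_r f=0$ and $d_\rho f=0$; $\ker\tau_*$ consists of the $\delta_f$ with $f\in Z_\Omega$; exactness at $\mathfrak G_\rho$ is Theorem~\ref{thm:relative-extensibility}; and $\mathfrak k_{\mathcal E}$ is closed under brackets by the cocycle identity. The only differences are cosmetic: you get $\ker\tau_*=\delta(Z_\Omega)$ by specializing Theorem~\ref{thm:relative-extensibility} to $d=0$, $\lambda=0$, and you check that $\tau_*$ preserves commutators abstractly rather than by evaluating $[P_{d,\lambda},P_{e,\mu}]$ on section elements.
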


\begin{proof}
For $f\in Z_\Omega$, the mixed identities are $\partial_rf=0$,
and the pure identity is $d_\rho f=0$. Hence $\delta_f$ is
a derivation. The map $f\mapsto\delta_f$ is injective because
$\delta_fs(x)=f(x)$.
If $\tau_*(D)=0$, then $D|_V=0$ and $pD=0$.
The map $f(x)=D(s(x))$ is independent of the representative
of $x$, and $D=\delta_f$. Its derivation identities imply
$f\in Z_\Omega$. This proves exactness at $\Der_V(E)$.
Theorem~\ref{thm:relative-extensibility} proves
$\im\tau_*=\ker\mathcal W_{\mathcal E}$.

For lifts $D=P_{d,\lambda}$ and $D'=P_{e,\mu}$, direct evaluation gives
\begin{align*}
[D,D'](s(x)+a)
={}&s([D_\g,E_\g]x)+[D_V,E_V]a\\
&+D_V\mu(x)-\mu(D_\g x)-E_V\lambda(x)+\lambda(E_\g x).
\end{align*}
Thus $\tau_*[D,D']=[\tau_*D,\tau_*D']$.
Moreover
\[
[\delta_f,\delta_k]=0,\qquad
[D,\delta_f]=\delta_{L_df}.
\]
The kernel is an abelian ideal. Finally,
\eqref{eq:relative-Lie-cocycle} makes $\ker\mathcal W_{\mathcal E}$
closed under brackets.
\end{proof}

For completeness, define
\[
\mathfrak G_{\rho,V}=\{D_V:(D_V,0)\in\mathfrak G_\rho\},
\qquad
\mathfrak G_{\rho,\g}=\{D_\g:(0,D_\g)\in\mathfrak G_\rho\}.
\]
Restriction of the same calculation gives
\begin{align*}
0\longrightarrow Z_\Omega\longrightarrow
 \{D\in\Der_V(E):\overline D=0\}
 &\longrightarrow\mathfrak G_{\rho,V}
 \xrightarrow{\,D_V\mapsto\mathcal W_{\mathcal E}(D_V,0)\,}
 \mathcal H_\rho^2,\\
0\longrightarrow Z_\Omega\longrightarrow
 \{D\in\Der_V(E):D|_V=0\}
 &\longrightarrow\mathfrak G_{\rho,\g}
 \xrightarrow{\,D_\g\mapsto\mathcal W_{\mathcal E}(0,D_\g)\,}
 \mathcal H_\rho^2.
\end{align*}
No surjectivity onto the final cohomology space is asserted.

\subsection{Independence of the section}
\begin{proposition}\label{prop:relative-section}
The spaces $A_\rho,\mathcal F_\rho,Z_\Omega,\mathcal H_\rho^2$,
the Lie algebra $\mathfrak G_\rho$, its action $\Phi_\rho$, and
the map $\mathcal W_{\mathcal E}$ are independent of the section
under their natural identifications.
If $s'=s+k$, a correction for the same operator changes by
\begin{equation}\label{eq:derivation-section}
\lambda'=\lambda+D_V k-kD_\g.
\end{equation}
The strictly compatible subalgebra $G_\rho^0$ need not be
section independent.
\end{proposition}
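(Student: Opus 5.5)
The plan is to work throughout with the intrinsic description of each object in terms of the algebra $E$, and to pass from one section to another with the shear $T_k$ of Lemma~\ref{lem:gauge-action}. The tool is Lemma~\ref{lem:partial-shears}, read as a statement about operators on $E$. Let $s'=s+k$. Then the coordinate isomorphisms satisfy $F_{s'}=F_s T_k$.

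First I handle $A_\rho$ and $\mathcal F_\rho$. For $A_\rho$ nothing is needed: its first description in \eqref{eq:relative-center} uses only brackets in $E$. For $\mathcal F_\rho$ I use the equivalence (1)$\Leftrightarrow$(2) in Lemma~\ref{lem:partial-shears}. This says $f\in\mathcal F_\rho$ exactly when $\delta_f=ifp$ satisfies the derivation identity on mixed brackets, and $\delta_f$ is defined without any section. So $\mathcal F_\rho$ is the same subspace of $\Hom(\g,V)$ for every section. The same argument applies to $Z_\Omega$, using the first arrow of Theorem~\ref{thm:relative-derivation-exact}: $Z_\Omega$ is the space of $f$ for which $\delta_f$ is a derivation of $E$, again an intrinsic condition.

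Next comes the correction formula \eqref{eq:derivation-section}. Fix an operator $P$ preserving $V$ and inducing $d$, and write $P(s(x))=s(D_\g x)+\lambda(x)$. Evaluating at $s'(x)=s(x)+k(x)$ gives
\[
P(s'(x))=s(D_\g x)+\lambda(x)+D_Vk(x)=s'(D_\g x)+\lambda(x)+D_Vk(x)-k(D_\g x).
\]
So $\lambda'=\lambda+D_Vk-kD_\g$. The mixed derivation identity for $P$ is intrinsic, and by the remark after \eqref{eq:partial-lift-P} it is equivalent to $\lambda$ being a correction for the section $s$. The same identity written in $s'$-coordinates says $\lambda'$ is a correction for $s'$. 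Hence $\mathfrak G_\rho$ is section independent, being the set of pairs induced by some $P$ that satisfies the mixed derivation identity.

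The cohomology is the delicate part. By Lemma~\ref{lem:defect-cocycle}, $\eta_P$ is an intrinsic map on $E$, and $\Theta^\omega_{d,\lambda}=\eta_P\circ s^{\wedge n}$. Changing $s$ to $s'$ adds $k(x_i)$ in the arguments of $\eta_P$. Each resulting term is $\eta_P$ evaluated on a mixed tuple, which is zero. Therefore $\Theta'=\Theta$ as maps $\bigwedge^n\g\to A_\rho$.

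The point I expect to be hardest is identifying $Z_\rho^2$ and $B_\rho^2$ across sections, because $\rho_1$ changes by the gauge formula \eqref{eq:gauge-rho-telescope}. My approach is to note that $\rho_A$ is section independent by Proposition~\ref{prop:central-representation}. On $A_\rho$-valued cochains $d_\rho=d_{\rho_A}$, so $Z_\rho^2$ is unchanged. For $f\in\mathcal F_\rho$, I compare $d_{\rho'}f$ with $d_\rho f$ using $\omega^f=\omega+d_\rho f$ from \eqref{eq:partial-gauge-affine}. The intrinsic meaning of $d_\rho f$ is the pure defect of the derivation $\delta_f$ on $s$-inputs, since $\eta_{\delta_f}(s(x_1),\ldots,s(x_n))=-d_\rho f$. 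The same mixed-vanishing argument used above for $\Theta$ then shows this pure defect does not depend on $s$. So $B_\rho^2$, and with it $\mathcal H_\rho^2$, is unchanged.

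Finally, $\Phi_\rho(d)$ is given by the section-free formula \eqref{eq:Ld-theta}, so the action is also section independent. The representative $\Theta$ is literally unchanged, hence so is $\mathcal W_{\mathcal E}$.

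For $G_\rho^0$, I would exhibit a failure of section independence. Take $d$ with $\lambda=0$ and choose $k$ so that $D_Vk-kD_\g\notin\mathcal F_\rho$. A direct product with non-abelian $V$ and $d=(D_V,0)$, with $D_V$ not preserving $Z(V)$-valued maps appropriately, should serve as the counterexample.
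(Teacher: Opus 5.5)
Your proposal is correct and follows the paper's proof essentially step for step. It uses the same intrinsic descriptions of $A_\rho$, $\mathcal F_\rho$ and $\mathfrak G_\rho$ through $\delta_f=ifp$ and the operator $P$, and the same evaluation of $P$ at $s'(x)$ to obtain \eqref{eq:derivation-section}. It also uses the vanishing of $\eta_P$ and $\eta_{\delta_f}$ on mixed tuples, which gives $\Theta^{\omega'}_{d,\lambda'}=\Theta^{\omega}_{d,\lambda}$ and $d_{\rho'}f=d_\rho f$ for $f\in\mathcal F_\rho$.

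For $G_\rho^0$, your criterion $D_Vk-kD_\g\notin\mathcal F_\rho$ is the precise one, slightly sharper than the paper's remark that $\lambda'$ may be nonzero. Your phrase about $D_V$ ``not preserving $Z(V)$-valued maps'' cannot be meant literally, since every derivation preserves $Z(V)$; what is needed is that $k$ itself be non-central. To complete the example, take a direct product with $V$ centerless and non-abelian and $\g\ne0$, so that $\mathcal F_\rho=\Hom(\g,Z(V))=0$. Then choose $D_\g=0$, a nonzero derivation $D_V$, and any $k$ with $D_Vk\ne0$.
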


\begin{proof}
The space $A_\rho$ is given intrinsically by
$[A_\rho,V,E,\ldots,E]=0$.
Its representation is unchanged by
Proposition~\ref{prop:central-representation}, so $Z_\rho^2$
is unchanged. The map $\delta_f=ifp$ is also intrinsic.
By Lemma~\ref{lem:partial-shears}, membership of $f$ in
$\mathcal F_\rho$ says that $\delta_f$ satisfies the derivation
identity on all mixed brackets; it is therefore unchanged.

For $f\in\mathcal F_\rho$ the defect $\eta_{\delta_f}$ vanishes
on mixed tuples, and its value on pure section inputs is
$-d_\rho f$. Consequently
\begin{align*}
-(d_{\rho'}f)(x_1,\ldots,x_n)
 &=\eta_{\delta_f}(s(x_1)+k(x_1),\ldots,s(x_n)+k(x_n))\\
 &=\eta_{\delta_f}(s(x_1),\ldots,s(x_n))
 =-(d_\rho f)(x_1,\ldots,x_n).
\end{align*}
Thus the boundary spaces and their kernels agree exactly.
This identifies $\mathcal H_{\rho'}^2=\mathcal H_\rho^2$.

Compatibility of $d$ means the existence of an operator on $E$
inducing $d$ and satisfying the mixed derivation identities.
This intrinsic description identifies the two compatible Lie
algebras. For the same operator $P$,
\begin{align*}
P(s'(x)+a)
 &=s(D_\g x)+\lambda(x)+D_V k(x)+D_V a\\
 &=s'(D_\g x)+(\lambda+D_V k-kD_\g)(x)+D_V a,
\end{align*}
which proves \eqref{eq:derivation-section}.
Since $\eta_P$ vanishes on mixed tuples,
\begin{align*}
\Theta^{\omega'}_{d,\lambda'}(x_1,\ldots,x_n)
 &=\eta_P(s'(x_1),\ldots,s'(x_n))\\
 &=\eta_P(s(x_1),\ldots,s(x_n))
 =\Theta^\omega_{d,\lambda}(x_1,\ldots,x_n).
\end{align*}
Hence the Wells maps agree already at the cocycle level.
The formula for $\Phi_\rho$ uses only $D_V|_{A_\rho},D_\g$,
so its action is unchanged. By contrast, $\lambda=0$ need not
imply $\lambda'=0$, explaining the last assertion.
\end{proof}

\subsection{Extensibility in every extension with fixed mixed brackets}
Fix $\Omega_0=(\omega_0,\rho_1,\ldots,\rho_{n-1})$ and denote by
$\mathcal E_\theta$ the extension represented by
$(\omega_0+\theta,\rho_1,\ldots,\rho_{n-1})$, where
$\theta\in Z_\rho^2$.
This is the affine family classified in
Theorem~\ref{thm:affine-classification}.

\begin{theorem}\label{thm:universal-extensibility}
For every $d\in\mathfrak G_\rho$,
\begin{equation}\label{eq:affine-Wells}
\mathcal W_{\mathcal E_\theta}(d)
 =\mathcal W_{\mathcal E_0}(d)+\Phi_\rho(d)[\theta].
\end{equation}
The pair $d$ is extensible in every extension with these mixed
brackets precisely when
\begin{equation}\label{eq:universal-extensibility}
\mathcal W_{\mathcal E_0}(d)=0,\qquad
\Phi_\rho(d)=0\ \text{on }\mathcal H_\rho^2.
\end{equation}
In particular, if $\mathcal H_\rho^2=0$, every compatible pair
is extensible in every extension in the family.
\end{theorem}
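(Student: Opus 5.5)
The plan is to compute the Wells cocycle of $\mathcal E_\theta$ directly at the cochain level, using the same mixed correction for both extensions. The mixed maps $\rho_1,\ldots,\rho_{n-1}$ are identical for $\mathcal E_0$ and $\mathcal E_\theta$. Hence the compatibility equations \eqref{eq:corrected-compatibility}, the space $\mathcal F_\rho$, the module $A_\rho$, the differential $d_\rho$, and $\mathcal H_\rho^2$ coincide for the two extensions. So $\mathfrak G_\rho$ is literally the same Lie algebra, and any mixed correction $\lambda$ for $d$ in $\mathcal E_0$ is also one in $\mathcal E_\theta$.

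By \eqref{eq:relative-obstruction}, with $\omega=\omega_0+\theta$,
\[
\Theta^{\omega_0+\theta}_{d,\lambda}
=L_d(\omega_0+\theta)-d_\rho\lambda
=\Theta^{\omega_0}_{d,\lambda}+L_d\theta ,
\]
since $L_d$ on alternating $V$-valued cochains is linear in $\omega$. By Lemma~\ref{lem:relative-cochain-action}, $L_d\theta\in Z^2_\rho$. Passing to classes and using Proposition~\ref{prop:relative-obstruction} and the definition of $\Phi_\rho$ in Theorem~\ref{thm:Phi-relative} gives \eqref{eq:affine-Wells}.

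One point needs care: the class $[\theta]$ must determine the right side. If $\theta'=\theta+d_\rho f$ with $f\in\mathcal F_\rho$, then $\mathcal E_{\theta'}\cong\mathcal E_\theta$ by Theorem~\ref{thm:affine-classification}. The Wells value is then section independent by Proposition~\ref{prop:relative-section}. On the right side, $L_d(d_\rho f)=d_\rho(L_df)$ with $L_df\in\mathcal F_\rho$, by Lemma~\ref{lem:compatible-module} and Lemma~\ref{lem:relative-cochain-action}. So both sides depend only on $[\theta]$.

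For the characterization, recall that by Theorem~\ref{thm:affine-classification} every extension with these mixed brackets is $\mathcal E_\theta$ for some $\theta\in Z^2_\rho$, and every class $[\theta]\in\mathcal H^2_\rho$ occurs. By Theorem~\ref{thm:relative-extensibility}, $d$ is extensible in $\mathcal E_\theta$ iff $\mathcal W_{\mathcal E_\theta}(d)=0$, since $d\in\mathfrak G_\rho$ is already assumed.

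\emph{Sufficiency.} If \eqref{eq:universal-extensibility} holds, then \eqref{eq:affine-Wells} vanishes for all $\theta$.

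\emph{Necessity.} Taking $\theta=0$ gives $\mathcal W_{\mathcal E_0}(d)=0$. Then $\Phi_\rho(d)[\theta]=0$ for every class $[\theta]$, so $\Phi_\rho(d)=0$.

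The final assertion follows because $\mathcal H^2_\rho=0$ forces $\mathcal W_{\mathcal E_0}(d)=0$ and $\Phi_\rho(d)=0$ trivially.

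The only delicate step is identifying $\mathfrak G_\rho$, its corrections, and the target space across the family. This is immediate here because the mixed maps are literally fixed rather than merely gauge equivalent, so no transport of corrections via \eqref{eq:derivation-section} is needed.
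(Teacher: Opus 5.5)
Your proposal is correct and is essentially the paper's proof: you use the same mixed correction $\lambda$ throughout the family to get $\Theta^{\omega_0+\theta}_{d,\lambda}=\Theta^{\omega_0}_{d,\lambda}+L_d\theta$, take $\theta=0$ and then arbitrary representatives for necessity, and invoke Theorem~\ref{thm:relative-extensibility} for sufficiency. Your extra check that both sides depend only on $[\theta]$ is harmless but not needed, since the cochain identity already holds for every representative $\theta\in Z^2_\rho$.
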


\begin{proof}
The mixed correction equation is the same for every $\theta$.
Using the same $\lambda$ in both obstructions gives
\begin{align*}
\Theta^{\omega_0+\theta}_{d,\lambda}
 &=L_d(\omega_0+\theta)-d_\rho\lambda\\
 &=\Theta^{\omega_0}_{d,\lambda}+L_d\theta.
\end{align*}
Passing to classes proves \eqref{eq:affine-Wells}.
If $d$ extends in every member, choosing $\theta=0$ gives the
first equality in \eqref{eq:universal-extensibility}.
For arbitrary $\theta$, subtracting that equality from
$\mathcal W_{\mathcal E_\theta}(d)=0$ gives
$\Phi_\rho(d)[\theta]=0$. Every class has such a representative,
so $\Phi_\rho(d)=0$.
Conversely, the two conditions make the right side of
\eqref{eq:affine-Wells} vanish for every $\theta$;
Theorem~\ref{thm:relative-extensibility} supplies the lifts.
\end{proof}

In an abelian semidirect product the origin $\omega_0=0$ is
available and its obstruction is zero. A fixed non-abelian
mixed family need not admit that origin. Even when it does,
a compatible pair may require a nonzero mixed correction.
Thus the first condition in \eqref{eq:universal-extensibility}
cannot be discarded in the general statement. This is the
additional term required in the non-abelian extension of
the universal extensibility criterion of~\cite{Xu}.

\subsection{Splitting of the derivation sequence}
If $\omega=0$, every strictly compatible pair has the lift
$s(x)+a\mapsto s(D_\g x)+D_V a$.
The bracket calculation above therefore gives
\begin{equation}\label{eq:strict-derivation-split}
\tau_*^{-1}(G_\rho^0)\simeq Z_\Omega\rtimes G_\rho^0,
\qquad
[(f,d),(k,e)]=(L_dk-L_ef,[d,e]).
\end{equation}
For the full image $\mathfrak k_{\mathcal E}$, splitting has
a further obstruction, which can also be written elementwise.

\begin{proposition}\label{prop:derivation-factor}
Choose a linear family of actual lifts
$\sigma(d)=P_{d,\lambda_d}$ for $d\in\mathfrak k_{\mathcal E}$.
Then
\begin{equation}\label{eq:derivation-factor}
\kappa_*(d,e)
 =L_d\lambda_e-L_e\lambda_d-\lambda_{[d,e]}
\end{equation}
takes values in $Z_\Omega$, is alternating, and satisfies
\begin{align}
0={}&L_d\kappa_*(e,f)-L_e\kappa_*(d,f)+L_f\kappa_*(d,e)\notag\\
&-\kappa_*([d,e],f)+\kappa_*([d,f],e)-\kappa_*([e,f],d).
\label{eq:derivation-factor-cocycle}
\end{align}
The short exact sequence \eqref{eq:derivation-short} splits
as Lie algebras precisely when there is a linear
$b:\mathfrak k_{\mathcal E}\to Z_\Omega$ such that
\[
\kappa_*(d,e)+L_db(e)-L_eb(d)-b([d,e])=0.
\]
\end{proposition}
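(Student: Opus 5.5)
The plan is to compute the commutator of two chosen lifts and read off $\kappa_*$ as the failure of $\sigma$ to be a homomorphism. By the bracket formula in the proof of Theorem~\ref{thm:relative-derivation-exact},
\[
[\sigma(d),\sigma(e)]=P_{[d,e],\,L_d\lambda_e-L_e\lambda_d},
\]
where $L_d\lambda=D_V\lambda-\lambda D_\g$. Hence
\[
[\sigma(d),\sigma(e)]-\sigma([d,e])=\delta_{\kappa_*(d,e)}.
\]
The left side is a derivation of $E$ preserving $V$, since $\Der_V(E)$ is a Lie algebra and $\sigma$ takes values in actual lifts. Moreover $\tau_*$ of the left side is $[d,e]-[d,e]=0$, because $\tau_*$ is a homomorphism. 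Exactness of \eqref{eq:derivation-short} then gives $\kappa_*(d,e)\in Z_\Omega$. Alternation is immediate from the formula, since $\lambda_{[e,d]}=-\lambda_{[d,e]}$ by linearity of $\sigma$.

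For the cocycle identity \eqref{eq:derivation-factor-cocycle}, I will take the Jacobi identity for $\sigma(d),\sigma(e),\sigma(f)$ in $\Der_V(E)$. Write
\[
[\sigma(d),\sigma(e)]=\sigma([d,e])+\delta_{\kappa_*(d,e)},
\]
and use the relations $[D,\delta_k]=\delta_{L_dk}$ for $D=\sigma(d)$ and $[\delta_k,\delta_{k'}]=0$, both from the proof of Theorem~\ref{thm:relative-derivation-exact}. Expanding $[\sigma(d),[\sigma(e),\sigma(f)]]$ gives
\[
\sigma([d,[e,f]])+\delta_{\kappa_*(d,[e,f])}+\delta_{L_d\kappa_*(e,f)}.
\]
Summing cyclically, the $\sigma$-terms cancel by Jacobi in $\mathfrak k_{\mathcal E}$. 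Injectivity of $f\mapsto\delta_f$ then yields a cyclic identity
\[
\sum_{\mathrm{cyc}}\bigl(L_d\kappa_*(e,f)+\kappa_*(d,[e,f])\bigr)=0.
\]
Rewriting it with alternation, using $\kappa_*(d,[e,f])=-\kappa_*([e,f],d)$ and so on, gives exactly the signed form \eqref{eq:derivation-factor-cocycle}. The main point to check is sign bookkeeping: that $L_d$ here is the $\mathfrak G_\rho$-action on $Z_\Omega$, which is well defined because $L_d\mathcal F_\rho\subseteq\mathcal F_\rho$ (Lemma~\ref{lem:compatible-module}) and $L_d$ commutes with $d_\rho$ on $\mathcal F_\rho$ (Lemma~\ref{lem:relative-cochain-action}). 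This preserves $Z_\Omega$.

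For the splitting criterion, first note that every Lie section has the form $\sigma'=\sigma+\delta_b$ with $b:\mathfrak k_{\mathcal E}\to Z_\Omega$ linear. This holds because two lifts of the same $d$ differ by an element of the kernel, by exactness. Computing
\[
[\sigma'(d),\sigma'(e)]-\sigma'([d,e])
\]
with the same two relations gives
\[
\delta_{\kappa_*(d,e)+L_db(e)-L_eb(d)-b([d,e])}.
\]
So $\sigma'$ is a Lie homomorphism exactly when the displayed equation holds. Conversely, any linear section $\sigma'$ of $\tau_*$ arises this way: each $\sigma'(d)$ is an actual lift, so it has the form $P_{d,\lambda'_d}$, and $\lambda'_d-\lambda_d\in Z_\Omega$ by Theorem~\ref{thm:relative-extensibility}. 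The only delicate step is ensuring the correction $\lambda_d$ in $\sigma(d)=P_{d,\lambda_d}$ is read off consistently, which is automatic because $P_{d,\lambda}(s(x))=s(D_\g x)+\lambda(x)$ determines $\lambda$.
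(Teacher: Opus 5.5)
Your proposal is correct and follows the paper's proof essentially step for step. The commutator formula from Theorem~\ref{thm:relative-derivation-exact} gives $[\sigma(d),\sigma(e)]-\sigma([d,e])=\delta_{\kappa_*(d,e)}$, the Jacobi identity with $[\sigma(d),\delta_z]=\delta_{L_dz}$ gives the cocycle identity, and replacing $\sigma$ by $\sigma+\delta_b$ gives the splitting criterion; your remark that every linear section of $\tau_*$ has the form $\sigma+\delta_b$ (by Theorem~\ref{thm:relative-extensibility}) spells out a step the paper leaves implicit.
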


\begin{proof}
A linear section of the surjective vector-space map $\tau_*$
exists. The commutator formula gives
\[
[\sigma(d),\sigma(e)]-\sigma([d,e])=\delta_{\kappa_*(d,e)}.
\]
Its left side is a derivation inducing zero on kernel and
quotient, hence $\kappa_*(d,e)\in Z_\Omega$.
Alternation follows immediately.
Using $[\sigma(d),\delta_z]=\delta_{L_dz}$ in the Jacobi identity,
we obtain
\begin{align*}
0
&=\sum_{\mathrm{cyc}}
 [\sigma(d),[\sigma(e),\sigma(f)]]\\
&=\sum_{\mathrm{cyc}}
 \left\{\sigma([d,[e,f]])
       +\delta_{\kappa_*(d,[e,f])}
       +\delta_{L_d\kappa_*(e,f)}\right\}.
\end{align*}
The $\sigma$-terms vanish by the Jacobi identity in
$\mathfrak k_{\mathcal E}$. Injectivity of $z\mapsto\delta_z$
gives \eqref{eq:derivation-factor-cocycle}.
Replacing $\sigma(d)$ by $\sigma(d)+\delta_{b(d)}$ changes
the factor to
\[
\kappa_*'(d,e)=\kappa_*(d,e)+L_db(e)-L_eb(d)-b([d,e]),
\]
because $[\delta_{b(d)},\delta_{b(e)}]=0$.
The new section is a Lie homomorphism exactly when this
factor vanishes, proving the assertion.
\end{proof}

\subsection{The binary specialization}
For $n=2$ the mixed datum is $\rho:\g\to\Der(V)$ and
\[
A_\rho=Z(V),\qquad
\mathcal F_\rho=\Hom(\g,Z(V)),\qquad
\mathcal H_\rho^2=H^2(\g,Z(V)).
\]
The full compatibility equation is
\[
[D_V,\rho(x)]-\rho(D_\g x)=\ad_V(\lambda(x)).
\]
Its obstruction is
\begin{align*}
\Theta^\omega_{d,\lambda}(x,y)
={}&D_V\omega(x,y)-\omega(D_\g x,y)-\omega(x,D_\g y)\\
&-\rho(x)\lambda(y)+\rho(y)\lambda(x)+\lambda([x,y]).
\end{align*}
Thus \eqref{eq:relative-derivation-exact} is the familiar
central-coefficient derivation obstruction sequence for
non-abelian Lie extensions; the higher-arity formulas retain
all the additional mixed compatibility equations.
For the binary extension framework see
\cite{InassaridzeKhmaladzeLadra,Fregier}.

\subsection{Comparison with the full infinitesimal obstruction space}
\label{sec:derivations}
The preceding sequence first solves the mixed equations and then
measures the remaining pure obstruction. There is also a single
linear obstruction which retains every component.
We describe its precise relation to \eqref{eq:relative-derivation-exact}.

Use section coordinates $E=\g\oplus_\Omega V$ and put
$C^1=\Hom(\g,V)$. Let $C_\Omega^2$ consist of alternating maps
$\eta:\bigwedge^nE\to V$ vanishing on $\bigwedge^nV$.
Let $Z_\Omega^2$ be the solutions of \eqref{eq:linear-FI} in
$C_\Omega^2$. Define
\begin{align}
(d_\Omega f)(z_1,\ldots,z_n)
 &=\sum_i[z_1,\ldots,\delta_fz_i,\ldots,z_n]_\Omega
   -\delta_f[z_1,\ldots,z_n]_\Omega.
\label{eq:dOmega}
\end{align}
Its pure and mixed components are
\begin{align}
(d_\Omega f)_0&=d_\rho f,\label{eq:dOmega0}\\
(d_\Omega f)_r&=\partial_rf,\qquad1\leq r\leq n-1.
\label{eq:dOmegar}
\end{align}
Since $d_\Omega f=-\eta_{\delta_f}$,
Lemma~\ref{lem:defect-cocycle} gives
$d_\Omega C^1\subseteq Z_\Omega^2$. Thus
\begin{equation}\label{eq:tangent-H}
H_\Omega^2=Z_\Omega^2/d_\Omega C^1
\end{equation}
is a vector space. It concerns first-order changes of all the
extension data; it is not an additive structure on
$H_{\nab}^2(\g,V)$.

For $d=(D_V,D_\g)\in\Der(V)\oplus\Der(\g)$ define
$\widehat d(x+a)=D_\g x+D_V a$ and
\begin{equation}\label{eq:Ob-def}
\Ob_d(z_1,\ldots,z_n)
 =\widehat d[z_1,\ldots,z_n]_\Omega
 -\sum_i[z_1,\ldots,\widehat d z_i,\ldots,z_n]_\Omega.
\end{equation}
Its components are
\begin{align}
(\Ob_d)_0&=L_d\omega,\label{eq:Ob0}\\
(\Ob_d)_r&=L_d\rho_r,\qquad1\leq r\leq n-1.
\label{eq:Obr}
\end{align}

\begin{theorem}\label{thm:derivation-obstruction}
For every derivation pair $d$, the defect $\Ob_d$ belongs to
$Z_\Omega^2$. The pair is extensible precisely when
$[\Ob_d]=0$ in $H_\Omega^2$, and its lifts are
\begin{equation}\label{eq:derivation-lift}
D(s(x)+a)=s(D_\g x)+\lambda(x)+D_V a,
\qquad d_\Omega\lambda=\Ob_d.
\end{equation}
\end{theorem}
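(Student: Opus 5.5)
The plan is to express both $\Ob_d$ and $d_\Omega f$ as bracket defects of linear operators on $E=\g\oplus_\Omega V$, and then reuse Lemma~\ref{lem:defect-cocycle}.

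\emph{Step 1: $\Ob_d$ is a cocycle.} The operator $\widehat d$ preserves $V$. Its restriction $D_V$ and its quotient $D_\g$ are derivations, and by definition $\Ob_d=\eta_{\widehat d}$. The identity \eqref{eq:linear-FI} was proved for arbitrary $u_j,v_i\in E$ and used only the Filippov identity in $E$, so it holds for $\eta_{\widehat d}$. Since $D_V\in\Der(V)$, the defect $\Ob_d$ vanishes on $\bigwedge^nV$. It is alternating because the bracket is. Hence $\Ob_d\in C^2_\Omega$ satisfies \eqref{eq:linear-FI}, that is, $\Ob_d\in Z^2_\Omega$. The component formulas \eqref{eq:Ob0}--\eqref{eq:Obr} follow directly: evaluate on pure tuples and on tuples with $r$ kernel entries. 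On a pure tuple $\widehat d$ applied to $[x]_\g+\omega$ gives $D_\g[x]_\g+D_V\omega$, and the section terms cancel because $D_\g\in\Der(\g)$.

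\emph{Step 2: lifts and the linear equation.} Every linear lift of $d$ preserving $V$ has the form $D=P_{d,\lambda}=\widehat d+\delta_\lambda$ for a unique $\lambda\in C^1$. The defect is additive in the operator, so
\[
\eta_D=\eta_{\widehat d}+\eta_{\delta_\lambda}=\Ob_d-d_\Omega\lambda,
\]
using $d_\Omega\lambda=-\eta_{\delta_\lambda}$, which is \eqref{eq:dOmega}. Therefore $D$ is a derivation exactly when $d_\Omega\lambda=\Ob_d$; this is \eqref{eq:derivation-lift}. A lift exists exactly when $\Ob_d\in d_\Omega C^1$, i.e.\ $[\Ob_d]=0$ in $H^2_\Omega$. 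The components \eqref{eq:dOmega0}--\eqref{eq:dOmegar} show that this single equation splits into the mixed equations $L_d\rho_r=\partial_r\lambda$ and the pure equation $L_d\omega=d_\rho\lambda$. This matches Theorem~\ref{thm:relative-extensibility} and can serve as a consistency check.

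\emph{Expected main obstacle.} The only delicate point is sign bookkeeping in identifying the components of $d_\Omega f$ and $\Ob_d$ on mixed tuples, namely matching $(d_\Omega f)_r$ with $\partial_r f$ including the sign $(-1)^{m-i}$ from moving $f(x_i)$ into the kernel block. Here I would reuse the computation in the proof of Lemma~\ref{lem:partial-shears}. There $\delta_f$ kills both the value and the kernel arguments, so only the insertions $f(x_i)$ in quotient slots survive, and reordering produces exactly \eqref{eq:partial-f}. With this identification the proof is complete, since the cocycle property needs no hypothesis beyond $V$ being an ideal.
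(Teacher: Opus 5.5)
Your proposal is correct and matches the paper's proof: the paper likewise gets the cocycle property by applying Lemma~\ref{lem:defect-cocycle} to $P=\widehat d$. It then characterizes lifts through $\eta_{\widehat d+\delta_\lambda}=\Ob_d-d_\Omega\lambda$ and reads off the mixed and pure components as the two stages of Theorem~\ref{thm:relative-extensibility}. The sign bookkeeping you flag as the main obstacle is not needed for the theorem itself, since the component formulas \eqref{eq:dOmega0}--\eqref{eq:dOmegar} are stated before it and enter only the final consistency remark.
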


\begin{proof}
The first assertion is Lemma~\ref{lem:defect-cocycle} with
$P=\widehat d$. For a linear lift $D=\widehat d+\delta_\lambda$,
the defect is
\[
\eta_D=\eta_{\widehat d}+\eta_{\delta_\lambda}
       =\Ob_d-d_\Omega\lambda.
\]
It vanishes precisely when the displayed lifting equation
holds. Its mixed components are
$\partial_r\lambda=L_d\rho_r$ and its pure component is
$d_\rho\lambda=L_d\omega$, exactly the two stages of
Theorem~\ref{thm:relative-extensibility}.
\end{proof}

\begin{corollary}\label{thm:derivation-exact}
There is an exact sequence of vector spaces
\begin{equation}\label{eq:derivation-exact}
0\longrightarrow Z_\Omega\longrightarrow\Der_V(E)
\xrightarrow{\tau_*}\Der(V)\oplus\Der(\g)
\xrightarrow{\,\mathcal W_\Omega\,}H_\Omega^2,
\qquad \mathcal W_\Omega(d)=[\Ob_d].
\end{equation}
\end{corollary}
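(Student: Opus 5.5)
We check exactness at each of the three positions in \eqref{eq:derivation-exact}, reusing the coordinate computations that give Theorem~\ref{thm:relative-derivation-exact} and Theorem~\ref{thm:derivation-obstruction}.

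First we check that $\mathcal W_\Omega$ is well defined and linear. By Theorem~\ref{thm:derivation-obstruction}, $\Ob_d\in Z_\Omega^2$ for every pair $d$, so its class in $H_\Omega^2=Z_\Omega^2/d_\Omega C^1$ makes sense. The defect \eqref{eq:Ob-def} depends linearly on $\widehat d$ because $\widehat d$ is linear in $d$, and hence $\mathcal W_\Omega$ is linear.

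Next comes exactness at $Z_\Omega$ and at $\Der_V(E)$. This is identical to the argument in Theorem~\ref{thm:relative-derivation-exact}. For $f\in Z_\Omega$ we have $\partial_rf=0$ and $d_\rho f=0$, so by \eqref{eq:dOmega0}--\eqref{eq:dOmegar} we get $d_\Omega f=0$. Since $\eta_{\delta_f}=-d_\Omega f$, the map $\delta_f$ is a derivation. Injectivity follows from $\delta_f s(x)=f(x)$. Clearly $\tau_*(\delta_f)=0$. Conversely, suppose $\tau_*(D)=0$. Then $D|_V=0$ and $pD=0$. Put $f(x)=D(s(x))$, so that $D=\delta_f$. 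The vanishing of $\eta_D=-d_\Omega f$ then gives $\partial_rf=0$ and $d_\rho f=0$, that is, $f\in Z_\Omega$.

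Finally we check exactness at $\Der(V)\oplus\Der(\g)$. If $d=\tau_*(D)$, write $D=\widehat d+\delta_\lambda$ with $\lambda(x)=D(s(x))-s(D_\g x)$. This is possible because $D$ preserves $V$, restricts to $D_V$ there, and induces $D_\g$ on the quotient. Then $0=\eta_D=\Ob_d-d_\Omega\lambda$, so $[\Ob_d]=0$. Conversely, if $\Ob_d=d_\Omega\lambda$ for some $\lambda\in C^1$, then Theorem~\ref{thm:derivation-obstruction} says that $D=\widehat d+\delta_\lambda$ is a derivation of $E$. It preserves $V$ and satisfies $\tau_*(D)=d$. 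Hence $\im\tau_*=\ker\mathcal W_\Omega$.

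The only point requiring care is the identity $\eta_{\widehat d+\delta_\lambda}=\Ob_d-d_\Omega\lambda$. It holds because the defect $\eta_P$ is linear in $P$, and by the definitions \eqref{eq:dOmega} and \eqref{eq:Ob-def} we have $\eta_{\widehat d}=\Ob_d$ and $\eta_{\delta_\lambda}=-d_\Omega\lambda$. Everything else is immediate from the earlier results. Note that, unlike the relative sequence, no restriction of the middle term to $\mathfrak G_\rho$ is needed, because the mixed components are included in the obstruction.
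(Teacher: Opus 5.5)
Your proposal is correct and follows the paper's route. Exactness at $Z_\Omega$ and $\Der_V(E)$ is the argument of Theorem~\ref{thm:relative-derivation-exact}, which you rephrase via $\eta_{\delta_f}=-d_\Omega f$ and the components \eqref{eq:dOmega0}--\eqref{eq:dOmegar}. Exactness at the pair space is the lifting criterion of Theorem~\ref{thm:derivation-obstruction}, via $\eta_{\widehat d+\delta_\lambda}=\Ob_d-d_\Omega\lambda$; your added check that $\mathcal W_\Omega$ is well defined and linear only makes explicit what the paper leaves implicit.
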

Exactness at the first two terms was proved in
Theorem~\ref{thm:relative-derivation-exact}; exactness at the
pair space is the preceding lifting criterion.
We do not assign the full pair space an action on $H_\Omega^2$.
The action needed for the Lie cocycle statement is the action
of $\mathfrak G_\rho$ on $\mathcal H_\rho^2$ constructed above.

\begin{proposition}\label{prop:relative-tangent-inclusion}
Extend $\theta\in Z_\rho^2$ to a cochain $j\theta$ on $E$
by requiring it to vanish whenever an argument lies in $V$:
\[
(j\theta)(x_1+a_1,\ldots,x_n+a_n)=\theta(x_1,\ldots,x_n).
\]
This induces an injection
$j:\mathcal H_\rho^2\hookrightarrow H_\Omega^2$.
For $d\in\mathfrak G_\rho$,
\begin{equation}\label{eq:relative-tangent-comparison}
\mathcal W_\Omega(d)=j\,\mathcal W_{\mathcal E}(d).
\end{equation}
\end{proposition}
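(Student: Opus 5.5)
The plan has four steps: show that $j$ sends cocycles to cocycles, show that it is well defined and injective on classes, and then compare the two Wells maps at cocycle level using a chosen mixed correction.

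\emph{Step 1: $j\theta\in Z_\Omega^2$.} Let $\theta\in Z_\rho^2$. I would realise $j\theta$ as a bracket defect. Consider the datum $\Omega_\theta=(\omega+\theta,\rho_1,\ldots,\rho_{n-1})$, which is a cocycle by Proposition~\ref{prop:fixed-mixed-differences}. On the common vector space $\g\oplus V$ one has $[\cdot]_{\Omega_\theta}-[\cdot]_\Omega=j\theta$. Rather than argue through a deformation, I would verify \eqref{eq:linear-FI} for $j\theta$ directly. Each term in which $j\theta$ has an argument in $V$ vanishes. Terms of the form $[u_1,\ldots,u_{n-1},j\theta(\ldots)]$ vanish unless all $u_i$ are pure, because the values of $\theta$ lie in $A_\rho$. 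Since $V$ is an ideal, $[U,v_i]$ has a kernel entry whenever $U$ or $v_i$ does. Hence every term vanishes unless all inputs are pure. On pure inputs, \eqref{eq:linear-FI} is exactly $d_\rho\theta=0$; the only point to check is that inner kernel components $\omega(X,y_i)$ drop out of $j\theta$.

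\emph{Step 2: well-definedness and injectivity.} For $f\in\mathcal F_\rho$, the components \eqref{eq:dOmega0}--\eqref{eq:dOmegar} give $d_\Omega f=j(d_\rho f)$, since $\partial_rf=0$. Hence $j$ maps $B_\rho^2$ into $d_\Omega C^1$ and descends to classes. For injectivity, suppose $j\theta=d_\Omega g$ with $g\in C^1$. The mixed components give $\partial_rg=(j\theta)_r=0$ for $1\le r\le n-1$, so $g\in\mathcal F_\rho$. The pure component gives $\theta=d_\rho g$, so $[\theta]=0$ in $\mathcal H_\rho^2$.

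\emph{Step 3: the comparison \eqref{eq:relative-tangent-comparison}.} Let $d\in\mathfrak G_\rho$ and choose a mixed correction $\lambda$. Then
\[
\Ob_d-d_\Omega\lambda=\eta_{\widehat d}+\eta_{\delta_\lambda}=\eta_{P_{d,\lambda}},
\]
as in the proof of Theorem~\ref{thm:derivation-obstruction}. By \eqref{eq:corrected-compatibility} this defect vanishes on mixed tuples, that is, $(\Ob_d)_r-\partial_r\lambda=L_d\rho_r-\partial_r\lambda=0$. Its pure component is $L_d\omega-d_\rho\lambda=\Theta^\omega_{d,\lambda}$ by Proposition~\ref{prop:relative-obstruction}. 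A defect of this kind is alternating and vanishes on all-kernel inputs, so by multilinearity it is determined by its values on tuples $(s(x_i))$. Therefore $\eta_{P_{d,\lambda}}=j\Theta^\omega_{d,\lambda}$. Passing to classes in $H_\Omega^2$ gives $[\Ob_d]=j[\Theta^\omega_{d,\lambda}]$, which is the claim.

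\emph{Main obstacle.} The only delicate point is Step 1: confirming that $j\theta$ is a cocycle in the full complex when some inputs are mixed. The argument above handles it through $A_\rho$-centrality and the ideal property. One also has to be careful that a mixed inner bracket $[U,v_i]$ can have a nonzero quotient part only when all entries are pure.
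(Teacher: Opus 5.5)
Your proposal is correct and follows the paper's proof essentially step for step. The steps are: the direct check of \eqref{eq:linear-FI} for $j\theta$ using the $A_\rho$-values of $\theta$ and the ideal property; the identity $d_\Omega f=j(d_\rho f)$ for $f\in\mathcal F_\rho$, with injectivity read off from the mixed components; and the cocycle-level equality $\Ob_d-d_\Omega\lambda=j\Theta^\omega_{d,\lambda}$. Writing $\Ob_d-d_\Omega\lambda$ as the defect $\eta_{P_{d,\lambda}}$ only repackages the paper's componentwise comparison.
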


\begin{proof}
On pure inputs, the linearized equation for $j\theta$ is
$d_\rho\theta=0$. If at least one input lies in $V$, every
potentially nonzero term is a bracket of a value of $\theta$
with a kernel argument. It vanishes because $\theta$ takes
values in $A_\rho$. Thus $j\theta\in Z_\Omega^2$.
For $f\in\mathcal F_\rho$, equations
\eqref{eq:dOmega0}--\eqref{eq:dOmegar} give
$d_\Omega f=j(d_\rho f)$, proving well-definedness.
If $j\theta=d_\Omega f$ for an arbitrary $f$, comparison of
the mixed components gives $\partial_rf=0$ for all $r$.
Hence $f\in\mathcal F_\rho$ and $\theta=d_\rho f$.
This proves injectivity.

If $\lambda$ is a mixed correction for $d$, the mixed
components of $\Ob_d-d_\Omega\lambda$ are zero, while its
pure component is $\Theta^\omega_{d,\lambda}$.
Therefore
\[
\Ob_d-d_\Omega\lambda=j\Theta^\omega_{d,\lambda}.
\]
Passing to quotient spaces proves \eqref{eq:relative-tangent-comparison}.
\end{proof}

The full space is also independent of section by direct
transport of the elementwise equations.
For $\Omega'=\Omega^k$, put
\begin{equation}\label{eq:eta-transport}
(P_k\eta)(z_1,\ldots,z_n)
 =\eta(T_kz_1,\ldots,T_kz_n).
\end{equation}
Write $U=(u_1,\ldots,u_{n-1})$, $Y=(v_1,\ldots,v_n)$.
Since $T_k[U,v]_{\Omega'}=[T_kU,T_kv]_\Omega$ and
$T_k$ is the identity on $V$, the four terms in the
linearized identity transform as follows:
\begin{align*}
(P_k\eta)(U,[Y]_{\Omega'})
 &=\eta(T_kU,[T_kY]_\Omega),\\
[U,(P_k\eta)(Y)]_{\Omega'}
 &=[T_kU,\eta(T_kY)]_\Omega,\\
(P_k\eta)(v_1,\ldots,[U,v_i]_{\Omega'},\ldots,v_n)
 &=\eta(T_kv_1,\ldots,[T_kU,T_kv_i]_\Omega,\ldots,T_kv_n),\\
[v_1,\ldots,(P_k\eta)(U,v_i),\ldots,v_n]_{\Omega'}
 &=[T_kv_1,\ldots,\eta(T_kU,T_kv_i),\ldots,T_kv_n]_\Omega.
\end{align*}
Their signed sum is zero by \eqref{eq:linear-FI}.
Thus $P_kZ_\Omega^2=Z_{\Omega'}^2$, with inverse $P_{-k}$.
Also $\delta_fT_k=T_k\delta_f=\delta_f$, so substitution
in \eqref{eq:dOmega} gives
\[
P_k(d_\Omega f)=d_{\Omega'}f.
\]
Consequently $P_k$ descends to an isomorphism of the quotient
spaces. Direct evaluation further gives
\[
T_{-k}\widehat dT_k=\widehat d+\delta_{D_V k-kD_\g},
\qquad
P_k\Ob_d^\Omega
 =\Ob_d^{\Omega'}-d_{\Omega'}(D_V k-kD_\g).
\]
The obstruction classes therefore agree.

The relation with the automorphism problem can be stated
entirely in components. The pure and mixed equations for a
derivation lift are
\begin{equation}\label{eq:Wells-linearization}
L_d\omega=d_\rho\lambda,\qquad
L_d\rho_r=\partial_r\lambda\quad(1\leq r\leq n-1).
\end{equation}
They follow by placing the correction $\lambda(x_i)$ in
one quotient slot at a time, as in the proof of
Theorem~\ref{thm:relative-extensibility}.
The automorphism problem, studied next, compares the
corresponding finite section changes. Both calculations
retain every mixed input type.

\section{Inducibility of automorphism pairs}\label{sec:inducibility}

Fix \eqref{eq:extension} and a section $s$ with datum $\Omega$.
Set
\begin{equation}\label{eq:autgroups}
\Aut_{V}(E)=\{\gamma\in\Aut(E):\gamma(V)=V\},\qquad
G=\Aut(V)\times\Aut(\g).
\end{equation}
Equality $\gamma(V)=V$ is part of the definition, also in infinite
dimension. For $\gamma\in\Aut_{V}(E)$ let
\begin{equation}\label{eq:tau}
\tau(\gamma)=(\gamma|_{V},\bar\gamma),
\qquad \bar\gamma(x)=p\gamma s(x).
\end{equation}
If $s'=s+f$, then
$p\gamma s'(x)=p\gamma s(x)+p\gamma f(x)=p\gamma s(x)$.
Also $p\gamma=\bar\gamma p$, so $\bar\gamma$ is an automorphism
with inverse induced by $\gamma^{-1}$ and
\[
\overline{\gamma\delta}=\bar\gamma\,\bar\delta.
\]
Thus $\tau$ is a section-independent group homomorphism.
A pair $(\beta,\alpha)\in G$ is \emph{inducible} when it lies in
$\im\tau$.

\begin{theorem}[Elementwise lifting criterion]\label{thm:inducibility}
The pair $(\beta,\alpha)\in G$ is inducible if and only if there is
$\lambda\in\Hom(\g,V)$ satisfying, for $1\leq r\leq n-1$
and $m=n-r$,
\begin{align}
&\beta\rho_r(x_1,\ldots,x_m)(a_1,\ldots,a_r)
 -\rho_r(\alpha x_1,\ldots,\alpha x_m)
            (\beta a_1,\ldots,\beta a_r)\notag\\
&=\sum_{i=1}^{m}
 [\alpha x_1,\ldots,\alpha x_{i-1},\lambda(x_i),
   \alpha x_{i+1}+\lambda(x_{i+1}),\ldots,
   \alpha x_m+\lambda(x_m),\beta a_1,\ldots,\beta a_r]_{\Omega},
 \label{eq:lift-rho}\\
&\beta\omega(x_1,\ldots,x_n)
 -\omega(\alpha x_1,\ldots,\alpha x_n)\notag\\
&=\sum_{i=1}^{n}
 [\alpha x_1,\ldots,\alpha x_{i-1},\lambda(x_i),
  \alpha x_{i+1}+\lambda(x_{i+1}),\ldots,
  \alpha x_n+\lambda(x_n)]_{\Omega}\notag\\
&\quad-\lambda([x_1,\ldots,x_n]_{\g}).
 \label{eq:lift-omega}
\end{align}
The corresponding lift and its inverse are
\begin{align}
\gamma(s(x)+a)&=s(\alpha x)+\lambda(x)+\beta a,
 \label{eq:lift-map}\\
\gamma^{-1}(s(x)+a)
 &=s(\alpha^{-1}x)+
   \beta^{-1}\bigl(a-\lambda(\alpha^{-1}x)\bigr).
 \label{eq:lift-inverse}
\end{align}
Every lift of $(\beta,\alpha)$ has this form.
\end{theorem}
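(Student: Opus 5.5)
Every linear map $\gamma:E\to E$ with $\gamma(V)=V$, $\gamma|_V=\beta$ and $p\gamma s=\alpha$ has the form \eqref{eq:lift-map} for a unique $\lambda\in\Hom(\g,V)$, namely $\lambda(x)=\gamma(s(x))-s(\alpha x)$. Indeed, $\gamma(s(x))-s(\alpha x)\in\ker p=V$, and $\gamma(a)=\beta a$. This already settles the last sentence of the theorem once we know which $\lambda$ give bracket-preserving maps. The plan is therefore to translate the requirement that $\gamma$ be an $n$-Lie homomorphism into component equations. Since the bracket is alternating and multilinear, it suffices to check $\gamma[z_1,\ldots,z_n]=[\gamma z_1,\ldots,\gamma z_n]$ on tuples of pure types $(s(x_1),\ldots,s(x_m),a_1,\ldots,a_r)$ with $0\le r\le n$.

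For $r=n$ the identity says $\beta\in\Aut(V)$, which holds by hypothesis. For $1\le r\le n-1$, work in the coordinates $F_s$, so that the bracket is $[\ ]_\Omega$. The left side is $\beta\rho_r(x_1,\ldots,x_m)(a_1,\ldots,a_r)$, because the value lies in $V$. The right side is $[\alpha x_1+\lambda(x_1),\ldots,\alpha x_m+\lambda(x_m),\beta a_1,\ldots,\beta a_r]_\Omega$. Apply the telescoping identity from the proof of Proposition~\ref{prop:gauge} with $v_i=\alpha x_i$, $w_i=\lambda(x_i)$ and $c_j=\beta a_j$. This separates the term $\rho_r(\alpha x_1,\ldots,\alpha x_m)(\beta a_1,\ldots,\beta a_r)$ from the finite sum in \eqref{eq:lift-rho}, so preservation on this input type is exactly \eqref{eq:lift-rho}.

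For $r=0$ the left side is $\gamma\bigl(s[x]_\g+\omega(x)\bigr)=s(\alpha[x]_\g)+\lambda([x]_\g)+\beta\omega(x)$. For the right side, telescope again, separating $[\alpha x_1,\ldots,\alpha x_n]_\Omega=s(\alpha[x]_\g)+\omega(\alpha x)$, where $\alpha$ preserves the bracket of $\g$. The section terms cancel, and the remaining kernel terms give \eqref{eq:lift-omega}. Hence $\gamma$ preserves brackets if and only if \eqref{eq:lift-rho} and \eqref{eq:lift-omega} hold.

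Bijectivity is handled by the explicit inverse. Composing \eqref{eq:lift-map} with \eqref{eq:lift-inverse} in both orders gives the identity. For example,
\[
\gamma\gamma^{-1}(s(x)+a)=s(x)+\lambda(\alpha^{-1}x)+\bigl(a-\lambda(\alpha^{-1}x)\bigr)=s(x)+a.
\]
Moreover $\gamma(V)=\beta V=V$, and $\tau(\gamma)=(\beta,\alpha)$ by construction. So a solution $\lambda$ yields an element of $\Aut_V(E)$ lying over the pair. Conversely, any lift determines $\lambda$ as above, and that $\lambda$ satisfies the equations.

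The only delicate point is the bookkeeping of signs and slot positions in the telescoping step. One must check that the literal slot placement used in \eqref{eq:lift-rho}, with kernel entries remaining in their original slots, matches the convention of \eqref{eq:bracket-mixed}. Since the identity is a formal consequence of multilinearity alone, no alternation moves are needed, and this point should be routine.
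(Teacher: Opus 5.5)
Your proposal is correct and follows the paper's proof. You parametrize every linear map over $(\beta,\alpha)$ by $\lambda=\gamma s-s\alpha$, reduce bracket preservation by multilinearity and alternation to pure input types, and read off \eqref{eq:lift-rho} and \eqref{eq:lift-omega} from the telescoping expansion, with the explicit inverse giving bijectivity; the only difference is that you package necessity and sufficiency as a single equivalence for maps of the form \eqref{eq:lift-map}, which is harmless.
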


\begin{proof}
Suppose $\tau(\gamma)=(\beta,\alpha)$. Then
\[
p(\gamma s(x)-s(\alpha x))=\alpha x-\alpha x=0,
\]
so $\lambda(x)=\gamma s(x)-s(\alpha x)$ is a linear map into $V$.
For a mixed bracket, preservation by $\gamma$ gives
\begin{align*}
&\beta\rho_r(x_1,\ldots,x_m)(a_1,\ldots,a_r)\\
&=\gamma[s(x_1),\ldots,s(x_m),a_1,\ldots,a_r]\\
&=[s(\alpha x_1)+\lambda(x_1),\ldots,
   s(\alpha x_m)+\lambda(x_m),\beta a_1,\ldots,\beta a_r].
\end{align*}
Subtracting the bracket with no $\lambda$-entries and applying the
telescoping expansion proves \eqref{eq:lift-rho}.
For pure quotient inputs,
\begin{align*}
\gamma[s(x_1),\ldots,s(x_n)]
 &=s(\alpha[x_1,\ldots,x_n]_{\g})
   +\lambda([x_1,\ldots,x_n]_{\g})
   +\beta\omega(x_1,\ldots,x_n),\\
[\gamma s(x_1),\ldots,\gamma s(x_n)]
 &=[s(\alpha x_1)+\lambda(x_1),\ldots,
    s(\alpha x_n)+\lambda(x_n)].
\end{align*}
Their quotient components agree because $\alpha\in\Aut(\g)$.
Subtracting $s(\alpha[x_1,\ldots,x_n]_{\g})$ from the equality of
these expressions gives \eqref{eq:lift-omega}.

Conversely, define $\gamma$ by \eqref{eq:lift-map}. Formula
\eqref{eq:lift-inverse} gives a two-sided inverse, as
\begin{align*}
\gamma\bigl(s(\alpha^{-1}x)
 +\beta^{-1}(a-\lambda(\alpha^{-1}x))\bigr)
 &=s(x)+\lambda(\alpha^{-1}x)
   +a-\lambda(\alpha^{-1}x)\\
 &=s(x)+a.
\end{align*}
Equation \eqref{eq:lift-rho}, with its unshifted term restored,
says that $\gamma$ preserves every mixed bracket.
Equation \eqref{eq:lift-omega} says that it preserves a bracket of
$n$ section elements. A bracket of $n$ kernel elements is preserved
because $\beta\in\Aut(V)$.
Expanding each argument $s(x_i)+a_i$ by multilinearity now proves
\[
\gamma[s(x_1)+a_1,\ldots,s(x_n)+a_n]
 =[\gamma(s(x_1)+a_1),\ldots,\gamma(s(x_n)+a_n)].
\]
Finally $\gamma|_{V}=\beta$ and $p\gamma s=\alpha$, as required.
The necessity argument already determines $\lambda$ uniquely for a
given $\gamma$.
\end{proof}

For $r=n-1$, the first lifting equation is particularly transparent:
\begin{align}\label{eq:lift-top}
&\beta\rho_{n-1}(x)(a_1,\ldots,a_{n-1})
 -\rho_{n-1}(\alpha x)(\beta a_1,\ldots,\beta a_{n-1})\notag\\
&=[\lambda(x),\beta a_1,\ldots,\beta a_{n-1}]_{V}.
\end{align}
For $n=2$, \eqref{eq:lift-rho}--\eqref{eq:lift-omega} become
\begin{align}
\beta\rho(x)a-\rho(\alpha x)\beta a
 &=[\lambda(x),\beta a]_{V},\notag\\
\beta\omega(x,y)-\omega(\alpha x,\alpha y)
 &=\rho(\alpha x)\lambda(y)-\rho(\alpha y)\lambda(x)
   +[\lambda(x),\lambda(y)]_{V}-\lambda([x,y]_{\g}).
 \label{eq:binary-lift}
\end{align}
These are the non-abelian Lie lifting equations.

\begin{remark}\label{rem:necessary-center}
If both $\beta$ and $\alpha$ are the identity, \eqref{eq:lift-top}
implies $\lambda(\g)\subseteq Z(V)$. The other mixed equations
still have to be checked. For $n\geq3$, being central inside $V$
does not imply that a kernel element has zero brackets with
section elements and fewer kernel entries.
\end{remark}

\section{The Wells map \texorpdfstring{$\varpi$}{varpi}}\label{sec:Wells}

\subsection{Transporting the complete cocycle}
For $c=(\beta,\alpha)\in G$, define
\begin{align}
{}^c\omega(x_1,\ldots,x_n)
 &=\beta\omega(\alpha^{-1}x_1,\ldots,\alpha^{-1}x_n),
 \label{eq:twist-omega}\\
{}^c\rho_r(x_1,\ldots,x_{n-r})(a_1,\ldots,a_r)
 &=\beta\rho_r(\alpha^{-1}x_1,\ldots,\alpha^{-1}x_{n-r})
                 (\beta^{-1}a_1,\ldots,\beta^{-1}a_r).
 \label{eq:twist-rho}
\end{align}
Write $ {}^c\Omega=({}^c\omega,{}^c\rho_1,\ldots,{}^c\rho_{n-1})$
and
\begin{equation}\label{eq:G-action-f}
{}^cf=\beta f\alpha^{-1},\qquad
S_c(x+a)=\alpha x+\beta a.
\end{equation}
The kernel bracket is unchanged, since
$\beta[\beta^{-1}a_1,\ldots,\beta^{-1}a_n]_{V}
 =[a_1,\ldots,a_n]_{V}$.

\begin{proposition}\label{prop:transport}
Equations \eqref{eq:twist-omega}--\eqref{eq:twist-rho}
define a left action of $G$ on $Z^2_{\nab}(\g,V)$.
It satisfies
\begin{equation}\label{eq:transport-gauge}
{}^c(\Omega^f)=({}^c\Omega)^{{}^cf}.
\end{equation}
Consequently,
\begin{equation}\label{eq:cohom-action}
{}^c[\Omega]=[{}^c\Omega]
\end{equation}
is a well-defined left action on $H^2_{\nab}(\g,V)$.
\end{proposition}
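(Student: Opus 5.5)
The plan is to realize the transport ${}^c\Omega$ as a conjugated bracket, exactly as $\Omega^f$ was realized through $T_f$ in \eqref{eq:gauge-bracket}. Put $S_c(x+a)=\alpha x+\beta a$ on $\g\oplus V$ and define
\[
[z_1,\ldots,z_n]' = S_c[S_c^{-1}z_1,\ldots,S_c^{-1}z_n]_{\Omega}.
\]
On pure quotient inputs the quotient component is $\alpha[\alpha^{-1}x_1,\ldots,\alpha^{-1}x_n]_\g=[x_1,\ldots,x_n]_\g$, because $\alpha\in\Aut(\g)$. The kernel component is $\beta\omega(\alpha^{-1}x_1,\ldots)$, which is \eqref{eq:twist-omega}. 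On mixed inputs the value lies in $V$ and equals \eqref{eq:twist-rho}. On all-kernel inputs it is $[a_1,\ldots,a_n]_V$, since $\beta\in\Aut(V)$. So the new bracket is $[\cdot]_{{}^c\Omega}$, and it has the form \eqref{eq:bracket-pure}--\eqref{eq:bracket-mixed}.

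Being the transport of an $n$-Lie bracket by a linear isomorphism, it satisfies the Filippov identity. The argument is the same as in the proof of Lemma~\ref{lem:gauge-action}, with $T_f$ replaced by $S_c$. Theorem~\ref{thm:cocycle-criterion} then gives ${}^c\Omega\in Z^2_{\nab}(\g,V)$.

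For the action axioms, note that $S_{cc'}=S_cS_{c'}$ for the product $(\beta\beta',\alpha\alpha')$ in $G$, and $S_{1}=\id$. Conjugating twice therefore gives ${}^{c}({}^{c'}\Omega)={}^{cc'}\Omega$. Alternatively, one can read this off directly from \eqref{eq:twist-omega}--\eqref{eq:twist-rho}, where the inverses compose in the right order.

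The key step is \eqref{eq:transport-gauge}. The main point is the operator identity
\[
S_cT_fS_c^{-1}=T_{{}^cf}.
\]
To check it, compute $S_cT_fS_c^{-1}(x+a)=S_c(\alpha^{-1}x+f(\alpha^{-1}x)+\beta^{-1}a)=x+\beta f\alpha^{-1}x+a$.

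With this identity, write both brackets as conjugates. We have $[\cdot]_{{}^c(\Omega^f)}=S_cT_{-f}[T_fS_c^{-1}\cdot]_\Omega$. We also have $[\cdot]_{({}^c\Omega)^{{}^cf}}=T_{-{}^cf}S_c[S_c^{-1}T_{{}^cf}\cdot]_\Omega$. Inserting $T_{\pm{}^cf}=S_cT_{\pm f}S_c^{-1}$ shows that the two are equal. This is the only place where one must be careful about the order of conjugations, and it is where I expect any sign or inverse error to arise. Once the identity above is verified, there is nothing further to compute.

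Finally, \eqref{eq:transport-gauge} says that the $G$-action maps the orbit of $\Omega$ under $\Hom(\g,V)$ into the orbit of ${}^c\Omega$. The map $f\mapsto{}^cf$ is a bijection of $\Hom(\g,V)$, so this relation is compatible with the equivalence of Definition~\ref{def:Hnab}. Hence ${}^c[\Omega]=[{}^c\Omega]$ is well defined, and the action axioms descend from cocycles to $H^2_{\nab}(\g,V)$.
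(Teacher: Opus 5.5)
Your proposal is correct and follows the paper's proof essentially step for step. Like the paper, you realize ${}^c\Omega$ as the $S_c$-conjugate bracket, transport the Filippov identity through $S_c$, get the left-action law from $S_cS_{c'}=S_{cc'}$, and deduce \eqref{eq:transport-gauge} from $S_cT_fS_c^{-1}=T_{{}^cf}$ by substituting into both conjugated brackets; your explicit tracking of the conjugation order is exactly the substitution the paper leaves to the reader.
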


\begin{proof}
On a pure quotient bracket one has
\begin{align*}
S_c[S_c^{-1}x_1,\ldots,S_c^{-1}x_n]_{\Omega}
 &=[x_1,\ldots,x_n]_{\g}
   +\beta\omega(\alpha^{-1}x_1,\ldots,\alpha^{-1}x_n).
\end{align*}
On a mixed bracket the same calculation is
\begin{align*}
&S_c[\alpha^{-1}x_1,\ldots,\alpha^{-1}x_{n-r},
                   \beta^{-1}a_1,\ldots,\beta^{-1}a_r]_{\Omega}\\
&=\beta\rho_r(\alpha^{-1}x_1,\ldots,\alpha^{-1}x_{n-r})
                   (\beta^{-1}a_1,\ldots,\beta^{-1}a_r).
\end{align*}
It follows that, on all elements,
\begin{equation}\label{eq:transport-bracket}
[z_1,\ldots,z_n]_{{}^c\Omega}
 =S_c[S_c^{-1}z_1,\ldots,S_c^{-1}z_n]_{\Omega}.
\end{equation}
For arbitrary $u_i,v_j$, the left-hand side of the Filippov identity
for this bracket is
\[
S_c[S_c^{-1}u_1,\ldots,S_c^{-1}u_{n-1},
       [S_c^{-1}v_1,\ldots,S_c^{-1}v_n]_{\Omega}]_{\Omega}.
\]
Applying \eqref{eq:FI} inside $S_c$ gives the sum
\[
\sum_{i=1}^n S_c
[S_c^{-1}v_1,\ldots,
 [S_c^{-1}u_1,\ldots,S_c^{-1}u_{n-1},S_c^{-1}v_i]_{\Omega},
 \ldots,S_c^{-1}v_n]_{\Omega}.
\]
By \eqref{eq:transport-bracket} this is its required right-hand side.
Thus $ {}^c\Omega$ is again a cocycle.
For example, the curvature equation explicitly transports to
\begin{align*}
&[{}^c\rho(X),{}^c\rho(y_1,\ldots,y_{n-1})]a
 -\sum_i{}^c\rho(y_1,\ldots,[X,y_i],\ldots,y_{n-1})a\\
&=\beta\left(
 [\rho(\alpha^{-1}X),\rho(\alpha^{-1}y_1,\ldots,\alpha^{-1}y_{n-1})]
 \beta^{-1}a\right.\\
&\hspace{17mm}\left.
 -\sum_i\rho(\alpha^{-1}y_1,\ldots,
       [\alpha^{-1}X,\alpha^{-1}y_i],\ldots,\alpha^{-1}y_{n-1})
       \beta^{-1}a\right)\\
&=\sum_i(-1)^{n-1-i}\,
 {}^c\rho_2(y_1,\ldots,\widehat y_i,\ldots,y_{n-1})
              ({}^c\omega(X,y_i),a).
\end{align*}
Here $\alpha^{-1}X$ denotes the displayed list of $n-1$ inverse
images, with no change in its order.

Since $S_cS_d=S_{cd}$, equation \eqref{eq:transport-bracket}
gives $ {}^c({}^d\Omega)={}^{cd}\Omega$ and $ {}^1\Omega=\Omega$.
For the interaction with a shear, compute
\begin{align*}
S_cT_fS_c^{-1}(x+a)
 &=S_c(\alpha^{-1}x+f(\alpha^{-1}x)+\beta^{-1}a)\\
 &=x+\beta f(\alpha^{-1}x)+a=T_{{}^cf}(x+a).
\end{align*}
Substituting this equality into
\eqref{eq:gauge-bracket} and \eqref{eq:transport-bracket} proves
\eqref{eq:transport-gauge}. Hence equivalent cocycles have
equivalent transforms, proving \eqref{eq:cohom-action}.
\end{proof}

\subsection{Definition and the obstruction criterion}
The extension $\mathcal E$ determines the distinguished point
\[
0_{\mathcal E}=[\Omega]\quad\text{in }H^2_{\nab}(\g,V).
\]
This symbol denotes a base point, not a zero vector.

\begin{definition}\label{def:Wells}
The \emph{Wells map} of $\mathcal E$ is the map of pointed sets
\begin{equation}\label{eq:Wells}
\varpi_{\mathcal E}:G\longrightarrow
 \bigl(H^2_{\nab}(\g,V),0_{\mathcal E}\bigr),
\qquad
\varpi_{\mathcal E}(c)=[{}^c\Omega].
\end{equation}
We abbreviate it to $\varpi$. Its kernel means the fiber
\[
\ker\varpi=\{c\in G:\varpi(c)=0_{\mathcal E}\}.
\]
\end{definition}

The value \eqref{eq:Wells} measures whether the transported extension
has the same class as the original one. In general it must not be
written as a difference of two elements of $H^2_{\nab}(\g,V)$:
there is no specified group structure on this set. Even the
componentwise difference of two cocycles need not satisfy the
nonlinear identities \eqref{eq:NA0}--\eqref{eq:NArs}.

\begin{theorem}[Wells obstruction]\label{thm:Wells-obstruction}
For $c=(\beta,\alpha)\in G$, the following three conditions are
equivalent:
\begin{enumerate}
\item $c$ is induced by an automorphism in $\Aut_{V}(E)$;
\item $\varpi(c)=0_{\mathcal E}$;
\item there is $h\in\Hom(\g,V)$ such that
      $ {}^c\Omega=\Omega^h$.
\end{enumerate}
For each solution $h$, the corresponding automorphism is
\begin{equation}\label{eq:normalized-lift}
\gamma_{c,h}(s(x)+a)=s(\alpha x)+h(\alpha x)+\beta a.
\end{equation}
\end{theorem}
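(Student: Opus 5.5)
The plan is to prove (2)$\Leftrightarrow$(3) from the definitions, and then (1)$\Leftrightarrow$(3) by comparing the lifting equations of Theorem~\ref{thm:inducibility} with the gauge formulas of Proposition~\ref{prop:gauge}.

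\emph{Step 1: (2)$\Leftrightarrow$(3).} By Definition~\ref{def:Wells}, $\varpi(c)=0_{\mathcal E}$ means $[{}^c\Omega]=[\Omega]$ in $H^2_{\nab}(\g,V)$. By Definition~\ref{def:Hnab}, this says ${}^c\Omega=\Omega^h$ for some $h\in\Hom(\g,V)$. The relation $\sim$ is symmetric by Lemma~\ref{lem:gauge-action}, since $\Omega'=\Omega^f$ is equivalent to $\Omega=\Omega'^{-f}$. So the direction of the shear does not matter, and nothing further is needed here.

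\emph{Step 2: (3)$\Rightarrow$(1).} I would work with operators on $\g\oplus V$ and avoid component formulas. Put $\gamma=T_hS_c$, so $\gamma(x+a)=\alpha x+h(\alpha x)+\beta a$. By \eqref{eq:transport-bracket},
\[
S_c[z_1,\ldots,z_n]_\Omega=[S_cz_1,\ldots,S_cz_n]_{{}^c\Omega}.
\]
By hypothesis ${}^c\Omega=\Omega^h$, and \eqref{eq:gauge-bracket} gives
\[
T_h[w_1,\ldots,w_n]_{\Omega^h}=[T_hw_1,\ldots,T_hw_n]_\Omega.
\]
Composing the two identities shows that $\gamma$ is an automorphism of $\g\oplus_\Omega V$. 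It is invertible because both $T_h$ and $S_c$ are. Moreover $\gamma|_V=\beta$ and $p\gamma=\alpha p$, so $\tau(\gamma)=c$. Through the coordinate isomorphism $F_s$ this is exactly the map \eqref{eq:normalized-lift}.

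\emph{Step 3: (1)$\Rightarrow$(3).} Given $\gamma$ with $\tau(\gamma)=c$, Theorem~\ref{thm:inducibility} writes $\gamma(s(x)+a)=s(\alpha x)+\lambda(x)+\beta a$. Set $h=\lambda\alpha^{-1}$, so $\gamma=T_hS_c$ in coordinates. Since $\gamma$ preserves $[\cdot]_\Omega$, we get $[z_1,\ldots,z_n]_\Omega=\gamma^{-1}[\gamma z_1,\ldots,\gamma z_n]_\Omega$. Unwinding with $\gamma^{-1}=S_c^{-1}T_{-h}$ gives
\[
S_c[S_c^{-1}w_1,\ldots,S_c^{-1}w_n]_\Omega=T_{-h}[T_hw_1,\ldots,T_hw_n]_\Omega.
\]
The left side is $[\,\cdot\,]_{{}^c\Omega}$ and the right side is $[\,\cdot\,]_{\Omega^h}$. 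Both operations have the form \eqref{eq:bracket-pure}--\eqref{eq:bracket-mixed}, so comparing components gives ${}^c\Omega=\Omega^h$.

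The same identification $\gamma=T_hS_c$ with $h=\lambda\alpha^{-1}$ shows that every solution $h$ produces the lift \eqref{eq:normalized-lift}. It is also consistent with the equations \eqref{eq:lift-rho}--\eqref{eq:lift-omega}, which are the componentwise form of ${}^c\Omega=\Omega^h$ after the substitution $x_i\mapsto\alpha^{-1}x_i$.

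The main obstacle is bookkeeping rather than any conceptual step. One must check that the shear sits on the correct side of $S_c$, which is why $h(\alpha x)$ rather than $h(x)$ appears in \eqref{eq:normalized-lift}. One must also ensure that no sign or ordering is lost when passing between the intrinsic identity and the section coordinates.
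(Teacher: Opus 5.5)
Your proof is correct and has the same structure as the paper's: (2)$\Leftrightarrow$(3) is read off from Definitions~\ref{def:Hnab} and~\ref{def:Wells}, and (3)$\Rightarrow$(1) is the paper's composite $T_hS_c$ of the bracket-preserving maps given by \eqref{eq:transport-bracket} and \eqref{eq:gauge-bracket}. The one variation is in (1)$\Rightarrow$(3): the paper substitutes $x_i\mapsto\alpha^{-1}x_i$, $a_j\mapsto\beta^{-1}a_j$ into the component equations \eqref{eq:lift-rho}--\eqref{eq:lift-omega}, whereas you factor $\gamma=T_hS_c$ and run those bracket identities backwards to get $[\cdot]_{{}^c\Omega}=[\cdot]_{\Omega^h}$ before comparing components, which is a valid coordinate-free version of the same step.
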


\begin{proof}
Let $\gamma$ induce $c$ and use the map
$\lambda=\gamma s-s\alpha$ of
Theorem~\ref{thm:inducibility}. Put $h=\lambda\alpha^{-1}$.
In \eqref{eq:lift-rho}, replace each $x_i$ by $\alpha^{-1}x_i$
and each $a_j$ by $\beta^{-1}a_j$. Restoring the unshifted
term gives
\begin{align*}
{}^c\rho_r(x_1,\ldots,x_m)(a_1,\ldots,a_r)
 &=[x_1+h(x_1),\ldots,x_m+h(x_m),a_1,\ldots,a_r]_{\Omega}\\
 &=\rho_r^h(x_1,\ldots,x_m)(a_1,\ldots,a_r).
\end{align*}
The same substitutions in \eqref{eq:lift-omega} give
\begin{align*}
{}^c\omega(x_1,\ldots,x_n)
 &=[x_1+h(x_1),\ldots,x_n+h(x_n)]_{\Omega}
   -[x_1,\ldots,x_n]_{\g}\\
 &\quad-h([x_1,\ldots,x_n]_{\g})
 =\omega^h(x_1,\ldots,x_n).
\end{align*}
Thus $ {}^c\Omega=\Omega^h$.

If this equality holds, $S_c:E_{\Omega}\to E_{{}^c\Omega}$
preserves brackets by \eqref{eq:transport-bracket}, and
$T_h:E_{\Omega^h}\to E_{\Omega}$ preserves brackets by
\eqref{eq:gauge-bracket}. Their composite $T_hS_c$ therefore
defines an automorphism. In elements,
\begin{align*}
T_hS_c(x+a)&=\alpha x+h(\alpha x)+\beta a,\\
(T_hS_c)^{-1}(x+a)&=
 \alpha^{-1}x+\beta^{-1}(a-h(x)).
\end{align*}
Transporting this formula by $F_s$ gives
\eqref{eq:normalized-lift} and proves inducibility.
Finally, Definition~\ref{def:Hnab} says exactly that
$[{}^c\Omega]=[\Omega]$ when $ {}^c\Omega=\Omega^h$ for some $h$.
This proves all three assertions.
\end{proof}

The theorem gives an explicit procedure: form
\eqref{eq:twist-omega}--\eqref{eq:twist-rho}, solve
\eqref{eq:gauge-rho}--\eqref{eq:gauge-omega} with $f=h$,
and insert the solution into \eqref{eq:normalized-lift}.
The equations for the mixed maps are part of the obstruction, even
when $\omega=0$.

\subsection{Independence of the section}
\begin{proposition}\label{prop:Wells-section}
Both $0_{\mathcal E}$ and $\varpi_{\mathcal E}$ are independent
of the section. If $s'=s+f$ and $h$ is a solution for $c$ with
respect to $s$, then the solution defining the same lift with
respect to $s'$ is
\begin{equation}\label{eq:lift-section-change}
h'=h+{}^cf-f.
\end{equation}
\end{proposition}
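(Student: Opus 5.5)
The argument will track how both the base point and the Wells map move when the section changes. Let $\Omega=\Omega_s$ and $\Omega'=\Omega_{s'}$ with $s'=s+f$. By Proposition~\ref{prop:gauge} we have $\Omega'=\Omega^f$, so $[\Omega']=[\Omega]$ in $H^2_{\nab}(\g,V)$ by Definition~\ref{def:Hnab}. Hence $0_{\mathcal E}$ does not depend on $s$. This is also the content of Theorem~\ref{thm:classification}: the class attached to $\mathcal E$ is intrinsic.

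For the Wells map, apply the transport identity \eqref{eq:transport-gauge}:
\[
{}^c\Omega'={}^c(\Omega^f)=({}^c\Omega)^{{}^cf}.
\]
Therefore $[{}^c\Omega']=[{}^c\Omega]$, and so $\varpi_{\mathcal E}(c)$ is the same whichever section is used. Both values lie in the same set $H^2_{\nab}(\g,V)$, so no further identification is needed. Well-definedness of the action on classes, from Proposition~\ref{prop:transport}, is exactly the statement being used here.

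For the formula \eqref{eq:lift-section-change}, suppose ${}^c\Omega=\Omega^h$. Using Lemma~\ref{lem:gauge-action} and \eqref{eq:transport-gauge}, compute
\[
{}^c\Omega'=({}^c\Omega)^{{}^cf}=(\Omega^h)^{{}^cf}=\Omega^{h+{}^cf}=(\Omega^{f})^{h+{}^cf-f}=(\Omega')^{h'}.
\]
So $h'$ is a solution relative to $s'$.

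It remains to check that $h'$ gives the same automorphism. By \eqref{eq:normalized-lift}, the lift built from $h$ sends $s(x)+a\mapsto s(\alpha x)+h(\alpha x)+\beta a$. In particular $\gamma s(x)=s(\alpha x)+h(\alpha x)$. Then
\[
\gamma s'(x)=\gamma s(x)+\beta f(x)=s'(\alpha x)-f(\alpha x)+h(\alpha x)+\beta f(x)=s'(\alpha x)+h'(\alpha x),
\]
since $\beta f(x)=({}^cf)(\alpha x)$. The map $\gamma$ restricts to $\beta$ on $V$, so it is $\gamma_{c,h'}$ written in $s'$-coordinates.

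The main point requiring care is this last identification. The uniqueness clause of Theorem~\ref{thm:inducibility} says that a lift is determined by $\lambda=\gamma s-s\alpha$, and the proof of Theorem~\ref{thm:Wells-obstruction} gives $h=\lambda\alpha^{-1}$. Together these show that "the solution defining the same lift" is well defined, and that it equals $h'$. The remaining check is the bookkeeping of $\alpha^{-1}$ in ${}^cf=\beta f\alpha^{-1}$.
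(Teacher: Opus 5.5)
Your proposal is correct and follows the paper's proof essentially step for step. It derives invariance of the base point and of the Wells map from $\Omega'=\Omega^f$ together with \eqref{eq:transport-gauge}, uses the same chain $({}^c\Omega)^{{}^cf}=(\Omega^h)^{{}^cf}=\Omega^{h+{}^cf}=(\Omega')^{h'}$, and evaluates $\gamma_{c,h}$ on $s'(x)+a$ elementwise in the same way. Your additional remark that $h'$ is determined by $\lambda'=\gamma s'-s'\alpha$ via Theorem~\ref{thm:inducibility} is a useful clarification of what ``the same lift'' means.
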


\begin{proof}
The new datum is $\Omega'=\Omega^f$. Consequently,
\[
[\Omega']=[\Omega],\qquad
[{}^c\Omega']=[({}^c\Omega)^{{}^cf}]=[{}^c\Omega],
\]
by \eqref{eq:transport-gauge}. This proves independence of the
base point and of every value of $\varpi$.
More explicitly, if $ {}^c\Omega=\Omega^h$, then
\begin{align*}
{}^c\Omega'
 &=({}^c\Omega)^{{}^cf}
   =(\Omega^h)^{{}^cf}
   =\Omega^{h+{}^cf}\\
 &=(\Omega^f)^{h+{}^cf-f}=(\Omega')^{h'}.
\end{align*}
At the level of elements,
\begin{align*}
\gamma_{c,h}(s'(x)+a)
 &=\gamma_{c,h}(s(x)+f(x)+a)\\
 &=s(\alpha x)+h(\alpha x)+\beta f(x)+\beta a\\
 &=s'(\alpha x)
   +\bigl(h+\beta f\alpha^{-1}-f\bigr)(\alpha x)+\beta a.
\end{align*}
This proves \eqref{eq:lift-section-change}.
\end{proof}

If $\Phi:E\to E'$ is an equivalence of extensions, then
$\gamma\mapsto\Phi\gamma\Phi^{-1}$ identifies their automorphism
groups over $V$ and commutes with $\tau$. Choosing $s'=\Phi s$
makes the two data equal, since
\[
[\Phi s(x_1),\ldots,\Phi s(x_{n-r}),a_1,\ldots,a_r]
 =\Phi[s(x_1),\ldots,s(x_{n-r}),a_1,\ldots,a_r],
\]
and $\Phi|_{V}=\id$. Their Wells maps therefore agree as well.

\subsection{Orbit identities and fibers}
\begin{proposition}\label{prop:Wells-orbit}
For $c,d\in G$,
\begin{equation}\label{eq:Wells-orbit-law}
\varpi(cd)={}^c\varpi(d).
\end{equation}
The set $K_{\mathcal E}=\ker\varpi$ is the stabilizer of
$[\Omega]$, hence a subgroup. Each fiber is a left coset:
\begin{equation}\label{eq:Wells-fibers}
\varpi(c)=\varpi(d)\ \text{ if and only if }\ c\in dK_{\mathcal E}.
\end{equation}
Thus $G/K_{\mathcal E}$ is in bijection with the orbit
$G\cdot[\Omega]$.
\end{proposition}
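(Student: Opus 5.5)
The plan is to deduce everything from the fact that the transport in Proposition~\ref{prop:transport} is a left action of $G$ on $H^2_{\nab}(\g,V)$, together with the definitions $\varpi(c)=[{}^c\Omega]$ and $0_{\mathcal E}=[\Omega]$. No new bracket computations should be needed.

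First, the orbit law \eqref{eq:Wells-orbit-law}. By the action property ${}^{cd}\Omega={}^c({}^d\Omega)$ and by \eqref{eq:cohom-action},
\[
\varpi(cd)=[{}^{cd}\Omega]=[{}^c({}^d\Omega)]={}^c[{}^d\Omega]={}^c\varpi(d).
\]
The only point to check here is that ${}^c$ acts on classes rather than on representatives. This is exactly the well-definedness supplied by \eqref{eq:transport-gauge}.

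Second, the kernel. By definition, $\ker\varpi=\{c:{}^c[\Omega]=[\Omega]\}$, which is the stabilizer of the point $[\Omega]$ under a group action. It is therefore a subgroup:
\begin{itemize}
\item it contains $1$ because ${}^1\Omega=\Omega$;
\item it is closed under products by the orbit law, since $\varpi(cd)={}^c\varpi(d)={}^c[\Omega]=[\Omega]$ for $c,d\in K_{\mathcal E}$;
\item it is closed under inverses, since applying ${}^{c^{-1}}$ to ${}^c[\Omega]=[\Omega]$ gives $[\Omega]={}^{c^{-1}}[\Omega]$.
\end{itemize}
Alternatively, Theorem~\ref{thm:Wells-obstruction} identifies $K_{\mathcal E}$ with $\im\tau$, which is a subgroup because $\tau$ is a homomorphism. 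This can be mentioned as a consistency check.

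Third, the fibers \eqref{eq:Wells-fibers}. If $c=dk$ with $k\in K_{\mathcal E}$, the orbit law gives $\varpi(c)={}^d\varpi(k)={}^d[\Omega]=\varpi(d)$. Conversely, suppose ${}^c[\Omega]={}^d[\Omega]$. Applying ${}^{d^{-1}}$ and using the action property gives ${}^{d^{-1}c}[\Omega]=[\Omega]$, so $d^{-1}c\in K_{\mathcal E}$.

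The bijection $G/K_{\mathcal E}\to G\cdot[\Omega]$, $cK_{\mathcal E}\mapsto\varpi(c)$, is then the orbit–stabilizer correspondence:
\begin{itemize}
\item it is well defined and injective by \eqref{eq:Wells-fibers};
\item it is surjective because every orbit point has the form ${}^c[\Omega]=\varpi(c)$.
\end{itemize}

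I expect no real obstacle. The one place needing care is that the action on classes is a genuine left action, i.e.\ ${}^c({}^d[\Omega])={}^{cd}[\Omega]$, and not a right action. This comes from $S_cS_d=S_{cd}$ in the proof of Proposition~\ref{prop:transport}. It fixes the coset side as $dK_{\mathcal E}$ rather than $K_{\mathcal E}d$, so I would verify this convention explicitly before writing the fiber statement.
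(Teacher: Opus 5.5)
Your proposal is correct and is essentially the paper's own argument: the orbit law follows from the left-action property (via $S_cS_d=S_{cd}$) and well-definedness on classes, the stabilizer is a subgroup by applying $c^{-1}$, and the fiber description follows by applying $d^{-1}$. Your explicit orbit--stabilizer check and the cross-check $K_{\mathcal E}=\im\tau$ are harmless additions.
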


\begin{proof}
The action law gives
\[
\varpi(cd)=[{}^{cd}\Omega]=[{}^c({}^d\Omega)]
          ={}^c[{}^d\Omega].
\]
If $c,d$ fix $[\Omega]$, then
$ {}^{cd}[\Omega]={}^c({}^d[\Omega])=[\Omega]$.
Applying $c^{-1}$ to $ {}^c[\Omega]=[\Omega]$ shows that
$c^{-1}$ also fixes $[\Omega]$.
If $\varpi(c)=\varpi(d)$, application of $d^{-1}$ gives
$ {}^{d^{-1}c}[\Omega]=[\Omega]$, so $c=dk$ for
$k\in K_{\mathcal E}$. Conversely, for such $k$,
$\varpi(dk)={}^d({}^k[\Omega])={}^d[\Omega]=\varpi(d)$.
This proves the fiber description and the stated bijection.
\end{proof}

Equation \eqref{eq:Wells-orbit-law} is an orbit identity. The
$G$-action need not fix the chosen base point; it does so exactly
on $K_{\mathcal E}$. We therefore do not interpret
\eqref{eq:Wells-orbit-law} as an additive crossed-homomorphism
identity on a non-abelian cohomology group. An additive map on
the mixed-compatible subgroup is constructed below; its abelian
specialization appears in Section~\ref{sec:abelian}.

\subsection{All lifts and their multiplication}
Define
\begin{equation}\label{eq:Aomega}
\mathcal A_{\Omega}
 =\{(c,h)\in G\times\Hom(\g,V):{}^c\Omega=\Omega^h\}.
\end{equation}

\begin{theorem}\label{thm:all-lifts}
The bijection $(c,h)\mapsto\gamma_{c,h}$ identifies
$\mathcal A_{\Omega}$ with $\Aut_{V}(E)$. The multiplication
and inverse are
\begin{align}
(c,h)(d,k)&=(cd,h+{}^ck),\label{eq:lifts-product}\\
(c,h)^{-1}&=(c^{-1},-{}^{c^{-1}}h).
\label{eq:lifts-inverse}
\end{align}
If $c$ has one solution $h_0$, all its solutions are
\begin{equation}\label{eq:lifts-torsor}
h_0+Z_{\Omega},\qquad
Z_{\Omega}=\{f\in\Hom(\g,V):\Omega^f=\Omega\}.
\end{equation}
Conjugation on this kernel is
\begin{equation}\label{eq:conjugation}
\gamma_{c,h}\,T_f\,\gamma_{c,h}^{-1}=T_{{}^cf},
\qquad f\in Z_{\Omega}.
\end{equation}
Here the shears are viewed on $E$ using $F_s$.
\end{theorem}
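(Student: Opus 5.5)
The bijection comes from Theorem~\ref{thm:Wells-obstruction} and Theorem~\ref{thm:inducibility}. Every $\gamma\in\Aut_V(E)$ has a unique $\lambda=\gamma s-s\alpha$, so it determines a unique $h=\lambda\alpha^{-1}$. The proof of Theorem~\ref{thm:Wells-obstruction} shows that ${}^c\Omega=\Omega^h$ and $\gamma=\gamma_{c,h}$. Conversely, every $(c,h)\in\mathcal A_\Omega$ gives the automorphism $\gamma_{c,h}$. Injectivity holds because $\gamma_{c,h}$ recovers $\beta$ as its restriction to $V$, $\alpha=\bar\gamma$, and $h(\alpha x)=\gamma s(x)-s(\alpha x)$. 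I will work throughout in coordinates, where $\gamma_{c,h}=T_hS_c$ by that same proof.

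For the multiplication I plan to use operator identities rather than brackets. From the proof of Proposition~\ref{prop:transport} we have $S_cT_kS_c^{-1}=T_{{}^ck}$, and \eqref{eq:shear} gives $T_hT_{k'}=T_{h+k'}$. Hence
\[
T_hS_c\,T_kS_d=T_hT_{{}^ck}S_cS_d=T_{h+{}^ck}S_{cd},
\]
which is \eqref{eq:lifts-product}. It remains to check that $(cd,h+{}^ck)\in\mathcal A_\Omega$. This follows either from closure of $\Aut_V(E)$ under composition together with uniqueness, or directly:
\[
{}^{cd}\Omega={}^c(\Omega^k)=({}^c\Omega)^{{}^ck}=(\Omega^h)^{{}^ck}=\Omega^{h+{}^ck},
\]
using \eqref{eq:transport-gauge} and Lemma~\ref{lem:gauge-action}. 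For the inverse, $(T_hS_c)^{-1}=S_{c^{-1}}T_{-h}=T_{-{}^{c^{-1}}h}S_{c^{-1}}$, again by the conjugation formula, which gives \eqref{eq:lifts-inverse}.

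For the torsor statement, take two solutions $h_0,h$ for the same $c$. Then $\Omega^{h_0}=\Omega^{h}$, and applying $(-)^{-h_0}$ with Lemma~\ref{lem:gauge-action} gives $\Omega^{h-h_0}=\Omega$, so $h-h_0\in Z_\Omega$. The converse is the same computation read backwards. We should check that this $Z_\Omega$ coincides with the earlier one, $\{f\in\mathcal F_\rho: d_\rho f=0\}$. Lemma~\ref{lem:partial-shears} shows that $\Omega^f=\Omega$ forces $f\in\mathcal F_\rho$ and $\omega^f=\omega+d_\rho f$, so the two definitions agree. The notation is therefore consistent with Theorem~\ref{thm:relative-derivation-exact}. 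This identification is the only place where a little care is needed.

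For the conjugation formula \eqref{eq:conjugation}, note that $T_f=\gamma_{1,f}$ for $f\in Z_\Omega$. Applying \eqref{eq:lifts-product} and \eqref{eq:lifts-inverse} gives
\[
(c,h)(1,f)(c^{-1},-{}^{c^{-1}}h)=(c,h+{}^cf)(c^{-1},-{}^{c^{-1}}h)=(1,h+{}^cf-h)=(1,{}^cf).
\]
Alternatively, one can compute directly that $T_hS_cT_fS_c^{-1}T_{-h}=T_{{}^cf}$, since the shears commute. The main obstacle is only bookkeeping: keeping the convention $\gamma=T_hS_c$ (with $h$ evaluated at $\alpha x$) consistent with the transport through $F_s$. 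Everything else is formal once the identity $S_cT_kS_c^{-1}=T_{{}^ck}$ is in hand.
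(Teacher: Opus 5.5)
Your proposal is correct and follows essentially the same route as the paper. The bijection comes from uniqueness of $\lambda=\gamma s-s\alpha$, closure from ${}^{cd}\Omega=\Omega^{h+{}^ck}$, the torsor statement from applying the gauge change $-h_0$, and conjugation from $(c,h)(1,f)(c,h)^{-1}=(1,{}^cf)$. The only cosmetic difference is that you obtain the product and inverse from the operator identity $S_cT_kS_c^{-1}=T_{{}^ck}$, whereas the paper evaluates $\gamma_{c,h}\gamma_{d,k}$ directly on $s(x)+a$ and gets the inverse by solving $h+{}^ck=0$.
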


\begin{proof}
The bijection follows from Theorem~\ref{thm:Wells-obstruction}
and uniqueness of $\lambda=\gamma s-s\alpha$.
Let $c=(\beta,\alpha)$ and $d=(\beta',\alpha')$. On $s(x)+a$,
\begin{align*}
\gamma_{c,h}\gamma_{d,k}(s(x)+a)
 &=\gamma_{c,h}(s(\alpha'x)+k(\alpha'x)+\beta'a)\\
 &=s(\alpha\alpha'x)+h(\alpha\alpha'x)
   +\beta k(\alpha'x)+\beta\beta'a\\
 &=s(\alpha\alpha'x)
   +(h+\beta k\alpha^{-1})(\alpha\alpha'x)+\beta\beta'a.
\end{align*}
This is \eqref{eq:lifts-product}. At the level of cocycles,
\begin{align*}
{}^{cd}\Omega
 &={}^c(\Omega^k)
   =({}^c\Omega)^{{}^ck}
   =(\Omega^h)^{{}^ck}
   =\Omega^{h+{}^ck},
\end{align*}
which also proves closure.
Putting $k=-{}^{c^{-1}}h$ gives
$h+{}^ck=0$ and proves \eqref{eq:lifts-inverse}.

If $h$ and $h_0$ are solutions for $c$, then
$\Omega^h=\Omega^{h_0}$. Applying the gauge change $-h_0$
to both sides gives $\Omega^{h-h_0}=\Omega$.
Conversely, $\Omega^f=\Omega$ implies
$\Omega^{h_0+f}=(\Omega^f)^{h_0}=\Omega^{h_0}$.
This proves \eqref{eq:lifts-torsor}.
Finally, the product and inverse formulas give
\[
(c,h)(1,f)(c,h)^{-1}
 =(1,h+{}^cf- h)=(1,{}^cf),
\]
proving \eqref{eq:conjugation}.
\end{proof}

\subsection{An additive Wells map for compatible non-abelian data}
The pointed map $\varpi$ applies to all pairs in
$G=\Aut(V)\times\Aut(\g)$. The relative cohomology constructed
in Section~\ref{sec:compatible-derivations} gives a more precise
additive map after the mixed lifting equations have been solved.
Define
\begin{equation}\label{eq:full-compatible-group}
\mathcal C_\rho
 =\{c=(\beta,\alpha)\in G:
       {}^c\rho_r=\rho_r^b\ (1\leq r\leq n-1)
       \text{ for some }b:\g\to V\}.
\end{equation}
Equivalently, the invertible map
\[
\Gamma_{c,b}(s(x)+a)=s(\alpha x)+b(\alpha x)+\beta a
\]
preserves every mixed bracket. This follows by substituting
section and kernel elements, exactly as in the mixed part of
the lifting equations.

\begin{proposition}\label{prop:relative-group-action}
The set $\mathcal C_\rho$ is a subgroup of $G$.
It acts on $\mathcal H_\rho^2$ by
\[
c[\theta]=[{}^c\theta],\qquad
{}^c\theta(x_1,\ldots,x_n)
 =\beta\theta(\alpha^{-1}x_1,\ldots,\alpha^{-1}x_n).
\]
For $f\in\mathcal F_\rho$,
\begin{equation}\label{eq:relative-group-boundary}
{}^cf=\beta f\alpha^{-1}\in\mathcal F_\rho,\qquad
d_\rho({}^cf)={}^c(d_\rho f).
\end{equation}
\end{proposition}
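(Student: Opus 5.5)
The plan is to transfer each claim to an operator statement on $E$, as was done for derivations in Lemma~\ref{lem:partial-shears} and Theorem~\ref{thm:compatible-Lie-algebra}. Membership $c\in\mathcal C_\rho$ with witness $b$ means that $\Gamma_{c,b}$ preserves every mixed bracket. Transport \eqref{eq:transport-bracket} together with the shear identity \eqref{eq:gauge-bracket} shows that $\Gamma_{c,b}$ corresponds to $T_bS_c$, restricted to mixed tuples.

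For the subgroup property, take $c$ with witness $b$ and $d$ with witness $k$. The mixed components of \eqref{eq:transport-gauge} involve only mixed brackets, so they give
\[
{}^{cd}\rho_r={}^c({}^d\rho_r)={}^c(\rho_r^k)=({}^c\rho_r)^{{}^ck}=(\rho_r^b)^{{}^ck}=\rho_r^{b+{}^ck}.
\]
Here the gauge formulas \eqref{eq:gauge-rho} for the mixed components are applied to the mixed subfamily alone; they do not use $\omega$. The unit has witness $0$. For the inverse, apply ${}^{c^{-1}}$ to ${}^c\rho_r=\rho_r^b$ to obtain $\rho_r={}^{c^{-1}}(\rho_r^b)=({}^{c^{-1}}\rho_r)^{{}^{c^{-1}}b}$. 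Then shear by $-{}^{c^{-1}}b$. This gives ${}^{c^{-1}}\rho_r=\rho_r^{-{}^{c^{-1}}b}$.

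Next, $\beta$ preserves $A_\rho$. The composite $\Gamma=\Gamma_{c,b}$ is bijective, preserves $V$, and preserves mixed brackets. For $a\in A_\rho$ we have $0=\Gamma[a,b',z_3,\dots,z_n]=[\beta a,\beta b',\Gamma z_3,\dots]$, and $\Gamma$ and $\beta$ are surjective, so $\beta a\in A_\rho$. Similarly $\beta\rho_A(X)a=\rho_A(\alpha X)\beta a$: the correction terms in $\Gamma$ evaluated on $[s(X),a]$ contain both $a$ and a kernel entry, so they vanish. Hence ${}^c\theta$ is $A_\rho$-valued. Moreover, $d_\rho({}^c\theta)={}^c(d_\rho\theta)$ by direct substitution, using this equivariance and $\alpha\in\Aut(\g)$, so $Z_\rho^2$ is preserved.

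For $f\in\mathcal F_\rho$, Lemma~\ref{lem:partial-shears} says $\delta_f$ satisfies the derivation identity on mixed brackets. Conjugation gives $\Gamma\delta_f\Gamma^{-1}=\delta_{\beta f\alpha^{-1}}$; check on $s(x)+a$, where $\Gamma^{-1}$ contributes only kernel corrections that $\delta_f$ kills. The conjugate of a map satisfying the mixed derivation identity by a mixed-bracket-preserving bijection that fixes $V$ setwise again satisfies it, so ${}^cf\in\mathcal F_\rho$.

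I expect the boundary identity $d_\rho({}^cf)={}^c(d_\rho f)$ to be the main obstacle, because $f$ need not be $A_\rho$-valued and so the clean equivariance of $\rho_A$ is not available. The difference consists of terms $\bigl(\beta\rho(\alpha^{-1}\cdot)\beta^{-1}-\rho(\cdot)\bigr)\beta f\alpha^{-1}$, that is, of terms $({}^c\rho-\rho)({}^cf)=(\rho^b-\rho)({}^cf)$. By the telescoping formula \eqref{eq:gauge-rho-telescope}, each such term is a sum of brackets containing $b(x_j)$ and ${}^cf(x_i)$ in distinct slots. As in the last step of Lemma~\ref{lem:relative-cochain-action}, I would group them by $j$ and apply the mixed derivation identity of $\delta_{{}^cf}$ to the bracket with $b(x_j)$ in slot $j$. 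This kills each group, including the terms with several $b$-entries, which are handled the same way. Alternatively, and more cleanly, use $\eta_{\delta_f}$: its pure value is $-d_\rho f$, and conjugating by $\Gamma$ transports $\eta$ exactly, as in the section-independence argument of Proposition~\ref{prop:relative-section}.

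Finally, the action is well defined on $\mathcal H^2_\rho=Z^2_\rho/d_\rho\mathcal F_\rho$ by the two invariances above. It is an action because ${}^{c}({}^{d}\theta)={}^{cd}\theta$ is immediate from the formula.
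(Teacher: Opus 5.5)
Your proposal is correct. The subgroup step, the invariance of $A_\rho$ with the equivariance of $\rho_A$, and the conjugation $\Gamma\delta_f\Gamma^{-1}=\delta_{\beta f\alpha^{-1}}$ all match the paper. The paper writes closure as $\Gamma_{c,b}\Gamma_{d,u}=\Gamma_{cd,b+{}^cu}$ and $\Gamma_{c,b}^{-1}=\Gamma_{c^{-1},-{}^{c^{-1}}b}$, which is the operator form of your mixed-component gauge computation.

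The genuine difference is in the boundary identity. The paper never computes the discrepancy. It writes ${}^c\Omega=\Omega^b+\theta_c$, notes that adding a pure cochain commutes with every gauge change, and compares pure components in ${}^c(\Omega^f)=({}^c\Omega)^{{}^cf}$. For this it applies Lemma~\ref{lem:partial-shears} at $\Omega$ for $f$ and at $\Omega^b$ for ${}^cf$. That is short, but it relies on the section independence of $\mathcal F_\rho$ and of $d_\rho|_{\mathcal F_\rho}$ from Proposition~\ref{prop:relative-section}.

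Your first route is a self-contained elementwise cancellation, and it does close. Fix the telescoping index $j$ and put $z=(s(x_1),\ldots,s(x_{j-1}),b(x_j),s(x_{j+1})+b(x_{j+1}),\ldots,s(x_n)+b(x_n))$. Then the summands with that index are exactly $\sum_{i\ne j}[z_1,\ldots,\delta_{{}^cf}z_i,\ldots,z_n]$, which is $\delta_{{}^cf}[z]=0$. Since $\delta_{{}^cf}(s(x_k)+b(x_k))={}^cf(x_k)$, the terms of higher order in $b$ are absorbed automatically and need no separate treatment.

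Your second route also works, but ``transports $\eta$ exactly'' hides the one point needing care: $\Gamma$ does not preserve pure brackets. The identity $\eta_{\delta_{{}^cf}}(z)=\beta\,\eta_{\delta_f}(\Gamma^{-1}z)$ still holds. Both $\Gamma^{-1}[z_1,\ldots,z_n]$ and $[\Gamma^{-1}z_1,\ldots,\Gamma^{-1}z_n]$ have quotient component $\alpha^{-1}[pz_1,\ldots,pz_n]_\g$, so their difference lies in $V$ and is killed by $\delta_f$. Every other bracket involved contains a kernel entry and is therefore preserved by $\Gamma$. Evaluating at section inputs, and using that $\eta_{\delta_f}$ vanishes on mixed tuples, then gives $d_\rho({}^cf)={}^c(d_\rho f)$.
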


\begin{proof}
Invertible maps preserving $V$ and all mixed brackets are
closed under composition and inverse: their images and inverse
images of mixed tuples are again mixed. Direct evaluation gives
\[
\Gamma_{c,b}\Gamma_{d,u}=\Gamma_{cd,b+{}^cu},
\qquad
\Gamma_{c,b}^{-1}=\Gamma_{c^{-1},-{}^{c^{-1}}b}.
\]
Hence their induced pairs form the subgroup $\mathcal C_\rho$.

Preservation of mixed brackets gives
$\beta A_\rho=A_\rho$. On a bracket with its last entry
$a\in A_\rho$, all terms containing $b(x_i)$ vanish, so
\[
\beta\rho_A(\alpha^{-1}x_1,\ldots,\alpha^{-1}x_{n-1})
 =\rho_A(x_1,\ldots,x_{n-1})\beta.
\]
Substitution into the four terms of \eqref{eq:relative-d2}
yields
\[
(d_\rho\,{}^c\theta)(X;Y)
 =\beta(d_\rho\theta)(\alpha^{-1}X;\alpha^{-1}Y).
\]
Thus the action preserves $Z_\rho^2$.
Conjugating $\delta_f$ by a mixed-bracket-preserving map gives
\[
\Gamma_{c,b}\delta_f\Gamma_{c,b}^{-1}
 =\delta_{\beta f\alpha^{-1}}.
\]
The conjugate still satisfies the derivation identity on mixed
brackets. Lemma~\ref{lem:partial-shears} gives
$ {}^cf\in\mathcal F_\rho$.

To verify the boundary formula without assuming $f(\g)\subseteq A_\rho$,
write $ {}^c\Omega=\Omega^b+\theta_c$, where
$\theta_c={}^c\omega-\omega^b$. The two cocycles have the same
mixed maps, so Proposition~\ref{prop:fixed-mixed-differences}
and section independence imply $\theta_c\in Z_\rho^2$.
Adding a pure cochain commutes with every gauge change:
\[
(\Omega+\theta)^u=\Omega^u+\theta,
\]
because the input projections to $\g$ are unchanged and
$T_{-u}$ is the identity on the output in $V$.
Gauge covariance and Lemma~\ref{lem:partial-shears} now give
\begin{align*}
{}^c(\Omega^f)&=\Omega^b+\theta_c+{}^c(d_\rho f),\\
({}^c\Omega)^{{}^cf}
 &=(\Omega^b+\theta_c)^{{}^cf}
   =\Omega^b+\theta_c+d_\rho({}^cf).
\end{align*}
The left sides agree by \eqref{eq:transport-gauge}; the boundary
formula follows by comparing pure components.
The action therefore descends to $\mathcal H_\rho^2$.
\end{proof}

\begin{theorem}\label{thm:relative-group-Wells}
For $c\in\mathcal C_\rho$, choose any mixed correction $b$.
Then
\begin{equation}\label{eq:relative-group-Wells}
\varpi_\rho(c)=[{}^c\omega-\omega^b]\in\mathcal H_\rho^2
\end{equation}
is independent of $b$ and of the section. It satisfies
\begin{equation}\label{eq:relative-group-crossed}
\varpi_\rho(cd)=\varpi_\rho(c)+c\varpi_\rho(d).
\end{equation}
There is an exact sequence
\begin{equation}\label{eq:relative-group-exact}
1\longrightarrow Z_\Omega\longrightarrow\Aut_V(E)
\xrightarrow{\tau}\mathcal C_\rho
\xrightarrow{\varpi_\rho}\mathcal H_\rho^2.
\end{equation}
For a chosen $b$, the pair $c$ lifts precisely when
\[
{}^c\omega-\omega^b=d_\rho f
\quad\text{for some }f\in\mathcal F_\rho;
\]
in that case all lifts have normalized correction $b+f+Z_\Omega$.
\end{theorem}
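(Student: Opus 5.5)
The argument runs entirely through the decomposition ${}^c\Omega=\Omega^b+\theta_c$ from the proof of Proposition~\ref{prop:relative-group-action}. There, $\theta_c={}^c\omega-\omega^b\in Z_\rho^2$, and $(\Omega+\theta)^u=\Omega^u+\theta$ for every gauge $u$. All claims will be reduced to equalities between gauges, using Lemma~\ref{lem:partial-shears} and Theorem~\ref{thm:affine-classification}.

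\emph{Independence of $b$ and of the section.} If $b'$ is another mixed correction, then $\rho_r^{b'}=\rho_r^b$. Write $\Omega^{b'}=(\Omega^b)^{b'-b}$. Since $\Omega^b$ has the same mixed maps as $\Omega$, Lemma~\ref{lem:partial-shears} applied to $\Omega^b$ gives $b'-b\in\mathcal F_\rho$ and $\omega^{b'}=\omega^b+d_\rho(b'-b)$. Here the operator $d_\rho$ is the same because $\rho_1$ is unchanged, and $\mathcal F_\rho$ is intrinsic by Proposition~\ref{prop:relative-section}. Hence the two representatives differ by an element of $B_\rho^2$.

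For a section change $s'=s+f$, we have $\Omega'=\Omega^f$. By \eqref{eq:transport-gauge}, ${}^c\Omega'=({}^c\Omega)^{{}^cf}=\Omega^{b+{}^cf}+\theta_c=(\Omega')^{b+{}^cf-f}+\theta_c$. So $b'=b+{}^cf-f$ is a correction for $\Omega'$ and yields the same $\theta_c$. By Proposition~\ref{prop:relative-section}, the spaces $\mathcal H_\rho^2$ are identified for the two sections.

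\emph{Crossed homomorphism.} Take corrections $b$ for $c$ and $u$ for $d$. Using \eqref{eq:transport-gauge} and the additivity of the pure part,
\[
{}^{cd}\Omega={}^c\bigl(\Omega^u+\theta_d\bigr)=({}^c\Omega)^{{}^cu}+{}^c\theta_d=\Omega^{b+{}^cu}+\theta_c+{}^c\theta_d .
\]
Here ${}^c$ acts on a pure cochain by transport, and the computation relies on ${}^c(\Omega+\theta)={}^c\Omega+{}^c\theta$, which is immediate from \eqref{eq:twist-omega}. This shows that $b+{}^cu$ is a correction for $cd$ and that $\theta_{cd}=\theta_c+{}^c\theta_d$. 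Passing to classes via Proposition~\ref{prop:relative-group-action} gives \eqref{eq:relative-group-crossed}.

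\emph{Exactness and lifts.} Exactness at $Z_\Omega$ and at $\Aut_V(E)$ follows from Theorem~\ref{thm:all-lifts}. Indeed, the kernel of $\tau$ consists of the pairs $(1,h)$ with $\Omega^h=\Omega$, the map $f\mapsto T_f$ is injective, and $Z_\Omega$ agrees with $\{f\in\mathcal F_\rho:d_\rho f=0\}$ by Lemma~\ref{lem:partial-shears}. Every induced pair lies in $\mathcal C_\rho$, since ${}^c\Omega=\Omega^h$, and it has $\varpi_\rho=0$ with $b=h$.

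The main step is exactness at $\mathcal C_\rho$. By Theorem~\ref{thm:Wells-obstruction}, $c$ lifts iff ${}^c\Omega=\Omega^h$ for some $h$. Any such $h$ matches the mixed maps of $\Omega^b$, so Lemma~\ref{lem:partial-shears} applied to $\Omega^b$ gives $h=b+f$ with $f\in\mathcal F_\rho$ and $\omega^h=\omega^b+d_\rho f$. Hence lifting is equivalent to $\theta_c=d_\rho f$ for some $f\in\mathcal F_\rho$, which is exactly the vanishing of $\varpi_\rho(c)$.

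For a fixed $b$, the solutions $h$ form $b+f+Z_\Omega$ by \eqref{eq:lifts-torsor}. The only delicate point throughout is to apply Lemma~\ref{lem:partial-shears} to $\Omega^b$ rather than to $\Omega$, so that $d_\rho$ and $\mathcal F_\rho$ are the unchanged intrinsic objects.
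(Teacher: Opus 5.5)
Your proposal follows the paper's proof step for step. It uses the same splitting ${}^c\Omega=\Omega^b+\theta_c$, the same gauge-covariance computation giving $\theta_{cd}=\theta_c+{}^c\theta_d$ with correction $b+{}^cu$, the same correction $b+{}^cf-f$ after a change of section, and the same appeal to Theorem~\ref{thm:Wells-obstruction} and Lemma~\ref{lem:partial-shears} for exactness and for the description of all lifts.

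One justification is wrong, however. You assert that $\Omega^b$ has the same mixed maps as $\Omega$ and that $\rho_1$ is unchanged. This is false in general, since by definition of a correction $\rho_r^b={}^c\rho_r$. For example, take $n=2$, $\g=\F x$, $\rho(x)=\ad_V v_0$, $c=(\beta,\id)$, and $b(x)=\beta v_0-v_0$; then $\rho^b(x)=\ad_V(\beta v_0)\ne\rho(x)$ when $V$ is centerless and $\beta v_0\ne v_0$. So Lemma~\ref{lem:partial-shears} applied at $\Omega^b$ a priori gives $b'-b\in\mathcal F_{\rho^b}$ and $\omega^{b'}=\omega^b+d_{\rho^b}(b'-b)$. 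The same issue arises in your exactness step with $h$ in place of $b'$.

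To replace these with $\mathcal F_\rho$ and $d_\rho$, note that $\Omega^b$ is the datum of the section $s+b$ of the same extension. Proposition~\ref{prop:relative-section} then gives $\mathcal F_{\rho^b}=\mathcal F_\rho$ and $d_{\rho^b}f=d_\rho f$ for $f\in\mathcal F_\rho$, because $d_\rho f=-\eta_{\delta_f}$ on section inputs and $\eta_{\delta_f}$ vanishes on mixed tuples. This is how the paper argues ("the section-independent description of $\mathcal F_\rho$"). Your closing remark about ``unchanged intrinsic objects'' points in this direction; with this justification in place of ``$\rho_1$ is unchanged,'' the proof is complete.
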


\begin{proof}
The cocycle assertion was proved above.
If $b'$ is another mixed correction, the section-independent
description of $\mathcal F_\rho$ gives $b'-b\in\mathcal F_\rho$.
The affine gauge formula, applied at $\Omega^b$, gives
\[
\omega^{b'}-\omega^b=d_\rho(b'-b).
\]
Thus \eqref{eq:relative-group-Wells} is well defined.

Choose corrections $b_c,b_d$, and write
$ {}^c\Omega=\Omega^{b_c}+\theta_c$ and
$ {}^d\Omega=\Omega^{b_d}+\theta_d$. Gauge covariance gives
\begin{align*}
{}^{cd}\Omega
 &={}^c(\Omega^{b_d}+\theta_d)\\
 &=({}^c\Omega)^{{}^ct_d}+{}^c\theta_d\\
 &=(\Omega^{b_c}+\theta_c)^{{}^ct_d}+{}^c\theta_d\\
 &=\Omega^{b_c+{}^ct_d}+\theta_c+{}^c\theta_d.
\end{align*}
The sum $b_c+{}^ct_d$ is a mixed correction for $cd$.
Taking classes proves \eqref{eq:relative-group-crossed}.

The condition $\varpi_\rho(c)=0$ means
$\theta_c=d_\rho f$ for some $f\in\mathcal F_\rho$.
Then
\[
{}^c\Omega=\Omega^{b_c}+\theta_c
          =\Omega^{b_c+f},
\]
so Theorem~\ref{thm:Wells-obstruction} supplies a lift.
Conversely, any lift has a mixed correction $h$, so
$h-b_c\in\mathcal F_\rho$ and
$\theta_c=d_\rho(h-b_c)$. Its class is zero.
The differences between solutions are $Z_\Omega$, proving
the last assertion and exactness at $\mathcal C_\rho$.
The kernel of $\tau$ consists of the shears
$s(x)+a\mapsto s(x)+f(x)+a$ with $\Omega^f=\Omega$.
By Lemma~\ref{lem:partial-shears} these are exactly $f\in Z_\Omega$,
which proves the remaining exactness assertions.

Finally, replace $\Omega$ by $\Omega'=\Omega^k$.
The same partial automorphism has correction
$b'_c=b_c+{}^ck-k$. Indeed,
\[
{}^c\Omega'=({}^c\Omega)^{{}^ck}
 =\Omega^{b_c+{}^ck}+\theta_c
 =(\Omega')^{b'_c}+\theta_c.
\]
The same cocycle $\theta_c$ represents the new obstruction,
and Proposition~\ref{prop:relative-section} identifies the
cohomology spaces. This proves section independence.
\end{proof}

The pointed and additive maps detect the same inducible pairs:
$\varpi(c)=[\Omega]$ precisely when
$c\in\mathcal C_\rho$ and $\varpi_\rho(c)=0$.
There is no subtraction of arbitrary non-abelian cohomology
classes in this assertion; subtraction is performed only
after the mixed data have been aligned.
The corresponding derivation equations are
$L_d\rho_r=\partial_r\lambda$ and
$L_d\omega=d_\rho\lambda$.
Their derivation in Section~\ref{sec:derivation-wells} is a
direct comparison of brackets, with a single correction
inserted in each quotient slot.
In both the group and Lie algebra settings, solving the
mixed equations leaves the pure cohomological obstruction.

\section{Wells exact sequences and splitting}\label{sec:exact}

\subsection{The kernel as a vector space}
For $f\in\Hom(\g,V)$ define a linear map on $E$ by
\begin{equation}\label{eq:delta-f}
\delta_f(s(x)+a)=f(x),\qquad
\delta_f=i f p,\qquad \delta_f^2=0.
\end{equation}
In these coordinates $T_f=\id+\delta_f$.

\begin{proposition}\label{prop:kernel-linear}
The following conditions on $f$ are equivalent:
\begin{enumerate}
\item $f\in Z_{\Omega}$;
\item $\id+\delta_f\in\Aut_{V}(E)$;
\item $\delta_f\in\Der(E)$;
\item $f$ satisfies the following equations, with
      $1\leq r\leq n-1$ in the second:
\begin{align}
f([x_1,\ldots,x_n]_{\g})
 &=\sum_{i=1}^n(-1)^{n-i}
 \rho(x_1,\ldots,\widehat x_i,\ldots,x_n)f(x_i),
 \label{eq:kernel-pure}\\
0&=\sum_{i=1}^{n-r}(-1)^{n-r-i}
 \rho_{r+1}(x_1,\ldots,\widehat x_i,\ldots,x_{n-r})
       (f(x_i),a_1,\ldots,a_r).
 \label{eq:kernel-mixed}
\end{align}
\end{enumerate}
In particular, $Z_{\Omega}$ is a vector subspace of
$\Hom(\g,Z(V))$, independent of the section, and
\begin{equation}\label{eq:kernel-isom}
(Z_{\Omega},+)\ \cong\ \ker\tau,\qquad f\longmapsto T_f.
\end{equation}
\end{proposition}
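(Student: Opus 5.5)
The plan is to link the four conditions through the shear $T_f=\id+\delta_f$ and reduce everything to the finite insertion calculation of Lemma~\ref{lem:partial-shears}.

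First, (1)$\Leftrightarrow$(2). By Lemma~\ref{lem:gauge-action}, $T_f$ is a bracket isomorphism $\g\oplus_{\Omega^f}V\to\g\oplus_\Omega V$, so $T_f$ preserves $[\,\cdot\,]_\Omega$ exactly when $\Omega^f=\Omega$. The map $T_f$ is always invertible with inverse $T_{-f}$, fixes $V$, and induces the identity on $\g$. Hence (2) is equivalent to (1). This also shows that $T_f\in\ker\tau$. Conversely, Theorem~\ref{thm:all-lifts} with $c=1$ shows that every element of $\ker\tau$ is $\gamma_{1,h}=T_h$ with $h\in Z_\Omega$. Together with $T_fT_k=T_{f+k}$, this gives the isomorphism \eqref{eq:kernel-isom}.

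Next, (3)$\Leftrightarrow$(4). The derivation defect $\eta_{\delta_f}=-d_\Omega f$ has components $-d_\rho f$ and $-\partial_rf$, by \eqref{eq:dOmega0}--\eqref{eq:dOmegar}, and it vanishes on all-kernel inputs. This is also the direct computation in the proof of Lemma~\ref{lem:partial-shears} and of Proposition~\ref{prop:relative-obstruction} with $D_V=0$, $D_\g=0$, $\lambda=f$. Its vanishing is therefore exactly \eqref{eq:kernel-pure} together with \eqref{eq:kernel-mixed}.

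Finally, (3)$\Leftrightarrow$(1), which I expect to be the main point. I will repeat the $S_k$ argument of Lemma~\ref{lem:partial-shears}, now applied to all tuples and not only mixed ones. For a pure tuple the values $[\ldots]$ lie in $E$ rather than $V$, so $\delta$ no longer annihilates the value; I need to check that the argument survives this.

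\begin{itemize}
\item Suppose $\delta=\delta_f$ is a derivation. Every term of $S_k$ with $k\geq1$ is mixed, so the argument gives $S_2=\cdots=S_n=0$. Hence
\[
T_f[z_1,\ldots,z_n]-[T_fz_1,\ldots,T_fz_n]=\delta S_0-S_1,
\]
and this vanishes by the derivation identity. So $T_f$ is an automorphism.
\item Conversely, suppose $T_f$ is an automorphism. Then (2) holds, hence (1), so $\rho_r^f=\rho_r$ and $f\in\mathcal F_\rho$ by Lemma~\ref{lem:partial-shears}. Consequently $S_{\geq2}=0$, and the displayed identity again forces $\delta S_0=S_1$, which is the derivation identity.
\end{itemize}

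Alternatively, (1)$\Rightarrow$(4) follows directly from Lemma~\ref{lem:partial-shears}, which gives $\omega^f=\omega+d_\rho f$, so $d_\rho f=0$.

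For the closing assertions: the case $r=n-1$ of \eqref{eq:kernel-mixed} is \eqref{eq:highest-good-change}, which forces $f(\g)\subseteq Z(V)$. The conditions in (4) are linear in $f$, so $Z_\Omega$ is a subspace. Condition (2) refers only to $E$, $V$ and $\delta_f=ifp$, with no section involved, so $Z_\Omega$ is independent of the section.
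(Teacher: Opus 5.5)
Your proof is correct and follows essentially the paper's route. The gauge identity of Lemma~\ref{lem:gauge-action} gives (1)$\Leftrightarrow$(2), the component description $\eta_{\delta_f}=-d_\Omega f$ gives (3)$\Leftrightarrow$(4), and the square-zero insertion calculation of Lemma~\ref{lem:partial-shears} links the two pairs; the paper uses that calculation through $\omega^f=\omega+d_\rho f$, while you run the $S_k$ expansion directly on arbitrary tuples, and your section-independence argument from the intrinsic condition (2) is a slightly more direct substitute for the paper's appeal to Proposition~\ref{prop:relative-section}.
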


\begin{proof}
The equations in the fourth condition are
$d_\rho f=0$ and $\partial_r f=0$.
By Lemma~\ref{lem:partial-shears}, they say that all mixed
components and the pure component are fixed by $f$, or equivalently
$\Omega^f=\Omega$. The gauge identity makes this equivalent to
$T_f$ preserving every bracket.
The same equations are the pure and mixed derivation identities
for $\delta_f$, as proved in
Theorem~\ref{thm:relative-derivation-exact}.
Hence the four conditions agree.
They are linear, and the equation with $r=n-1$ gives
$f(\g)\subseteq Z(V)$.
Section independence is Proposition~\ref{prop:relative-section}.
Every element of $\ker\tau$ has the unique form $T_f$, and
$T_fT_k=T_{f+k}$, proving \eqref{eq:kernel-isom}.
\end{proof}

\begin{remark}
Although the gauge action on all cocycles is nonlinear, its
stabilizer is described by the linear equations
\eqref{eq:kernel-pure}--\eqref{eq:kernel-mixed}. It is the
additive group of derivations of $E$ that annihilate $V$ and have
image in $V$. This conclusion uses characteristic zero.
\end{remark}

\subsection{The main sequence and two restricted sequences}
\begin{theorem}\label{thm:Wells-exact}
The extension \eqref{eq:extension} gives the exact sequence
\begin{equation}\label{eq:Wells-exact}
1\longrightarrow Z_{\Omega}
 \xrightarrow{\,f\mapsto T_f\,}\Aut_{V}(E)
 \xrightarrow{\tau}\Aut(V)\times\Aut(\g)
 \xrightarrow{\varpi}
 \bigl(H^2_{\nab}(\g,V),[\Omega]\bigr).
\end{equation}
The first three nontrivial terms are groups; exactness at the
third means $\im\tau=\varpi^{-1}([\Omega])$.
Equivalently, there is a short exact sequence of groups
\begin{equation}\label{eq:short-exact}
1\longrightarrow Z_{\Omega}\longrightarrow\Aut_{V}(E)
 \xrightarrow{\tau}K_{\mathcal E}\longrightarrow1.
\end{equation}
The induced $K_{\mathcal E}$-action on $Z_{\Omega}$ is
$ {}^{(\beta,\alpha)}f=\beta f\alpha^{-1}$.
\end{theorem}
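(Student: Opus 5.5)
The plan is to assemble the sequence \eqref{eq:Wells-exact} from results already proved, checking exactness one term at a time. Then I deduce \eqref{eq:short-exact} and identify the conjugation action.

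\emph{Exactness at $Z_\Omega$ and at $\Aut_V(E)$.} Proposition~\ref{prop:kernel-linear} shows that $f\mapsto T_f$ is an injective group homomorphism $(Z_\Omega,+)\to\Aut_V(E)$. Injectivity comes from $T_f s(x)=s(x)+f(x)$, and the homomorphism property from $T_fT_k=T_{f+k}$.
\begin{itemize}
\item \emph{Image lies in the kernel.} Each $T_f$ is the identity on $V$ and satisfies $pT_f=p$, so $\tau(T_f)=(\id,\id)$.
\item \emph{Kernel lies in the image.} Let $\tau(\gamma)=(\id,\id)$. By Theorem~\ref{thm:inducibility}, $\gamma$ has the form $\gamma(s(x)+a)=s(x)+\lambda(x)+a$, that is, $\gamma=T_\lambda$ in section coordinates. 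Since $\gamma$ preserves brackets, $\Omega^\lambda=\Omega$ by the gauge identity \eqref{eq:gauge-bracket}, so $\lambda\in Z_\Omega$.
\end{itemize}
The map $\tau$ is a section-independent group homomorphism, as checked just before Theorem~\ref{thm:inducibility}.

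\emph{Exactness at $\Aut(V)\times\Aut(\g)$.} This is the equivalence of conditions (1) and (2) in Theorem~\ref{thm:Wells-obstruction}: $c\in\im\tau$ exactly when $\varpi(c)=[\Omega]$. The target is only a pointed set, so exactness here means $\im\tau=\varpi^{-1}([\Omega])$, as stated; no group structure on $H^2_{\nab}$ is used. Proposition~\ref{prop:Wells-orbit} identifies this fiber with the stabilizer subgroup $K_{\mathcal E}$. Hence $\tau$ corestricts to a surjective homomorphism $\Aut_V(E)\to K_{\mathcal E}$ with kernel $Z_\Omega$, which is \eqref{eq:short-exact}.

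\emph{The induced action.} By Theorem~\ref{thm:all-lifts}, every $\gamma\in\Aut_V(E)$ equals $\gamma_{c,h}$ for some $(c,h)\in\mathcal A_\Omega$. Formula \eqref{eq:conjugation} then gives $\gamma_{c,h}T_f\gamma_{c,h}^{-1}=T_{{}^cf}$, and this depends only on $c=\tau(\gamma)$. So conjugation descends to the $K_{\mathcal E}$-action ${}^{(\beta,\alpha)}f=\beta f\alpha^{-1}$ on $Z_\Omega$. That $Z_\Omega$ is stable under this action follows from normality of the kernel together with injectivity of $f\mapsto T_f$.

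\emph{Where care is needed.} The main subtlety is bookkeeping rather than a new computation. One must confirm that the kernel of $\tau$ is exactly the set of shears: an automorphism inducing the identity pair could a priori have a correction $\lambda$ not in $Z_\Omega$. This is excluded by the uniqueness of $\lambda=\gamma s-s\alpha$ in Theorem~\ref{thm:inducibility} combined with the stabilizer description in Proposition~\ref{prop:kernel-linear}. One must also keep exactness at the pointed set phrased as a statement about fibers.
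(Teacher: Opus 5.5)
Your proposal is correct and follows the paper's proof: $\ker\tau=\{T_f:f\in Z_\Omega\}$ comes from Proposition~\ref{prop:kernel-linear}, exactness at $\Aut(V)\times\Aut(\g)$ is the equivalence (1)$\Leftrightarrow$(2) of Theorem~\ref{thm:Wells-obstruction}, and the action comes from \eqref{eq:conjugation}. Your only addition is re-deriving $\ker\tau=\{T_f\}$ from Theorem~\ref{thm:inducibility} and the gauge identity; as your own argument there shows, what forces $\lambda\in Z_\Omega$ is that $T_\lambda$ preserves brackets, not the uniqueness of $\lambda$.
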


\begin{proof}
If $T_f=\id$, then $f(x)=T_f(s(x))-s(x)=0$, so the first
map is injective. Proposition~\ref{prop:kernel-linear} identifies
its image with $\ker\tau$. If $c=\tau(\gamma)$,
Theorem~\ref{thm:Wells-obstruction} supplies $h$ with
$ {}^c\Omega=\Omega^h$, so $\varpi(c)=[\Omega]$.
If $\varpi(c)=[\Omega]$, the same theorem constructs the
automorphism \eqref{eq:normalized-lift} with image $c$.
Thus $\im\tau=K_{\mathcal E}$, proving both sequences.
Formula \eqref{eq:conjugation} gives the asserted action, and
shows directly that it preserves $Z_{\Omega}$.
\end{proof}

\begin{corollary}\label{cor:centerless}
If $Z(V)=0$, each inducible pair has a unique lift and
$\Aut_{V}(E)\cong K_{\mathcal E}$.
\end{corollary}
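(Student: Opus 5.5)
The plan is to read Corollary~\ref{cor:centerless} off the short exact sequence \eqref{eq:short-exact} of Theorem~\ref{thm:Wells-exact}. That sequence identifies $\ker\tau$ with $Z_\Omega$ and $\im\tau$ with $K_{\mathcal E}$. Both assertions therefore reduce to a single fact: $Z_\Omega=0$ whenever $Z(V)=0$.

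The key step is Proposition~\ref{prop:kernel-linear}, which states that $Z_\Omega$ is a subspace of $\Hom(\g,Z(V))$. To make this explicit, take $f\in Z_\Omega$. By the fourth condition of that proposition, $f$ satisfies \eqref{eq:kernel-mixed} with $r=n-1$; equivalently, by \eqref{eq:gauge-top} together with $\rho_{n-1}^f=\rho_{n-1}$, we have
\[
[f(x),a_1,\ldots,a_{n-1}]_V=0\qquad\text{for all } a_i\in V .
\]
Hence $f(x)\in Z(V)=0$ for every $x$, so $f=0$ and $Z_\Omega=0$.

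With $Z_\Omega=0$, the sequence \eqref{eq:short-exact} shows that $\tau:\Aut_V(E)\to K_{\mathcal E}$ is an injective surjective group homomorphism, hence an isomorphism. Uniqueness of lifts also follows directly from Theorem~\ref{thm:all-lifts}: the lifts of an inducible $c$ are indexed by $h_0+Z_\Omega$, and this set is a single point.

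There is no real obstacle in this argument. The only point requiring care is that the top mixed equation alone forces centrality inside $V$. This is exactly what \eqref{eq:gauge-top} gives, so no hypothesis on $A_\rho$ or on the intermediate maps $\rho_2,\ldots,\rho_{n-2}$ is needed. For $n=2$ the same equation reads $[f(x),a]_V=0$, and the argument is unchanged.
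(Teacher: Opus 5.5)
Your proposal is correct and follows the paper's proof. The paper also uses Proposition~\ref{prop:kernel-linear} to get $Z_\Omega\subseteq\Hom(\g,Z(V))=0$, which you make explicit through the $r=n-1$ mixed equation, and then reads both conclusions off the short exact sequence \eqref{eq:short-exact}.
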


\begin{proof}
Proposition~\ref{prop:kernel-linear} gives
$Z_{\Omega}\subseteq\Hom(\g,Z(V))=0$. The conclusion follows
from \eqref{eq:short-exact}.
\end{proof}

To fix one of the two prescribed automorphisms, set
\begin{align*}
\Aut_{V}^{V}(E)
 &=\{\gamma\in\Aut_{V}(E):\gamma|_{V}=\id_{V}\},\\
\Aut_{V}^{\g}(E)
 &=\{\gamma\in\Aut_{V}(E):\bar\gamma=\id_{\g}\},\\
\varpi_{\g}(\alpha)&=\varpi(\id_{V},\alpha),\qquad
\varpi_{V}(\beta)=\varpi(\beta,\id_{\g}).
\end{align*}

\begin{theorem}\label{thm:restricted-exact}
There are exact sequences of groups and pointed sets
\begin{align}
1\longrightarrow Z_{\Omega}\longrightarrow
\Aut_{V}^{V}(E)
\xrightarrow{\gamma\mapsto\bar\gamma}\Aut(\g)
\xrightarrow{\varpi_{\g}}
\bigl(H^2_{\nab}(\g,V),[\Omega]\bigr),
\label{eq:restricted-g}\\
1\longrightarrow Z_{\Omega}\longrightarrow
\Aut_{V}^{\g}(E)
\xrightarrow{\gamma\mapsto\gamma|_{V}}\Aut(V)
\xrightarrow{\varpi_{V}}
\bigl(H^2_{\nab}(\g,V),[\Omega]\bigr).
\label{eq:restricted-h}
\end{align}
Thus $\alpha$ lifts while fixing $V$ pointwise precisely when
$\varpi_{\g}(\alpha)=[\Omega]$, and $\beta$ extends while inducing
the identity on $\g$ precisely when
$\varpi_{V}(\beta)=[\Omega]$.
\end{theorem}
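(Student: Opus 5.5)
The plan is to obtain both sequences by restricting the main sequence \eqref{eq:Wells-exact} of Theorem~\ref{thm:Wells-exact} to the subgroups $\Aut(\g)\cong\{\id_V\}\times\Aut(\g)$ and $\Aut(V)\cong\Aut(V)\times\{\id_\g\}$ of $G$. First, $\Aut_V^V(E)$ and $\Aut_V^\g(E)$ are subgroups of $\Aut_V(E)$. They are exactly $\tau^{-1}(\{\id_V\}\times\Aut(\g))$ and $\tau^{-1}(\Aut(V)\times\{\id_\g\})$, because $\tau$ is a group homomorphism. The maps $\gamma\mapsto\bar\gamma$ and $\gamma\mapsto\gamma|_V$ are then the composites of $\tau$ with the two coordinate projections, and so they are group homomorphisms.

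Next I will check exactness at $Z_\Omega$ and at the middle group. For $f\in Z_\Omega$, the shear $T_f=\id+\delta_f$ restricts to the identity on $V$ and induces the identity on $\g$. So $T_f$ lies in both restricted groups and $\tau(T_f)=1$. Injectivity of $f\mapsto T_f$ is already proved. In the first sequence, a kernel element $\gamma$ satisfies $\bar\gamma=\id_\g$ and $\gamma|_V=\id_V$. Hence $\tau(\gamma)=1$, and Proposition~\ref{prop:kernel-linear} gives $\gamma=T_f$ for some $f\in Z_\Omega$. The second sequence is handled in the same way.

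Exactness at $\Aut(\g)$ and at $\Aut(V)$ is where the content lies, and I expect it to be the only real step. For $\alpha$ in the image, choose $\gamma\in\Aut_V^V(E)$ with $\bar\gamma=\alpha$. Then $\tau(\gamma)=(\id_V,\alpha)$, so Theorem~\ref{thm:Wells-obstruction} gives $\varpi_\g(\alpha)=\varpi(\id_V,\alpha)=[\Omega]$.

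For the converse, suppose $\varpi_\g(\alpha)=[\Omega]$. The same theorem supplies some $h$ with ${}^{(\id_V,\alpha)}\Omega=\Omega^h$, together with the lift $\gamma_{c,h}$ of \eqref{eq:normalized-lift}. The point to check is that this lift has the required normalization. By construction $\gamma_{c,h}|_V=\beta=\id_V$ and $\overline{\gamma_{c,h}}=\alpha$, so $\gamma_{c,h}\in\Aut_V^V(E)$. I will stress that every lift of $c$ induces exactly $c$, so no separate correction is needed. The case of $\Aut(V)$, with $c=(\beta,\id_\g)$, is identical, the lift now inducing $\id_\g$.

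Finally, since $\varpi_\g$ and $\varpi_V$ are maps into the pointed set $(H^2_{\nab}(\g,V),[\Omega])$, exactness at the third term means image equals the fiber over the base point. That fiber is exactly what was just established, and the two displayed lifting statements are restatements of it.
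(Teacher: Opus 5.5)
Your proposal is correct and follows essentially the paper's argument. The kernels are the shears $T_f$, $f\in Z_\Omega$, obtained as elements of $\ker\tau$, and exactness at $\Aut(\g)$ and $\Aut(V)$ follows from Theorem~\ref{thm:Wells-obstruction} for $c=(\id_V,\alpha)$ or $c=(\beta,\id_\g)$, since the normalized lift $\gamma_{c,h}$ already has the required restriction to $V$ and induced map on $\g$.
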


\begin{proof}
In $\Aut_{V}^{V}(E)$, having $\bar\gamma=\id_{\g}$ means that
both induced maps are the identity. Its kernel is therefore
$\{T_f:f\in Z_{\Omega}\}$.
If $\varpi_{\g}(\alpha)=[\Omega]$, a solution of
$ {}^{(\id,\alpha)}\Omega=\Omega^h$ gives
\[
\gamma(s(x)+a)=s(\alpha x)+h(\alpha x)+a.
\]
It fixes $V$ and induces $\alpha$. Conversely, any such
automorphism satisfies this cocycle equation by the lifting
criterion. This proves exactness of \eqref{eq:restricted-g}.

In $\Aut_{V}^{\g}(E)$ the kernel of restriction to $V$
again consists of the automorphisms inducing the identity on
both terms, hence of the same shears $T_f$.
When $\varpi_{V}(\beta)=[\Omega]$, a solution of
$ {}^{(\beta,\id)}\Omega=\Omega^h$ gives
\[
\gamma(s(x)+a)=s(x)+h(x)+\beta a.
\]
It induces the identity on $\g$ and restricts to $\beta$.
Every automorphism with these properties supplies such a solution,
which proves exactness of \eqref{eq:restricted-h}.
\end{proof}

\subsection{Splitting and the factor set of chosen lifts}
The stabilizer of the datum itself is
\begin{equation}\label{eq:strict-stabilizer}
C_{\Omega}=\{c\in G:{}^c\Omega=\Omega\}\subseteq K_{\mathcal E}.
\end{equation}
It may be smaller than the stabilizer of its class.

\begin{proposition}\label{prop:strict-splitting}
The inverse image of $C_{\Omega}$ is
\begin{equation}\label{eq:strict-splitting}
\tau^{-1}(C_{\Omega})\cong Z_{\Omega}\rtimes C_{\Omega}.
\end{equation}
The section sends $(\beta,\alpha)$ to
$s(x)+a\mapsto s(\alpha x)+\beta a$.
\end{proposition}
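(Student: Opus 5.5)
The approach is to use the parametrization of $\Aut_V(E)$ by $\mathcal A_\Omega$ from Theorem~\ref{thm:all-lifts}, where $\gamma_{c,h}$ corresponds to $(c,h)$ and the product is $(c,h)(d,k)=(cd,h+{}^ck)$. The idea is to restrict this description to the pairs whose first component lies in $C_\Omega$ and then recognize the resulting group law as a semidirect product.

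First I determine the inverse image. If $c\in C_\Omega$, then ${}^c\Omega=\Omega=\Omega^0$, so $h=0$ is a solution and $(c,0)\in\mathcal A_\Omega$. By \eqref{eq:lifts-torsor}, the solutions for this $c$ are exactly $h\in Z_\Omega$. Hence
\[
\tau^{-1}(C_\Omega)=\{\gamma_{c,h}:c\in C_\Omega,\ h\in Z_\Omega\}.
\]
This is a subgroup, because $C_\Omega$ is a subgroup of $G$: it is the stabilizer of $\Omega$ under the left action of Proposition~\ref{prop:transport}. It also contains $\ker\tau$.

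Next I define $\sigma(c)=\gamma_{c,0}$. By \eqref{eq:normalized-lift}, this is the map $s(x)+a\mapsto s(\alpha x)+\beta a$. The product formula gives $(c,0)(d,0)=(cd,0)$, so $\sigma$ is a group homomorphism. It is a section of $\tau$ over $C_\Omega$.

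The map $Z_\Omega\times C_\Omega\to\tau^{-1}(C_\Omega)$, $(f,c)\mapsto T_f\sigma(c)=(1,f)(c,0)=(c,f)$, is a bijection by the description above. Multiplication gives
\[
(c,f)(d,k)=(cd,f+{}^ck).
\]
This is the semidirect product law for the action ${}^cf=\beta f\alpha^{-1}$. That action preserves $Z_\Omega$ by \eqref{eq:conjugation}, or directly from \eqref{eq:transport-gauge}: ${}^c(\Omega^f)=\Omega^{{}^cf}$ when ${}^c\Omega=\Omega$. Together these give the isomorphism \eqref{eq:strict-splitting}.

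The only point needing care is the membership claim, namely that every lift of $c\in C_\Omega$ has correction in $Z_\Omega$. This follows from \eqref{eq:lifts-torsor} once $h_0=0$ is known to be a solution, so I expect no genuine obstacle.
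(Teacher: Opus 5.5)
Your proposal is correct and follows the paper's own argument: restrict the parametrization $\mathcal A_\Omega$ of Theorem~\ref{thm:all-lifts} to $c\in C_\Omega$, observe that $(c,0)$ is a lift so every lift is $(c,f)$ with $f\in Z_\Omega$ by \eqref{eq:lifts-torsor}, and read off the semidirect product law $(c,f)(d,k)=(cd,f+{}^ck)$. Your additional checks, that $C_\Omega$ is a subgroup and that ${}^c$ preserves $Z_\Omega$ via \eqref{eq:transport-gauge}, supply details the paper leaves implicit.
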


\begin{proof}
For $c\in C_{\Omega}$, the pair $(c,0)$ lies in
$\mathcal A_{\Omega}$ and
$(c,0)(d,0)=(cd,0)$. Every lift of such $c$ is
$(c,f)$ with $f\in Z_{\Omega}$ by \eqref{eq:lifts-torsor}.
Its multiplication is
$ (c,f)(d,k)=(cd,f+{}^ck)$, which is the asserted semidirect
product.
\end{proof}

For the full group $K_{\mathcal E}$ a further obstruction may
remain. Choose, for each $c\in K_{\mathcal E}$, a solution $h_c$
of $ {}^c\Omega=\Omega^{h_c}$, with $h_1=0$. Define
\begin{equation}\label{eq:kappa}
\kappa(c,d)=h_c+{}^ch_d-h_{cd}.
\end{equation}

\begin{theorem}\label{thm:factor-set}
The map $\kappa:K_{\mathcal E}\times K_{\mathcal E}\to Z_{\Omega}$
is a normalized group $2$-cocycle:
\begin{align}
\kappa(1,c)=\kappa(c,1)&=0,\notag\\
\kappa(c,d)+\kappa(cd,e)
 &={}^c\kappa(d,e)+\kappa(c,de).
 \label{eq:kappa-identity}
\end{align}
Its class in the ordinary group cohomology
$H^2_{\mathrm{grp}}(K_{\mathcal E},Z_{\Omega})$ is independent
of the chosen lifts. Sequence \eqref{eq:short-exact} splits
precisely when this class is zero.
\end{theorem}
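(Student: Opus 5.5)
The plan is to translate everything into the group $\mathcal A_\Omega$ of Theorem~\ref{thm:all-lifts}, where the product is $(c,h)(d,k)=(cd,h+{}^ck)$ and $(c,h)\mapsto\gamma_{c,h}$ identifies it with $\Aut_V(E)$. The chosen solutions give a set-theoretic section $\sigma(c)=(c,h_c)$ of $\tau$ onto $K_{\mathcal E}$, which is surjective by Theorem~\ref{thm:Wells-exact}. First, $\kappa(c,d)\in Z_\Omega$. Both $(cd,h_c+{}^ch_d)=\sigma(c)\sigma(d)$ and $(cd,h_{cd})$ lie in $\mathcal A_\Omega$ over the same $cd$. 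Hence by \eqref{eq:lifts-torsor} their difference $h_c+{}^ch_d-h_{cd}$ lies in $Z_\Omega$. In group form, $\sigma(c)\sigma(d)=(1,\kappa(c,d))\,\sigma(cd)$, since $(1,z)(cd,h_{cd})=(cd,z+h_{cd})$. Normalization is immediate from $h_1=0$ and ${}^1h=h$.

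The cocycle identity \eqref{eq:kappa-identity} I would check by direct expansion rather than through associativity, because the action $c\mapsto{}^c(\cdot)$ on $\Hom(\g,V)$ is a linear left action with ${}^c({}^dk)={}^{cd}k$ (Proposition~\ref{prop:transport}). The left side is $h_c+{}^ch_d-h_{cd}+h_{cd}+{}^{cd}h_e-h_{cde}$. The right side is ${}^ch_d+{}^{cd}h_e-{}^ch_{de}+h_c+{}^ch_{de}-h_{cde}$. These agree term by term. The $K_{\mathcal E}$-module structure on $Z_\Omega$ is the one from \eqref{eq:conjugation}, namely ${}^cf=\beta f\alpha^{-1}$, which preserves $Z_\Omega$ by Theorem~\ref{thm:Wells-exact}.

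For independence of choices, take other solutions $h'_c$ with $h'_1=0$. By \eqref{eq:lifts-torsor}, $h'_c=h_c+b(c)$ with $b:K_{\mathcal E}\to Z_\Omega$ and $b(1)=0$. Substituting gives $\kappa'(c,d)=\kappa(c,d)+b(c)+{}^cb(d)-b(cd)$, a coboundary. The section $s$ does not matter either. By Proposition~\ref{prop:Wells-section}, $h'_c=h_c+{}^cf-f$ describes the same lifts relative to $s'$. Then $\kappa$ is literally unchanged, since the extra terms ${}^cf-f+{}^{c}({}^df-f)-({}^{cd}f-f)$ cancel. The identification of $Z_\Omega$ is also section-independent by Proposition~\ref{prop:kernel-linear}.

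Splitting is the main point. A group-homomorphic section $K_{\mathcal E}\to\Aut_V(E)$ of $\tau$ is exactly a choice $c\mapsto(c,h'_c)$ that is multiplicative, i.e.\ $h'_{cd}=h'_c+{}^ch'_d$. Such a choice forces $h'_1=0$ and makes the corresponding $\kappa'$ identically zero. So splitting implies that the class of $\kappa$ is zero, by the independence just shown. Conversely, suppose $\kappa=-(b(c)+{}^cb(d)-b(cd))$ for some $b$. A normalized coboundary may be assumed with $b(1)=0$, after subtracting the constant $b(1)$, which changes the coboundary only by a term that vanishes for a normalized cocycle. Then $h'_c=h_c+b(c)$ is still a solution and has $\kappa'=0$, hence gives a homomorphic section. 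The only delicate step is this normalization bookkeeping; everything else is formal once the product law of $\mathcal A_\Omega$ is in hand.
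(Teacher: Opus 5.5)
Your proposal is correct and follows the paper's proof essentially line by line: the torsor property \eqref{eq:lifts-torsor} puts $\kappa$ in $Z_\Omega$, the cocycle identity is checked by direct expansion, a change $h'_c=h_c+b(c)$ alters $\kappa$ by a coboundary, and splitting is equivalent to a multiplicative choice $h_{cd}=h_c+{}^ch_d$ (your section-independence check is a harmless extra). The ``delicate'' normalization step is actually vacuous, because evaluating $\kappa=-(b(c)+{}^cb(d)-b(cd))$ at $c=d=1$ gives $0=\kappa(1,1)=-b(1)$, so $b(1)=0$ holds automatically; this is cleaner than subtracting a constant from $b$, which would shift the coboundary by $-{}^cb(1)$.
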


\begin{proof}
By \eqref{eq:lifts-product}, both $h_c+{}^ch_d$ and $h_{cd}$
solve the lifting equation for $cd$. Their difference lies in
$Z_{\Omega}$ by \eqref{eq:lifts-torsor}.
The normalizations follow from $h_1=0$. Expanding the left side
of \eqref{eq:kappa-identity} gives
\[
h_c+{}^ch_d-h_{cd}+h_{cd}+{}^{cd}h_e-h_{cde}
 =h_c+{}^ch_d+{}^{cd}h_e-h_{cde}.
\]
Expanding its right side gives the same expression:
\begin{align*}
{}^c(h_d+{}^dh_e-h_{de})+h_c+{}^ch_{de}-h_{cde}
 &=h_c+{}^ch_d+{}^{cd}h_e-h_{cde}.
\end{align*}
If another choice is $h'_c=h_c+z_c$,
$z_c\in Z_{\Omega}$, $z_1=0$, then
\begin{equation}\label{eq:kappa-change}
\kappa'(c,d)=\kappa(c,d)+z_c+{}^cz_d-z_{cd}.
\end{equation}
This is precisely a group coboundary change, so the class is
unchanged.

A homomorphic section of $\tau$ has the form
$c\mapsto\gamma_{c,h_c}$ with
$h_{cd}=h_c+{}^ch_d$, by \eqref{eq:lifts-product}.
It therefore makes $\kappa$ zero.
Conversely, if the class of $\kappa$ is zero, choose $z_c$ in
\eqref{eq:kappa-change} so that $\kappa'=0$.
The maps $\gamma_{c,h'_c}$ then satisfy
$\gamma_{c,h'_c}\gamma_{d,h'_d}=\gamma_{cd,h'_{cd}}$ and form
a homomorphic section.
\end{proof}

This group cohomology obstruction is distinct from $\varpi$:
$\varpi$ decides whether each individual pair lifts, while
$[\kappa]$ decides whether the lifts can be chosen multiplicatively.
The existence of an algebra section of \eqref{eq:extension}
asserts $\omega=0$ in suitable coordinates; it does not remove
the mixed-component lifting equations.

\section{Sparse crossed products as a special case}\label{sec:sparse}
This section assumes $n\geq3$.
We impose
\begin{equation}\label{eq:sparse-components}
\rho_1=\rho,\qquad \rho_{n-1}=\vartheta,\qquad
\rho_r=0\quad(2\leq r\leq n-2).
\end{equation}
For $n=3$ the vanishing range is empty, so every extension is
sparse relative to every section. For $n\geq4$ this condition
depends on the chosen section.

\subsection{The bracket and the two representation meanings}
\begin{definition}\label{def:crossed-system}
A sparse crossed system is a triple of maps
\[
\rho:\bigwedge^{n-1}\g\to\End(V),\qquad
\vartheta:\g\to\Hom(\bigwedge^{n-1}V,V),\qquad
\omega:\bigwedge^n\g\to V
\]
whose operation
\begin{align}
&[x_1+u_1,\ldots,x_n+u_n]_{\rho,\vartheta,\omega}\notag\\
&=[x_1,\ldots,x_n]_\g+[u_1,\ldots,u_n]_V+\omega(x_1,\ldots,x_n)
\notag\\
&\quad+\sum_{i=1}^n(-1)^{n-i}
\rho(x_1,\ldots,\widehat x_i,\ldots,x_n)u_i
\notag\\
&\quad+\sum_{i=1}^n(-1)^{i-1}
\vartheta(x_i)(u_1,\ldots,\widehat u_i,\ldots,u_n)
\label{eq:crossed-bracket}
\end{align}
is an $n$-Lie bracket. Its algebra is denoted
$\g\#_{\rho,\vartheta,\omega}V$.
An extension is sparse if it has a section with these components.
\end{definition}
Its exact structural conditions are obtained by substituting
\eqref{eq:sparse-components} in
Theorem~\ref{thm:cocycle-criterion}; equivalently one may use
the sparse crossed-product criterion of~\cite{ZhangExtending}.

The following consequences explain the two nonzero mixed maps.
For $n\geq4$, $\rho_2=0$, and \eqref{eq:curvature} together with
\eqref{eq:NAr0} for $r=1$ gives
\begin{align}
[\rho(X),\rho(y_1,\ldots,y_{n-1})]
 &=\sum_{i=1}^{n-1}\rho(y_1,\ldots,[X,y_i],\ldots,y_{n-1}),
\notag\\
\rho(x_1,\ldots,x_{n-2},[y_1,\ldots,y_n])
 &=\sum_{i=1}^n(-1)^{n-i}
\rho(y_1,\ldots,\widehat y_i,\ldots,y_n)
\rho(x_1,\ldots,x_{n-2},y_i).
\label{eq:sparse-representation}
\end{align}
Thus $\rho$ is a representation on all of $V$, its values are
derivations of $V$, and $d_\rho\omega=0$.
For $n=3$ the two defects are instead
\begin{align}
&\bigl([\rho(x_1,x_2),\rho(y_1,y_2)]
-\rho([x_1,x_2,y_1],y_2)-\rho(y_1,[x_1,x_2,y_2])\bigr)u
\notag\\
&\quad=\vartheta(y_1)(\omega(x_1,x_2,y_2),u)
-\vartheta(y_2)(\omega(x_1,x_2,y_1),u),\notag\\
&\bigl(\rho(x,[y_1,y_2,y_3])
-\rho(y_2,y_3)\rho(x,y_1)+\rho(y_1,y_3)\rho(x,y_2)
-\rho(y_1,y_2)\rho(x,y_3)\bigr)u
\notag\\
&\quad=-\vartheta(x)(\omega(y_1,y_2,y_3),u).
\label{eq:sparse-ternary-defects}
\end{align}
The second equation shows that $\rho$ is a representation
precisely when its displayed right-hand side is zero.

The operation $\vartheta$ is not assumed to be an ordinary
representation of $\g$. Its contractions are derivations of
$V$, and it represents the intrinsic module extension of
Section~\ref{sec:highest-representation}:
\[
D_{x;u_1,\ldots,u_{n-2}}v=\vartheta(x)(u_1,\ldots,u_{n-2},v),
\qquad
\chi_s(U)x=(-1)^{n-1}\vartheta(x)(U).
\]
Their interaction is the following instance of \eqref{eq:NA1}:
\begin{align}
\rho(X)\vartheta(y)(u_1,\ldots,u_{n-1})
&-\sum_j\vartheta(y)(u_1,\ldots,\rho(X)u_j,\ldots,u_{n-1})
\notag\\
&=\vartheta([X,y])(u_1,\ldots,u_{n-1})
  +[\omega(X,y),u_1,\ldots,u_{n-1}]_V.
\label{eq:sparse-rho-theta}
\end{align}
Thus the inner operation determined by $\omega$ is precisely
the equivariance defect of $\vartheta$.

\subsection{Changes between sparse sections}
The general change-of-section formulas apply without requiring
the new section to be sparse. The zero intermediate components
must be checked as well.

\begin{theorem}\label{thm:sparse-section-change}
The extreme and pure components of the new section are
\begin{align}
\vartheta^f(x)(u_1,\ldots,u_{n-1})
 &=
\vartheta(x)(u_1,\ldots,u_{n-1})
 +[f(x),u_1,\ldots,u_{n-1}]_V,
\label{eq:sparse-gauge-theta}\\
\rho^f(x_1,\ldots,x_{n-1})u
 &=\rho(x_1,\ldots,x_{n-1})u\notag\\
 &\quad+\sum_{i=1}^{n-1}(-1)^{i-1}
\vartheta(x_i)(f(x_1),\ldots,\widehat{f(x_i)},\ldots,f(x_{n-1}),u)
\notag\\
 &\quad+[f(x_1),\ldots,f(x_{n-1}),u]_V,
\label{eq:sparse-gauge-rho}\\
\omega^f(x_1,\ldots,x_n)
 &=\omega(x_1,\ldots,x_n)+(d_{\rho}f)(x_1,\ldots,x_n)\notag\\
 &\quad+\sum_{i=1}^n(-1)^{i-1}
\vartheta(x_i)(f(x_1),\ldots,\widehat{f(x_i)},\ldots,f(x_n))
\notag\\
 &\quad+[f(x_1),\ldots,f(x_n)]_V,
\label{eq:sparse-gauge-omega}
\end{align}
where
\[
(d_{\rho}f)(x_1,\ldots,x_n)
 =\sum_i(-1)^{n-i}\rho(x_1,\ldots,\widehat x_i,\ldots,x_n)f(x_i)
 -f([x_1,\ldots,x_n]_\g).
\]
For $2\leq m\leq n-2$, the newly created intermediate bracket is
\begin{align}
&[s'(x_1),\ldots,s'(x_m),u_1,\ldots,u_{n-m}]\notag\\
&=\sum_{i=1}^m(-1)^{i-1}
\vartheta(x_i)(f(x_1),\ldots,\widehat{f(x_i)},\ldots,f(x_m),
                                      u_1,\ldots,u_{n-m})\notag\\
&\quad+[f(x_1),\ldots,f(x_m),u_1,\ldots,u_{n-m}]_V.
\label{eq:sparse-gauge-middle}
\end{align}
Thus $s'$ is sparse precisely when every expression in
\eqref{eq:sparse-gauge-middle} vanishes.
\end{theorem}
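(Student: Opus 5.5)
The plan is to specialize Proposition~\ref{prop:gauge} to the sparse datum \eqref{eq:sparse-components}. Rather than the telescoped form, I will use the direct formulas \eqref{eq:gauge-rho}--\eqref{eq:gauge-omega}. A component of the new section with $m$ quotient entries is
\[
[x_1+f(x_1),\ldots,x_m+f(x_m),u_1,\ldots,u_{n-m}]_{\Omega}
\]
(minus $[x]_\g+f([x]_\g)$ when $m=n$). By multilinearity this expands into a sum over subsets $I\subseteq\{1,\ldots,m\}$ of the slots that keep $x_i$, with $f(x_j)$ placed in the remaining slots. The term for $I$ is a bracket with $|I|$ quotient entries and $n-|I|$ kernel entries, so by \eqref{eq:bracket-mixed} it is evaluated by $\rho_{n-|I|}$. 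Under \eqref{eq:sparse-components}, only $|I|\in\{0,1,n-1\}$ (and $|I|=n$ for the pure component) survive. This single observation organizes all four formulas.

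Next I go through the cases. For $m=1$ (the $\vartheta$-component), $I=\emptyset$ gives $[f(x),u_1,\ldots,u_{n-1}]_V$ and $I=\{1\}$ gives $\vartheta(x)(u)$, which is \eqref{eq:sparse-gauge-theta}, in agreement with \eqref{eq:gauge-top}. For $m=n-1$, $|I|=n-1$ gives $\rho(x)u$, $|I|=1$ gives the $\vartheta$ terms, and $|I|=0$ gives the kernel bracket. For $2\leq m\leq n-2$ there is no $|I|=n-1$ term, which yields \eqref{eq:sparse-gauge-middle}; the intermediate subsets $2\leq|I|\leq m-1$ vanish because the old section has $\rho_r=0$ in that range. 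For $m=n$, the subsets with $|I|=n-1$ reproduce the $\rho$-part of $d_\rho f$, and together with the subtracted $f([x]_\g)$ they give $d_\rho f$. The $|I|=1$ and $|I|=0$ subsets give the remaining terms of \eqref{eq:sparse-gauge-omega}. For $n=3$ the ranges $|I|=1$ and $|I|=n-2$ coincide, and I will check that the terms are then not double counted: the cases $m=2$, $|I|=1$ are the $\vartheta$ terms and nothing else.

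The main obstacle is the signs. In each surviving term, the retained quotient entries must be moved to the front past the $f(x_j)$ entries, and the sign of that permutation must be compared with the displayed sign. For $|I|=1$ with $x_i$ in slot $i$, moving it to the front costs $(-1)^{i-1}$, and $\vartheta(x_i)$ then receives $f(x_1),\ldots,\widehat{f(x_i)},\ldots,f(x_m),u$ in order, which matches. For $|I|=n-1$ with $f(x_i)$ in slot $i$, moving $f(x_i)$ to the last kernel position (just before the $u$'s, or at the end when $m=n$) costs $(-1)^{m-i}$. This gives $(-1)^{n-i}$ in the pure case, consistent with \eqref{eq:relative-d1}; in the $m=n-1$ case the single kernel entry $u$ then sits last as required. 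I will verify these two transpositions explicitly; everything else is bookkeeping.

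The final claim is then immediate: the new section is sparse exactly when its components with $2\leq r\leq n-2$ vanish, and these are the brackets \eqref{eq:sparse-gauge-middle} with $r=n-m$.
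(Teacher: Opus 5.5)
Your proposal is correct and is essentially the paper's own proof written out in more detail. You specialize Proposition~\ref{prop:gauge}, observe that under \eqref{eq:sparse-components} only the expansion terms with $0$, $1$, or $n-1$ quotient entries (plus the pure term) survive, and fix the signs by moving the lone quotient entry $x_i$ to the front at cost $(-1)^{i-1}$, or the lone kernel entry $f(x_i)$ of the pure expansion to the end at cost $(-1)^{n-i}$. Two slips are cosmetic only. The $n=3$ ``double counting'' check is vacuous, since $\rho_1,\rho_2,\rho_3$ are distinct components there and the intermediate range is simply empty. For $m=n-1$, the term with $|I|=n-1$ contains no $f$-entry, so no transposition is needed.
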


\begin{proof}
Apply Proposition~\ref{prop:gauge}. In each mixed expansion only
the terms with one remaining quotient entry, with $n-1$ quotient
entries, or with no quotient entry can survive. Moving the sole
quotient entry from slot $i$ to the first slot contributes
$(-1)^{i-1}$. The one-kernel terms in the pure expansion give
$d_\rho f$. This gives the four displayed formulas.
\end{proof}

The middle equations cannot be discarded when comparing two
sparse triples. They express preservation of the zero
components, not an additional choice of notation.
For instance, the first of them reads
\begin{align}
0={}&\vartheta(x)(f(y),u_1,\ldots,u_{n-2})
 -\vartheta(y)(f(x),u_1,\ldots,u_{n-2})\notag\\
&+[f(x),f(y),u_1,\ldots,u_{n-2}]_V.
\label{eq:sparse-middle-two}
\end{align}

\subsubsection{Arity four and higher arity}
\begin{theorem}\label{thm:sparse-arity}
Suppose $n\geq4$.
For $n=4$, condition \eqref{eq:sparse-middle-two} alone
characterizes the changes between sparse sections, and then
\begin{align}
\rho^f(x,y,z)u
 &=\rho(x,y,z)u-\tfrac12[f(x),f(y),f(z),u]_V,
\label{eq:sparse-four-rho}\\
\omega^f(x_1,x_2,x_3,x_4)
 &=\omega(x_1,x_2,x_3,x_4)+(d_{\rho}f)(x_1,x_2,x_3,x_4)\notag\\
 &\quad-[f(x_1),f(x_2),f(x_3),f(x_4)]_V.
\label{eq:sparse-four-omega}
\end{align}
For $n\geq5$, a change preserves sparsity precisely when
\eqref{eq:sparse-middle-two} and
\begin{equation}\label{eq:sparse-middle-three}
[f(x),f(y),f(z),u_1,\ldots,u_{n-3}]_V=0
\end{equation}
hold. In that case
\begin{equation}\label{eq:sparse-higher-affine}
\rho^f=\rho,\qquad \omega^f=\omega+d_{\rho}f.
\end{equation}
The top mixed operation still changes according to
\eqref{eq:sparse-gauge-theta}.
\end{theorem}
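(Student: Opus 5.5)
The plan is to read everything off Theorem~\ref{thm:sparse-section-change}, whose formula \eqref{eq:sparse-gauge-middle} gives the new intermediate components. For $n\geq4$ the intermediate range is $2\leq m\leq n-2$, where $m$ is the number of quotient entries. Write $M_m(x_1,\ldots,x_m)(u)$ for the right side of \eqref{eq:sparse-gauge-middle}. Sparsity of $s'$ means $M_m=0$ for every such $m$. The case $m=2$ is exactly \eqref{eq:sparse-middle-two}. So the task is to show that $M_2=0$ forces all higher $M_m$ to be determined by the pure kernel brackets of values of $f$, and then to substitute into \eqref{eq:sparse-gauge-rho} and \eqref{eq:sparse-gauge-omega}.

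\textbf{Key recursion.} Assume $M_2=0$. Then, for any $m\geq2$, each $\vartheta$-term in $M_m$ whose argument list begins with $f(x_j)$, $j\ne i$, can be rewritten using \eqref{eq:sparse-middle-two} with the pair $(x_i,x_j)$. This requires alternation of $\vartheta(x)$ in its $n-1$ kernel slots. Concretely, \eqref{eq:sparse-middle-two} lets us exchange $\vartheta(x_i)(f(x_j),\ldots)$ for $\vartheta(x_j)(f(x_i),\ldots)-[f(x_i),f(x_j),\ldots]_V$. Applying this to the $m$ terms of $M_m$ produces linear relations among them. I expect that summing over all transpositions collapses the $\vartheta$-sum to a multiple of the pure bracket, in the form
\[
\sum_{i=1}^m(-1)^{i-1}\vartheta(x_i)(f(x_1),\ldots,\widehat{f(x_i)},\ldots,f(x_m),U)
=c_m\,[f(x_1),\ldots,f(x_m),U]_V,
\]
with $c_m=-(m-1)/2$. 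The check for $m=2$ gives $c_2=-1/2$, since antisymmetrizing \eqref{eq:sparse-middle-two} yields $2\vartheta(x)(f(y),\ldots)-\ldots$. This step is the main obstacle: I would first verify the coefficient carefully for $m=2,3$ by direct expansion, then induct on $m$ by splitting off the last argument.

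\textbf{Consequences.} Granting the formula, $M_m=(1+c_m)[f(x_1),\ldots,f(x_m),U]_V$, and $1+c_m=(3-m)/2$. Hence $M_2=0$ gives $\tfrac12[f(x),f(y),U]_V=0$ in that case? That cannot be right, since $M_2=0$ is the hypothesis itself. So the identity should be used only for $m\geq3$, and $c_m$ should be fixed so that $c_3=-1$ and $M_3=0$ automatically; for $m\geq4$, $M_m=\bigl((3-m)/2\bigr)[\ldots]$. I would recalibrate $c_m$ accordingly after the $m=3$ computation.

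For $n=4$ only $m=2$ occurs, so \eqref{eq:sparse-middle-two} alone characterizes sparse changes. Then \eqref{eq:sparse-gauge-rho}, with $m=n-1=3$ $f$-entries, and \eqref{eq:sparse-gauge-omega}, with $m=4$, give coefficients $1+c_3$ and $1+c_4$. These should equal $-\tfrac12$ and $-1$, which yields \eqref{eq:sparse-four-rho}–\eqref{eq:sparse-four-omega}; this also fixes the constants $c_m$.

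For $n\geq5$ the range contains $m=3$, and $M_3=0$ is equivalent to \eqref{eq:sparse-middle-three}, since $1+c_3\neq0$. Conversely, \eqref{eq:sparse-middle-three} kills every pure bracket containing at least three values of $f$. Hence $M_m=0$ for all $m\geq3$, and the extra terms in \eqref{eq:sparse-gauge-rho}–\eqref{eq:sparse-gauge-omega} vanish, because they involve $n-1\geq4$ or $n$ values of $f$. This gives \eqref{eq:sparse-higher-affine}. Formula \eqref{eq:sparse-gauge-theta} is unaffected by any of this.
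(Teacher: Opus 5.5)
There is a genuine gap at exactly the step you call the main obstacle, and your way of closing it is circular. Your reduction to Theorem~\ref{thm:sparse-section-change} and the final bookkeeping are the right frame. But the coefficient identity for the $\vartheta$-sum is never derived. You guess $c_m=-(m-1)/2$ and find it contradicts $m=2$. You then switch to $c_3=-1$ with $M_3=0$ automatic. Finally you propose to fix $c_3,c_4$ by demanding $1+c_3=-\tfrac12$ and $1+c_4=-1$, that is, by reading them off \eqref{eq:sparse-four-rho}--\eqref{eq:sparse-four-omega}, which are what you must prove.

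The intermediate guesses are also false. Under \eqref{eq:sparse-middle-two} the correct constant is $c_m=-m/2$ for every $m\geq2$. For $m=2$ this is the hypothesis itself, so $c_2=-1$, not $-\tfrac12$. Hence $M_m=(1-\tfrac m2)[f(x_1),\ldots,f(x_m),U]_V$, not $\tfrac{3-m}{2}$ times it. Nor does $M_3$ vanish automatically. The example after the theorem ($n\geq5$, $[p_1,\ldots,p_n]_V=z$, $f(x_i)=p_i$, $\vartheta(x)=-\tfrac12[f(x),\cdot,\ldots,\cdot]_V$) satisfies \eqref{eq:sparse-middle-two}, yet its three-entry intermediate bracket is $-\tfrac12 z\neq0$. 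If $c_3=-1$ were right, \eqref{eq:sparse-middle-three} would be vacuous, which also contradicts your later use of $1+c_3\neq0$.

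The missing idea is a single-term identity, and it needs no induction on $m$. Fix $x,y,z$ and kernel entries $u_1,\ldots,u_{n-3}$, and set
\begin{itemize}
\item $A=\vartheta(x)(f(y),f(z),u_1,\ldots,u_{n-3})$,
\item $B=\vartheta(y)(f(x),f(z),u_1,\ldots,u_{n-3})$,
\item $C=\vartheta(z)(f(x),f(y),u_1,\ldots,u_{n-3})$,
\item $K=[f(x),f(y),f(z),u_1,\ldots,u_{n-3}]_V$.
\end{itemize}
Apply \eqref{eq:sparse-middle-two} to the pairs $(x,y)$, $(x,z)$, $(y,z)$, each time with the remaining $f$-value as the first free kernel entry. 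Alternation gives $A-B+K=0$, $-A-C-K=0$ and $-B+C+K=0$. Eliminating $B$ and $C$ yields $-2A-K=0$, so
\[
\vartheta(x)(f(y),f(z),u_1,\ldots,u_{n-3})=-\tfrac12[f(x),f(y),f(z),u_1,\ldots,u_{n-3}]_V .
\]

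For $m\geq3$, each signed summand $(-1)^{i-1}\vartheta(x_i)(f(x_1),\ldots,\widehat{f(x_i)},\ldots,f(x_m),U)$ contains at least two $f$-values. Moving $f(x_i)$ back to slot $i$ cancels the sign, so each summand equals $-\tfrac12[f(x_1),\ldots,f(x_m),U]_V$. The sum is therefore $-\tfrac m2$ times the bracket, and the total coefficient is $1-\tfrac m2$.

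With this lemma your outline goes through. For $n=4$, the cases $m=3,4$ give $-\tfrac12$ and $-1$. For $n\geq5$, the case $m=3$ is equivalent to \eqref{eq:sparse-middle-three}. In the converse direction, \eqref{eq:sparse-middle-three} alone kills only the pure kernel brackets. You need the same lemma to kill the $\vartheta$-sums for all $m\geq3$, including those in \eqref{eq:sparse-gauge-rho}--\eqref{eq:sparse-gauge-omega}.
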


\begin{proof}
Assume \eqref{eq:sparse-middle-two}. Fix $x,y,z$ and
$u_1,\ldots,u_{n-3}$, and abbreviate just these four values:
\begin{align*}
A&=\vartheta(x)(f(y),f(z),u_1,\ldots,u_{n-3}),\\
B&=\vartheta(y)(f(x),f(z),u_1,\ldots,u_{n-3}),\\
C&=\vartheta(z)(f(x),f(y),u_1,\ldots,u_{n-3}),\\
K&=[f(x),f(y),f(z),u_1,\ldots,u_{n-3}]_V.
\end{align*}
Use \eqref{eq:sparse-middle-two} first for $x,y$ with
$f(z)$ as the first kernel entry, then for $x,z$ with
$f(y)$, and finally for $y,z$ with $f(x)$.
Alternation gives, respectively,
\[
A-B+K=0,\qquad -A-C-K=0,\qquad -B+C+K=0.
\]
Thus
\[
B=A+K,\qquad C=-A-K,\qquad -2A-K=0,
\]
and therefore
\begin{equation}\label{eq:sparse-half}
\vartheta(x)(f(y),f(z),u_1,\ldots,u_{n-3})
 =-\tfrac12[f(x),f(y),f(z),u_1,\ldots,u_{n-3}]_V.
\end{equation}

For $m\geq3$, apply \eqref{eq:sparse-half} to each summand
in \eqref{eq:sparse-gauge-middle}, or to the corresponding
sum in the extreme and pure formulas.
Moving $f(x_i)$ from the first kernel slot back to its
original position contributes a second factor $(-1)^{i-1}$.
Consequently
\begin{align}
&\sum_{i=1}^m(-1)^{i-1}
\vartheta(x_i)(f(x_1),\ldots,\widehat{f(x_i)},\ldots,f(x_m),
                                            u_1,\ldots,u_{n-m})
\notag\\
&\quad=-\frac m2
[f(x_1),\ldots,f(x_m),u_1,\ldots,u_{n-m}]_V.
\label{eq:sparse-count}
\end{align}
Adding the all-kernel term gives coefficient $1-m/2$.

When $n=4$, the only intermediate case is $m=2$.
In \eqref{eq:sparse-gauge-rho} put $m=3$, which gives
$1-3/2=-1/2$; in \eqref{eq:sparse-gauge-omega} put $m=4$,
which gives $1-4/2=-1$. This proves the two arity-four formulas.

When $n\geq5$, $m=3$ is an intermediate case.
Its vanishing and \eqref{eq:sparse-count} give
\[
-\tfrac12[f(x),f(y),f(z),u_1,\ldots,u_{n-3}]_V=0.
\]
Every all-kernel term with $m\geq3$ changed entries then
vanishes by substituting further values of $f$ for some
$u_j$. Equation \eqref{eq:sparse-count} also annihilates
the corresponding $\vartheta$-sums.
Thus the equations with $m\geq3$ follow from
\eqref{eq:sparse-middle-three}, and both extra terms in
\eqref{eq:sparse-gauge-rho} and \eqref{eq:sparse-gauge-omega}
are zero. Together with the $m=2$ equation, this proves
necessity, sufficiency, and \eqref{eq:sparse-higher-affine}.
\end{proof}

For $n\geq5$ the affine formula in
\eqref{eq:sparse-higher-affine} concerns changes between sparse
sections; it does not assert that all mixed maps are fixed.
In particular, $f$ need not take values in $Z(V)$, and
$\vartheta^f$ may differ from $\vartheta$.
Nevertheless, for $n\geq5$ the representation $\rho$ and
the ordinary cohomology class
\[
[\omega]\in H_{\ab}^2(\g,V;\rho)
\]
are independent of the sparse section. Indeed, $\rho^f=\rho$,
$d_{\rho}\omega=0$, and $\omega^f-\omega=d_{\rho}f$.
This class retains only part of the extension data:
the equations for $\vartheta$ and the zero middle
components remain necessary for equivalence and lifting.

\subsection{The good criterion and relative cohomology}
The general spaces have the following explicit forms:
\begin{align}
A_\rho=\At
 &=\{a\in Z(V):\vartheta(x)(a,u_2,\ldots,u_{n-1})=0
                         \text{ for all }x,u_j\},\notag\\
\mathcal F_\rho=\Ft
 &=\{f\in\Hom(\g,Z(V)):
\vartheta(x)(f(y),u_1,\ldots,u_{n-2})
=\vartheta(y)(f(x),u_1,\ldots,u_{n-2})\}.
\label{eq:sparse-A-F}
\end{align}
Indeed, the highest mixed condition tests centrality in $V$,
and the two-quotient condition gives the displayed equality.
Every other defining condition is zero by sparsity.

Proposition~\ref{prop:good-criterion} says that a sparse section
is good relative to $\mathcal S$ precisely when
$\mathcal S\subseteq\Ft$, with the additional condition
\[
\vartheta(x)(\omega(y_1,y_2,y_3),u)=0
\]
if $n=3$. For $n\geq4$ the representation condition is automatic.
For every $f\in\Ft$, the finite formula is
$\omega^f=\omega+d_\rho f$, and all mixed components remain fixed.
In particular,
\begin{equation}\label{eq:sparse-relative-H}
\mathcal H_\rho^2=\Ht
=\frac{\{\eta:\bigwedge^n\g\to\At:d_\rho\eta=0\}}
             {d_\rho\Ft}.
\end{equation}
If $\dim\g\geq2$, central goodness forces
$\vartheta(x)(a,u_2,\ldots,u_{n-1})=0$ for every $a\in Z(V)$:
for $x\ne0$ choose $y$ independent of $x$ and take
$f(x)=0,\ f(y)=a$ in \eqref{eq:sparse-A-F}.
Conversely this vanishing gives $\Ft=\Hom(\g,Z(V))$;
in the ternary case the additional pure condition above is
still necessary. Under these conditions $\At=Z(V)$.

The third obstruction of Theorem~\ref{thm:third-obstruction}
is also specialized by \eqref{eq:sparse-components}.
It takes values in $H_{\ext}^3(\g,\At;\rho|_{\At})$.
If $n\geq4$, the full action $\rho$ on $V$ is a representation,
so the obstruction cochain is $d_\rho\omega_0$ in the
$V$-coefficient complex. This equality alone does not imply
vanishing in the $\At$-coefficient complex, since the primitive
$\omega_0$ need not take values in $\At$.

\subsection{Compatible pairs and the specialized Wells sequences}
For a derivation pair $d=(D_V,D_\g)$, use $L_d$ from
\eqref{eq:mixed-Ld}. Its mixed equations reduce to
\begin{equation}\label{eq:sparse-compatible-top}
(L_d\vartheta)(x)(u_1,\ldots,u_{n-1})
=[\lambda(x),u_1,\ldots,u_{n-1}]_V
\end{equation}
and, if $n\geq4$,
\begin{align}
L_d\rho&=0,\notag\\
\vartheta(x)(\lambda(y),u_1,\ldots,u_{n-2})
&=\vartheta(y)(\lambda(x),u_1,\ldots,u_{n-2}).
\label{eq:sparse-compatible-higher}
\end{align}
If $n=3$, the last two equations are replaced by
\begin{equation}\label{eq:sparse-compatible-three}
(L_d\rho)(x,y)u
=\vartheta(x)(\lambda(y),u)-\vartheta(y)(\lambda(x),u).
\end{equation}
To obtain these formulas, insert $\lambda$ in one quotient slot
of each mixed bracket. Only a bracket with two quotient entries
can produce a nonzero $\vartheta$ term; inserting it in the
one-quotient bracket produces the kernel bracket.
This also explains why \eqref{eq:sparse-compatible-higher}
must include preservation of a zero intermediate component.

An automorphism pair $c=(\beta,\alpha)$ lifts precisely when
some $f:\g\to V$ satisfies
\begin{equation}\label{eq:sparse-lift-extremes}
{}^c\vartheta=\vartheta^f,\qquad
{}^c\rho=\rho^f,\qquad{}^c\omega=\omega^f,
\end{equation}
and all expressions in \eqref{eq:sparse-gauge-middle} are zero.
Here transport is \eqref{eq:twist-omega}--\eqref{eq:twist-rho},
restricted to the indicated components.
Its lift is
\[
\gamma(s(x)+u)=s(\alpha x)+f(\alpha x)+\beta u.
\]
For $n\geq5$ Theorem~\ref{thm:sparse-arity} reduces the complete
criterion to
\begin{align}
{}^c\rho&=\rho,\notag\\
({}^c\vartheta-\vartheta)(x)(u_1,\ldots,u_{n-1})
 &=[f(x),u_1,\ldots,u_{n-1}]_V,\notag\\
0&=\vartheta(x)(f(y),u_1,\ldots,u_{n-2})
 -\vartheta(y)(f(x),u_1,\ldots,u_{n-2})\notag\\
&\quad+[f(x),f(y),u_1,\ldots,u_{n-2}]_V,\notag\\
0&=[f(x),f(y),f(z),u_1,\ldots,u_{n-3}]_V,\notag\\
{}^c\omega-\omega&=d_\rho f.
\label{eq:sparse-higher-lift}
\end{align}
All these equations are required; an ordinary cohomology
primitive for the last line alone need not give a lift.

The top mixed equation also implies
\begin{equation}\label{eq:sparse-top-module}
{}^c\chi_s-\chi_s=\partial_V f,\qquad
({}^cq)(U)x=\beta q(\beta^{-1}U)(\alpha^{-1}x).
\end{equation}
For derivations it reads
$L_d\chi_s=\partial_V\lambda$, where $L_d$ acts on the output,
the kernel entries of $U$, and the quotient entry $x$.
These are the representation obstruction carried by
$[\chi_s]$, not substitutes for the remaining mixed equations.

With $\mathfrak G_\rho$ and $\mathcal C_\rho$ defined by these
mixed equations, the general results give
\begin{align}
0\longrightarrow Z_\Omega\longrightarrow\Der_V(E)
 &\longrightarrow\mathfrak G_\rho
 \xrightarrow{\mathcal W_{\mathcal E}}\Ht,\notag\\
1\longrightarrow Z_\Omega\longrightarrow\Aut_V(E)
 &\longrightarrow\mathcal C_\rho
 \xrightarrow{\varpi_\rho}\Ht,
\label{eq:sparse-Wells}
\end{align}
where
\[
Z_\Omega=\{f\in\Ft:d_\rho f=0\},\qquad
\mathcal W_{\mathcal E}(d)=[L_d\omega-d_\rho\lambda],
\qquad
\varpi_\rho(c)=[{}^c\omega-\omega^{f_c}].
\]
The pointed map on all pairs is the restriction of
\eqref{eq:Wells}. Its target may equivalently be restricted to
the subset of $H_{\nab}^2(\g,V)$ represented by sparse data.
No new exactness or splitting proof is needed: the general
theorems already include the zero components.

\subsection{Sparse examples}

\subsubsection{A nonzero representation class of \texorpdfstring{$\vartheta$}{vartheta}}
\begin{example}\label{ex:theta-class}
Let $N$ have basis $x,u_1,\ldots,u_{n-1},z$ and only the
nonzero basic bracket
\[
[x,u_1,\ldots,u_{n-1}]=z.
\]
Every bracket value is central, so all nested brackets vanish.
Let $S$ be centerless and non-abelian, and take
\[
E=S\oplus N,\qquad
V=S\oplus\langle u_1,\ldots,u_{n-1},z\rangle,\qquad
\g=\langle\bar x\rangle.
\]
For $s(\bar x)=x$,
\[
\rho=0,\quad\omega=0,\quad
\vartheta(\bar x)(u_1,\ldots,u_{n-1})=z.
\]
The quotient is one-dimensional, and the section is sparse
and centrally good. Direct calculation gives
\[
\At=\langle z\rangle,\qquad
\Ft=\Hom(\g,\langle u_1,\ldots,u_{n-1},z\rangle),\qquad
\Ht=0,\quad Z_\Omega=\Ft.
\]
Indeed, inserting each $u_i$ and all the other $u_j$ into
$\vartheta(\bar x)$ detects its coefficient, whereas $z$
annihilates every bracket.
The balance equation is automatic in one quotient dimension.
Thus $\Ft$ is strictly larger than $\Hom(\g,\At)$.

For $U=(u_1,\ldots,u_{n-1})$,
\[
\chi_s(U)\bar x=(-1)^{n-1}z\ne0,\qquad
(\partial_V f)(U)\bar x=[U,f(\bar x)]_V=0
\]
for every $f$, since all $u_i$ are central in $V$.
Hence $[\chi_s]\ne0$.
Although $s$ is an algebra section, the representation
extension \eqref{eq:theta-module-extension} does not split.

Let $\beta$ fix $S,u_1,\ldots,u_{n-1}$ and send $z$ to $2z$,
and let $\alpha=\id_\g$.
Then $ {}^c\vartheta(\bar x)(u_1,\ldots,u_{n-1})=2z$,
whereas the top correction
$[f(\bar x),u_1,\ldots,u_{n-1}]_V$ is always zero.
Thus $c$ has no mixed lift, despite $\omega=0$ and $\Ht=0$.
Likewise, the kernel derivation which sends $z$ to $z$ and
kills $S,u_i$, together with $D_\g=0$, is not compatible.
The obstruction in both cases is already carried by
$\vartheta$, before the pure Wells map is formed.
\end{example}

\subsubsection{Zero intermediate brackets can obstruct a lift}
\begin{example}\label{ex:sparse-middle-obstruction}
Let $n\geq4$ and enlarge the preceding $N$ by a central basis
vector $y$. Take the same kernel $V$ and the quotient
$\g=\langle\bar x,\bar y\rangle$.
The standard section has $\rho=\omega=0$ and
$\vartheta(\bar y)=0$.
Set $f(\bar x)=0$, $f(\bar y)=u_1$.
Centrality of $u_1$ in $V$ gives $\vartheta^f=\vartheta$, and
$\bigwedge^{n-1}\g=\bigwedge^n\g=0$ gives
$\rho^f=\rho=0$, $\omega^f=\omega=0$.
Nevertheless,
\[
[s(\bar x)+f(\bar x),s(\bar y)+f(\bar y),u_2,\ldots,u_{n-1}]
 =[x,y+u_1,u_2,\ldots,u_{n-1}]=z\ne0.
\]
Thus $s+f$ is not sparse, and $T_f$ is not an automorphism.
The derivation candidate $\delta_f(y)=u_1$ fails on the same
zero bracket:
\[
\delta_f[x,y,u_2,\ldots,u_{n-1}]=0,\qquad
[x,\delta_f(y),u_2,\ldots,u_{n-1}]=z.
\]
This verifies the need for the middle equations in both theories.
\end{example}

\subsubsection{The exceptional corrections in arity four}
\begin{example}\label{ex:sparse-four-corrections}
Let $V$ have basis $p_1,p_2,p_3,p_4,z$ with
$[p_1,p_2,p_3,p_4]_V=z$, and let $\g$ be abelian with
basis $x_1,x_2,x_3,x_4$.
Define $f(x_i)=p_i$ and
\[
\rho=0,\qquad\omega=0,\qquad
\vartheta(x)(u,v,w)=-\tfrac12[f(x),u,v,w]_V.
\]
Every bracket in $E$ has value in $\F z$ and vanishes if an
argument is $z$, so this is a sparse crossed system.
Its two-entry equation is
\begin{align*}
&\vartheta(x)(f(y),u,v)-\vartheta(y)(f(x),u,v)
                                +[f(x),f(y),u,v]_V\\
&=-\tfrac12[f(x),f(y),u,v]_V
  +\tfrac12[f(y),f(x),u,v]_V+[f(x),f(y),u,v]_V=0.
\end{align*}
Both sections are sparse, but
\[
\rho^f(x_1,x_2,x_3)p_4=-\tfrac12z,\qquad
\omega^f(x_1,x_2,x_3,x_4)=-z,\qquad d_\rho f=0.
\]
The cubic and quartic corrections are therefore essential.

Here $\chi_s=-\tfrac12\partial_V f$, so
$[\chi_s]=0$. The equivariant section $s+\tfrac12 f$
has $\vartheta^{f/2}=0$, but its middle bracket is
\[
[s(x_1)+\tfrac12p_1,s(x_2)+\tfrac12p_2,p_3,p_4]
 =-\tfrac14z\ne0.
\]
Thus splitting the representation extension does not ensure
that the resulting section is sparse.
\end{example}

For $n\geq5$, the analogous kernel
$[p_1,\ldots,p_n]_V=z$ and maps
$f(x_i)=p_i$, $\vartheta(x)=-\tfrac12[f(x),\,\cdot,\ldots,\cdot]_V$
still satisfy the two-entry equation, but
\[
[s(x_1)+p_1,s(x_2)+p_2,s(x_3)+p_3,p_4,\ldots,p_n]
 =-\tfrac12z.
\]
This is now an intermediate bracket, so the changed section
is not sparse.
If $f$ instead has image in $\langle p_1,p_2\rangle$, the
three-entry kernel bracket vanishes by alternation. Both
sections are then sparse, $\rho^f=\rho$ and
$\omega^f=\omega+d_\rho f$, while $\vartheta^f$ can still change.

\section{Specializations and examples}\label{sec:examples}
The following are specializations and examples.
\subsection{The abelian specializations }\label{sec:abelian}
Suppose $[V,V,E,\ldots,E]=0$, so that $V$ is abelian and every $\rho_r$ with $r\geq2$ vanishes.
Then $\rho$ is a representation in every arity $n\geq2$,
\[
A_\rho=V,\qquad \mathcal F_\rho=\Hom(\g,V),\qquad
\mathcal H_\rho^2=H_{\ab}^2(\g,V;\rho),\qquad Z_\Omega=Z^1(\g,V;\rho).
\]
The known semidirect-product and cocycle criteria are recalled
without reproving them.
\begin{proposition}\label{prop:semidirect}
For a representation $\rho$, the operation
\[
[x_1+u_1,\ldots,x_n+u_n]
 =[x_1,\ldots,x_n]_\g
  +\sum_i(-1)^{n-i}\rho(x_1,\ldots,\widehat x_i,\ldots,x_n)u_i
\]
is an $n$-Lie bracket.
\end{proposition}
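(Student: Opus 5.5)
The plan is to deduce the statement from Theorem~\ref{thm:cocycle-criterion} rather than verify the Filippov identity by hand. I view the displayed operation as the full bracket \eqref{eq:bracket-pure}--\eqref{eq:bracket-mixed} on $\g\oplus V$ with datum
\[
\omega=0,\qquad \rho_1=\rho,\qquad \rho_r=0\ (2\leq r\leq n-1),\qquad \rho_n=[\cdot,\ldots,\cdot]_V=0,
\]
where $V$ is given the zero $n$-Lie bracket. First I check that the one-kernel terms of \eqref{eq:bracket-mixed}, after alternation moves $u_i$ to the last slot, are exactly $\sum_i(-1)^{n-i}\rho(x_1,\ldots,\widehat x_i,\ldots,x_n)u_i$. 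Every term with two or more kernel entries vanishes, so the displayed operation is $[\cdot]_\Omega$.

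Next I verify the four families one at a time.
\begin{itemize}
\item Family \eqref{eq:NA0} is trivial, since $\omega=0$.
\item In \eqref{eq:NA1} with $s\geq2$, every term contains some $\rho_{s}$ or $\rho_{s+1}$ with index at least $2$, hence vanishes.
\item In \eqref{eq:NA1} with $s=1$, the $\rho_2$ term carries $\omega$, so the equation reduces to the first identity in \eqref{eq:representation-definition}.
\item In \eqref{eq:NAr0} with $r\geq2$, all terms involve some $\rho_{r}$ or $\rho_{r+1}$ with index $\geq2$.
\item In \eqref{eq:NAr0} with $r=1$, the $\rho_2(\ldots,\omega(Y))$ term vanishes. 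What remains is $-\rho(x_1,\ldots,x_{n-2},[Y])a=\sum_i(-1)^{n-i+1}\rho(\ldots\widehat y_i\ldots)\rho(x_1,\ldots,x_{n-2},y_i)a$, which is the second representation identity multiplied by $-1$.
\item In \eqref{eq:NArs}, every term contains $\rho_{r+1}$ or $\rho_{s+1}$ with index $\geq2$, or $\rho_s$ applied to such a value. When $r=s=1$, the left side is $\rho_2$ and so is zero. The right side has terms $\rho_2(\ldots)$, which vanish, and $\rho_1(Y)(\rho_2(\ldots))$, which also vanishes.
\end{itemize}

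The only delicate point is the sign bookkeeping in the case $r=1$ of \eqref{eq:NAr0}. This is exactly the computation already carried out in the proof of Proposition~\ref{prop:central-representation}, with $\rho_2$ set to zero, so I will quote it. Once the families are checked, Theorem~\ref{thm:cocycle-criterion} yields the $n$-Lie bracket. Alternatively, this is the case $\vartheta=0$, $\omega=0$ of the sparse system, and \eqref{eq:sparse-representation} shows that the conditions reduce to $\rho$ being a representation.
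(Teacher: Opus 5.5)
Your argument is correct. The paper itself does not prove this proposition. It recalls the semidirect product as a standard construction and cites \cite{Kasymov,Takhtajan,Xu,TanXu,ZhangExtending}. Your specialization of Theorem~\ref{thm:cocycle-criterion} therefore gives an argument internal to the paper, with datum $\omega=0$, $\rho_1=\rho$, $\rho_r=0$ for $r\geq2$, and zero kernel bracket. Your case checks are right:
\begin{itemize}
\item \eqref{eq:NA0} and \eqref{eq:NArs} hold trivially as $0=0$.
\item \eqref{eq:NA1} for $s\geq2$ and \eqref{eq:NAr0} for $r\geq2$ also reduce to $0=0$.
\item \eqref{eq:NA1} with $s=1$ is the first representation identity.
\item \eqref{eq:NAr0} with $r=1$ is $-1$ times the second, including the case $n=2$, where $p=0$.
\end{itemize}

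The trade-off is that your proof rests on the cited crossed-product criterion. A direct check of \eqref{eq:FI}, sorted by the number of kernel arguments among the $2n-1$ entries, is equally short here:
\begin{itemize}
\item With no kernel argument it is the Filippov identity of $\g$, because $\omega=0$.
\item With one kernel argument among the inner entries it is the first representation identity; among the outer entries, the second.
\item With two or more kernel arguments, each term on either side contains a bracket with at least two kernel entries and vanishes.
\end{itemize}

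One caution about your closing ``alternatively'' remark. In the paper, \eqref{eq:sparse-representation} is derived as a consequence of a sparse system already being an $n$-Lie bracket, and only for $n\geq4$. It therefore gives the necessary direction, not sufficiency, and does not cover $n=2,3$. Drop that remark, or replace it with the substitution into Theorem~\ref{thm:cocycle-criterion} that you have already carried out.
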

\begin{proposition}\label{prop:cocycle-bracket}
Adding $\omega(x_1,\ldots,x_n)$ to this bracket gives an
$n$-Lie algebra precisely when $d_\rho\omega=0$.
Two such extensions with the same representation are equivalent
precisely when their cocycles differ by $d_\rho f$.
\end{proposition}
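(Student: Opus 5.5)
The statement to prove is Proposition~\ref{prop:cocycle-bracket}: adding $\omega$ to the semidirect bracket of Proposition~\ref{prop:semidirect} gives an $n$-Lie algebra exactly when $d_\rho\omega=0$, and equivalence of two such extensions with the same $\rho$ corresponds to their cocycles differing by $d_\rho f$. The plan is to read both parts as special cases of the general non-abelian machinery, with $V$ abelian and every $\rho_r$, $r\geq2$, set to zero. The datum is then $\Omega=(\omega,\rho,0,\ldots,0)$ with $\rho_n=0$.

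\textbf{Bracket criterion.} I would apply Theorem~\ref{thm:cocycle-criterion} to this datum and check the four families in turn.
\begin{itemize}
\item In \eqref{eq:NA1} and \eqref{eq:NArs}, every term with some $\rho_s$ for $s\geq2$ vanishes. For $s=1$, \eqref{eq:NA1} reduces to the first representation identity, since its $\rho_2$-term is zero. For $s=n$ it is trivially true. Likewise \eqref{eq:NArs} is trivially true.
\item \eqref{eq:NAr0} is nontrivial only for $r=1$. There the $\rho_2(\ldots)(a,\omega(Y))$ term vanishes, and the identity becomes the second representation identity.
\item Both representation identities hold by hypothesis. Hence the only remaining condition is \eqref{eq:NA0}, and it is literally $d_\rho\omega=0$ in the notation \eqref{eq:relative-d2}.
\end{itemize}
Alternatively, one can use Proposition~\ref{prop:fixed-mixed-differences} with base point $\omega_0=0$, which is a cocycle by Proposition~\ref{prop:semidirect}. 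Since $A_\rho=V$, that proposition says $(\theta,\rho,0,\ldots)$ is a cocycle iff $\theta\in Z^2_\rho$. This gives the first claim directly.

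\textbf{Equivalence.} I would invoke Theorem~\ref{thm:affine-classification}, or equivalently Lemma~\ref{lem:partial-shears}. In the abelian case $\partial_rf$ involves only $\rho_{r+1}$ with $r+1\geq2$, which is zero, so $\mathcal F_\rho=\Hom(\g,V)$. Every shear $T_f$ then preserves the mixed maps, and $\omega^f=\omega+d_\rho f$ by \eqref{eq:partial-gauge-affine}. Conversely, by Theorem~\ref{thm:classification}, every equivalence in these coordinates is some shear $T_f$. That shear fixes $\rho$ automatically and changes $\omega$ by $d_\rho f$. So two cocycles are equivalent iff they differ by $d_\rho\Hom(\g,V)$.

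\textbf{Main difficulty.} The hardest point is justifying that every higher-degree term of the telescoped $\omega^f$ in \eqref{eq:gauge-omega-telescope} really drops out. Any term with two or more $f$-entries is a bracket with at least two kernel entries. In the abelian setting such a bracket is a value of some $\rho_r$ with $r\geq2$, and these are all zero. I would state this explicitly rather than rely on the characteristic-zero triangular argument.
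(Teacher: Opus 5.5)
Your argument is correct. It differs from the paper mainly because the paper does not prove Proposition~\ref{prop:cocycle-bracket} at all: it recalls it, together with Proposition~\ref{prop:semidirect}, as the standard abelian-extension construction from the cited literature. You instead derive it inside the paper as the specialization $\rho_r=0$ for $2\leq r\leq n$ of the general machinery. The families \eqref{eq:NA1}, \eqref{eq:NAr0}, \eqref{eq:NArs} of Theorem~\ref{thm:cocycle-criterion} reduce to the two representation identities (at $s=1$ and $r=1$) plus trivial equations, so only \eqref{eq:NA0}, i.e.\ $d_\rho\omega=0$, remains. Since $A_\rho=V$ and $\mathcal F_\rho=\Hom(\g,V)$, every shear preserves $\rho$ and changes $\omega$ by $d_\rho f$; with Theorem~\ref{thm:classification} (or Theorem~\ref{thm:affine-classification}) this gives the equivalence statement. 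What your route buys is a consistency check of the non-abelian theory against the classical case: it confirms the identifications $A_\rho=V$, $\mathcal F_\rho=\Hom(\g,V)$, $\mathcal H^2_\rho=H^2_{\ab}(\g,V;\rho)$ asserted at the start of Section~\ref{sec:abelian}. Your $\omega=0$ case also proves Proposition~\ref{prop:semidirect}, so the alternative route through Proposition~\ref{prop:fixed-mixed-differences} is not circular. Killing the terms with two or more $f$-entries directly via $[V,V,E,\ldots,E]=0$ is sound. It avoids the division by $k+1$ in Lemma~\ref{lem:partial-shears}, so that step needs no assumption on the characteristic. The price is that your proof is only as self-contained as Theorems~\ref{thm:cocycle-criterion} and~\ref{thm:classification}, which the paper itself imports from earlier work. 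The abelian statement, by contrast, can be checked by a direct expansion of the Filippov identity without the non-abelian apparatus.
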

These are the standard representation and abelian extension
constructions; see~\cite{Kasymov,Takhtajan,Xu,TanXu,ZhangExtending}.
Here compatibility becomes
\[
\mathfrak G_\rho=\{d:L_d\rho=0\},\qquad \mathcal C_\rho=\{c:{}^c\rho=\rho\}.
\]
The mixed correction can be chosen zero, and the Wells maps are
\[
\mathcal W_{\mathcal E}(d)=[L_d\omega],\qquad
\varpi_\rho(c)=[{}^c\omega-\omega].
\]
The corresponding exact sequences are
\begin{align*}
0\longrightarrow Z^1(\g,V;\rho)\longrightarrow\Der_V(E)
 &\longrightarrow\mathfrak G_\rho
 \xrightarrow{\mathcal W_{\mathcal E}}H_{\ab}^2(\g,V;\rho),\\
1\longrightarrow Z^1(\g,V;\rho)\longrightarrow\Aut_V(E)
 &\longrightarrow\mathcal C_\rho
 \xrightarrow{\varpi_\rho}H_{\ab}^2(\g,V;\rho).
\end{align*}
They recover the ternary cohomology and Wells constructions
of~\cite{Xu,TanXu} and display their general-arity form.

\subsection{Direct products and corrected compatibility}
Assume $n\geq3$. For the direct product $E=\g\oplus V$ one has
$\rho=\vartheta=\omega=0$ and
\[
A_\rho=Z(V),\quad \mathcal F_\rho=\Hom(\g,Z(V)),\quad
Z_\Omega=\Hom(\g/[\g,\ldots,\g],Z(V)).
\]
Every automorphism or derivation pair lifts diagonally, and
both additive Wells maps vanish. The sequences split over the full
groups and Lie algebras of pairs.

Even for a direct product, strict compatibility depends on the
sparse section. Let $V$ be centerless and non-abelian, let $\g$
be abelian, and choose $b:\g\to V$ of rank one.
The section $s_b(x)=x+b(x)$ is sparse, since every intermediate
kernel bracket contains at least two values in the same
one-dimensional space. Its maps are
\[
\rho^b=0,\qquad\omega^b=0,\qquad
\vartheta^b(x)(u_1,\ldots,u_{n-1})
 =[b(x),u_1,\ldots,u_{n-1}]_V.
\]
For every pair $d=(D_V,D_\g)$ the diagonal derivation, written
in this section, has correction
$\lambda=D_Vb-bD_\g$.
Its top equation is directly
\[
(L_d\vartheta^b)(x)(u_1,\ldots,u_{n-1})
 =[D_Vb(x)-b(D_\g x),u_1,\ldots,u_{n-1}]_V.
\]
Choosing $D_\g=0$ and $D_Vb(x)\ne0$ makes this expression
nonzero for suitable kernel entries, since $Z(V)=0$.
Thus an extensible pair need not be strictly compatible.
Here $A_\rho=\mathcal F_\rho=0$, so every pair has a unique lift.

\subsection{Determinant and trace obstructions with non-abelian kernel}
Let $\g$ be abelian of dimension $m\geq n$,
$C=\bigwedge^n\g$, and $V=S\oplus C$, where $S$ is perfect,
centerless, and non-abelian.
Such kernels are supplied, for example, by the standard
simple $n$-Lie algebras in characteristic zero~\cite{Filippov,Ling}.
Define
\[
E=S\oplus\g\oplus C,\qquad
\rho=\vartheta=0,\qquad
\omega(x_1,\ldots,x_n)=x_1\wedge\cdots\wedge x_n.
\]
The $S$-bracket and the displayed central bracket are the only
nonzero types; their direct sum satisfies the Filippov identity.
Here $A_\rho=C$, $\mathcal F_\rho=\Hom(\g,C)$, and $d_\rho\mathcal F_\rho=0$.

The characteristic subspaces $[V,\ldots,V]=S$ and $Z(V)=C$
imply that every kernel automorphism is $b\oplus q$.
The top lifting equation forces the correction into $C$.
The pure equation then reads
\[
q(x_1\wedge\cdots\wedge x_n)
 =\alpha x_1\wedge\cdots\wedge\alpha x_n.
\]
Since decomposable wedges span $C$, inducibility is exactly
$q=\bigwedge^n\alpha$. All lifts are
\[
x+a+c\longmapsto
\alpha x+f(x)+b(a)+(\bigwedge^n\alpha)c,\qquad f:\g\to C.
\]
These formulas preserve the pure $\g$-brackets, the $S$-brackets,
and every zero mixed bracket, so the criterion is also sufficient.

Similarly, every kernel derivation is $D_S\oplus Q$.
Its top equation forces $\lambda(\g)\subseteq C$, and its
pure equation is
\[
Q(x_1\wedge\cdots\wedge x_n)
 =\sum_i x_1\wedge\cdots\wedge D_\g x_i\wedge\cdots\wedge x_n.
\]
All mixed identities are then zero. When $m=n$, the two criteria
become respectively
\[
q=(\det\alpha)\id_C,\qquad Q=(\tr D_\g)\id_C.
\]
For example, $(\id_S\oplus2\id_C,\id_\g)$ has no automorphism
lift, and $(0\oplus\id_C,0)$ has no derivation lift.
Both pairs satisfy all mixed equations.
These are pure cohomological obstructions, complementary to
the representation obstruction in Example~\ref{ex:theta-class}.

\subsection{A noncentral action and a nonzero derivation obstruction}
\begin{example}\label{ex:noncentral-obstruction}
Let $n\geq3$, let $\g$ have basis $x_1,\ldots,x_n$ with
$[x_1,\ldots,x_n]_\g=x_1$, and put
$A=\langle a_1,a_2\rangle$.
Define an idempotent $P\in\End(A)$ by
\[
P(a_1)=P(a_2)=\tfrac12(a_1+a_2).
\]
Let $S$ be centerless and non-abelian, and put
$V=S\oplus A$, with $A$ an abelian direct summand.
For $w\in A$, define the extension by
\begin{align}
[x_1,\ldots,x_n]&=x_1+w,\notag\\
[x_2,\ldots,x_n,a]&=(-1)^{n-1}Pa\quad(a\in A),
\label{eq:noncentral-example-bracket}
\end{align}
together with the bracket of $S$; all other basic mixed
brackets are zero.

Here is a verification that every $w$ gives an $n$-Lie algebra.
Put $U=\langle x_2,\ldots,x_n\rangle$,
$K=\langle x_1\rangle\oplus A$, and let $\nu$ be the volume
form on $U$ with $\nu(x_2,\ldots,x_n)=1$. Define
\[
T(x_1)=(-1)^{n-1}(x_1+w),\qquad
T|_A=(-1)^{n-1}P.
\]
On the abelian algebra $U$ the map
$\rho_U(u_1,\ldots,u_{n-1})=\nu(u_1,\ldots,u_{n-1})T$
is a representation. The first representation identity
holds because scalar multiples of $T$ commute. The second
follows from
\[
\sum_{i=1}^n(-1)^{n-i}
\nu(v_1,\ldots,\widehat v_i,\ldots,v_n)
\nu(u_1,\ldots,u_{n-2},v_i)=0,
\]
the alternating determinant relation for $n$ vectors in
the $(n-1)$-dimensional space $U$.
Proposition~\ref{prop:semidirect} gives the bracket on
$U\oplus K$, and taking its direct product with $S$ gives
\eqref{eq:noncentral-example-bracket}.

The kernel is non-abelian, while the quotient action on $A$
is nontrivial. For the evident section,
\[
\omega_w(x_1,\ldots,x_n)=w,\qquad
\rho(x_2,\ldots,x_n)|_A=(-1)^{n-1}P,
\]
and $\vartheta=0$. Every intermediate mixed bracket is zero,
while the kernel retains the nonzero bracket of $S$.
Thus
\[
A_\rho=A,\qquad \mathcal F_\rho=\Hom(\g,A).
\]
Since every $w$ gives a cocycle and
$\dim\bigwedge^n\g=1$, evaluation on $x_1,\ldots,x_n$ identifies
$\Zt$ with $A$. For $f\in\mathcal F_\rho$,
\[
(d_\rho f)(x_1,\ldots,x_n)=(P-\id)f(x_1).
\]
Consequently
\begin{equation}\label{eq:noncentral-H}
\mathcal H_\rho^2
 =A/(P-\id)A=A/\langle a_1-a_2\rangle.
\end{equation}

Take $D_V|_S=0$, $D_V|_A=2P$, and
\[
D_\g x_1=x_1,\quad D_\g x_2=x_2,\quad
D_\g x_n=-x_n,\quad
D_\g x_j=0\quad(3\leq j\leq n-1).
\]
The trace of $D_\g$ is $1$, and
\[
D_\g[x_1,\ldots,x_n]_\g=x_1
 =\sum_i[x_1,\ldots,D_\g x_i,\ldots,x_n]_\g.
\]
Thus $D_\g$ is a derivation. The pair is strictly compatible:
$[2P,P]=0$ and the sum of the eigenvalues on $x_2,\ldots,x_n$
is zero. Its correction can be chosen as $\lambda=0$, and
\[
\Theta^{\omega_w}_{d,0}(x_1,\ldots,x_n)=2Pw-w.
\]
For $w=a_1-2a_2$, this equals $-2a_1+a_2$, which is not
in $\langle a_1-a_2\rangle$.
Therefore the pair has no derivation lift.

More precisely, on the quotient \eqref{eq:noncentral-H},
$P$ induces the identity, so
\[
\Phi_{\rho,\vartheta}(d)=\id,\qquad
\mathcal W_{\mathcal E_w}(d)=[w],\qquad
\mathcal W_{\mathcal E_0}(d)=0.
\]
The pair extends in the split member but fails to extend
in every member of the same mixed family. This explicitly
realizes the cohomology-action term in
Theorem~\ref{thm:universal-extensibility}.
\end{example}

\subsection{An intermediate mixed obstruction}
\begin{example}\label{ex:intermediate}
Let $n\geq4$. Let $N$ have basis
\[
x_1,\ldots,x_{n-2},a,b,z
\]
and the single nonzero basic bracket
\begin{equation}\label{eq:intermediate-bracket}
[x_1,\ldots,x_{n-2},a,b]=z.
\end{equation}
All brackets involving $z$ vanish. Hence every nested bracket is
zero, and \eqref{eq:FI} holds.
Let $S$ be any non-abelian $n$-Lie algebra, and set
\[
E=S\oplus N,\qquad
V=S\oplus\operatorname{span}\{a,b,z\},\qquad
\g=\operatorname{span}\{\bar x_1,\ldots,\bar x_{n-2}\}.
\]
The subspace $V$ is an ideal, and $\g$ is abelian.
The section $s(\bar x_i)=x_i$ is an algebra section.
Its datum satisfies
\begin{equation}\label{eq:intermediate-datum}
\omega=0,\qquad \rho_1=0,\qquad \rho_{n-1}=0,\qquad
\rho_2(\bar x_1,\ldots,\bar x_{n-2})(a,b)=z,
\end{equation}
with all other mixed components zero.

Define $\alpha=\id_{\g}$ and let $\beta\in\Aut(V)$ fix $S,a,b$
and send $z$ to $2z$. This is an automorphism because
$\operatorname{span}\{a,b,z\}$ is a central direct summand of $V$.
If a lift existed, it would have
$\gamma(x_i)=x_i+\lambda(\bar x_i)$ and
$\gamma(a)=a$, $\gamma(b)=b$. Preservation of
\eqref{eq:intermediate-bracket} would give
\begin{align*}
2z
 &=\gamma([x_1,\ldots,x_{n-2},a,b])\\
 &=[x_1+\lambda(\bar x_1),\ldots,
    x_{n-2}+\lambda(\bar x_{n-2}),a,b]\\
 &=z.
\end{align*}
Indeed, any term containing $\lambda(\bar x_i)\in V$ has fewer
than $n-2$ of the required $x$-entries in its $N$-component,
and its $S$-component is zero because of the last two entries
$a,b$. This is a contradiction. Hence
\[
\varpi(\beta,\id_{\g})\ne[\Omega].
\]
In coordinates, the transformed nonzero mixed operation is
$ {}^{(\beta,\id)}\rho_2=2\rho_2$.
Thus this obstruction is present in a split extension with
non-abelian kernel, and it is invisible to $\omega$, $\rho_1$,
and $\rho_{n-1}$ alone.
\end{example}

\section*{Acknowledgments}
ChatGPT (6-Astra) was used in the process of this manuscript.


\begin{thebibliography}{99}

\bibitem{AfiBasdouri}
M.~Afi and O.~Basdouri,
On the isomorphism of non-abelian extensions of $n$-Lie algebras,
\emph{J. Geom. Phys.} \textbf{173} (2022), 104427.
\url{https://doi.org/10.1016/j.geomphys.2021.104427}.


\bibitem{AlekseevskyMichorRuppert}
D.~Alekseevsky, P.~W.~Michor, and W.~A.~F.~Ruppert,
Extensions of super Lie algebras,
\emph{J. Lie Theory} \textbf{15} (2005), no.~1, 125--134.
\url{https://arxiv.org/abs/math/0101190}.

\bibitem{AzcarragaIzquierdo}
J.~A.~de Azc\'arraga and J.~M.~Izquierdo,
$n$-ary algebras: a review with applications,
\emph{J. Phys. A: Math. Theor.} \textbf{43} (2010), no.~29, 293001.
\url{https://arxiv.org/abs/1005.1028}.

\bibitem{BardakovSingh}
V.~G.~Bardakov and M.~Singh,
Extensions and automorphisms of Lie algebras,
\emph{J. Algebra Appl.} \textbf{16} (2017), no.~9, 1750162, 15 pp.
\url{https://doi.org/10.1142/S0219498817501626}.

\bibitem{CasasLodayPirashvili}
J.~M.~Casas, J.-L.~Loday, and T.~Pirashvili,
Leibniz $n$-algebras,
\emph{Forum Math.} \textbf{14} (2002), no.~2, 189--207.

\bibitem{DaletskiiTakhtajan}
Y.~L.~Daletskii and L.~A.~Takhtajan,
Leibniz and Lie algebra structures for Nambu algebra,
\emph{Lett. Math. Phys.} \textbf{39} (1997), 127--141.

\bibitem{Filippov}
V.~T.~Filippov,
$n$-Lie algebras,
\emph{Siberian Math. J.} \textbf{26} (1985), 879--891.

\bibitem{Fregier}
Y.~Fr\'egier,
Non-abelian cohomology of extensions of Lie algebras as Deligne groupoid,
\emph{J. Algebra} \textbf{398} (2014), 243--257.

\bibitem{InassaridzeKhmaladzeLadra}
N.~Inassaridze, E.~Khmaladze, and M.~Ladra,
Non-abelian cohomology and extensions of Lie algebras,
\emph{J. Lie Theory} \textbf{18} (2008), 413--432.

\bibitem{Kasymov}
Sh.~M.~Kasymov,
On a theory of $n$-Lie algebras,
\emph{Algebra and Logic} \textbf{26} (1987), 155--166.

\bibitem{Ling}
W.~X.~Ling,
\emph{On the Structure of $n$-Lie Algebras},
Ph.D. thesis, University-GHS-Siegen, Siegen, 1993.


\bibitem{SongMakhloufTang}
L.~Song, A.~Makhlouf, and R.~Tang,
On non-abelian extensions of $3$-Lie algebras,
\emph{Commun. Theor. Phys.} \textbf{69} (2018), no.~4, 347--356.
\url{https://arxiv.org/abs/1708.02847}.

\bibitem{Takhtajan}
L.~A.~Takhtajan,
Higher order analog of Chevalley--Eilenberg complex and deformation
theory of $n$-gebras,
\emph{St. Petersburg Math. J.} \textbf{6} (1995), 429--438.

\bibitem{TanXu}
Y.~Tan and S.~Xu,
The Wells map for abelian extensions of $3$-Lie algebras,
\emph{Czechoslovak Math. J.} \textbf{69} (144) (2019),
no.~4, 1133--1164.
\url{https://doi.org/10.21136/CMJ.2019.0098-18}.

\bibitem{Wells}
C.~Wells,
Automorphisms of group extensions,
\emph{Trans. Amer. Math. Soc.} \textbf{155} (1971), 189--194.

\bibitem{Xu}
S.~Xu,
Cohomology, derivations and abelian extensions of $3$-Lie algebras,
\emph{J. Algebra Appl.} \textbf{18} (2019), no.~7, 1950130, 26 pp.
\url{https://doi.org/10.1142/S0219498819501305}.


\bibitem{Zhang3Lie}
T.~Zhang,
Extending structures for \(3\)-Lie algebras,
\emph{Comm. Algebra} \textbf{50} (2022), 1469--1497.


\bibitem{ZhangExtending}
T.~Zhang,
Extending structures for $n$-Lie algebras,
preprint (2026), arXiv:2609.05051.
\url{https://arxiv.org/abs/2609.05051}.

\end{thebibliography}
\end{document}